\documentclass[a4paper]{amsart}
\usepackage{silence}
\WarningFilter{latexfont}{Font}
\usepackage{amsmath,amsthm,amssymb,latexsym,epic,bbm,comment,amscd}
\usepackage{thmtools,mathtools}
\usepackage{graphicx,color}
\usepackage{stmaryrd}
\usepackage[shortlabels]{enumitem}
\SetSymbolFont{stmry}{bold}{U}{stmry}{m}{n}
\usepackage[all,2cell]{xy}
\xyoption{2cell}

\allowdisplaybreaks[4]

\overfullrule=1mm
\vbadness=10001
\hbadness=10001

\usepackage[all]{xy}
\usepackage[active]{srcltx}

\usepackage[table]{xcolor}
\usepackage{array}
\newcolumntype{C}{>{$}c<{$}}

\newcommand{\mone}[1][1]{{\,\text{-}#1}}

\font\sc=rsfs10
\newcommand{\cC}{\sc\mbox{C}\hspace{1.0pt}}
\newcommand{\cCH}{\sc\mbox{C}_{\mathcal{H}}\hspace{1.0pt}}

\newcommand{\cR}{\sc\mbox{R}\hspace{1.0pt}}

\newcommand{\cS}{\sc\mbox{S}\hspace{1.0pt}}
\newcommand{\cSo}{\sc\mbox{S}^{\,(0)}\hspace{1.0pt}}
\newcommand{\cSH}{\sc\mbox{S}_{\mathcal{H}}\hspace{1.0pt}}
\newcommand{\cSHo}{\sc\mbox{S}^{\,(0)}_{\mathcal{H}}\hspace{1.0pt}}
\newcommand{\cSJ}{\sc\mbox{S}_{\mathcal{J}}\hspace{1.0pt}}

\newcommand{\cD}{\sc\mbox{D}\hspace{1.0pt}}

\newcommand{\cA}{\sc\mbox{A}\hspace{1.0pt}}
\newcommand{\cAH}{\sc\mbox{A}_{\mathcal{H}}\hspace{1.0pt}}
\newcommand{\cAJ}{\sc\mbox{A}_{\mathcal{J}}\hspace{1.0pt}}
\newcommand{\cAHop}{\sc\mbox{A}^{\;\mathrm{op}}_{\mathcal{H}}\hspace{1.0pt}}

\newcommand{\cAHp}{\sc\mbox{A}^{\;\prime}_{\mathcal{H}}\hspace{1.0pt}}
\newcommand{\cAHpzero}{(\sc\mbox{A}^{\;\prime}_{\mathcal{H}}\hspace{1.0pt})^{(0)}}
\newcommand{\cAJp}{\sc\mbox{A}^{\;\prime}_{\mathcal{J}}\hspace{1.0pt}}

\newcommand{\cAHpop}{\sc\mbox{A}^{\;\prime,\mathrm{op}}_{\mathcal{H}}\hspace{1.0pt}}
\newcommand{\cAJpop}{\sc\mbox{A}^{\;\prime,\mathrm{op}}_{\mathcal{J}}\hspace{1.0pt}}

\newcommand{\cBH}{\sc\mbox{B}_{\mathcal{H}}\hspace{1.0pt}}
\newcommand{\cBHo}{\sc\mbox{B}^{(0)}_{\mathcal{H}}\hspace{1.0pt}}
\newcommand{\cBHop}{\sc\mbox{B}^{\,\mathrm{op}}_{\mathcal{H}}\hspace{1.0pt}}
\newcommand{\cBJ}{\sc\mbox{B}_{\mathcal{J}}\hspace{1.0pt}}
\newcommand{\cBJop}{\sc\mbox{B}^{\,\mathrm{op}}_{\mathcal{J}}\hspace{1.0pt}}
\newcommand{\cBJo}{\sc\mbox{B}^{(0)}_{\mathcal{J}}\hspace{1.0pt}}
\newcommand{\cE}{\sc\mbox{E}\hspace{1.0pt}}

\newcommand{\cO}{\sc\mbox{O}\hspace{1.0pt}}

\newcommand{\cV}{\sc\mbox{V}\hspace{1.0pt}}
\font\scc=rsfs7
\newcommand{\ccA}{\scc\mbox{A}\hspace{1.0pt}}
\newcommand{\ccAH}{\scc\mbox{A}_{\mathcal{H}}\hspace{1.0pt}}
\newcommand{\ccAHp}{\scc\mbox{A}^{\;\prime}_{\mathcal{H}}\hspace{1.0pt}}
\newcommand{\ccAJp}{\scc\mbox{A}^{\;\prime}_{\mathcal{J}}\hspace{1.0pt}}

\newcommand{\ccBH}{\scc\mbox{B}_{\mathcal{H}}\hspace{1.0pt}}
\newcommand{\ccBHo}{\scc\mbox{B}^{\,(0)}_{\mathcal{H}}\hspace{1.0pt}}

\newcommand{\ccC}{\scc\mbox{C}\hspace{1.0pt}}

\newcommand{\ccE}{\scc\mbox{E}\hspace{1.0pt}}

\newcommand{\ccS}{\scc\mbox{S}\hspace{1.0pt}}
\newcommand{\ccSH}{\scc\mbox{S}_{\mathcal{H}}\hspace{1.0pt}}

\newcommand{\ccR}{\scc\mbox{R}\hspace{1.0pt}}

\newcommand{\cccSH}{\scc\mbox{S}_{\mathcal{H}}\hspace{1.0pt}}

\newcommand{\cEnd}{\cE\mathrm{nd}}
\newcommand{\ccEnd}{\ccE\mathrm{nd}}

\newcommand{\CH}{\mathbf{C}_{\mathcal{H}}}
\newcommand{\CHo}{\mathbf{C}^{\,(0)}_{\mathcal{H}}}
\newcommand{\CL}{\mathbf{C}_{\mathcal{L}}}

\newcommand{\Cd}{\mathrm{C}_{d}}
\newcommand{\sd}{\square_{\Cd}}

\usepackage{tikz,tikz-cd}
\tikzset{anchorbase/.style={baseline={([yshift=-0.5ex]current bounding box.center)}},
smallnodes/.style={font=\scriptsize,text height=0.75ex,text depth=0.15ex},
cstrand/.style={line width=1.5,color=black,solid},
dstrand/.style={line width=1.5,color=blue,densely dotted},
xstrand/.style={line width=1.5,color=red,densely dotted},
}
\usetikzlibrary{decorations}
\usetikzlibrary{decorations.markings}
\usetikzlibrary{decorations.pathreplacing}
\usetikzlibrary{decorations.pathmorphing}
\tikzstyle directed=[postaction={decorate,decoration={markings,
mark=at position #1 with {\arrow[draw=black, line width=0.3mm]{>}}}}]
\tikzstyle rdirected=[postaction={decorate,decoration={markings,
mark=at position #1 with {\arrow[draw=black, line width=0.3mm]{<}}}}]
\tikzstyle marked=[postaction={decorate,decoration={markings,
mark=at position #1 with {\draw[ultra thin,black,fill=black] (0,0) circle (.08cm);}}}]
\tikzstyle box=[minimum height=0.4cm,draw,rounded corners, draw,rectangle,solid]

\setlength\parindent{0pt}

\newtheorem{theoremm}{Theorem}[section]

\declaretheorem[style=plain,name=Theorem,numberlike=theoremm]{theorem}
\declaretheorem[style=plain,name=Lemma,numberlike=theoremm]{lemma}
\declaretheorem[style=plain,name=Proposition,numberlike=theoremm]{proposition}
\declaretheorem[style=plain,name=Corollary,numberlike=theoremm]{corollary}

\declaretheorem[style=remark,name=Example,numberlike=theorem]{example}
\declaretheorem[style=remark,name=Remark,numberlike=theorem]{remark}

\setcounter{tocdepth}{1}
\setcounter{secnumdepth}{3}
\numberwithin{equation}{section}

\usepackage{hyperref}
\hypersetup{
pdftoolbar=true,
pdfmenubar=true,
pdffitwindow=false,
pdfstartview={FitH},
pdftitle={Simple transitive \texorpdfstring{$2$}{2}-representations of Soergel bimodules for finite Coxeter types},
pdfauthor={Marco Mackaay, Volodymyr Mazorchuk, Vanessa Miemietz, Daniel Tubbenhauer and Xiaoting Zhang},
pdfsubject={},
pdfcreator={Marco Mackaay, Volodymyr Mazorchuk, Vanessa Miemietz, Daniel Tubbenhauer and Xiaoting Zhang},
pdfproducer={Marco Mackaay, Volodymyr Mazorchuk, Vanessa Miemietz, Daniel Tubbenhauer and Xiaoting Zhang},
pdfkeywords={},
pdfnewwindow=true,
colorlinks=true,
linkcolor=black,
citecolor=black,
filecolor=black,
urlcolor=black,
linkbordercolor=black,
citebordercolor=black,
urlbordercolor=black,
linktocpage=true
}

\newcommand{\nnfootnote}[1]{%
\begin{NoHyper}
\renewcommand\thefootnote{}\footnote{#1}%
\addtocounter{footnote}{-1}%
\end{NoHyper}
}

\begin{document}
\title[Simple transitive $2$-representations of Soergel bimodules]
{Simple transitive $2$-representations of Soergel bimodules for finite Coxeter types}

\author[M. Mackaay, V. Mazorchuk, V. Miemietz, D. Tubbenhauer and X. Zhang]
{Marco Mackaay, Volodymyr Mazorchuk, Vanessa Miemietz,\\ Daniel Tubbenhauer and Xiaoting Zhang}

\begin{abstract}
In this paper we show that Soergel bimodules for
finite Coxeter types have only finitely many equivalence classes of simple transitive $2$-representations and we complete their classification in all types but $H_{3}$ and $H_{4}$.
\end{abstract}

\nnfootnote{\textit{Mathematics Subject Classification 2020.} Primary: 18N10, 18D25; Secondary: 16D20, 18A50, 18N25, 20C08.}
\nnfootnote{\textit{Keywords.} $2$-categories, $2$-representations, Soergel bimodules, Lusztig's asymptotic bicategories.}

\maketitle
\setcounter{tocdepth}{1}
\tableofcontents

\setlength{\parskip}{5pt}


\section{Introduction}\label{section:introduction}


The series of papers \cite{MM1}, \cite{MM2}, \cite{MM3}, \cite{MM4}, \cite{MM5}, \cite{MM6}
laid the foundations for a systematic theory of finitary
$2$-representations of finitary $2$-categories, which is a categorical
analog of the theory of finite dimensional representations of
finite dimensional algebras. This theory is a part of
modern $2$-representation theory which originated in seminal papers
like \cite{Os2}, \cite{ENO}, \cite{ChRo} and \cite{KhLa}.
The main difference to the ``alternative''
theory of module categories over tensor categories,
see \cite{EGNO} and references therein,
is not that $2$-categories can have more than one object: every
finitary $2$-category gives rise to a multifinitary monoidal category with the same
finitary $2$-representation theory and vice versa, see \cite[Remark 4.3.7]{EGNO} and
\cite[Subsection 2.4]{MMMTZ2}. The main difference is that finitary $2$-representations and
$2$-categories are not assumed to be abelian, but additive and idempotent complete. This is
important for the main character of this paper, the one-object $\mathbb{C}$-linear
$2$-category $\cS=\cS(W,S)$ of Soergel bimodules for a Coxeter system $(W,S)$ of finite type
over the coinvariant algebra of $W$, which is finitary
but not abelian. (Our choice of ground field in this paper is crucial, as $\cS$ behaves very
differently over a field of positive characteristic.) One can, of course, consider an
abelianization $\underline{\cC}$ of a finitary $2$-category $\cC$, but such abelianizations
do not admit adjunctions in general, which is a serious obstruction. Recall that a finitary
$2$-category is (quasi) fiat if it has a duality structure satisfying conditions akin to those
for (rigid or) pivotal monoidal categories (this is the categorical analog of an algebra having
an involution). Tensor categories are rigid by assumption and many of the
structural results on their module categories do not hold without that assumption, see \cite{EGNO}, but $\underline{\cC}$ is not equivalent to a multitensor
category for a general quasi fiat $2$-category $\cC$.
For this reason, there are many differences between
the representation theories of quasi fiat $2$-categories and multitensor categories, e.g.
cell structures have played a key role in the first one but no role at all in the second. Even the
formulation of analogous results in the two theories, and the respective proofs,
can have important differences, e.g. the role of coalgebras in quasi fiat $2$-categories
is analogous to that of algebras in multitensor categories, but technically more involved,
compare e.g. \cite[Section 4]{MMMT} and \cite[Subsections 7.8 to 7.10]{EGNO}.
However, the two representation theories coincide
when $\cC$ is fusion or pivotal fusion, i.e. locally semisimple (meaning that morphism
categories are semisimple) and quasi fiat or fiat, which is very helpful for
the purpose of this paper, as we will explain below.

A key result in finitary $2$-representation theory is the (weak) Jordan--H\"{o}lder theorem \cite[Theorem 8]{MM5}
for finitary $2$-representations, in which the role of the simples is played by the so-called
simple transitive $2$-representations. Classifying
the simple transitive $2$-representations of a given finitary $2$-category $\cC$
is therefore a fundamental problem in $2$-representation theory, which we will refer to as the
\emph{Classification Problem}. When $\cC$ is fusion, there are only finitely many equivalence
classes of simple transitive $2$-representations by
(a consequence of) Ocneanu rigidity, see e.g. \cite[Proposition 3.4.6 and
Corollary 9.1.6]{EGNO}. For certain fusion $2$-categories
those equivalence classes have even been classified explicitly,
see Section \ref{appendix}. However, when $\cC$ is not locally semisimple,
that finiteness result need not be true, see e.g. \cite[Subsection 4.3]{EO},
and in most cases the Classification Problem is unsolved.

In this paper we address the Classification Problem for the fiat $2$-category
$\cS$ of Soergel bimodules. This $2$-category, which is naturally graded and categorifies the
corresponding Hecke algebra $\mathsf{H}=\mathsf{H}(W,S)$,
was introduced by Soergel \cite{So}, \cite{So2} to give an alternative proof of the famous
Kazhdan--Lusztig conjectures of positive integrality in the case of Weyl groups and
provides a framework in which to generalize and approach these conjectures for arbitrary
Coxeter groups, see e.g. \cite{So3}. A general and completely algebraic proof using Soergel
bimodules was eventually found by Elias and Williamson \cite{EW}. Since their introduction,
Soergel bimodules and their (graded) $2$-representations have played a fundamental role in
representation theory, both in characteristic zero, see e.g.
\cite{KiMa}, \cite{KMR} (and the above mentioned papers), and,
more recently, in positive characteristic, see e.g. \cite{LW}, \cite{RW}. It is therefore
interesting to study its (graded) $2$-representation theory,
initially in characteristic zero. However, only very partial results on the classification
of its simple transitive $2$-representations were known. To be precise, the Classification Problem
was completely solved for finite Coxeter type A in \cite{MM5} both in the graded and the ungraded
setting, for finite dihedral type in \cite{KMMZ} and \cite{MT} in the graded setting
and almost completely in the ungraded setting, and for a small number of other special
cases in other Coxeter types, see e.g. \cite{KMMZ} and \cite{MMMZ}, both
in the graded and the ungraded setting. In all these
cases, the classification turned out to be finite. However, in general it was not even known if
the number of equivalence classes of simple transitive $2$-representations of $\cS$
is finite, left aside any classification. In this paper, we show that there are indeed only
finitely many equivalence classes of simple transitive
$2$-representations of $\cS$ for any finite Coxeter type, and classify them completely
for all finite Coxeter types except $H_{3}$ and $H_{4}$.
All our results are proved in the graded setting, but they hold in the ungraded setting
as well, see Subsection \ref{subsection:ungraded}.

Let us explain our results in some more detail. First of all, the Classification Problem for
$\cS$ can be reduced to that of certain subquotients. To do that, one uses the cell structure
of $\cS$, which is the categorical analog of the Kazhdan--Lusztig cell structure of $\mathsf{H}$.
By \cite[Subsection 3.2]{CM}, for every (graded or ungraded) simple transitive
$2$-representation $\mathbf{M}$ of $\cS$ there is a unique two-sided cell,
called \emph{apex}, that is not annihilated by $\mathbf{M}$ and is maximal in the two-sided
order with respect to that property. The Classification Problem for $\cS$ can therefore be
addressed apex by apex, and for a fixed two-sided cell $\mathcal{J}$ we denote the $2$-category of (graded)
simple transitive $2$-representations of $\cS$ with apex $\mathcal{J}$ by $\cS\text{-}
\mathrm{(g)stmod}_{\mathcal{J}}$. Next, we can reduce the Classification Problem even
further by \emph{(strong) $\mathcal{H}$-reduction}, see \cite[Theorem 15]{MMMZ} or \cite[Theorem 4.32]{MMMTZ2}.
For any diagonal $\mathcal{H}$-cell $\mathcal{H}\subseteq \mathcal{J}$, which, by definition,
the intersection of a left cell in $\mathcal{J}$ and the dual right cell,
there is a subquotient $\cSH$ of $\cS$ whose only left, right and two-sided cells are the
trivial cell and $\mathcal{H}$. By \cite[Theorem 4.32]{MMMTZ2}, there is a biequivalence
\begin{gather*}
\cS\text{-}\mathrm{(g)stmod}_{\mathcal{J}}\simeq
\cSH\text{-}\mathrm{(g)stmod}_{\mathcal{H}}.
\end{gather*}
This is a major reduction, because $\cSH$ is much smaller than $\cS$ and one can pick any diagonal
$\mathcal{H}$-cell in $\mathcal{J}$ (which is helpful in practice, because not all diagonal
$\mathcal{H}$-cells of $\mathcal{J}$ necessarily have the same structure, see Section \ref{appendix}).

By the results in this paper, the benefits turn out to be even bigger.
Based on Elias and Williamson's results in \cite{EW}, Lusztig defined
in \cite[Subsection 18.15]{Lu2} a locally semisimple bicategory
$\cAJ$, for any two-sided cell $\mathcal{J}$
of any Coxeter group. This bicategory categorifies the summand of
the \emph{asymptotic Hecke algebra} corresponding to the cell $\mathcal{J}$.
The bicategory $\cAJ$ is pivotal fusion by \cite{EW2} and contains a one-object pivotal fusion full subbicategory
$\cAH$ for any diagonal $\mathcal{H}$-cell $\mathcal{H}\subset\mathcal{J}$, which we call the
\emph{asymptotic bicategory} associated to $\mathcal{H}$. Being fusion, the bicategory
$\cAH$ has only one cell, corresponding to $\mathcal{H}$, which is left, right,
two-sided and diagonal simultaneously, so all its simple transitive $2$-representations have apex $\mathcal{H}$.
The main insight of this paper is that these asymptotic bicategories completely determine
the (graded) simple transitive $2$-representations of $\cS$. To be precise, our main result
(Theorem \ref{theorem:main}) is the existence of a biequivalence of graded $2$-categories
\begin{gather*}
\cSH\text{-}\mathrm{gstmod}_{\mathcal{H}}\simeq
\cAH\text{-}\mathrm{stmod}^{\prime}
\end{gather*}
for every diagonal $\mathcal{H}$-cell $\mathcal{H}$ of $\cS$ for
any finite Coxeter type. Here $\cAH\text{-}\mathrm{stmod}^{\prime}$ is
the graded $2$-category with translation obtained from $\cAH\text{-}\mathrm{stmod}$
by a well-known and straightforward construction, which we will recall in Subsection
\ref{subsection:grading}. As a matter of fact, $\cAH\text{-}\mathrm{stmod}^{\prime}$ and
$\cAH\text{-}\mathrm{stmod}$ are biequivalent as ungraded $2$-categories, but not as
graded $2$-categories. In the ungraded case, therefore, the above biequivalence
becomes
\begin{gather*}
\cSH\text{-}\mathrm{stmod}_{\mathcal{H}}\simeq
\cAH\text{-}\mathrm{stmod},
\end{gather*}
see Subsection \ref{subsection:ungraded}.
By \cite[Corollary 9.1.6 and Proposition 3.4.6]{EGNO} and strong $\mathcal{H}$-reduction, this implies that the number of equivalence classes of (graded) simple transitive $2$-representations of
$\cS$ is finite for any finite Coxeter type, c.f. Corollary \ref{cor:gradedfiniteness} and
the end of Subsection \ref{subsection:ungraded}.
Moreover, for all finite Coxeter types but $H_{3}$ and $H_{4}$,
all two-sided cells contain a diagonal $\mathcal{H}$-cell $\mathcal{H}$ for which both
$\cAH$ and $\cAH\text{-}\mathrm{stmod}$ are known explicitly, as summarized in Section \ref{appendix}.
For some two-sided cells in Coxeter types $H_{3}$ and $H_{4}$ this is unfortunately not true, see Section \ref{appendix}. Therefore,
we get a complete solution of the Classification Problem for $\cS$ for all finite Coxeter types
but $H_{3}$ and $H_{4}$, and even for those two Coxeter types we get a complete solution
for more than half the number of apexes, see Section \ref{appendix}. Finally,
Theorem \ref{theorem:main} also implies (see Theorem \ref{theorem:main-local-ss})
that $\cS\text{-}\mathrm{(g)stmod}_{\mathcal{J}}$ is locally (graded) semisimple, meaning
that its morphism categories are all (graded) semisimple, for any two-sided cell
$\mathcal{J}$ and any finite Coxeter type.

Let us briefly sketch the key ingredients of the proof of Theorem \ref{theorem:main}. Let
$\mathbf{C}_{\mathcal{H}}$ be the cell $2$-representation of $\cSH$ with apex
$\mathcal{H}$, which categorifies the Kazhdan--Lusztig cell module of the Hecke algebra
associated to $\mathcal{H}$. The main ingredient in the proof of Theorem \ref{theorem:main}
is the existence of a graded biequivalence
\begin{gather*}
\cEnd_{\ccSH}(\mathbf{C}_{\mathcal{H}})\simeq\cAHpop,
\end{gather*}
c.f. Corollary \ref{cor:ssendo}.
(For the experts,
we remark that the proof of this proposition uses that the Duflo involution
$\Cd\in\mathcal{H}$ is a separable Frobenius algebra in $\cSH$,
as we show in Subsection \ref{s7.5} and
Proposition \ref{proposition:dot diagram}.) In the ungraded setting,
the above biequivalence becomes
\begin{gather*}
\cEnd_{\ccSH}(\mathbf{C}_{\mathcal{H}})\simeq\cAHop
\end{gather*}

Another important ingredient
in the proof of our main result is the \emph{double centralizer theorem}, see \cite[Theorem 5.2]{MMMTZ2}, which implies that there is a biequivalence
\begin{gather*}
\cSH\text{-}\mathrm{(g)stmod}_{\mathcal{H}}\simeq
\big(\cEnd_{\ccSH}(\mathbf{C}_{\mathcal{H}})\big)^{\mathrm{op}}\text{-}\mathrm{(g)stmod},
\end{gather*}
as we show in Proposition \ref{prop:main-theorem-prop2}. (Note that the bicategory
$\cBH$ in that proposition, which will be defined in \eqref{def:Cdbicomodules},
is graded biequivalent to
$\big(\cEnd_{\ccSH}(\mathbf{C}_{\mathcal{H}})\big)^{\mathrm{op}}$ by
Corollary \ref{cor:End-Bicomod}.)

Along the way, we also show several results that are interesting in their own right, e.g.
the aforementioned fact that $\Cd\in\mathcal{H}$ is a separable Frobenius algebra in
$\cSH$. In order to achieve this, we prove in Proposition \ref{prop:Frobenius}
that the underlying algebra $\mathsf{B}$ of $\mathbf{C}_{\mathcal{H}}$
is a finite dimensional positively graded weakly symmetric Frobenius algebra of graded length $2\mathbf{a}$, where $\mathbf{a}$ is the
value of Lusztig's $\mathbf{a}$-function on $\mathcal{H}$. In certain cases $\mathsf{B}$
is known to be symmetric, see Remark \ref{remark:symmetric}. However, we do not know if $\mathsf{B}$ is symmetric in general.
\medskip

\textbf{Acknowledgments.}
All computer assisted calculations in this paper were done using SageMath.

We thank Ben Elias, Meinolf Geck, Matt Hogancamp, Hankyung Ko, Raphael Rouquier and
Geordie Williamson for stimulating discussions and helpful exchanges of emails.
D.T. would also like to thank
Ben Elias for mellifluous influence
which is hardly superfluous.
We thank the referee for a very careful reading of
the paper and many helpful comments.

Significant parts of this research were done when all the authors met at
the University of Z{\"u}rich in September 2018, at Uppsala
University in April 2019 and at the University of East Anglia in July 2019. Hospitality and support of these universities are gratefully acknowledged.

M.M. was supported in part by Funda\c{c}{\~a}o para a Ci\^{e}ncia e a
Tecnologia (Portugal), projects UID/MAT/04459/2013 (Center for Mathematical Analysis,
Geometry and Dynamical Systems - CAMGSD) and PTDC/MAT-PUR/31089/2017 (Higher Structures and Applications).
Vo.Ma. is partially supported by
the Swedish Research Council and G{\"o}ran Gustafsson Stiftelse.
Va.Mi. is partially supported by EPSRC grant EP/S017216/1, which also funded the workshop at UEA in July 2019.
D.T. is supported by SageMath.
X.Z. is supported by G{\"o}ran Gustafsson Stiftelse and
National Natural Science Foundation of China (Grant No.12101422).
\vspace{1cm}


\section{Recollections}\label{section:Recollections}



\subsection{Categorical conventions}\label{s2.0}


Categories $\mathcal{C}$ and $2$-categories, $2$-semicategories and bicategories $\cC$ in this paper are assumed to be essentially small.
We also view a monoidal category as a $2$-category with one (possibly unspecified)
object; a perspective which we will use throughout, e.g. for Soergel bimodules.
We will also use the following notation:
\begin{enumerate}[\textbullet]

\item objects in categories (which are not morphism categories in $2$-categories)
are denoted by letters such as $X\in\mathcal{C}$, and morphisms by $f\in\mathcal{C}$;

\item objects in $2$-categories are denoted by $\mathtt{i}\in\cC$, $1$-morphisms by
$\mathrm{F}\in\cC$ and $2$-morphisms by Greek letters such as $\alpha\in\cC$;

\item for any $\mathtt{i},\mathtt{j}\in\cC$, we denote by $\cC(\mathtt{i},\mathtt{j})$
the corresponding morphism category;

\item identity $1$-morphisms are denoted by $\mathbbm{1}_{\mathtt{i}}$ and identity
$2$-morphisms by $\mathrm{id}_{\mathrm{F}}$, where the subscripts are sometimes omitted;

\item we write $\mathrm{F}\mathrm{G}=\mathrm{F}\circ\mathrm{G}$ for composition of $1$-morphisms, and
$\circ_{\mathrm{v}}$ and $\circ_{\mathrm{h}}$ denote vertical and horizontal compositions
of $2$-morphisms, respectively.

\end{enumerate}

We will also use bicategories, silently adapting definitions and results to the weaker setting if
necessary, using \cite{MMMTZ2}. We will stress when we do not work with genuine $2$-categories,
$2$-functors etc. The reader is referred to e.g. \cite{ML}, \cite{Le} or \cite{Be} for these and related notions.


\subsection{Finitary and fiat $2$-categories, and their $2$-representations}\label{s2.1}


Let $\Bbbk$ be an algebraically closed field.

A category $\mathcal{C}$ is called \emph{finitary} (over $\Bbbk$)
if it is equivalent to the category of finitely generated, injective (or projective) modules over
some associative, finite dimensional $\Bbbk$-algebra.
These categories assemble into a $2$-category $\mathfrak{A}^{f}_{\Bbbk}$ having
additive, $\Bbbk$-linear functors and natural transformations as $1$- and $2$-morphisms,
respectively. Similarly, a $2$-category $\cC$ is \emph{finitary} (over $\Bbbk$) if it has finitely many objects,
all identity $1$-morphisms $\mathbbm{1}_{\mathtt{i}}$ are indecomposable
and each morphism category $\cC(\mathtt{i},\mathtt{j})$ is finitary over $\Bbbk$
with all compositions being (bi)additive and
$\Bbbk$-(bi)linear.
We further say that $\cC$ is \emph{fiat} if it has a weak antiinvolution
${}^{\star}$, reversing the direction of both $1$- and $2$-morphisms,
and adjunction $2$-morphisms associated to ${}^{\star}$. If ${}^{\star}$ is just a weak
antiequivalence of finite order, then $\cC$ is called \emph{quasi fiat}.
Finally, a finitary $2$-category $\cC$ is called \emph{locally semisimple} if its morphism
categories are all semisimple. Let us note that our use of the term ``locally semisimple''
is similar to such standard categorical terminology as  ``locally small'', with ``locally'' referring to $1$-morphism categories.

\begin{remark}
For completeness, dropping the assumption of the identity $1$-morphisms being indecomposable gives
what we call multifinitary, multifiat or quasi multifiat $2$-categories. Most of the theory
goes through for these as well, see \cite{MMMTZ2}, but we will not need that level of
generality in this paper.
\end{remark}

\begin{example}
For a finite group $G$, the (strictified) one-object $2$-category $\cR\mathrm{ep}(G,\Bbbk)$ of
finite dimensional
representations of $G$ over $\Bbbk$ is fiat if and only if the algebra $\Bbbk[G]$ has finite representation type.
This is true, for example, if $\mathrm{char}(\Bbbk)\nmid\#G$, in which case
$\cR\mathrm{ep}(G,\Bbbk)$ is locally semisimple.

Let $\Bbbk=\mathbb{C}$.
Another example of a fiat $2$-category is $\cS=\cS_{\mathbb{C}}(W,\mathtt{S})$, the
one-object $2$-category of Soergel bimodules over the coinvariant algebra of a
finite Coxeter group, see Section \ref{s5}.
\end{example}

\begin{example}\label{ex:projbimod}
For any finite dimensional algebra $\mathsf{B}$ we have an associated one-object $2$-category $\cC_{\mathsf{B}}$, called the \emph{$2$-category of projective functors}, whose $1$-morphisms are direct sums of functors with summands isomorphic to the identity functor or to tensoring with projective $\mathsf{B}$-$\mathsf{B}$-bimodules. (Despite the name of $\cC_{\mathsf{B}}$, the identity functor is not a
projective functor, but is needed to make $\cC_{\mathsf{B}}$ a genuine $2$-category
instead of a $2$-semicategory.) Assume that $\mathsf{B}$ is basic and connected and that
$1=e_{1}+\dots+e_{n}$ is a splitting of the identity into
orthogonal primitive idempotents, then $\{\mathsf{B}e_{i}\otimes_{\Bbbk} e_{j}\mathsf{B}\mid i,j=1,\dots,n\}$ is a complete and irredundant set of indecomposable projective $\mathsf{B}$-$\mathsf{B}$-bimodules. By e.g. \cite[Lemma 45 and its proof]{MM1}, for every $i,j=1,\dots,n$,
\begin{gather*}
(\mathsf{B}e_{i}\otimes_{\Bbbk}e_{j}\mathsf{B})^{\star}\cong (e_{j}\mathsf{B})^{\star}\otimes_{\Bbbk}e_{i}\mathsf{B}.
\end{gather*}
The left injective $\mathsf{B}$-modules $(e_{j}\mathsf{B})^{\star}$, for $j=1,\dots,n$,
are projective if and only if $\mathsf{B}$ is Frobenius (since $\mathsf{B}$ is assumed to be basic,
it is self-injective if and only if it is Frobenius). Therefore, $\cC_{\mathsf{B}}$ is quasi fiat if and only if
$\mathsf{B}$ is Frobenius.

Assume that $\cC_{\mathsf{B}}$ is quasi fiat. Then $(e_{j}\mathsf{B})^{\star}\cong\mathsf{B}e_{\sigma(j)}$, for $j=1,\dots,n$, where $\sigma$ is the Nakayama
permutation. Applying ${}^{\star}$ twice yields
\begin{gather*}
(\mathsf{B}e_{i}\otimes_{\Bbbk}e_{j}\mathsf{B})^{\star\star}\cong \mathsf{B}e_{\sigma(i)}\otimes_{\Bbbk}e_{\sigma(j)}\mathsf{B},
\end{gather*}
for all $i,j=1,\dots,n$. We thus see that $\cC_{\mathsf{B}}$ is fiat if and only if $\mathsf{B}$
weakly symmetric. For more details, see \cite[Subsection 7.3]{MM1}.
\end{example}

A locally semisimple quasi fiat $2$-category is called a \emph{fusion} $2$-category
and a locally semisimple fiat $2$-category can be equipped with the structure of a \emph{pivotal fusion} $2$-category.
Note that one-object (pivotal) fusion $2$-categories, a.k.a. (pivotal) fusion categories, form an important class of
tensor categories which has been intensively studied, see e.g. \cite{EGNO}.

\begin{example}
The $2$-category $\cR\mathrm{ep}(G,\Bbbk)$ is pivotal fusion unless $\mathrm{char}(\Bbbk)|\#G$.
By contrast, the $2$-category $\cS$ is not (pivotal) fusion, since it is not locally semisimple.
\end{example}

\begin{remark}
Throughout this paper, we will consistently use the above $2$-categorical terminology.
Let us, for convenience, list the correspondence between our
terminology and the one used in \cite{EGNO}  in the abelian setting for one-object $2$-categories/monoidal categories:
\begin{enumerate}[\textbullet]

\item quasi fiat $\leftrightarrow$ finitary and rigid;

\item fiat $\leftrightarrow$ finitary and pivotal;

\item locally semisimple $\leftrightarrow$ semisimple;

\item (pivotal) fusion $2$-category $\leftrightarrow$ (pivotal) fusion category.

\end{enumerate}
Note that the notion of fiat only requires the existence of
a natural isomorphism between the identity and ${}^{\star\star}$, whereas the notion of pivotal requires
a choice of such a natural isomorphism. In practice, this subtle difference is not important for us  because the abstract results hold for fiat $2$-categories, whereas in the examples the pivotal structure is known and fixed.
\end{remark}

For a finitary $2$-category $\cC$, a \emph{finitary $2$-representation}
$\mathbf{M}$ is an additive, $\Bbbk$-linear $2$-functor from $\cC$ to $\mathfrak{A}^{f}_{\Bbbk}$.
Finitary $2$-representations of $\cC$ form a $2$-category, as we will explain in
Subsection \ref{s:repcats}; in particular, there is an appropriate notion of equivalence.
The \emph{underlying category} of $\mathbf{M}$ is defined as
\begin{gather*}
\mathcal{M}:=\coprod_{\mathtt{i}\in\ccC}\mathbf{M}(\mathtt{i}).
\end{gather*}
The \emph{rank} of $\mathbf{M}$ is by definition the number of
isomorphism classes of indecomposable objects in $\mathcal{M}$.
Moreover, we will often use the action notation $\mathrm{F}\,X:=\mathbf{M}(\mathrm{F})(X)$
for $2$-representations.

\begin{example}\label{example:yoneda}
If $\cC$ is finitary, then the so-called \emph{principal} or \emph{Yoneda} $2$-representation
$\mathbf{P}_{\mathtt{i}}:=\cC(\mathtt{i},{}_{-})$ is finitary, for all $\mathtt{i}\in\cC$.
\end{example}

A finitary $2$-representation $\mathbf{M}$ is called \emph{transitive} if,
for any $\mathtt{i}\in\cC$ and any non-zero object $X\in\mathbf{M}(\mathtt{i})$, the
additive closure (in the sense of being closed under direct sums, direct
summands and isomorphisms)
\begin{gather*}
\mathrm{add}\big(
\{\mathrm{F}\,X\mid\mathtt{j}\in\cC,\mathrm{F}\in\cC(\mathtt{i},\mathtt{j})\}\big)
\end{gather*}
coincides with $\mathcal{M}$.
Recall that an \emph{ideal} of $\mathbf{M}$ is by definition
a $\cC$-stable ideal of $\mathcal{M}$.
A \emph{transitive} $2$-representation $\mathbf{M}$ is said to be \emph{simple transitive}
provided that it has no non-trivial ideals. Moreover, every transitive
$2$-representation has a unique simple transitive quotient.

The importance of simple transitive $2$-representations is explained, in particular,
by the existence of a weak version of the Jordan--H{\"o}lder theorem.
Namely, for any finitary $2$-representation $\mathbf{M}$
of $\cC$, there is a finite filtration
\begin{gather*}
0=\mathbf{M}_{0}\subset\mathbf{M}_{1}\subset\dots\subset\mathbf{M}_m=\mathbf{M}
\end{gather*}
where every $2$-representation $\mathbf{M}_k$ generates an ideal
$\mathbf{I}_k$ in $\mathbf{M}_{k+1}$ such that
$\mathbf{M}_{k+1}/\mathbf{I}_{k}$ is transitive, and thus, has a unique associated
simple transitive quotient $\mathbf{L}_{k+1}$. Up to equivalence and
ordering, the set $\{\mathbf{L}_k\mid 1\leq k\leq m\}$ is an invariant of $\mathbf{M}$.

The above motivates the \emph{Classification Problem}, i.e. the classification of
simple transitive $2$-representations for a fixed finitary $2$-category.

\begin{example}
For the $2$-category $\cR\mathrm{ep}(G,\mathbb{C})$ the Classification Problem has a
well-known solution in terms of subgroups of $G$ and their group cohomology, see Section \ref{appendix} for details.

By contrast, solving the Classification Problem for $\cS$ is the main objective of this paper.
\end{example}

See \cite[Subsections 2.2, 2.3 and 2.4]{MM1}, \cite[Subsection 2.5]{MM6},
\cite[Subsection 2.3]{MM3} and \cite[Subsection 3.5]{MM5} for details.


\subsection{Cells and cell $2$-representations}\label{s2.2}


For every finitary $2$-category $\cC$, one has the notion of \emph{cells}:
For any pair of indecomposable $1$-morphisms $\mathrm{F}$ and $\mathrm{G}$,
we define $\mathrm{F}\geq_L\mathrm{G}$ if $\mathrm{F}$ is isomorphic to a direct summand
of $\mathrm{H}\mathrm{G}$, for some $1$-morphism $\mathrm{H}$. This produces the
\emph{left} preorder $\geq_L$, for which the equivalence classes are called \emph{left cells}.
Similarly one obtains \emph{right cells} and \emph{two-sided cells}.
By \cite[Subsection 3.2]{CM}, for any transitive $2$-representation $\mathbf{M}$ there is a unique
two-sided cell $\mathcal{J}$, an invariant of $\mathbf{M}$ called the \emph{apex},
which is not annihilated by $\mathbf{M}$
and is maximal, in the two-sided order, with respect to this property.

\begin{example}\label{rem:fusioncell}
Any one-object (pivotal) fusion $2$-category $\cC$, a.k.a.
(pivotal) fusion category, has only one cell, which is left, right and two-sided.
This follows from the fact that
$\mathrm{X}\mathrm{X}^{\star}$ and $\mathrm{X}^{\star}\mathrm{X}$ both contain the
identity $1$-morphism as a direct summand, for all indecomposable $\mathrm{X}\in\cC$.
\end{example}

\begin{example}
When $\mathrm{char}(\Bbbk)\nmid\#G$, the $2$-category
$\cR\mathrm{ep}(G,\Bbbk)$ is pivotal fusion and has, therefore, only one cell,
which is left, right and two-sided. When $\mathrm{char}(\Bbbk)\mid\#G$,
the $2$-category $\cR\mathrm{ep}(G,\Bbbk)$ has more than one cell, for example, the projective modules form
a two-sided cell.

As we will recall in Subsection \ref{s5.1},
the cells of $\cS$ are given by the Kazhdan--Lusztig cells in the sense of \cite{KL}.
\end{example}

Each left cell $\mathcal{L}$ of $\cC$ can be used to define
a \emph{cell $2$-representation} $\mathbf{C}_{\mathcal{L}}$
as follows. First we note that all $1$-morphisms
in $\mathcal{L}$ have the same domain, say $\mathtt{i}$.
Define a $2$-subrepresentation $\mathbf{M}^{\geq\mathcal{L}}$ of $\mathbf{P}_{\mathtt{i}}$
using the induced action of $\cC$ on
\begin{gather*}
\mathrm{add}\big(
\{\mathrm{F}\mid\mathrm{F}\geq_{L}\mathcal{L}\}
\big).
\end{gather*}
The $2$-representation $\mathbf{M}^{\geq\mathcal{L}}$ has a unique maximal ideal
$\mathbf{I}$
and we define
\begin{gather*}
\mathbf{C}_{\mathcal{L}}:=\mathbf{M}^{\geq\mathcal{L}}/\mathbf{I},
\end{gather*}
which is a simple transitive $2$-representation by construction. If $\cC$ is quasi fiat, then
the apex of $\mathbf{C}_{\mathcal{L}}$ is the two-sided cell containing $\mathcal{L}$.

\begin{example}
The (unique) cell $2$-representation of $\cR\mathrm{ep}(G,\mathbb{C})$ coincides with
its unique principal $2$-representation.

The cell $2$-representations of $\cS$ categorify the Kazhdan--Lusztig cell representations, see Subsection \ref{s5.1}.
\end{example}

Finally, recall that a $2$-category is called \emph{$\mathcal{J}$-simple}, where
$\mathcal{J}$ is a two-sided cell, if any non-zero $2$-ideal contains the identity $2$-morphisms
of all $1$-morphisms in $\mathcal{J}$. If $\cC$ is fiat, then $\cC$ has
an associated $\mathcal{J}$-simple subquotient $2$-category $\cC_{\mathcal{J}}$, whose only two-sided cells are $\mathcal{J}$
and the two-sided cells containing $\mathbbm{1}_{\mathtt{i}}$, such that $\mathtt{i}$
is the source of some
$1$-morphism in $\mathcal{J}$. Note that these cells may all coincide, e.g. if $\cC$ is fusion.

For further details we refer to \cite[Subsection 4.5]{MM1}, \cite[Subsection 3.2]{CM}, \cite[Section 3]{MM5},
\cite[Subsection 4.2]{MMMZ} and \cite[Subsections 2.5 and 2.6]{MMMTZ2}.


\subsection{Coalgebra and algebra $1$-morphisms}\label{s2.23}


Finitary $2$-categories can be injectively or projectively abelianized.
The \emph{injective abelianization} is denoted $\underline{\cC}$
and the \emph{projective abelianization} is denoted $\overline{\cC}$. Moreover,
$\cC$ embeds into $\underline{\cC}$ or
into $\overline{\cC}$, and the isomorphism closure of the image of this
embedding is the $2$-full $2$-subcategory of injective
or projective $1$-morphisms, respectively. In particular, in these abelianizations,
each indecomposable $1$-morphism $\mathrm{F}\in\cC$ has an associated simple socle or head, respectively.

These abelianizations are rather technical and not all properties of $\cC$ carry over
to the abelianizations. In particular, the abelianizations of fiat $2$-categories are usually not even finitary
and the involution ${}^{\star}$ only gives rise to an antiequivalence between
$\underline{\cC}$ and $\overline{\cC}$.

The same abelianizations, mutatis mutandis, exist for finitary $2$-representations, where we use the same notation.

See \cite[Section 3]{MMMT} for details.

\begin{example}
When $\mathrm{char}(\Bbbk)\nmid\#G$, we have
$\underline{\cR\mathrm{ep}(G,\Bbbk)}\simeq\overline{\cR\mathrm{ep}(G,\Bbbk)}\simeq\cR\mathrm{ep}(G,\Bbbk)$,
because $\cR\mathrm{ep}(G,\Bbbk)$ is locally semisimple. In contrast, $\cS$ is not abelian
and neither $\underline{\cS}$ nor $\overline{\cS}$ are equivalent to it.
\end{example}

Abelianizations play a key role in the construction and study of $2$-representations:
Recall that a \emph{coalgebra}
$\mathrm{C}:=(\mathrm{C},\delta_{\mathrm{C}},\epsilon_{\mathrm{C}})$ in $\underline{\cC}$
is a $1$-morphism in some $\underline{\cC}(\mathtt{i,i})$ equipped with $2$-morphisms
$\delta_{\mathrm{C}}\colon\mathrm{C}\to\mathrm{C}\mathrm{C}$,
called comultiplication, and
$\epsilon_{\mathrm{C}}\colon\mathrm{C}\to\mathbbm{1}_{\mathtt{i}}$, called counit, satisfying
the usual conditions. Dually, one can define algebras as well. A
\emph{right $\mathrm{C}$-comodule} $\mathrm{M}=(\mathrm{M},\delta_{\mathrm{M},\mathrm{C}})$
is a $1$-morphism in some $\underline{\cC}(\mathtt{i},\mathtt{j})$
together with a right coaction $\delta_{\mathrm{M},\mathrm{C}}\colon\mathrm{M}\to\mathrm{M}\mathrm{C}$,
again satisfying the usual coherence conditions.
For each object $\mathtt{j}\in\cC$, these assemble into a category
\begin{gather*}
\mathrm{comod}_{\underline{\ccC}}(\mathrm{C})_{\mathtt{j}}
:=
\{
(\mathrm{M},\delta_{\mathrm{M},\mathrm{C}})\mid\mathrm{M}\in\underline{\cC}(\mathtt{i,j}),\,\,
\mathrm{M}\text{ is a right $\mathrm{C}$-comodule}
\},
\end{gather*}
whose morphisms are called \emph{right $\mathrm{C}$-comodule morphisms} and satisfy the
usual intertwining condition.
All of these notions can be defined, mutatis mutandis, for left coactions as well,
and we put the coalgebras on the left-hand side to stress that we have a left coaction.
Let
\begin{gather*}
\mathrm{inj}_{\underline{\ccC}}(\mathrm{C})_{\mathtt{j}}
:=
\{
(\mathrm{M},\delta_{\mathrm{M},\mathrm{C}})\mid\mathrm{M}
\in\mathrm{comod}_{\underline{\ccC}}(\mathrm{C})_{\mathtt{j}}\text{ injective}
\}
\end{gather*}
denote the full subcategory
of $\mathrm{comod}_{\underline{\ccC}}(\mathrm{C})_{\mathtt{j}}$
consisting of all injective objects. We set
\begin{gather*}
\mathrm{comod}_{\underline{\ccC}}(\mathrm{C})=\coprod_{\mathtt{j}\in\ccC}
\mathrm{comod}_{\underline{\ccC}}(\mathrm{C})_{\mathtt{j}}\quad\text{and}
\quad\mathrm{inj}_{\underline{\ccC}}(\mathrm{C})=\coprod_{\mathtt{j}\in\ccC}
\mathrm{inj}_{\underline{\ccC}}(\mathrm{C})_{\mathtt{j}}.
\end{gather*}

Note that left composition defines a natural $\underline{\cC}$-action on
$\mathrm{comod}_{\underline{\ccC}}(\mathrm{C})$, which restricts to a natural
left $\cC$-action on $\mathrm{inj}_{\underline{\ccC}}(\mathrm{C})$. We use a
bold font whenever we want to consider these as $2$-representations, e.g.
we write $\mathbf{inj}_{\underline{\ccC}}(\mathrm{C})$ instead of $\mathrm{inj}_{\underline{\ccC}}(\mathrm{C})$.

\begin{example}
The identity $1$-morphism $\mathbbm{1}_{\mathtt{i}}$, for any $\mathtt{i}\in\cC$,
has a natural structure of a coalgebra, with both the comultiplication and the counit being
given by the identity $2$-morphism. The $2$-representation $\mathbf{comod}_{\underline{\ccC}}(\mathbbm{1}_{\mathtt{i}})$
of $\cC$ is equivalent to $\underline{\mathbf{P}_{\mathtt{i}}}$
and the $2$-representation $\mathbf{inj}_{\underline{\ccC}}(\mathbbm{1}_{\mathtt{i}})$
of $\cC$ is equivalent to $\mathbf{P}_{\mathtt{i}}$.
\end{example}

Another natural source of coalgebras in $\underline{\cC}$ is provided by
the internal cohom construction:
Let $\mathbf{M}$ be a finitary $2$-representation of $\cC$ and consider
objects $X\in\mathbf{M}(\mathtt{i})$, $Y\in\mathbf{M}(\mathtt{j})$.
Their \emph{internal cohom} $[X,Y]\in\underline{\cC}(\mathtt{i,j})$ is
uniquely determined, up to isomorphism, by the isomorphism
\begin{gather*}
\mathrm{Hom}_{\underline{\mathbf{M}(\mathtt{j})}}(Y,\mathrm{G}\,X)\cong\mathrm{Hom}_{\underline{\ccC}(\mathtt{i,j})}\big([X,Y],\mathrm{G}\big)
\end{gather*}
for all $\mathrm{G}\in\underline{\cC}(\mathtt{i,j})$. If it is not
immediately obvious in which $2$-category or bicategory the internal cohom
is taken, we will use a subscript $[X,Y]_{\underline{\ccC}}$ to clarify. The $1$-morphism $\mathrm{C}^{X}:=[X,X]$
has a natural structure of a coalgebra in $\underline{\cC}$.
Moreover, if $\cC$ is fiat and $X$ generates $\mathbf{M}$ in the sense that $\mathrm{add}\{\mathrm{F}X\mid\mathrm{F}\in\cC\}=\mathcal{M}$,
then $\mathbf{inj}_{\underline{\ccC}}(\mathrm{C}^{X})\simeq\mathbf{M}$. In
this way, we can realize any finitary $2$-representation of a fiat $2$-category
$\cC$, up to equivalence, as
$\mathbf{inj}_{\underline{\ccC}}(\mathrm{C})$ for some coalgebra $\mathrm{C}$ in
$\underline{\cC}$ (or in the injective abelianization of the additive closure
of $\cC$, if no generator in a single $\mathbf{M}(\mathtt{i})$ exists), and simple
transitive $2$-representations of $\cC$ (which always have a generator) correspond
to cosimple coalgebras in $\underline{\cC}$. If $\mathbf{M}$ is a simple transitive
$2$-representation of $\cC$ with apex $\mathcal{J}$ and $\cC$ is $\mathcal{J}$-simple, then
$\mathrm{C}^X$ is in $\cC$ for every $X\in\mathcal{M}$.

Finally, we recall that coalgebras in $\cC$ have an associated Morita--Takeuchi theory. In particular, $\mathrm{C}$
and $\mathrm{C}^{\prime}$ are Morita--Takeuchi equivalent if and only if
$\mathbf{inj}_{\underline{\ccC}}(\mathrm{C})\simeq\mathbf{inj}_{\underline{\ccC}}(\mathrm{C}^{\prime})$.
See also e.g. \cite[Chapter 7]{EGNO}, \cite[Sections 4 and 5]{MMMT} and \cite[Subsection 3.6]{MMMZ}.

For later use, we recall one technical result and record three others. The first
result, Theorem \ref{thm:projapex}, is the content of Theorem \cite[Theorem 2]{KMMZ}, which we will need
repeatedly throughout the paper.

\begin{theorem}\label{thm:projapex}
Let $\mathbf{M}$ be a simple transitive $2$-representation of a fiat $2$-category $\cC$ with apex
$\mathcal{J}$. For every $1$-morphism $\mathrm{F}\in \mathcal{J}$, the
endofunctor $\overline{\mathbf{M}}(\mathrm{F})$ is projective (in the category of
right exact linear endofunctors).
\end{theorem}

In particular, suppose that $\cC$ has only one object $\mathtt{i}$ and that
$\mathsf{B}$ is the basic connected underlying algebra of $\mathbf{M}(\mathtt{i})$, i.e., $\mathcal{M}\simeq \mathsf{B}\text{-}\mathrm{proj}$ and $\overline{\mathcal{M}}\simeq \mathsf{B}\text{-}\mathrm{mod}$.
Theorem \ref{thm:projapex} then implies that, for every $1$-morphism $\mathrm{F}\in \mathcal{J}$, the action of
$\overline{\mathbf{M}}(\mathrm{F})$ on $\mathsf{B}\text{-}\mathrm{mod}$ is given
by tensoring over $\mathsf{B}$ with a projective $\mathsf{B}$-$\mathsf{B}$-bimodule $B_{\mathrm{F}}$. Note that
$B_{\mathrm{F}}\otimes_{\mathsf{B}}M\in \mathsf{B}\text{-}\mathrm{proj}$
for any $M\in \mathsf{B}\text{-}\mathrm{mod}$.

\begin{corollary}\label{cor:projapex}
The algebra $\mathsf{B}$ is Frobenius.
\end{corollary}

\begin{proof}
Let $1=e_{1}+\dots+e_{n}$ be a splitting of the unit into orthogonal primitive idempotents in $\mathsf{B}$.
Then $\{\mathsf{B}e_{i}\otimes_{\Bbbk} e_{j}\mathsf{B}|i,j=1,\dots,n\}$ is a complete and irredundant set of indecomposable
projective $\mathsf{B}$-$\mathsf{B}$-bimodules.

Choose $j\in \{1,\dots n\}$. By Theorem \ref{thm:projapex} and transitivity of $\mathbf{M}$,
for every $i\in\{1,\dots,n\}$ with $i\neq j$, there is an $\mathrm{F}\in\mathrm{add}(\mathcal{J})$ such that $\mathsf{B}e_{i}$ is a direct summand of
$B_{\mathrm{F}}\otimes_{\mathsf{B}}\mathsf{B}e_{j}$, so $B_{\mathrm{F}}$ contains a direct summand of the form
\begin{gather*}
\mathsf{B}e_{i}\otimes_{\Bbbk} e_{k}\mathsf{B},
\end{gather*}
for some $k\in\{1,\dots,n\}$. As in Example \ref{ex:projbimod}, the decomposition of
$B_{\mathrm{F}}\cong B_{\mathrm{F}^{\star\star}}$ thus contains a direct summand of the form
\begin{gather*}
(e_{i}\mathsf{B})^{\star}\otimes_{\Bbbk} e_{l}\mathsf{B},
\end{gather*}
for some $l\in\{1,\dots,n\}$. This shows that $(e_{i}\mathsf{B})^{\star}$ is a projective left $\mathsf{B}$-module, by Theorem \ref{thm:projapex}.

Since this holds for all $i\in\{1,\dots,n\}$, every left injective $\mathsf{B}$-module is also projective, i.e., $\mathsf{B}$ is self-injective. Since every self-injective basic algebra is Frobenius, the result follows.
\end{proof}

Corollary \ref{cor:projapex} shows that Theorem \ref{thm:projapex} can also be formulated
in terms of the injective abelianization $\underline{\mathbf{M}}$, so $\mathcal{M}\simeq \mathsf{B}\text{-}\mathrm{inj}=\mathsf{B}\text{-}\mathrm{proj}$ and $\underline{\mathcal{M}}\cong \mathsf{B}\text{-}\mathrm{mod}$, and the action of $\underline{\mathbf{M}}(\mathrm{F})$ on $\mathsf{B}\text{-}\mathrm{mod}$ is also given by $B_{\mathrm{F}}$, which is injective-projective as an $\mathsf{B}$-$\mathsf{B}$-bimodule. This consequence of Theorem \ref{thm:projapex} and Corollary \ref{cor:projapex} is crucial and will be used repeatedly throughout the paper.

\begin{remark}
The $2$-category $\cC_{\mathsf{B}}$ in Example \ref{ex:projbimod}
is fiat if and only if $\mathsf{B}$ is weakly symmetric, whereas simple transitivity of
$\mathbf{M}$ only implies that $\mathsf{B}=\mathsf{B}^{\mathbf{M}}$ is Frobenius. The reason is that, for any $\mathrm{F}\in \mathcal{J}$, the $\mathsf{B}$-$\mathsf{B}$-bimodule $B_{\mathrm{F}}$ is a direct sum of indecomposable $\mathsf{B}$-$\mathsf{B}$-bimodules.
When $\cC$ is fiat, then $B_{\mathrm{F}}^{\star\star}\cong B_{\mathrm{F}}$, but
${}^{\star\star}$ can still permute the indecomposable direct summands, so the Nakayama
permutation of $\mathsf{B}$ need not be trivial. In Proposition \ref{proposition:weakly-symmetric}, however, we will show that $\mathsf{B}^{\mathbf{M}}$ is weakly symmetric if $\mathbf{M}$ is a (graded) simple transitive $2$-representation of
Soergel bimodules of finite Coxeter type.
\end{remark}

\begin{proposition}\label{prop:ss}
Suppose that $\cC$ is a one-object pivotal fusion $2$-category and
$\mathbf{M}$ a finitary $2$-representation of $\cC$. If $\mathbf{M}$ is simple transitive, then
$\mathcal{M}$ is semisimple.
\end{proposition}

\begin{proof}
Suppose that $\mathbf{M}$ is simple transitive. As explained in Example \ref{rem:fusioncell},
$\cC$ has only one cell, which is thus the apex of $\mathbf{M}$. By Theorem \ref{thm:projapex},
this implies that $\overline{\mathbf{M}}(\mathbbm{1})$ is a projective endofunctor
of $\overline{\mathcal{M}}$, where $\mathbbm{1}$ is the unique identity $1$-morphism of $\cC$. This implies that every simple $L\in\overline{\mathcal{M}}$ is projective, since $L=\mathbf{M}(\mathbbm{1})(L)$. Therefore, $\mathcal{M}$ is semisimple.
\end{proof}

\begin{corollary}\label{cor:ss}
Suppose that $\cC$ is a one-object pivotal fusion $2$-category and
$\mathrm{C}$ a cosimple coalgebra in $\cC$. Then $\mathrm{inj}_{\ccC}(\mathrm{C})=\mathrm{comod}_{\ccC}(\mathrm{C})$ is semisimple.
\end{corollary}

\begin{proof}
If $\mathrm{C}$ is a cosimple coalgebra in $\cC$, then $\mathbf{inj}_{\ccC}(\mathrm{C})$ is simple transitive by \cite[Corollary 12]{MMMZ}. Therefore, its underlying category is semisimple by Proposition \ref{prop:ss}.
\end{proof}


\subsection{$2$-Categories of $2$-representations}\label{s:repcats}


The Classification Problem suggests studying the following $2$-categories.
Given two $2$-categories $\cC$ and $\cD$,
let $[\cC,\cD]$ denote the $2$-category of $2$-functors together with $2$-natural
transformations and modifications.

If $\cC$ is finitary, then let $\cC\text{-}\mathrm{afmod}:=[\cC,\mathfrak{A}^{f}_{\Bbbk}]$
denote the additive, $\Bbbk$-linear $2$-category of its finitary $2$-representations.
We let $\cC\text{-}\mathrm{stmod}$
denote the $1,2$-full $2$-subcategory of $\cC\text{-}\mathrm{afmod}$ consisting of
all simple transitive $2$-representations.
Further, we indicate by the subscript $\mathcal{J}$, where $\mathcal{J}$ is a given
two-sided cell of $\cC$, the $1,2$-full $2$-subcategories
consisting of the finitary $2$-representations of $\cC$ whose weak Jordan--H{\"o}lder
constituents all have apex $\mathcal{J}$.
Finally, we use the superscript $\mathrm{ex}$ to indicate the $2$-full
$2$-subcategories having the same objects, but $1$-morphisms being exact,
in the sense of their component functors extending to exact functors in the (injective) abelianization.

\begin{example}
Suppose that $\cC$ is fiat and $\mathcal{L}$ a left cell inside a two-sided cell $\mathcal{J}$ of
$\cC$. Then $\mathbf{C}_{\mathcal{L}}\in
\cC\text{-}\mathrm{stmod}_{\mathcal{J}}^{\mathrm{ex}}
\subset\cC\text{-}\mathrm{stmod}_{\mathcal{J}}
\subset\cC\text{-}\mathrm{stmod}
\subset\cC\text{-}\mathrm{afmod}$.
\end{example}

As discussed in detail in \cite[Subsection 2.6]{MMMTZ2}, there are various relations
between the above $2$-categories. We recall two of the most crucial ones
here in the form in which we need them. The first relates to all morphisms
between simple transitive $2$-representations with the same apex being exact:

\begin{proposition}\label{prop:exactness}
Suppose that $\cC$ is fiat. For any two-sided cell $\mathcal{J}$
of $\cC$, the $2$-categories $\cC\text{-}\mathrm{stmod}_{\mathcal{J}}^{\mathrm{ex}}$
and $\cC\text{-}\mathrm{stmod}_{\mathcal{J}}$ are equal.
\end{proposition}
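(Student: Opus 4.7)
The plan is to show that any $1$-morphism $\Phi\colon\mathbf{M}\to\mathbf{N}$ in $\cC\text{-}\mathrm{stmod}_{\mathcal{J}}$ is automatically exact, by realizing $\Phi$ as cotensoring with a bicomodule whose underlying $1$-morphism lies in $\cC$ itself, rather than only in $\underline{\cC}$. Since $\cC$ is fiat, each such $1$-morphism admits both a left and right adjoint (given by $(-)^{\star}$), and therefore acts as an exact functor on the injective abelianization; this will yield exactness of $\Phi$.

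First, I would pick generators $X\in\mathcal{M}$ and $Y\in\mathcal{N}$, which exist since $\mathbf{M}$ and $\mathbf{N}$ are simple transitive. Using the internal cohom construction from Subsection \ref{s2.23}, I obtain coalgebras $\mathrm{C}^{X}=[X,X]_{\underline{\ccC}}$ and $\mathrm{C}^{Y}=[Y,Y]_{\underline{\ccC}}$ in $\underline{\cC}$ and equivalences
\begin{gather*}
\mathbf{M}\simeq\mathbf{inj}_{\underline{\ccC}}(\mathrm{C}^{X}),
\qquad
\mathbf{N}\simeq\mathbf{inj}_{\underline{\ccC}}(\mathrm{C}^{Y}).
\end{gather*}
Because both apexes equal $\mathcal{J}$, the final statement recalled in Subsection \ref{s2.23} lets me replace $\cC$ by its $\mathcal{J}$-simple subquotient $\cC_{\mathcal{J}}$ and thereby assume $\mathrm{C}^{X},\mathrm{C}^{Y}\in\cC_{\mathcal{J}}$, i.e. these coalgebras lie in the genuine fiat $2$-category, not merely in its abelianization.

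Next, I would represent $\Phi$ by the bicomodule $\mathrm{D}:=[X,\Phi(X)]_{\underline{\ccC}}$, which is naturally a $\mathrm{C}^{Y}\text{-}\mathrm{C}^{X}$-bicomodule, and identify the component functor $\Phi_{\mathtt{i}}$ with cotensoring with $\mathrm{D}$ over $\mathrm{C}^{X}$. The same apex argument places $\mathrm{D}$ in $\cC$ rather than just in $\underline{\cC}$: indeed, $\Phi(X)$ generates its image inside $\mathbf{N}$, and since $\mathbf{N}$ has apex $\mathcal{J}$ with $\cC_{\mathcal{J}}$ being $\mathcal{J}$-simple, the internal cohom $[X,\Phi(X)]_{\underline{\ccC}}$ is forced into $\cC$ by the last line of Subsection \ref{s2.23}.

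Finally, I would conclude exactness as follows: every $1$-morphism of $\cC$ admits both a left and right adjoint by the fiat hypothesis, hence acts as an exact functor on the injective abelianization. Cotensoring with the bicomodule $\mathrm{D}$ is then a direct summand of applying an honest $1$-morphism of $\cC$ (followed by the counit/idempotent for the comodule structure), so it inherits exactness. Consequently $\underline{\Phi_{\mathtt{i}}}$ is exact for every $\mathtt{i}$, proving $\Phi\in\cC\text{-}\mathrm{stmod}_{\mathcal{J}}^{\mathrm{ex}}$. The main obstacle is the middle step, namely verifying rigorously that the bicomodule $\mathrm{D}$ really lies in $\cC$ under the apex and simple-transitivity hypotheses; this is exactly where the common apex is essential and where one must invoke the structural results of \cite{MMMTZ2} carefully.
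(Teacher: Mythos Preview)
The paper itself does not prove this proposition; it simply records it as one of the structural facts imported from \cite[Subsection 2.6]{MMMTZ2}. So there is no internal argument to compare against, and your task is really to produce a correct proof along the lines of that reference.

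Your outline has two genuine problems. First, the object $\mathrm{D}:=[X,\Phi(X)]_{\underline{\ccC}}$ is not well-defined with the internal cohom recalled in Subsection~\ref{s2.23}: that construction requires both entries to live in the \emph{same} $2$-representation, whereas $X\in\mathbf{M}$ and $\Phi(X)\in\mathbf{N}$. The ``last line of Subsection~\ref{s2.23}'' you invoke concerns $[X,X]$ for $X$ in a single simple transitive $2$-representation and says nothing about a mixed cohom between two different ones, so neither the existence of $\mathrm{D}$ nor the claim $\mathrm{D}\in\cC$ is justified as written.

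Second, and more seriously, the step ``identify $\Phi$ with cotensoring by $\mathrm{D}$'' is precisely the Eilenberg--Watts type statement that, in this setting, \emph{uses} exactness rather than producing it. In the paper this dependence is visible in Corollary~\ref{cor:End-Bicomod}, where the identification of $\cEnd_{\ccSH}(\CH)$ with bicomodules explicitly invokes Proposition~\ref{prop:exactness} together with \cite[Theorems 4.26 and 4.31]{MMMTZ2}. Your argument therefore risks circularity: you want the bicomodule description to force exactness, but the available bicomodule description is only known to capture the \emph{exact} morphisms. To repair this you would need an independent reason why an arbitrary additive morphism between simple transitive $2$-representations with common apex extends left exactly (this is what \cite[Subsection 2.6]{MMMTZ2} supplies), after which the bicomodule picture and the fiat adjunctions can be brought in as you suggest.
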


The second one is the so-called \emph{strong $\mathcal{H}$-reduction} \cite[Theorem 4.32]{MMMTZ2}:
Suppose that $\cC$ is fiat and let $\mathcal{J}$ be a two-sided cell of $\cC$.
Then ${}^{\star}$ preserves $\mathcal{J}$ and the $\mathcal{J}$-simple subquotient
$\cC_{\mathcal{J}}$, see the end of Subsection \ref{s2.2}, is also fiat and,
by \cite[Theorem 4.28]{MMMTZ2}, there is a
biequivalence
\begin{gather*}
\cC_{\mathcal{J}}\text{-}\mathrm{stmod}_{\mathcal{J}}\simeq
\cC\text{-}\mathrm{stmod}_{\mathcal{J}}.
\end{gather*}
The point of this biequivalence is that the structure of $\cC_{\mathcal{J}}$ is simpler
than that of the whole $\cC$ in general.

However, there are even simpler $2$-categories, which still contain enough information to solve
the Classification Problem. We define \emph{$\mathcal{H}$-cells} as the intersection of left and right
cells. Suppose that $\cC$ is fiat. Then, for every left cell $\mathcal{L}$, we define the associated \emph{diagonal}
$\mathcal{H}$-cell
\begin{gather*}
\mathcal{H}(\mathcal{L}):=\mathcal{L}\cap\mathcal{L}^{\star}.
\end{gather*}
By construction, $\mathcal{H}=\mathcal{H}(\mathcal{L})$
lies in the same two-sided cell $\mathcal{J}$
as $\mathcal{L}$. Further, recall that each $\mathcal{L}$
contains a unique distinguished
$1$-morphism $\mathrm{D}=\mathrm{D}(\mathcal{L})$ called
\textit{Duflo involution} and that, in fact, $\mathrm{D}\in\mathcal{H}(\mathcal{L})$.

Given a left cell $\mathcal{L}$ in some two-sided cell $\mathcal{J}$,
we define $\cC_{\mathcal{H}}$ to be the $2$-full $2$-subcategory of
$\cC_{\mathcal{J}}$ generated by all $1$-morphisms
in $\mathcal{H}:=\mathcal{H}(\mathcal{L})$
together with the identity $1$-morphism $\mathbbm{1}_{\mathtt{i}}$, where $\mathtt{i}$ is
the unique domain and codomain of all $1$-morphisms
in $\mathcal{H}$. The $2$-category $\cC_{\mathcal{H}}$
is fiat,
has $\mathcal{H}$ as its maximal
two-sided cell and is $\mathcal{H}$-simple. The importance of $\cCH$ is expressed
by the following result, called strong $\mathcal{H}$-reduction.

\begin{theorem}\label{theorem:strongH}
Let $\cC$ be a fiat $2$-category and fix a two-sided cell $\mathcal{J}$ of $\cC$. For any
diagonal $\mathcal{H}$-cell $\mathcal{H}\subset\mathcal{J}$, there is a biequivalence
\begin{gather*}
\cC\text{-}\mathrm{stmod}_{\mathcal{J}}\simeq
\cC_{\mathcal{H}}\text{-}\mathrm{stmod}_{\mathcal{H}}.
\end{gather*}
\end{theorem}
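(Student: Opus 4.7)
The plan is in two stages. First, by the biequivalence $\cC\text{-}\mathrm{stmod}_{\mathcal{J}}\simeq\cC_{\mathcal{J}}\text{-}\mathrm{stmod}_{\mathcal{J}}$ cited just above the statement (\cite[Theorem 4.28]{MMMTZ2}), it is enough to construct a biequivalence
\[
\cC_{\mathcal{J}}\text{-}\mathrm{stmod}_{\mathcal{J}}\simeq\cC_{\mathcal{H}}\text{-}\mathrm{stmod}_{\mathcal{H}}.
\]
I would argue using the internal coHom and coalgebra machinery of Subsection \ref{s2.23}, under which any simple transitive $2$-representation of a fiat $2$-category is recovered, up to equivalence, from the cosimple coalgebra $[X,X]$ attached to any of its generators, and in a way which is compatible with $1$- and $2$-morphisms between $2$-representations by the universal property.

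For the restriction pseudofunctor $\mathrm{Res}\colon\cC_{\mathcal{J}}\text{-}\mathrm{stmod}_{\mathcal{J}}\to\cC_{\mathcal{H}}\text{-}\mathrm{stmod}_{\mathcal{H}}$, take $\mathbf{M}\in\cC_{\mathcal{J}}\text{-}\mathrm{stmod}_{\mathcal{J}}$ and choose an object $X\in\mathbf{M}(\mathtt{i})$ with $\mathrm{D}\,X\neq 0$, where $\mathtt{i}$ is the common source and target of the $1$-morphisms in $\mathcal{H}$ and $\mathrm{D}=\mathrm{D}(\mathcal{L})$ is the Duflo involution of the unique left cell $\mathcal{L}$ satisfying $\mathcal{H}=\mathcal{L}\cap\mathcal{L}^{\star}$. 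Such an $X$ exists because the apex of $\mathbf{M}$ is $\mathcal{J}\supset\mathcal{H}\ni\mathrm{D}$. Set
\[
\mathrm{Res}(\mathbf{M})(\mathtt{i}):=\mathrm{add}\{\mathrm{F}\,X\mid\mathrm{F}\in\cC_{\mathcal{H}}\},
\]
with its natural $\cC_{\mathcal{H}}$-action. In the opposite direction, define $\mathrm{Ind}(\mathbf{N}):=\mathbf{inj}_{\underline{\cC_{\mathcal{J}}}}([Y,Y]_{\cC_{\mathcal{H}}})$ for any generator $Y$ of $\mathbf{N}\in\cC_{\mathcal{H}}\text{-}\mathrm{stmod}_{\mathcal{H}}$, viewing the cosimple coalgebra $[Y,Y]_{\cC_{\mathcal{H}}}$ as a coalgebra in $\underline{\cC_{\mathcal{J}}}$ via the $2$-full inclusion $\cC_{\mathcal{H}}\hookrightarrow\cC_{\mathcal{J}}$. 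Independence from the choices of $X$ and $Y$ is encoded by Morita--Takeuchi equivalence, and extension to $1$- and $2$-morphisms between simple transitive $2$-representations is handled through the universal property of internal coHom.

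Three verifications remain, of which the first is the crux and the expected main obstacle: one must show that $\mathrm{Res}(\mathbf{M})$ is simple transitive with apex $\mathcal{H}$, which amounts to proving that the coalgebra $[X,X]_{\cC_{\mathcal{J}}}$ is in fact supported on $\cC_{\mathcal{H}}$. This should follow from the $\mathcal{J}$-simplicity of $\cC_{\mathcal{J}}$, combined with the non-vanishing condition $\mathrm{D}\,X\neq 0$, which pins down all indecomposable summands of $[X,X]$ to lie in $\mathcal{H}\cup\{\mathbbm{1}_{\mathtt{i}}\}$; without this, the whole restriction/induction machinery would not descend to $\cC_{\mathcal{H}}$. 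Second, $\mathrm{Ind}(\mathbf{N})$ must be simple transitive with apex $\mathcal{J}$; this follows because $[Y,Y]_{\cC_{\mathcal{H}}}$ remains cosimple in $\underline{\cC_{\mathcal{J}}}$ thanks to the $2$-fullness of the inclusion, and its support in $\mathcal{H}\subseteq\mathcal{J}$ pins down the apex. Third, $\mathrm{Ind}\circ\mathrm{Res}$ and $\mathrm{Res}\circ\mathrm{Ind}$ must be pseudonaturally equivalent to the identity $2$-functors, which via Morita--Takeuchi theory reduces to a tautological comparison of the underlying coalgebras.
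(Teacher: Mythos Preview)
The paper does not supply its own proof of this theorem; it is quoted verbatim as strong $\mathcal{H}$-reduction and attributed to \cite[Theorem 4.32]{MMMTZ2} (see the sentence immediately preceding the statement and the reference line ``The main reference for the above is \cite[Subsections 2.6, 4.7 and 4.8]{MMMTZ2}'' just after). So there is no in-paper argument to compare against; your proposal is being measured against the result as a black box.

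Your overall architecture---reduce to $\cC_{\mathcal{J}}$ via \cite[Theorem~4.28]{MMMTZ2}, then pass between $\cC_{\mathcal{J}}$ and $\cC_{\mathcal{H}}$ by transporting cosimple coalgebras through the $2$-full embedding---is the right shape and is indeed the mechanism used in \cite{MMMTZ2}. The genuine gap is exactly where you flag it: the assertion that for a generator $X$ with $\mathrm{D}\,X\neq 0$ the coalgebra $[X,X]_{\cC_{\mathcal{J}}}$ has all its indecomposable summands in $\mathcal{H}\cup\{\mathbbm{1}_{\mathtt{i}}\}$. Neither $\mathcal{J}$-simplicity nor the non-vanishing $\mathrm{D}\,X\neq 0$ gives this on its own. $\mathcal{J}$-simplicity only forces $[X,X]\in\cC_{\mathcal{J}}$ (end of Subsection~\ref{s2.23}); but $\cC_{\mathcal{J}}(\mathtt{i},\mathtt{i})$ can contain indecomposables from several distinct $\mathcal{H}$-cells of $\mathcal{J}$, and nothing in ``$\mathrm{D}\,X\neq 0$'' prevents such summands from appearing in $[X,X]$. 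What one actually needs is a cell-theoretic argument showing that the set of indecomposable $\mathrm{F}\in\mathcal{J}$ for which $[X,X]$ has an $\mathrm{F}$-summand is confined to a single diagonal $\mathcal{H}$-cell, and that by choosing $X$ appropriately (not merely $\mathrm{D}\,X\neq 0$) one can arrange this $\mathcal{H}$-cell to be the prescribed one. This is precisely the content of \cite[Theorem~4.19 and the surrounding Subsections~4.7--4.8]{MMMTZ2}, and it is substantially more delicate than your sketch suggests. Your second and third verifications (cosimplicity under the embedding, and the round-trip equivalences) likewise hinge on this support statement: a subcoalgebra of $[Y,Y]$ in $\underline{\cC_{\mathcal{J}}}$ need not lie in $\cC_{\mathcal{H}}$ unless you already know the support is confined there, so cosimplicity in $\cC_{\mathcal{H}}$ does not automatically propagate to $\cC_{\mathcal{J}}$ by $2$-fullness alone.
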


\begin{example}
The main purpose of this paper is to study
$\cS\text{-}\mathrm{stmod}$. Theorem \ref{theorem:strongH} tells us that,
by varying over all two-sided cells $\mathcal{J}$ and choosing an arbitrary but fixed
diagonal $\mathcal{H}$-cell in each $\mathcal{J}$, this problem can be
reduced to studying $\cSH\text{-}\mathrm{stmod}_{\mathcal{H}}$. The main result of this paper, which is Theorem \ref{theorem:main},
shows that the latter problem can be reduced even further.
\end{example}

The main reference for the above is \cite[Subsections 2.6, 4.7 and 4.8]{MMMTZ2}.


\subsection{Grading conventions}\label{subsection:grading}


Let $\Bbbk\text{-}\mathrm{fgmod}$ denote the category of finite dimensional ($\mathbb{Z}$-)graded $\Bbbk$-vector spaces. An object in $\Bbbk\text{-}\mathrm{fgmod}$
has the form $V=\bigoplus_{t\in\mathbb{Z}}V_{t}$, where $V_{t}$ denotes
the linear subspace of elements of $V$ which are homogeneous of degree $t$.
Morphisms in $\Bbbk\text{-}\mathrm{fgmod}$ are $\Bbbk$-linear maps
(not necessarily homogeneous, but each morphism is a linear combination
of homogeneous morphisms). The group $\mathbb{Z}$ acts
on $\Bbbk\text{-}\mathrm{fgmod}$ by grading shift $\langle{}_{-}\rangle$ via the rule
$(V\langle t\rangle)_{s}=V_{s+t}$, for all $s,t\in\mathbb{Z}$.
From now on, if
$p(\mathsf{v})=n_{\mone[k]}\mathsf{v}^{\mone[k]}+\dots+n_{l}\mathsf{v}^l\in\mathbb{N}_{0}[\mathsf{v},\mathsf{v}^{\mone}]$
and $V$ is a graded vector space, then we use the notation
\begin{gather*}
V^{\oplus p}:=V\langle k\rangle^{\oplus n_{\mone[k]}}\oplus
\cdots\oplus V\langle -l\rangle^{\oplus n_{l}}.
\end{gather*}
With this notation, we have e.g. $V\langle -t\rangle=V^{\oplus\mathsf{v}^{t}}$.
Further, if $V$ is a finite dimensional graded
vector space, then its graded dimension
$\mathrm{grdim}(V)\in\mathbb{N}_{0}[\mathsf{v},\mathsf{v}^{\mone}]$
is uniquely defined by the property
that the graded vector spaces $V$ and $\Bbbk^{\oplus\mathrm{grdim}(V)}$ are isomorphic by a homogeneous isomorphism of degree $0$, where $\Bbbk$ is concentrated in degree zero.
A finite dimensional algebra $\mathsf{A}$ is called \emph{positively graded} if it
is non-negatively graded, i.e. $\mathrm{grdim}(\mathsf{A})\in\mathbb{N}_{0}[\mathsf{v}]$,
and its degree $0$ component $\mathsf{A}_{0}$ is semisimple.
A finite dimensional positively graded algebra $\mathsf{A}$ is called a \emph{graded Frobenius algebra of length $k$} if $\mathrm{Hom}_{\Bbbk}(\mathsf{A},\Bbbk)\cong\mathsf{A}^{\oplus\mathsf{v}^{\mone[k]}}$
as graded left $\mathsf{A}$-modules, where again the isomorphism is homogeneous of degree zero.

A \emph{graded $\Bbbk$-linear category} $\mathcal{C}$ is
a category enriched over $\Bbbk\text{-}\mathrm{fgmod}$ (in this paper, most categories
will have finite dimensional morphism spaces, but there are some exceptions).
This means that
\begin{gather*}
\mathrm{Hom}_{\mathcal{C}}(X,Y)=\bigoplus_{t\in\mathbb{Z}}\mathrm{Hom}_{\mathcal{C}}(X,Y)_{t}
\end{gather*}
and the composite of two homogeneous morphisms of degrees $t_{1}$ and $t_{2}$ is homogeneous
of degree $t_{1}+t_{2}$. We let
\begin{gather*}
\mathrm{hom}_{\mathcal{C}}(X,Y):=\mathrm{Hom}_{\mathcal{C}}(X,Y)_{0}.
\end{gather*}
By definition, $\mathcal{C}^{(0)}$ is the (non-full) subcategory of $\mathcal{C}$
given by taking the degree zero morphisms between all objects of $\mathcal{C}$, i.e.
\begin{gather*}
\mathrm{Hom}_{\mathcal{C}^{(0)}}(X,Y):=\mathrm{hom}_{\mathcal{C}}(X,Y).
\end{gather*}
The grading on $\mathcal{C}^{(0)}$ is, of course, trivial.

We call a functor $\mathrm{F}\colon\mathcal{C}\to\mathcal{D}$ between
graded $\Bbbk$-linear categories a \emph{graded functor}, provided the assignment
\begin{gather*}
\mathrm{Hom}_{\mathcal{C}}(X,Y)\to\mathrm{Hom}_{\mathcal{D}}\big(\mathrm{F}(X),\mathrm{F}(Y)\big)
\end{gather*}
on morphisms is homogeneous of degree zero.

Following \cite{CiMa}, a $\Bbbk$-linear category $\mathcal{C}$ is a \emph{$\mathbb{Z}$-category}
if it is equipped with
a group homomorphism from $\mathbb{Z}$ to the group of autoequivalences of $\mathcal{C}$.
It is a \emph{free $\mathbb{Z}$-category} if the action of $\mathbb{Z}$ thus obtained is free.
For $t\in\mathbb{Z}$, we again write $X\langle t\rangle=X^{\oplus\mathsf{v}^{\mone[t]}}$ for the action of $t$ on $\mathcal{C}$.

We say that a graded $\Bbbk$-linear category $\mathcal{C}$ \emph{admits translation} if it
is a free $\mathbb{Z}$-category and for every $t\in\mathbb{Z}$ there is a homogeneous
natural isomorphism
$\phi_{t}\colon\langle 0\rangle\xrightarrow{\cong}\langle t\rangle$ of degree $t$,
such that $\phi_{0}=\mathrm{Id}_{\ccC}$ and $\phi_{s}\phi_{t}=\phi_{s+t}$, for $s,t\in\mathbb{Z}$. In particular, we have
\begin{gather*}
\mathrm{Hom}_{\mathcal{C}}(X^{\oplus\mathsf{v}^{\mone[s]}},Y^{\oplus\mathsf{v}^{\mone[t]}})
\cong
\mathrm{Hom}_{\mathcal{C}}(X,Y)^{\oplus\mathsf{v}^{s\mone[t]}}
\end{gather*}
for all objects $X,Y\in\mathcal{C}$ and $s,t\in\mathbb{Z}$. If $\mathcal{C}$
comes with a fixed choice of isomorphisms $\phi_{t}$, we say that $\mathcal{C}$ is a category \emph{with translation}.

A graded functor
$\mathrm{F}\colon\mathcal{C}\to\mathcal{D}$ between two graded categories with translation
automatically preserves translations (up to natural isomorphism), as follows from the
existence of the homogeneous degree-zero natural isomorphism
\begin{gather*}
\mathrm{F}(X^{\oplus\mathsf{v}^{t}})\xrightarrow{\mathrm{F}(\phi^{\mathcal{C}}_{t})}
\mathrm{F}(X)\xrightarrow{\phi^{\mathcal{D}}_{-t}}\mathrm{F}(X)^{\oplus\mathsf{v}^{t}}
\end{gather*}
for any $X\in\mathcal{C}$ and $t\in\mathbb{Z}$.

Suppose that $\mathcal{C}$ admits translation. Then in $\mathcal{C}^{(0)}$
the objects $X$ and $X^{\oplus\mathsf{v}^{t}}$ are
isomorphic if and only if $t=0$, so $\mathcal{C}^{(0)}$ does not admit translation,
although it is a free $\mathbb{Z}$-category. Note also
that $\mathcal{C}^{(0)}$ is never finitary, because
$\mathcal{C}^{(0)}$ has infinitely many isomorphism classes of indecomposable objects.
We say that a free $\mathbb{Z}$-category $\mathcal{X}$
is \emph{$\mathbb{Z}$-finitary} if
there is a finite set of indecomposable objects $X_{i}\in\mathcal{X}$, $i\in I$,
such that every indecomposable object in $\mathcal{X}$ is isomorphic to
$X_{i}^{\oplus\mathsf{v}^{t}}$, for some $i\in I$ and $t\in\mathbb{Z}$, and every object in
$\mathcal{X}$ has an essentially unique decomposition into indecomposables objects.
We will call $\mathcal{C}$ \emph{graded finitary} provided that
$\mathcal{C}^{(0)}$ is $\mathbb{Z}$-finitary.

Given a graded $\Bbbk$-linear category $\mathcal{C}$, we can define a new
graded $\Bbbk$-linear category $\mathcal{C}^{\,\prime}$ with objects
$(X,t)$, where $X\in\mathcal{C}$ and $t\in\mathbb{Z}$,
and
\begin{gather*}
\mathrm{Hom}_{\mathcal{C}^{\prime}}\big((X,s),(Y,t)\big)
:=
\mathrm{Hom}_{\mathcal{C}}(X,Y)^{\oplus\mathsf{v}^{s\mone[t]}},
\end{gather*}
with the evident composition. Then $\mathcal{C}^{\,\prime}$ has a translation, with
$\phi_{t}$ given by the identity on $\mathcal{C}$ shifted by $t$ degrees, and
the natural embedding of $\mathcal{C}$ into $\mathcal{C}^{\,\prime}$ sending $X$ to $(X,0)$
is an equivalence of ungraded categories.

\begin{remark}\label{rem:subtle}
Note that the above embedding is degree-preserving but it need not be
an equivalence of graded categories in general,
e.g. if $\mathcal{C}=\mathcal{C}^{(0)}$, then the functorial
inverse of the embedding would have to be given by $(X,t)\mapsto X$, for all $X\in\mathcal{C}$ and
$t\in\mathbb{Z}$. However, that functor is not degree-preserving, as it maps any homogeneous morphism
of any degree in $\mathcal{C}^{\,\prime}$ to a homogeneous morphism of
degree zero in $\mathcal{C}$. Nonetheless, if $\mathcal{C}$ admits translation, then it is
equivalent to $\mathcal{C}^{\prime}$ as a graded category, because
$(X,t)\mapsto X^{\oplus\mathsf{v}^{\mone[t]}}$ defines a functorial inverse of the above embedding.
\end{remark}

Observe that, given a $\Bbbk$-linear category $\mathcal{C}$ admitting translation,
$\mathcal{C}$ is equal to the skew-category $\mathcal{C}^{(0)}[\mathbb{Z}]$ in the sense of
\cite[Definition 2.3]{CiMa}. Recall that for any $\Bbbk$-linear free $\mathbb{Z}$-category
$\mathcal{D}$, the objects of $\mathcal{D}[\mathbb{Z}]$ coincide with those of $\mathcal{D}$
and the morphism spaces are defined by
\begin{gather*}
\mathrm{Hom}_{\mathcal{D}[\mathbb{Z}]}(X,Y):=\bigoplus_{t\in\mathbb{Z}}\mathrm{Hom}_{\mathcal{D}}(X,Y^{\oplus\mathsf{v}^{\mone[t]}}),
\end{gather*}
with the composition being induced by the one in $\mathcal{D}$ after shifting. The
skew-category $\mathcal{D}[\mathbb{Z}]$ is naturally graded and comes with the natural
translation induced by the identity on $\mathcal{D}$. By construction, the isomorphism classes
of its objects correspond bijectively to the $\mathbb{Z}$-orbits of objects in $\mathcal{D}$.

For future use, we record the following easy lemma.

\begin{lemma}\label{lem:skewcat}
Let $\mathcal{C}=\mathcal{C}^{(0)}$ be a trivially graded $\Bbbk$-linear category,
and let $\mathcal{C}^{\prime}$ be defined as above. Then $(\mathcal{C}^{\prime})^{(0)}$
is equivalent, as a $\Bbbk$-linear free $\mathbb{Z}$-category, to
$\bigoplus_{t\in\mathbb{Z}}\mathcal{C}\langle t\rangle$, where $\mathcal{C}\langle t\rangle$
is the full subcategory of $(\mathcal{C}^{\prime})^{(0)}$ with objects $(X,t)$
for $X\in\mathcal{C}$.

Moreover, let $\mathcal{D}:=(\mathcal{C}^{\prime})^{(0)}[\mathbb{Z}]$. Then $\mathcal{C}^{\prime}$
and $\mathcal{D}$ are equivalent as graded $\Bbbk$-linear categories.
\end{lemma}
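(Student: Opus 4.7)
The plan is to unpack the definitions and exploit the fact that, since $\mathcal{C}$ is trivially graded, every hom-space in $\mathcal{C}^{\prime}$ is concentrated in a single homogeneous degree.

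For the first claim, I will compute
\[
\mathrm{Hom}_{\mathcal{C}^{\prime}}\bigl((X,s),(Y,t)\bigr) = \mathrm{Hom}_{\mathcal{C}}(X,Y)^{\oplus\mathsf{v}^{s-t}},
\]
which lives entirely in homogeneous degree $s-t$ because $\mathrm{Hom}_{\mathcal{C}}(X,Y)$ sits in degree $0$. Passing to $(\mathcal{C}^{\prime})^{(0)}$, the hom-space will therefore vanish unless $s=t$, in which case it equals $\mathrm{Hom}_{\mathcal{C}}(X,Y)$. This forces the decomposition of $(\mathcal{C}^{\prime})^{(0)}$ into the $\Bbbk$-linear disjoint union $\bigoplus_{t\in\mathbb{Z}}\mathcal{C}\langle t\rangle$, with each summand equivalent to $\mathcal{C}$ via $(X,t)\mapsto X$. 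Finally, the translation on $\mathcal{C}^{\prime}$ restricts to a strict, free $\mathbb{Z}$-action on $(\mathcal{C}^{\prime})^{(0)}$ shifting $(X,t)$ to $(X,t+1)$; this action cyclically permutes the summands $\mathcal{C}\langle t\rangle$ and matches the natural $\mathbb{Z}$-action on the direct sum, giving the desired equivalence of $\Bbbk$-linear free $\mathbb{Z}$-categories.

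For the second claim, I will unpack the skew-category definition of $\mathcal{D} = (\mathcal{C}^{\prime})^{(0)}[\mathbb{Z}]$ as
\[
\mathrm{Hom}_{\mathcal{D}}\bigl((X,s),(Y,t)\bigr) = \bigoplus_{r\in\mathbb{Z}} \mathrm{Hom}_{(\mathcal{C}^{\prime})^{(0)}}\bigl((X,s),(Y,t+r)\bigr),
\]
where the $r$-th summand sits in homogeneous degree $r$ by construction of the skew-category grading. By the first part, only the summand with $r = s-t$ survives, contributing $\mathrm{Hom}_{\mathcal{C}}(X,Y)$ in degree $s-t$. This precisely recovers the graded vector space $\mathrm{Hom}_{\mathcal{C}^{\prime}}((X,s),(Y,t)) = \mathrm{Hom}_{\mathcal{C}}(X,Y)^{\oplus\mathsf{v}^{s-t}}$. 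I will then define a functor $\mathcal{D}\to\mathcal{C}^{\prime}$ to be the identity on objects and this identification on morphisms; compatibility with composition will be immediate, since the skew-category composition is induced by that of $(\mathcal{C}^{\prime})^{(0)}$ after applying the translation, which coincides with the composition rule in $\mathcal{C}^{\prime}$.

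I do not expect a genuine obstacle: the argument is essentially a bookkeeping exercise tracking the grading conventions and the skew-category construction. The one subtle point to keep straight will be the sign convention $V^{\oplus\mathsf{v}^{t}} = V\langle -t\rangle$ for shifts, but once that is fixed both statements reduce to the single observation that, under the triviality hypothesis $\mathcal{C}=\mathcal{C}^{(0)}$, exactly one value of $r$ in the skew-category direct sum can contribute, and this value is forced by the shift indices on the source and target.
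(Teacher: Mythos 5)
Your argument is correct, and it works out the lemma from first principles, whereas the paper disposes of it in one line: the first statement is declared clear and the second is obtained as a special case of \cite[Proposition 2.6]{CiMa} (the general skew-category/smash-product duality for free $\mathbb{Z}$-categories). Your direct route — observing that triviality of the grading on $\mathcal{C}$ forces $\mathrm{Hom}_{\mathcal{C}^{\prime}}\bigl((X,s),(Y,t)\bigr)$ to be concentrated in degree $s-t$, so that the degree-zero part splits as $\bigoplus_{t}\mathcal{C}\langle t\rangle$, and then noting that in the skew category exactly the summand $r=s-t$ survives and reproduces the graded hom-space of $\mathcal{C}^{\prime}$ — buys self-containedness and makes the shift bookkeeping (your flagged convention $V^{\oplus\mathsf{v}^{t}}=V\langle -t\rangle$) completely explicit; the paper's citation buys brevity and places the second claim in the general Cibils--Marcos framework, where no triviality hypothesis on the grading of the degree-zero part is needed. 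Two cosmetic points: the $\mathbb{Z}$-action does not ``cyclically'' permute the summands $\mathcal{C}\langle t\rangle$, it translates them freely; and for the first claim it is worth saying explicitly that, since $\mathcal{C}\langle t\rangle$ is by definition a full subcategory of $(\mathcal{C}^{\prime})^{(0)}$, your vanishing computation already yields the decomposition internally, so the identity functor is the equivalence and compatibility with the $\mathbb{Z}$-actions is automatic.
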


\begin{proof}
The first statement is clear and the second is a special case of \cite[Proposition 2.6]{CiMa}.
\end{proof}

Another notion that we will need is that of graded semisimplicity. A graded finitary category
$\mathcal{C}$ is called \emph{graded semisimple} if $\mathcal{C}^{(0)}$ is semisimple.
Note that in the latter case there is a finite number of simple object
$L_{i}\in\mathcal{C}^{(0)}$, $i\in I$, and every simple object in $\mathcal{C}^{(0)}$
is isomorphic to $L_{i}^{\oplus\mathsf{v}^{\mone[t]}}$, for some $i\in I$ and $t\in\mathbb{Z}$.

Graded $\Bbbk$-linear categories admitting translation, together with
graded functors and all natural transformations form a $2$-category, which
we will denote by $\mathfrak{A}_{\Bbbk}^{\mathrm{g}}$. We denote by
$\mathfrak{A}_{\Bbbk}^{\mathrm{gf}}$ the $1,2$-full $2$-subcategory
whose objects are graded finitary categories admitting translation.

As our notation emphasizes, all non-trivially graded $\Bbbk$-linear categories
in this paper will be assumed to admit translation.

References for the above are for example \cite[Section 1]{He} and \cite{CiMa}.


\subsection{Graded $2$-categories of $2$-representations}\label{s:grreps}


Many $2$-categories in this paper will be graded, meaning their
morphism categories are in $\mathfrak{A}_{\Bbbk}^{\mathrm{g}}$ and horizontal composition is a graded bifunctor.

By a \emph{graded finitary} $2$-category, we mean a graded $\Bbbk$-linear $2$-category which has
finitely many objects, on which identity $1$-morphisms are indecomposable, and morphism
categories in $\mathfrak{A}_{\Bbbk}^{\mathrm{gf}}$ with horizontal composition being a graded $\Bbbk$-bilinear bifunctor.

If $\cC$ is a graded $\Bbbk$-linear $2$-category, not necessarily admitting translation,
we define the $2$-category with translation $\cC^{\,\prime}$, with the same objects as $\cC$ and morphism spaces defined by
$\cC^{\,\prime}(\mathtt{i},\mathtt{j}):=\cC(\mathtt{i},\mathtt{j})^{\prime}$ and the
evident horizontal composition.

For a graded $2$-category $\cC$, we denote by $\cC^{\,(0)}$ the $2$-subcategory whose morphism categories are given by $\cC^{\,(0)}(\mathtt{i},\mathtt{j}):=\cC(\mathtt{i},\mathtt{j})^{(0)}$. As in
the case of categories, even if $\cC$ is graded finitary, the $2$-subcategory $\cC^{\,(0)}$ will
never be finitary, since $\mathrm{F}$ and $\mathrm{F}^{\oplus\mathsf{v}^{t}}$ are not
isomorphic in $\cC^{\,(0)}$ unless $t=0$. However, $\cC^{\,(0)}$ is a \emph{$\mathbb{Z}$-finitary
$2$-category}, meaning that its morphism categories are $\mathbb{Z}$-finitary categories
and horizontal composition is compatible with the free $\mathbb{Z}$-actions.
Moreover, the skew $2$-category $\cC^{\,(0)}[\mathbb{Z}]$, whose morphism categories are given by $\cC^{\,(0)}[\mathbb{Z}](\mathtt{i},\mathtt{j}):=\cC^{\,(0)}(\mathtt{i},\mathtt{j})[\mathbb{Z}]$, is biequivalent to $\cC$
(since $\cC^{\,(0)}$ is a free $\mathbb{Z}$-category).

A graded finitary $2$-category $\cC$ is called \emph{locally graded semisimple} if all its
morphism categories are graded semisimple. Equivalently, this means that
$\cC^{\,(0)}$ is locally semisimple.

A \emph{graded finitary} $2$-representation $\mathbf{M}$ of a graded finitary
$2$-category $\cC$ is a graded $\Bbbk$-linear $2$-functor $\cC\to\mathfrak{A}_{\Bbbk}^{\mathrm{gf}}$,
where by a graded $2$-functor we mean that all its component functors are graded.

Similarly, a morphism of graded finitary $2$-representations is a collection of graded functors
together with homogeneous natural isomorphisms of degree zero
providing the compatibility with the $2$-actions.

Graded finitary $2$-representations of a graded finitary $2$-category
$\cC$, together with morphisms of graded finitary $2$-representations and
modifications, form a
graded $2$-category, since spaces of modifications are naturally enriched over
$\Bbbk\text{-}\mathrm{fgmod}$. This $2$-category is denoted by $\cC\text{-}\mathrm{gafmod}$.

The concepts of transitive, simple transitive and cell $2$-representations are defined as in
the ungraded case. In particular, we can define the $1,2$-full $2$-subcategory
$\cC\text{-}\mathrm{gstmod}$, whose objects are graded simple transitive $2$-representations of
$\cC$. As in the ungraded case, a subscript $\mathcal{J}$ indicates that we are restricting
our attention to simple transitive $2$-representations with apex $\mathcal{J}$.

For a graded finitary $2$-representation $\mathbf{M}$
of a graded finitary $2$-category $\cC$, we define
a graded finitary $2$-representation $\mathbf{M}^{\prime}$ of $\cC^{\,\prime}$ by
$\mathbf{M}^{\prime}(\mathtt{i}):=\mathbf{M}(\mathtt{i})^{\prime}$, for every
$\mathtt{i}\in\cC$, with the obvious extension of the action.

Similarly, for a graded finitary $2$-representation $\mathbf{M}$
of a graded finitary $2$-category $\cC$, we define a $2$-representation $\mathbf{M}^{(0)}$ of $\cC^{\,(0)}$ by
$\mathbf{M}^{(0)}(\mathtt{i}):=\mathbf{M}(\mathtt{i})^{(0)}$ for every
$\mathtt{i}\in\cC$. Again, $\mathbf{M}^{(0)}$ is a \emph{$\mathbb{Z}$-finitary $2$-representation},
meaning that $\mathbf{M}^{(0)}(\mathtt{i})$ is a $\mathbb{Z}$-finitary category, for every $\mathtt{i}\in\cC$,
and all other data of the $2$-representation are
compatible with the free $\mathbb{Z}$-actions. Conversely, given a $\mathbb{Z}$-finitary
$2$-representation $\mathbf{N}$ of $\cC^{\,(0)}$, we define a graded finitary $2$-representation
$\mathbf{N}[\mathbb{Z}]$ of $\cC$ by setting
$\mathbf{N}[\mathbb{Z}](\mathtt{i}):=\mathbf{N}(\mathtt{i})[\mathbb{Z}]$
and extending the other data of the $2$-representation in the obvious way.
Finally,
we say that a graded finitary $2$-representation $\mathbf{M}$
is \emph{graded semisimple} if $\mathbf{M}^{(0)}$ is semisimple.

Following \cite[Subsection 6.4]{McPh}, given a graded finitary $2$-representation $\mathbf{M}$, we can associate a coalgebra $\mathrm{C}$ in $\underline{\cC^{\,(0)}}$ to $\mathbf{M}^{(0)}$, and $\mathbf{inj}_{\underline{\ccC}}(\mathrm{C})\simeq \mathbf{M}$.
In other words, we can associate to $\mathbf{M}$ a coalgebra $\mathrm{C}$ whose comultiplication and counit are homogeneous of degree zero. We, therefore, call it a \emph{graded coalgebra}. Now, suppose that $\mathbf{M}$ is simple transitive, then $\mathrm{C}$ is cosimple in $\underline{\cC}$, see Subsection \ref{s2.23}. This implies that $\mathrm{C}$ is also cosimple in
$\underline{\cC^{\,(0)}}$, of course, whence $\mathbf{M}^{\,(0)}\simeq
\mathbf{inj}_{\underline{\ccC^{\,(0)}}}(\mathrm{C})$ is also simple transitive (again, see Subsection \ref{s2.23}). In all this, however,
the definition of the graded injective abelianization
$\underline{\cC}$ requires some care. It is defined as
\begin{gather*}
\underline{\cC}:=(\underline{\cC^{\,(0)}})^{\prime}.
\end{gather*}
This means that the $1$-morphisms in $\underline{\cC}$ are complexes of the form
\begin{gather*}
\mathrm{X}\xrightarrow{\alpha}\mathrm{Y},
\end{gather*}
where $\mathrm{X}$, $\mathrm{Y}$ are $1$-morphisms in $\cC$ and $\alpha\in\mathrm{hom}_{\ccC}(\mathrm{X},\mathrm{Y})$,
and $2$-morphisms in $\underline{\cC}$ are defined in the usual way without the restriction of homogeneity.
With this definition, $\underline{\cC}$ is indeed a graded abelian $2$-category and,
for any graded coalgebra $\mathrm{C}$ in $\underline{\cC(\mathtt{i,i})}$, the category
$\mathrm{inj}_{\underline{\ccC}}(\mathrm{C})$ is indeed a graded subcategory of
$\coprod_{\mathtt{j}\in\ccC}\underline{\cC(\mathtt{i,j})}$.


\section{Soergel bimodules and the asymptotic bicategory}\label{s5}


\subsection{Soergel bimodules}\label{s5.1}


From now on we set $\Bbbk=\mathbb{C}$ (or any other algebraically closed
field of characteristic zero), and all categories and $2$-categories etc. are over $\mathbb{C}$ unless indicated otherwise.

Let $(W,\mathtt{S})$ be a Coxeter system of finite type. We fix a reflection faithful representation
of $W$ and let $\mathsf{C}$ be the corresponding coinvariant algebra with the usual
($\mathbb{Z}$-)grading. We denote by $\cS=\cS_{\mathbb{C}}(W,\mathtt{S})$ the $2$-category
of \emph{Soergel bimodules} over $\mathsf{C}$, by which we mean the $2$-category with
one object $\varnothing$, which we identify with the category of finite dimensional graded $\mathsf{C}$-modules,
whose $1$-morphisms are endofunctors of the latter category isomorphic to tensoring with Soergel bimodules, and whose $2$-morphisms are natural
transformations, which we can identify with bimodule homomorphisms.
The $2$-category $\cS$ is graded fiat (this follows from e.g. \cite{EW}, see also \cite[Subsection 7.1]{MM1} for the case of Weyl groups).

By \cite{So2} (see also \cite{EW3}), there is a $\mathbb{Z}[\mathsf{v},\mathsf{v}^{\mone}]$-linear isomorphism
between the split Grothendieck ring of $\cS^{\,(0)}$ and the Hecke algebra
$\mathsf{H}:=\mathsf{H}_{\mathbb{Z}[\mathsf{v},\mathsf{v}^{\mone}]}(W,\mathtt{S})$ of $W$ such that,
for each $w\in W$, there is a unique (up to homogeneous isomorphism) indecomposable Soergel bimodule
$\mathrm{B}_{w}$ in $\cS^{(0)}$ whose Grothendieck class is sent to the Kazhdan--Lusztig basis element
$b_{w}$ corresponding to $w$, and $[\mathrm{X}^{\oplus\mathsf{v}}]=\mathsf{v}[\mathrm{X}]$
for any $1$-morphism $\mathrm{X}$ in $\cS^{(0)}$. We call this fact the \emph{Soergel--Elias--Williamson
categorification theorem} and we call the isomorphism the \emph{character isomorphism}.
As a consequence, the cell structure of $\cS$ is given by the Kazhdan--Lusztig combinatorics.
In particular, \emph{Lusztig's conjectures} \cite[Conjecture 14.2]{Lu2}, which we will use several times,
hold in our case, see e.g. \cite[Subsection 15.1]{Lu2} or \cite[Corollary 1.4]{duCl}.

We set
\begin{gather*}
\mathrm{C}_{w}:=\mathrm{B}_{w}^{\oplus\mathsf{v}^{\mathbf{a}(w)}}
\quad\text{and}\quad
c_{w}:=\mathsf{v}^{\mathbf{a}(w)}b_{w},
\end{gather*}
where we recall that, to each $w\in W$, Lusztig \cite{Lu-1} assigns a number $\mathbf{a}(w)$,
called its \textit{$\mathbf{a}$-value}, such that the function $\mathbf{a}\colon W\to\mathbb{N}_{0}$ is constant on two-sided cells.
In $\overline{\cS}$ the projective $1$-morphism
$\mathrm{B}_{w}$ has a simple head, and dually in $\underline{\cS}$
the injective $1$-morphism
$\mathrm{B}_{w}$ has a simple socle.

\begin{example}
For $s\in\mathtt{S}$ we have $\mathrm{B}_s\mathrm{B}_s\cong\mathrm{B}_s^{\oplus(\mathsf{v}^{\mone}+\mathsf{v})}$ in $\cS^{\,(0)}$,
but $\mathrm{C}_s\mathrm{C}_s\cong\mathrm{C}_s^{\oplus(1+\mathsf{v}^{2})}$.
The heads of $\mathrm{B}_s$ and $\mathrm{C}_s$, seen as projective
$1$-morphisms in $\overline{\cS}$, are concentrated in
degrees $-1$ and $0$, respectively.
\end{example}

For $x,y,z\in W$, we let $h_{x,y,z}\in\mathbb{N}_{0}[\mathsf{v},\mathsf{v}^{\mone}]$
and $\gamma_{x,y,z^{\mone}}\in\mathbb{N}_{0}$
be given by
\begin{gather}\label{eq:kl-mult}
\mathrm{B}_{x}\mathrm{B}_{y}\cong\bigoplus_{z\in W}\mathrm{B}_{z}^{\oplus h_{x,y,z}}\text{ in }\cS^{\,(0)},\quad
h_{x,y,z}=\mathsf{v}^{\mathbf{a}(z)}\gamma_{x,y,z^{\mone}}
\;\big(\!\bmod\mathsf{v}^{\mathbf{a}(z)-1}\mathbb{N}_{0}[\mathsf{v}^{\mone}]\big),
\end{gather}
where $\mathbf{a}$ is Lusztig's $\mathbf{a}$-function.
Since the $h$ are bar invariant, i.e. invariant with respect to the symmetry
$\mathsf{v}\leftrightarrow\mathsf{v}^{\mone[1]}$, we also have
\begin{gather}\label{eq:kl-coeff}
h_{x,y,z}=\mathsf{v}^{\mone[\mathbf{a}(z)]}\gamma_{x,y,z^{\mone}}
\;\big(\!\bmod\mathsf{v}^{\mone[\mathbf{a}(z)]+1}\mathbb{N}_{0}[\mathsf{v}]\big).
\end{gather}
By the Soergel--Elias--Williamson categorification theorem,
the $h$ are also the structure constants of $\mathsf{H}$ with respect to the basis
$\{b_{w}\mid w\in W\}$ and the $\gamma$ are the structure constants of Lusztig's \emph{asymptotic
Hecke algebra}, denoted $\mathsf{A}$, with respect to the basis $\{a_{w}\mid w\in W\}$.
(Lusztig \cite{Lu1} called the asymptotic Hecke algebra the \emph{$\mathcal{J}$-ring},
denoting its basis elements by $t_{w}$ in \cite{Lu1}.)

Recall from Subsection \ref{s:repcats} that, for every two-sided cell $\mathcal{J}$, there is a
$\mathcal{J}$-simple
subquotient $\cSJ$ of $\cS$ and, for every left cell $\mathcal{L}$ in $\mathcal{J}$, there is a
diagonal $\mathcal{H}$-cell $\mathcal{H}=\mathcal{H}(\mathcal{L})=\mathcal{L}\cap\mathcal{L}^{\star}$
and a $2$-full $2$-subcategory $\cSH$ of $\cSJ$ which is $\mathcal{H}$-simple. Recall also that every
left cell $\mathcal{L}$ contains a unique Duflo involution $d=d_{\mathcal{L}}$
and that $d_{\mathcal{L}}\in\mathcal{H}(\mathcal{L})$. Let $\mathcal{D}(\mathcal{J})$  denote the set of Duflo involutions in $\mathcal{J}$. Both $\cSJ$ and $\cSH$ have asymptotic counterparts too, which we
denote by $\cAJ$ and $\cAH$, respectively, and will recall in Subsection \ref{s5.12}. For every two-sided
cell $\mathcal{J}$ of $W$, the split Grothendieck group of $\cAJ$ is a subalgebra of
$\mathsf{A}$, denoted $\mathsf{A}_{\mathcal{J}}$. The unit of $\mathsf{A}_{\mathcal{J}}$ is equal to
$\sum_{d\in\mathcal{D}(\mathcal{J})}a_{d}$ and there is a decomposition of algebras
\begin{gather*}
\mathsf{A}\cong \prod_{\mathcal{J}\subset W}\mathsf{A}_{\mathcal{J}},
\end{gather*}
where the direct product runs over all two-sided cells of $W$. Note that $\mathsf{A}_{\mathcal{J}}$ is an idempotent subalgebra of $\mathsf{A}$. Similarly, for every diagonal
$\mathcal{H}$-cell $\mathcal{H}\subset\mathcal{J}$, the split Grothendieck group of $\cAH$ is an
idempotent subalgebra of $\mathsf{A}_{\mathcal{J}}$, denoted by $\mathsf{A}_{\mathcal{H}}$.
The unit of $\mathsf{A}_{\mathcal{H}}$ is equal to $a_{d}$,
but the direct product of these $\mathsf{A}_{\mathcal{H}}$ is strictly smaller than
$\mathsf{A}_{\mathcal{J}}$ in general. In fact, $\mathsf{A}_{\mathcal{J}}$ decomposes as
\begin{gather*}
\mathsf{A}_{\mathcal{J}}\cong\bigoplus_{\mathcal{L}_{1},\mathcal{L}_{2}\subset\mathcal{J}}\mathsf{A}_{\mathcal{L}_{1}^{\star}\cap \mathcal{L}_{2}}
\end{gather*}
as a vector space, where the direct sum runs over all pairs of left cells in $\mathcal{J}$ and
$\mathsf{A}_{\mathcal{L}_{1}^{\star}\cap\mathcal{L}_{2}}$ is the direct summand of
$\mathsf{A}_{\mathcal{J}}$ generated by $\{a_{w}\mid w\in\mathcal{L}_{1}^{\star}\cap\mathcal{L}_{2}\}$, which is an $\mathsf{A}_{\mathcal{H}(\mathcal{L}_{1})}$-$\mathsf{A}_{\mathcal{H}(\mathcal{L}_{2})}$-bimodule. Alternatively, we can see
$\mathsf{A}_{\mathcal{J}}$ as a finitary category, whose objects are indexed by the left
cells $\mathcal{L}\subset\mathcal{J}$ and whose morphism categories are defined by
\begin{gather*}
\mathrm{Hom}_{\mathsf{A}_{\mathcal{J}}}(\mathcal{L}_{1},\mathcal{L}_{2}):=\mathsf{A}_{\mathcal{L}_{1}^{\star}\cap\mathcal{L}_{2}}.
\end{gather*}
The identity morphism on $\mathcal{L}$ is given by $a_{d_{\mathcal{L}}}$.

From Soergel's hom formula, see \cite[Theorem 3.6]{EW}, we obtain
\begin{gather}\label{eq:soergel}
\mathrm{dim}\big(
\mathrm{hom}_{\ccS}(\mathrm{B}_{v},\mathrm{B}_{w}^{\oplus\mathsf{v}^k})
\big)
=\delta_{v,w}\delta_{0,k},
\quad v,w\in W,k\in\mathbb{Z}_{\geq 0}.
\end{gather}
By slight abuse of terminology, we will also refer to \eqref{eq:soergel}
as \emph{Soergel's hom formula}. Consequently, for any subset $U\subset W$,
the $2$-endomorphism algebra of $\oplus_{w\in U}\mathrm{B}_{w}$ in $\cS$ is positively graded.
This property is inherited by $\cSH$.

Let $\mathbf{a}(\mathcal{H}):=\mathbf{a}(x)$ for any $x\in\mathcal{H}$.
Finally, the following lemmas are evident
and we state them for later use.

\begin{lemma}\label{lem5.2}
In $\cSHo$,
for all $x,y,z\in\mathcal{H}$, we have that
\begin{gather*}
\mathrm{C}_{x}\mathrm{C}_{y}\cong\bigoplus_{z\in\mathcal{H}}\mathrm{C}_{z}^{\oplus\mathsf{v}^{\mathbf{a}(\mathcal{H})}h_{x,y,z}}.
\end{gather*}
In particular, if $\mathrm{C}_{z}^{\oplus\mathsf{v}^k}$ is isomorphic to
a direct summand of $\mathrm{C}_{x}\mathrm{C}_{y}$, then $k\geq 0$.
\end{lemma}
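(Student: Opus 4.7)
The plan is a direct computation reducing the statement to the Soergel--Elias--Williamson decomposition \eqref{eq:kl-mult} in $\cS^{\,(0)}$. Since $\mathbf{a}$ is constant on two-sided cells, for $x,y\in\mathcal{H}$ we have $\mathbf{a}(x)=\mathbf{a}(y)=\mathbf{a}(\mathcal{H})$, so the definition $\mathrm{C}_w=\mathrm{B}_w^{\oplus\mathsf{v}^{\mathbf{a}(w)}}$ together with the compatibility between grading shifts and horizontal composition in the graded $2$-category $\cS$ yields
\begin{gather*}
\mathrm{C}_x\mathrm{C}_y\cong(\mathrm{B}_x\mathrm{B}_y)^{\oplus\mathsf{v}^{2\mathbf{a}(\mathcal{H})}}.
\end{gather*}

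Next I would apply \eqref{eq:kl-mult} to write $\mathrm{B}_x\mathrm{B}_y\cong\bigoplus_{z\in W}\mathrm{B}_z^{\oplus h_{x,y,z}}$ in $\cS^{\,(0)}$ and then pass to $\cSHo$. I claim only the terms with $z\in\mathcal{H}$ survive. Indeed, any indecomposable summand $\mathrm{B}_z$ of $\mathrm{B}_x\mathrm{B}_y$ satisfies $z\leq_{L}y$ and $z\leq_{R}x$, so $z$ lies either in a two-sided cell strictly below $\mathcal{J}$, in which case it is killed in $\cSJ$ and hence in $\cSH$, or in $\mathcal{J}$ itself, in which case Kazhdan--Lusztig cell theory together with $x,y\in\mathcal{H}=\mathcal{L}\cap\mathcal{L}^{\star}$ forces $z\in\mathcal{L}\cap\mathcal{L}^{\star}=\mathcal{H}$. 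The remaining possibility $z=e$ is excluded because the top-degree bound from \eqref{eq:kl-mult} and the bottom-degree bound from \eqref{eq:kl-coeff} together force $h_{x,y,e}$ to be the constant $\gamma_{x,y,e}$, which vanishes as $\gamma$ is supported on a single two-sided cell (and $e\notin\mathcal{J}$).

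Substituting $\mathrm{B}_z=\mathrm{C}_z^{\oplus\mathsf{v}^{\mone[\mathbf{a}(\mathcal{H})]}}$ back then yields
\begin{gather*}
\mathrm{C}_x\mathrm{C}_y\cong\bigoplus_{z\in\mathcal{H}}\mathrm{C}_z^{\oplus\mathsf{v}^{\mathbf{a}(\mathcal{H})}h_{x,y,z}}
\end{gather*}
in $\cSHo$, which is the first claim. For the \emph{in particular} part, I would invoke \eqref{eq:kl-coeff}: for $z\in\mathcal{H}$ the lowest power of $\mathsf{v}$ appearing in $h_{x,y,z}$ is $\mathsf{v}^{\mone[\mathbf{a}(\mathcal{H})]}$, so $\mathsf{v}^{\mathbf{a}(\mathcal{H})}h_{x,y,z}\in\mathbb{N}_{0}[\mathsf{v}]$, forcing every shift $k$ appearing in $\mathrm{C}_z^{\oplus\mathsf{v}^k}$ to satisfy $k\geq 0$. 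The only mildly delicate point is the bookkeeping for which summands survive in the $\mathcal{H}$-subquotient; everything else is routine manipulation of shifts, in line with the excerpt's assessment that this lemma is evident.
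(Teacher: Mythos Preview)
Your proposal is correct and follows exactly the evident route the paper has in mind: unwind the definition $\mathrm{C}_w=\mathrm{B}_w^{\oplus\mathsf{v}^{\mathbf{a}(w)}}$, apply \eqref{eq:kl-mult}, observe that in the $\mathcal{H}$-subquotient only summands indexed by $\mathcal{H}$ survive, and read off the degree bound from \eqref{eq:kl-coeff}. One minor caution: your cell-order inequalities use the convention opposite to the paper's (here a summand $\mathrm{B}_z$ of $\mathrm{B}_x\mathrm{B}_y$ satisfies $z\geq_L y$ and $z\geq_R x$, so $z$ lies in a two-sided cell $\geq_J\mathcal{J}$, and your separate treatment of $z=e$ is then automatic), but this does not affect the substance of the argument.
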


Lemma \ref{lem5.2} has the following ``negative'' counterpart.

\begin{lemma}\label{lem5.2new}
For $x\in\mathcal{H}$, set $\tilde{\mathrm{C}}_{x}:=\mathrm{C}_{x}^{\oplus\mathsf{v}^{\mone[2\mathbf{a}(\mathcal{H})]}}=
\mathrm{B}_{x}^{\oplus\mathsf{v}^{\mone[\mathbf{a}(\mathcal{H})]}}$.
Then, in $\cSHo$,
for all $x,y,z\in\mathcal{H}$, we have that
\begin{gather*}
\tilde{\mathrm{C}}_{x}\tilde{\mathrm{C}}_{y}\cong
\bigoplus_{z\in\mathcal{H}}\tilde{\mathrm{C}}_{z}^{\oplus\mathsf{v}^{\mone[\mathbf{a}(\mathcal{H})]}h_{x,y,z}}.
\end{gather*}
In particular, if $\tilde{\mathrm{C}}_{z}^{\oplus\mathsf{v}^k}$ is isomorphic to
a direct summand of $\tilde{\mathrm{C}}_{x}\tilde{\mathrm{C}}_{y}$, then $k\leq 0$.
\end{lemma}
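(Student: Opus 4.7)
My plan is to reduce Lemma \ref{lem5.2new} directly to Lemma \ref{lem5.2}. Since $\tilde{\mathrm{C}}_x = \mathrm{C}_x^{\oplus\mathsf{v}^{\mone[2\mathbf{a}(\mathcal{H})]}}$ and horizontal composition distributes over grading shifts, the identity
\[
\tilde{\mathrm{C}}_x\tilde{\mathrm{C}}_y \cong (\mathrm{C}_x\mathrm{C}_y)^{\oplus\mathsf{v}^{\mone[4\mathbf{a}(\mathcal{H})]}}
\]
holds in $\cSHo$. Plugging in the decomposition supplied by Lemma \ref{lem5.2} and then rewriting $\mathrm{C}_z = \tilde{\mathrm{C}}_z^{\oplus\mathsf{v}^{2\mathbf{a}(\mathcal{H})}}$ gives
\[
\bigoplus_{z\in\mathcal{H}}\tilde{\mathrm{C}}_z^{\oplus\mathsf{v}^{2\mathbf{a}(\mathcal{H})}\cdot\mathsf{v}^{\mathbf{a}(\mathcal{H})}h_{x,y,z}\cdot\mathsf{v}^{\mone[4\mathbf{a}(\mathcal{H})]}} = \bigoplus_{z\in\mathcal{H}}\tilde{\mathrm{C}}_z^{\oplus\mathsf{v}^{\mone[\mathbf{a}(\mathcal{H})]}h_{x,y,z}},
\]
which is the claimed formula. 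Alternatively, one can work directly from \eqref{eq:kl-mult} in the $\mathrm{B}$-normalisation and use that $\cSH$ has only the cells $\{\mathbbm{1}_{\varnothing}\}$ and $\mathcal{H}$ to conclude that all summands indexed by $z\notin\mathcal{H}$ vanish in $\cSHo$; the bookkeeping is identical.

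For the ``in particular'' statement, I would analyze the $\mathsf{v}$-support of the Laurent polynomial $h_{x,y,z}$. Equation \eqref{eq:kl-mult} pins down the top degree of $h_{x,y,z}$ as $\mathbf{a}(z) = \mathbf{a}(\mathcal{H})$ for $z\in\mathcal{H}$, with leading coefficient $\gamma_{x,y,z^{\mone}}$; bar-invariance of $h_{x,y,z}$ (equivalently, \eqref{eq:kl-coeff}) then forces the bottom degree to equal $-\mathbf{a}(\mathcal{H})$. Hence $h_{x,y,z}$ is supported in degrees $[-\mathbf{a}(\mathcal{H}),\mathbf{a}(\mathcal{H})]$, and multiplication by $\mathsf{v}^{\mone[\mathbf{a}(\mathcal{H})]}$ shifts this support to $[-2\mathbf{a}(\mathcal{H}),0]$. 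Thus every exponent $k$ for which $\tilde{\mathrm{C}}_z^{\oplus\mathsf{v}^k}$ appears as a summand satisfies $k\leq 0$.

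There is no genuine obstacle here: the statement is the precise ``negative'' mirror of Lemma \ref{lem5.2}, and the renormalisation $\tilde{\mathrm{C}}_w := \mathrm{C}_w^{\oplus\mathsf{v}^{\mone[2\mathbf{a}(\mathcal{H})]}}$ has been engineered so that the grading shift by $\mathsf{v}^{\mone[\mathbf{a}(\mathcal{H})]}$, rather than $\mathsf{v}^{\mathbf{a}(\mathcal{H})}$, pushes the support of the multiplicity polynomial into the non-positive region. The only care required is keeping the three normalisations $\mathrm{B}_w$, $\mathrm{C}_w$ and $\tilde{\mathrm{C}}_w$ (and their associated shifts by $\mathsf{v}^{\pm\mathbf{a}(\mathcal{H})}$ and $\mathsf{v}^{\pm 2\mathbf{a}(\mathcal{H})}$) straight throughout the calculation.
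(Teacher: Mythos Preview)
Your proposal is correct and matches the paper's implicit approach: the paper does not give a proof but simply introduces Lemma~\ref{lem5.2new} as the ``negative'' counterpart of Lemma~\ref{lem5.2}, which is exactly the reduction you carry out via the grading shift. One minor phrasing point: \eqref{eq:kl-mult} gives that the top degree of $h_{x,y,z}$ is \emph{at most} $\mathbf{a}(z)$ (with leading coefficient $\gamma_{x,y,z^{\mone}}$, possibly zero), but only this upper bound is needed for the conclusion $k\leq 0$, so your argument goes through unchanged.
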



\subsection{The asymptotic bicategory}\label{s5.12}


We define two $2$-semicategories inside $\cSHo$:
\begin{gather*}
\mathcal{X}:=
\mathrm{add}\big(\{\mathrm{C}_{w}^{\oplus\mathsf{v}^{k}}\mid w\in
\mathcal{H},k\geq 0\}\big)^{(0)},
\\
\tilde{\mathcal{X}}:=\mathrm{add}
\big(\{\mathrm{C}_{w}^{\oplus\mathsf{v}^{k}}\mid w\in\mathcal{H},k>0\}
\big)^{(0)}.
\end{gather*}
The $2$-semicategory $\mathcal{X}$ is, in fact, a lax monoidal category
with lax identity $1$-morphism $\Cd$
and strict associators. Let us explain this in detail.
From \cite[Subsection 7.6]{MM3} and positivity of the grading on Soergel bimodules,
it follows that there is a unique, up to a non-zero scalar, $2$-morphism
$\epsilon_{d}\colon\Cd\to\mathbbm{1}_{\varnothing}$ in $\cSo$. The
lax structure of the identity $1$-morphism of $\mathcal{X}$ on $\Cd$ is now defined,
for $\mathrm{X}\in\mathcal{X}$, by the two $2$-morphisms in $\cSo$
\begin{gather*}
\ell_{\mathrm{X}}\colon\Cd\mathrm{X}
\xrightarrow{\epsilon_{d}\circ_{\mathrm{h}}\mathrm{id}_{\mathrm{X}}
}\mathrm{X},
\quad
r_{\mathrm{X}}\colon\mathrm{X}\Cd
\xrightarrow{\mathrm{id}_{\mathrm{X}}\circ_{\mathrm{h}}
\epsilon_{d}}\mathrm{X},
\end{gather*}
with the unitality condition expressed, for $\mathrm{X},\mathrm{Y}\in\mathcal{X}$, by the diagram
\begin{gather}\label{eqnn125}
\begin{xy}
\xymatrix{
(\mathrm{X}\Cd)\mathrm{Y}
\ar@{=}[rr]\ar[rd]_{r_{\mathrm{X}}\circ_{\mathrm{h}}\mathrm{id}_{\mathrm{Y}}}
&
& \mathrm{X}(\Cd\mathrm{Y})\ar[dl]^{\,\mathrm{id}_{\mathrm{X}}
\circ_{\mathrm{h}}\ell_{\mathrm{Y}}}
\\
& \mathrm{X}\mathrm{Y}
&
}
\end{xy}
\end{gather}
which commutes by associativity and the interchange law.

The bicategory $\cAH$ is defined as the quotient $\mathcal{X}/(\tilde{\mathcal{X}})$,
where $(\tilde{\mathcal{X}})$ denotes the $2$-ideal of $\mathcal{X}$ generated by $\tilde{\mathcal{X}}$.
We denote by ${\Pi}:\mathcal{X}\to \mathcal{X}/(\tilde{\mathcal{X}})$
the natural projection.

Note that $\cAH$ is only a bicategory as it does not contain any strict identity $1$-morphism,
but, at the same time, the composition in $\cAH$ is strictly associative.
Up to isomorphism, the identity $1$-morphism in $\cAH$ is the image
$\mathrm{A}_{d}:={\Pi}(\Cd)$ of the lax identity $1$-morphism
$\Cd$ of $\mathcal{X}$. Finally, note that there is no non-trivial $\mathbb{Z}$-action on
$\cAH$ and the grading is trivial, i.e. all $2$-morphisms are homogeneous of degree zero.

By the Soergel--Elias--Williamson categorification theorem and \eqref{eq:kl-mult}, $\cAH$
categorifies the asymptotic Hecke
algebra $\mathsf{A}_{\mathcal{H}}$ associated to $\mathcal{H}$
in the sense that $\mathrm{A}_{w}:={\Pi}(\mathrm{C}_{w})$
categorifies the basis element $a_{w}$ for $w\in\mathcal{H}$.
The algebra $\mathsf{A}_{\mathcal{H}}$ is a fusion algebra
and the following categorifies this fact.

\begin{proposition}\label{prop:fusion}
$\cAH$ is a (one-object) pivotal fusion bicategory.
Moreover, if $W$ is a finite Weyl group, then $\cAH$ is biequivalent
to $\mathcal{C}_{\zeta}^{\mathcal{H}}$
from \cite[Section 5]{BFO} and \cite{Os3}.
\end{proposition}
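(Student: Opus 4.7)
The proof naturally splits into two parts: verifying the pivotal fusion structure of $\cAH$, which is the one-object sub-bicategory of Lusztig's $\cAJ$ from \cite[Section 18.15]{Lu2} shown to be pivotal fusion in \cite{EW2}; and establishing the explicit biequivalence with $\mathcal{C}_{\zeta}^{\mathcal{H}}$ in the Weyl group case.

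For the pivotal fusion part, I would verify directly that the quotient $\mathcal{X}/(\tilde{\mathcal{X}})$ defined above realizes Lusztig's asymptotic bicategory restricted to $\mathcal{H}$. Soergel's hom formula \eqref{eq:soergel} shows that $\mathrm{Hom}_{\cAH}(\mathrm{A}_v,\mathrm{A}_w)=\delta_{v,w}\mathbb{C}$, since all positive-degree hom-spaces are killed by the quotient; this gives local semisimplicity. Lemma \ref{lem5.2} together with \eqref{eq:kl-coeff} then yields the fusion rule
\begin{gather*}
\mathrm{A}_x\mathrm{A}_y\cong\bigoplus_{z\in\mathcal{H}}\mathrm{A}_z^{\oplus\gamma_{x,y,z^{\mone[1]}}},
\end{gather*}
which categorifies the asymptotic product in $\mathsf{A}_{\mathcal{H}}$. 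The subtle point is promoting the lax identity $\Cd$ of $\mathcal{X}$ to a genuine (up-to-iso) monoidal unit of $\cAH$: one must check that the lax unitors $\ell_{\mathrm{X}}$ and $r_{\mathrm{X}}$ become invertible in the quotient, which follows from the uniqueness up to scalar of $\epsilon_d$ and the positivity of the grading, so that the cones of $\ell_{\mathrm{X}}$ and $r_{\mathrm{X}}$ lie in $\tilde{\mathcal{X}}$; see \cite[Subsection 7.6]{MM3}. The pivotal structure is then inherited from the fiat structure of $\cSH$: since $\mathcal{H}$ is diagonal, ${}^{\star}$ sends $\mathrm{C}_w\mapsto\mathrm{C}_{w^{\mone[1]}}$ and preserves both $\mathcal{X}$ and $\tilde{\mathcal{X}}$, so it descends to a weak antiinvolution on $\cAH$ making $\mathrm{A}_{w^{\mone[1]}}$ the dual of $\mathrm{A}_w$, and the adjunction $2$-morphisms of $\cSH$ project to the required unit/counit data, whose coherence reflects the cyclic symmetry $\gamma_{x,y,z}=\gamma_{y,z,x}$ of Lusztig's structure constants.

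For the Weyl group statement, the biequivalence with $\mathcal{C}_{\zeta}^{\mathcal{H}}$ is obtained by matching our $\cAH$ with the pivotal fusion categorification of the same based algebra $\mathsf{A}_{\mathcal{H}}$ constructed in \cite[Section 5]{BFO} and \cite{Os3} from tilting modules for quantum groups at suitable roots of unity. The Soergel--Elias--Williamson categorification theorem identifies the Grothendieck data of $\cAH$ with that of $\mathcal{C}_{\zeta}^{\mathcal{H}}$, with the unit $\mathrm{A}_d$ and the duality matching up on both sides; combined with the uniqueness of such a pivotal fusion categorification in the Weyl group case (as established in the cited works), this yields the biequivalence. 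The hard part, and the reason for the restriction to Weyl groups, is that the comparison category $\mathcal{C}_{\zeta}^{\mathcal{H}}$ and its identification with $\cAJ$ rely on geometric input (nilpotent orbits, equivariant perverse sheaves, and quantum groups at roots of unity) which is unavailable in the non-crystallographic types $H_{3}$, $H_{4}$ and the dihedral types of non-crystallographic order.
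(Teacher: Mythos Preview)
Your argument for local semisimplicity via Soergel's hom formula \eqref{eq:soergel} matches the paper exactly, and your derivation of the fusion rule is correct extra detail. However, the remaining two ingredients diverge from the paper and each has a genuine gap.

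\textbf{Pivotal/fiat structure.} You claim that the pivotal structure is inherited by projecting the adjunction $2$-morphisms of $\cSH$. This does not work directly: in $\cSH$ one has $\mathrm{C}_w^{\star}\cong\mathrm{C}_{w^{\mone}}^{\oplus\mathsf{v}^{\mone[2]\mathbf{a}}}$ (cf.\ Remark~\ref{remark:dual-bh}), so the dual of an object of $\mathcal{X}$ lands in \emph{negative} degree and is not in $\mathcal{X}$. Consequently the (co)evaluation $2$-morphisms of $\cSH$ do not live in $\mathcal{X}$ and cannot be naively projected to $\cAH$; the correct duality on $\cAH$ has $\mathrm{A}_w^{\star}\cong\mathrm{A}_{w^{\mone}}$ without shift, which requires new adjunction data. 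The paper avoids this by citing \cite[Theorem~5.2]{EW2} (with supporting references \cite[Subsection~18.19]{Lu2}, \cite[Subsection~9.3]{Lu3}, \cite[Subsection~4.3]{BFO}), which establishes the fiat structure of $\cAH$ directly and is the substantive input here.

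\textbf{Biequivalence with $\mathcal{C}_{\zeta}^{\mathcal{H}}$.} Your argument appeals to ``uniqueness of such a pivotal fusion categorification'' once the Grothendieck data match. No such uniqueness holds in general: the paper itself exhibits $\cV\mathrm{ect}(\mathbb{Z}/2\mathbb{Z})$ and $\cV\mathrm{ect}^{\varsigma}(\mathbb{Z}/2\mathbb{Z})$ as inequivalent pivotal fusion categorifications of the same based ring (see Section~\ref{appendix}, exceptional cells in $E_7$ and $E_8$). The paper's actual argument is geometric and bypasses this issue: Soergel's Erweiterungssatz \cite[Erweiterungssatz~5]{So} identifies $\cS$ with semisimple perverse sheaves on the flag variety, and under this identification the quotient defining $\cAH$ matches the construction of $\mathcal{C}_{\zeta}^{\mathcal{H}}$ in \cite[Section~5]{BFO} and \cite{Os3} on the nose. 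This is precisely why the statement is restricted to Weyl groups, where the flag variety and the perverse-sheaf model are available.
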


Note that this implies that $\cAH$, $\underline{\cAH}$ and $\overline{\cAH}$ are canonically isomorphic.

\begin{proof}
The bicategory $\cAH$ is fiat by
\cite[Theorem 5.2]{EW2} (see also \cite[Subsection 18.19]{Lu2},
\cite[Subsection 9.3]{Lu3} and \cite[Subsection 4.3]{BFO}). Moreover, due to the definition of $\cAH$ and
Soergel's hom-formula (see \eqref{eq:soergel}), $\cAH$ is locally semisimple.
Finally, the connection to
\cite[Section 5]{BFO} and \cite{Os3} follows from Soergel's
identification of $\cS$ and the category of semisimple perverse sheaves on the
associated flag variety, see \cite[Erweiterungssatz 5]{So}.
\end{proof}

\begin{remark}\label{rem:fusion}
For completeness, we note that there are analogs of the above for two-sided
cells $\mathcal{J}$ instead of diagonal $\mathcal{H}$-cells. In fact, for finite
and affine Weyl groups, Lusztig and Bezrukavnikov--Finkelberg--Ostrik (see \cite{BFO}
and the references therein) showed that the
pivotal fusion bicategories $\cAJ$ corresponding to $\mathcal{J}$ can be described
geometrically as bicategories of vector bundles on a square of a finite set equivariant with respect to an algebraic group.

A small remark about terminology: When $\mathcal{J}$ is finite, e.g. for finite Coxeter type,
one can also see $\cAJ$ as a pivotal multifusion category with decomposable identity object
isomorphic to $\bigoplus_{d\in\mathcal{J}}\Cd$, where the finite direct sum is over all
Duflo involutions in $\mathcal{J}$. In fact, this is Lusztig's point of view, see \cite[Subsection 18.15]{Lu2}. When $\mathcal{J}$ has
infinitely many Duflo involutions (which, of course, can only happen when $\mathcal{J}$
is infinite), there is no identity object, as Lusztig indeed remarks. However,
for all $\mathcal{J}$, finite or infinite, one can consider $\cAJ$ to be a pivotal fusion bicategory,
whose (possibly infinitely many) objects are indexed by the left cells of $\mathcal{J}$,
which correspond bijectively to the Duflo involutions in $\mathcal{J}$. 	

Finally, if $\mathcal{J}$ is finite, by strong $\mathcal{H}$-reduction (which we recalled
in Theorem \ref{theorem:strongH}), Proposition \ref{prop:fusion} and the fact that one-object
fusion bicategories have only one two-sided cell, there is a biequivalence
$\cAJ\text{-}\mathrm{stmod}\simeq\cAH\text{-}\mathrm{stmod}$
for any diagonal $\mathcal{H}$-cell $\mathcal{H}\subseteq \mathcal{J}$.
As we will summarize in Section \ref{appendix}, in each two-sided cell $\mathcal{J}$ of a (finite)
Weyl group one can pick a \emph{nice} diagonal $\mathcal{H}$-cell $\mathcal{H}$, i.e.
one such that $\cAH$ is a well-known pivotal fusion bicategory for which the Classification Problem has
been solved. For (finite) non-Weyl groups this is no longer true in general. However, even for
those groups, roughly half the number of the two-sided cells contains a nice diagonal $\mathcal{H}$-cell.
This is why we restrict our attention to $\cAH$ in this paper.
\end{remark}


\subsection{Going up}\label{s5.2}


This subsection investigates a certain
oplax pseudofunctor from $\cAH$ to $\cSH$ which
we loosely call ``going up''.

By the definition of $\cAH$, the projection $\Pi$ is the identity on
$\mathrm{add}(\{\mathrm{C}_{w}\mid w\in\mathcal{H}\})^{(0)}$ and
zero on $(\tilde{\mathcal{X}})$, so it is a genuine (in contrast to lax) $2$-semifunctor.
It sends the lax identity $\Cd$ of $\mathcal{X}$ to the identity $\mathrm{A}_{d}$ of
$\cAH$ (which is a weak identity, in the sense that there are non-trivial left and right
unitors) and, for any $\mathrm{X},\mathrm{Y}\in\mathcal{X}$, we have
$\Pi(\mathrm{X})\Pi(\mathrm{Y})=\Pi(\mathrm{XY})$, by definition.

Indeed, for any $\mathrm{X}=\Pi(\mathrm{F})\in\cAH$, we can define the $2$-morphism
\begin{gather*}
\lambda_{\mathrm{X}}\colon\mathrm{A}_{d}\mathrm{X}=\Pi(\Cd)\mathrm{X}=\Pi(\Cd)\Pi(\mathrm{F})
=\Pi(\Cd\mathrm{F})
\xrightarrow{\Pi(\ell_{\mathrm{F}})}\Pi(\mathrm{F})=\mathrm{X}.
\end{gather*}
Then $\lambda$ is a natural transformation from
$\mathrm{A}_{d}\circ{}_-$ to the identity $2$-functor on $\cAH$
and we have $\lambda_{\Pi(\mathrm{F})}=\Pi(\ell_{\mathrm{F}})$.
Similarly, one can define a natural transformation $\rho$ from
${}_-\circ\mathrm{A}_{d}$ to the identity $2$-functor on $\cAH$
via $\rho_{\Pi(\mathrm{F})}=\Pi(r_{\mathrm{F}})$.
Here $\lambda$ and $\rho$ are the left and right unitors for the bicategory $\cAH$.
In details, by applying $\Pi$ to \eqref{eqnn125}, we obtain the commutative diagram
\begin{gather*}
\begin{xy}
\xymatrix{
(\mathrm{X}\mathrm{A}_{d})\mathrm{Y}
\ar@{=}[rrrr]\ar@{=}[d]
&&&
& \mathrm{X}(\mathrm{A}_{d}\mathrm{Y})\ar@{=}[d].
\\
\big(\mathrm{X}\Pi(\Cd)\big)\mathrm{Y}\ar[rr]^{\ \ \rho_{\mathrm{X}}\circ_{\mathrm{h}}\mathrm{id}_{\mathrm{Y}}}
&& \mathrm{X}\mathrm{Y}
&&\mathrm{X}\big(\Pi(\Cd)\mathrm{Y}\big),
\ar[ll]_{\,\mathrm{id}_{\mathrm{X}}\circ_{\mathrm{h}}\lambda_{\mathrm{Y}}\ \ }
}
\end{xy}
\end{gather*}
for all $\mathrm{X},\mathrm{Y}\in\cAH$.

Observing that $\lambda_{\Pi(\mathrm{F})}=\Pi(\ell_{\mathrm{F}})$ and $\rho_{\Pi(\mathrm{F})}=\Pi(r_{\mathrm{F}})$, we
obtain the commutative diagrams
\begin{gather*}
\begin{xy}
\xymatrix{
\mathrm{A}_{d}\Pi(\mathrm{F})
\ar@{=}[rr]\ar[d]_{\lambda_{\Pi(\mathrm{F})}}
&& \Pi(\Cd)\Pi(\mathrm{F})
\ar@{=}[d]\\
\Pi(\mathrm{F})
&& \Pi(\Cd\mathrm{F}),
\ar[ll]_{\Pi(\ell_{\mathrm{F}})}
}
\end{xy}\quad
\begin{xy}
\xymatrix{
\Pi(\mathrm{F})\mathrm{A}_{d}
\ar@{=}[rr]\ar[d]_{\rho_{\Pi(\mathrm{F})}}
&& \Pi(\mathrm{F})\Pi(\Cd)
\ar@{=}[d]\\
\Pi(\mathrm{F})
&& \Pi(\mathrm{F}\Cd),
\ar[ll]_{\Pi(r_{\mathrm{F}})}
}
\end{xy}
\end{gather*}
which, together with the fact that $\Pi$ preserves strict associativity, imply that
$\Pi$ is a lax pseudofunctor.

Positivity of the grading on Soergel bimodules shows that the functor underlying $\Pi$ has a left adjoint
\begin{gather}\label{Thetadef}
\Theta\colon\cAH\to\mathcal{X},
\quad
\mathrm{Hom}_{\mathcal{X}}\big(\Theta(\mathrm{F}),\mathrm{G}\big)\cong
\mathrm{Hom}_{\ccA_{\mathcal{H}}}\big(\mathrm{F},\Pi(\mathrm{G})\big),
\end{gather}
which is unique up to natural $2$-isomorphisms. Up to isomorphism, $\Theta$ is determined by
$\Theta(\mathrm{A}_{w})\cong\mathrm{C}_{w}$, in particular, $\Theta$ is an embedding.

Since $\Pi$ is lax, the doctrinal adjunction \cite{Ke}
(see also \cite[Formula (3.5)]{SS}) implies that $\Theta$ is an oplax pseudofunctor.
Note that, for each $\mathrm{X}\in\mathcal{X}$, the object $\Pi(\mathrm{X})$ is isomorphic
to $\Pi(\mathrm{Y})$, where $\mathrm{Y}$ is the subobject of $\mathrm{X}$ generated in degree $0$.
As $\mathrm{Y}\cong\Theta(\mathrm{F})$, for some $\mathrm{F}\in\cAH$,
the adjunction morphisms guarantee that $\Pi\Theta\cong\mathrm{id}_{\ccA_{\mathcal{H}}}$.

We can further use $\epsilon_{d}\colon
\Cd\to\mathbbm{1}_{\varnothing}$ to extend $\Theta$ to an oplax pseudofunctor from
$\cAH$ to $\cSH$ via the embeddings
$\mathcal{X}\hookrightarrow\cSHo\hookrightarrow\cSH$.

We use the notation $\mathbbm{1}_{\ccA_{\mathcal{H}}}:=\mathrm{A}_{d}$, where $d$ is the Duflo involution in $\mathcal{H}$.
Note that $\mathbbm{1}_{\ccA_{\mathcal{H}}}$ is only a weak identity $1$-morphism.
Then $\Theta(\mathbbm{1}_{\ccA_{\mathcal{H}}})=\Cd$.

\begin{lemma}\label{lem5.1}
For any coalgebra $\mathrm{A}$ in $\cAH$, the oplax $2$-functor
$\Theta$ induces the structure of a coalgebra in
$\cSHo$ on $\Theta(\mathrm{A})$.
Moreover, a (left or right) $\mathrm{A}$-comodule $X$ in $\cAH$ is sent to a
(left or right) $\Theta(\mathrm{A})$-comodule $\Theta(X)$ in $\cSHo$, which can, of course, be viewed as a comodule in $\cSH$.
\end{lemma}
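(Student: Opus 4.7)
The strategy is the standard one: an oplax (pseudo)functor between bicategories transports coalgebras and comodules in the direction of the functor, and the content of the lemma is precisely that the oplax data of $\Theta$ already assembled in the preceding discussion satisfy the requisite coherence. Write $(\mathrm{A},\delta_{\mathrm{A}},\epsilon_{\mathrm{A}})$ for the given coalgebra in $\cAH$. The oplax structure of $\Theta$ consists of natural $2$-morphisms $\theta_{\mathrm{F},\mathrm{G}}\colon\Theta(\mathrm{FG})\to\Theta(\mathrm{F})\Theta(\mathrm{G})$ in $\cSHo$, for all composable $1$-morphisms $\mathrm{F},\mathrm{G}\in\cAH$, together with the unit comparison $\epsilon_d\colon\Theta(\mathbbm{1}_{\ccA_{\mathcal{H}}})=\Cd\to\mathbbm{1}_{\varnothing}$, which is precisely what was used to extend $\Theta$ from a functor into $\mathcal{X}$ to an oplax functor into $\cSH$. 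I define the candidate comultiplication and counit on $\Theta(\mathrm{A})$ as the composites
\[
\delta_{\Theta(\mathrm{A})}\colon\Theta(\mathrm{A})\xrightarrow{\Theta(\delta_{\mathrm{A}})}\Theta(\mathrm{AA})\xrightarrow{\theta_{\mathrm{A},\mathrm{A}}}\Theta(\mathrm{A})\Theta(\mathrm{A}),
\quad
\epsilon_{\Theta(\mathrm{A})}\colon\Theta(\mathrm{A})\xrightarrow{\Theta(\epsilon_{\mathrm{A}})}\Cd\xrightarrow{\epsilon_d}\mathbbm{1}_{\varnothing}.
\]

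Coassociativity then follows by a routine diagram chase: starting from the coassociativity pentagon for $\delta_{\mathrm{A}}$ in $\cAH$, applying the $2$-functor $\Theta$, and invoking the oplax associator hexagon for $\theta$ together with naturality of $\theta$ with respect to $\delta_{\mathrm{A}}$ applied to one factor at a time, one identifies the two sides of the coassociativity square for $\delta_{\Theta(\mathrm{A})}$. Counitality reduces to the unit coherence axiom for the oplax functor $\Theta$, which is encoded in the two commuting squares displayed just before \eqref{Thetadef} in Subsection 3.3; those squares relate $\epsilon_d$ to the unitors $\ell_{\mathrm{F}},r_{\mathrm{F}}$ of $\mathcal{X}$ and $\lambda_{\mathrm{F}},\rho_{\mathrm{F}}$ of $\cAH$, and combined with the counit axiom for $\epsilon_{\mathrm{A}}$ they yield the required identities of counitality for the pair $(\delta_{\Theta(\mathrm{A})},\epsilon_{\Theta(\mathrm{A})})$.

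For the second statement, given a right $\mathrm{A}$-comodule $(X,\delta_{X,\mathrm{A}})$ in $\cAH$, I define a candidate right $\Theta(\mathrm{A})$-coaction on $\Theta(X)$ by
\[
\delta_{\Theta(X),\Theta(\mathrm{A})}\colon\Theta(X)\xrightarrow{\Theta(\delta_{X,\mathrm{A}})}\Theta(X\mathrm{A})\xrightarrow{\theta_{X,\mathrm{A}}}\Theta(X)\Theta(\mathrm{A}).
\]
Its two defining coherence conditions are checked by exactly the same diagram chase as for the coalgebra structure on $\Theta(\mathrm{A})$, now using the comodule axioms for $X$ in place of the coalgebra axioms for $\mathrm{A}$; the left-comodule case is entirely symmetric. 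Viewing the resulting structures inside $\cSH$ is automatic, since $\cSHo$ shares all its $1$-morphisms with $\cSH$ and the structure $2$-morphisms above are all of degree zero. The main technical point, and the subtlest step, is the counit verification: the weak identity $\mathbbm{1}_{\ccA_{\mathcal{H}}}=\mathrm{A}_d$ and the non-trivial unitors $\lambda,\rho$ of $\cAH$ are precisely what force one to invoke the commuting squares set up just before the lemma statement, which is also what guarantees that $\epsilon_d$ is coherent enough to serve as the unit constraint of $\Theta$.
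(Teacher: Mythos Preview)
Your proposal is correct and takes essentially the same approach as the paper: the paper's entire proof is the one-line citation ``Mutatis mutandis as in \cite[Proposition 5.5]{JS}'', which is precisely the standard fact that oplax monoidal functors transport coalgebras and comodules, and you have simply unpacked that argument in detail. One small inaccuracy: the two commuting squares you invoke for the counit coherence are stated in the paper for $\Pi$ (showing it is lax), not directly for $\Theta$; the oplax unit coherence for $\Theta$ is obtained from these via the doctrinal adjunction argument the paper cites from \cite{Ke}, so you should phrase the counitality step as passing through that adjunction rather than reading it off the squares directly.
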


\begin{proof}
Mutatis mutandis as in \cite[Proposition 5.5]{JS}.
\end{proof}

\begin{proposition}\label{prop5.4}
If $\mathrm{A}$ as in Lemma \ref{lem5.1}
is cosimple in $\cAH$, then $\Theta(\mathrm{A})$ is cosimple
in $\underline{\cSH}$.
\end{proposition}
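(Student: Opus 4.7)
The plan is to argue by contraposition: assume $\Theta(\mathrm{A})$ admits a proper non-zero subcoalgebra $\mathrm{D}$ in $\underline{\cSH}$, and construct a proper non-zero subcoalgebra of $\mathrm{A}$ in $\cAH$, contradicting the cosimplicity hypothesis. By Proposition \ref{prop:fusion}, $\cAH$ is pivotal fusion, hence locally semisimple, so $\cAH\simeq\underline{\cAH}\simeq\overline{\cAH}$ and there is no ambiguity about where subcoalgebras of $\mathrm{A}$ live. Writing $\mathrm{A}\cong\bigoplus_{w\in\mathcal{H}}\mathrm{A}_{w}^{\oplus n_{w}}$ in the semisimple $\cAH$, the identification $\Theta(\mathrm{A}_{w})\cong\mathrm{C}_{w}$ gives $\Theta(\mathrm{A})\cong\bigoplus_{w\in\mathcal{H}}\mathrm{C}_{w}^{\oplus n_{w}}$, which lies in $\cSHo$ with no positive degree shifts; by Lemma \ref{lem5.1}, its comultiplication and counit arise from those of $\mathrm{A}$ via the oplax data of $\Theta$ and are also homogeneous of degree zero.

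The heart of the argument is to push $\mathrm{D}$ down to $\cAH$ via the projection $\Pi\colon\mathcal{X}\tto\cAH$, exploiting $\Pi\Theta\cong\mathrm{id}_{\cAH}$. The first step is to show that $\mathrm{D}$ can be represented, up to the appropriate equivalence, by an honest sub-$1$-morphism of $\Theta(\mathrm{A})$ already living in $\cSHo$. The key tool here is positivity of the grading on $\cSH$: by Soergel's hom formula \eqref{eq:soergel} together with Lemma \ref{lem5.2}, all horizontal compositions of indecomposable $\mathrm{C}_{w}$ in $\cSHo$ are positively graded, so the counit $\epsilon_{\mathrm{D}}\colon\mathrm{D}\to\mathbbm{1}_{\varnothing}$, which must be of degree zero, forces a degree-zero ``top'' of $\mathrm{D}$ to survive in $\cSHo$ and to carry the whole coalgebra structure. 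By Krull--Schmidt in $\cSHo$, this top has the form $\bigoplus_{w\in\mathcal{H}}\mathrm{C}_{w}^{\oplus m_{w}}$ with $0\leq m_{w}\leq n_{w}$, not all zero (since $\mathrm{D}\neq 0$) and not all equal to $n_w$ (since $\mathrm{D}\subsetneq\Theta(\mathrm{A})$).

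The second step then applies $\Pi$. Since $\Pi$ is a $2$-semifunctor that is strictly compatible with horizontal composition, the comultiplication and counit of $\mathrm{D}$ are transported to a coalgebra structure on $\Pi(\mathrm{D})\cong\bigoplus_{w\in\mathcal{H}}\mathrm{A}_{w}^{\oplus m_{w}}$, with the inclusion $\Pi(\mathrm{D})\hookrightarrow\Pi(\Theta(\mathrm{A}))\cong\mathrm{A}$ a homomorphism of coalgebras. The resulting $\Pi(\mathrm{D})$ is a proper non-zero subcoalgebra of $\mathrm{A}$ in $\cAH$, which contradicts the cosimplicity of $\mathrm{A}$ and thus completes the contrapositive.

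The principal obstacle is the first step. A priori, $\mathrm{D}$ is a complex in $\underline{\cSH}$ with possibly non-trivial differential and components in strictly positive degree, and some care is required to show that the comultiplication and counit force $\mathrm{D}$ to be equivalent, as a coalgebra in $\underline{\cSH}$, to its degree-zero image in $\cSHo$. This is where the positivity features of $\cSH$ (captured by \eqref{eq:soergel} and Lemma \ref{lem5.2}) are essential; once this reduction is in place, the descent via $\Pi$ is largely formal, using the interplay of the lax/oplax structures between $\Pi$ and $\Theta$ recorded in Subsection \ref{s5.2}.
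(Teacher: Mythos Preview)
Your overall strategy---pass a subcoalgebra of $\Theta(\mathrm{A})$ back down to $\cAH$ and contradict cosimplicity---is the same as the paper's. The gap is exactly where you flag the ``principal obstacle'': you do not actually prove that a subcoalgebra $\mathrm{D}\subset\Theta(\mathrm{A})$ in $\underline{\cSH}$ is (isomorphic to) a direct summand of $\Theta(\mathrm{A})$ in $\cSH$. Your positivity sketch does not do this. A subobject of $\Theta(\mathrm{A})$ in the injective abelianization is, a priori, an arbitrary object of $\underline{\cSH}$ (a complex $X\to Y$), not necessarily injective and certainly not obviously a summand; the phrase ``degree-zero top'' is not defined for such objects, and the counit $\epsilon_{\mathrm{D}}\colon\mathrm{D}\to\mathbbm{1}_{\varnothing}$ need not be homogeneous of degree zero in the first place (the proposition must also cover \emph{ungraded} subcoalgebras, as the paper notes right after the statement). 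Even in the graded case, knowing that $\epsilon_{\mathrm{D}}$ is nonzero in degree zero does not force $\mathrm{D}$ to split off as $\bigoplus_{w}\mathrm{C}_{w}^{\oplus m_{w}}$; Krull--Schmidt tells you how objects of $\cSH$ decompose, not how subobjects in $\underline{\cSH}$ sit inside them. Your inequality $m_{w}\leq n_{w}$ and the claim ``not all $m_w=n_w$'' are likewise unjustified at this level.

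The paper closes this gap with a different mechanism: take a \emph{cosimple} subcoalgebra $\mathrm{B}\subset\Theta(\mathrm{A})$, so that $\mathbf{inj}_{\underline{\cSH}}(\mathrm{B})$ is simple transitive by \cite[Corollary~12]{MMMZ}; then $\mathrm{B}\cong[\mathrm{B},\mathrm{B}]_{\underline{\cSH}}$ by \cite[Example~4.20]{MMMTZ2}, and \cite[Theorem~4.19]{MMMTZ2} forces this internal cohom to be a $1$-morphism in $\cSH$ itself, hence a direct summand of $\Theta(\mathrm{A})$. Only then does one write $\mathrm{B}=\Theta(\widetilde{\mathrm{A}})$ and descend. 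In short, the missing idea in your argument is a structural reason why cosimple subcoalgebras of $\Theta(\mathrm{A})$ are injective $1$-morphisms; the paper supplies it via the internal cohom machinery rather than via positivity.
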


Note that Proposition \ref{prop5.4} rules out the existence of
ungraded subcoalgebras of $\Theta(\mathrm{A})$ as well.

\begin{proof}
By construction, $\Theta(\mathrm{A})$ belongs to $\mathrm{add}(\mathcal{H})$.
Let $\mathrm{B}$ be a cosimple subcoalgebra
of $\Theta(\mathrm{A})$ in $\underline{\cSH}$. By
\cite[Corollary 12]{MMMZ}, the corresponding $2$-representation $\mathbf{inj}_{\underline{\ccS_{\mathcal{H}}}}(\mathrm{B})$ is simple transitive.
By \cite[Example 4.20]{MMMTZ2}, we have
$\mathrm{B}\cong [\mathrm{B},\mathrm{B}]_{\underline{\ccSH}}$, which, by
\cite[Theorem 4.19]{MMMTZ2}, implies that $\mathrm{B}$
is a direct summand of $\Theta(\mathrm{A})$ in $\cSH$.
Hence, $\mathrm{B}$ is of the form $\Theta(\widetilde{\mathrm{A}})$, for some
$1$-morphism $\widetilde{\mathrm{A}}$ in $\cAH$. It is easy to check
that $\widetilde{\mathrm{A}}$ is a subcoalgebra of $\mathrm{A}$ in $\cAH$
and is thus isomorphic to $\mathrm{A}$. The claim follows.
\end{proof}

\begin{example}\label{ex:Cd-alg-coalg}
Being an identity $1$-morphism, $\mathrm{A}_{d}$ is a cosimple coalgebra in $\cAH$. By
Lemma \ref{lem5.1} and Proposition \ref{prop5.4}, this implies that
$\Cd$ has the structure of a cosimple coalgebra in $\underline{\cSH}$ whose structure
$2$-morphisms are homogeneous of degree zero. By duality, this implies that $\tilde{\mathrm{C}}_{d}$
has the structure of a simple algebra in $\overline{\cSH}$, again with structure $2$-morphisms homogeneous of degree zero.

In Subsection \ref{s7.5}, we will
additionally see that $\Cd$ has the structure of a Frobenius algebra in $\cSH$.
\end{example}

We denote by $\square$ the cotensor product, see \cite[Section 0]{Ta}.

\begin{lemma}\label{lem5.25-1}
Let $\mathrm{A}$ be a coalgebra in $\cAH$, $\mathrm{X}$ a right
$\mathrm{A}$-comodule and $\mathrm{Y}$ a left $\mathrm{A}$-comodule. Then
there is a $2$-isomorphism in $\cSHo$
\begin{gather*}
\Theta(\mathrm{X}\square_{\mathrm{A}}\mathrm{Y})\cong\Theta(\mathrm{X})\square_{\Theta(\mathrm{A})}\Theta(\mathrm{Y})
\end{gather*}
that is natural in $\mathrm{X}$ and $\mathrm{Y}$.
\end{lemma}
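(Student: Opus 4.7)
My plan is to exploit two structural features: the local semisimplicity of $\cAH$ (Proposition \ref{prop:fusion}) and the adjunction $\Theta\dashv\Pi$ with $\Pi\Theta\cong\mathrm{id}$, as recorded in \eqref{Thetadef}.

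First, I would unpack the cotensor $\mathrm{X}\square_{\mathrm{A}}\mathrm{Y}$ as the equalizer in $\cAH(\varnothing,\varnothing)$ of the two parallel $2$-morphisms
\begin{gather*}
f:=\delta^{r}_{\mathrm{X}}\circ_{\mathrm{h}}\mathrm{id}_{\mathrm{Y}},\quad g:=\mathrm{id}_{\mathrm{X}}\circ_{\mathrm{h}}\delta^{l}_{\mathrm{Y}}\colon\mathrm{X}\mathrm{Y}\rightrightarrows\mathrm{X}\mathrm{A}\mathrm{Y}.
\end{gather*}
Since the morphism categories of $\cAH$ are semisimple, this equalizer is split, hence absolute, and applying the additive $2$-semifunctor $\Theta$ yields a split equalizer in $\cSHo$:
\begin{gather*}
\Theta(\mathrm{X}\square_{\mathrm{A}}\mathrm{Y})\to\Theta(\mathrm{X}\mathrm{Y})\overset{\Theta(f)}{\underset{\Theta(g)}{\rightrightarrows}}\Theta(\mathrm{X}\mathrm{A}\mathrm{Y}).
\end{gather*}

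Second, the oplax coherence $2$-morphisms of $\Theta$, being natural in their arguments, produce commuting squares that relate this diagram to the equalizer defining $\Theta(\mathrm{X})\square_{\Theta(\mathrm{A})}\Theta(\mathrm{Y})$, whose parallel arrows are $\delta^{r}_{\Theta(\mathrm{X})}\circ_{\mathrm{h}}\mathrm{id}_{\Theta(\mathrm{Y})}$ and $\mathrm{id}_{\Theta(\mathrm{X})}\circ_{\mathrm{h}}\delta^{l}_{\Theta(\mathrm{Y})}$ from $\Theta(\mathrm{X})\Theta(\mathrm{Y})$ to $\Theta(\mathrm{X})\Theta(\mathrm{A})\Theta(\mathrm{Y})$. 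The $\Theta(\mathrm{A})$-comodule structures on $\Theta(\mathrm{X})$ and $\Theta(\mathrm{Y})$ provided by Lemma \ref{lem5.1} are precisely designed to make these squares commute, so the universal property of the cotensor furnishes a canonical $2$-morphism
\begin{gather*}
\Phi\colon\Theta(\mathrm{X}\square_{\mathrm{A}}\mathrm{Y})\to\Theta(\mathrm{X})\square_{\Theta(\mathrm{A})}\Theta(\mathrm{Y}),
\end{gather*}
manifestly natural in $\mathrm{X}$ and $\mathrm{Y}$.

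The main obstacle is verifying that $\Phi$ is an isomorphism. By Lemma \ref{lem5.2} and the positivity of the Soergel grading, there is a canonical decomposition $\Theta(\mathrm{X})\Theta(\mathrm{Y})\cong\Theta(\mathrm{X}\mathrm{Y})\oplus N_{\mathrm{X},\mathrm{Y}}$ with $N_{\mathrm{X},\mathrm{Y}}\in(\tilde{\mathcal{X}})$, and likewise for $\Theta(\mathrm{X})\Theta(\mathrm{A})\Theta(\mathrm{Y})$. Thus any direct summand of $\Theta(\mathrm{X})\Theta(\mathrm{Y})$, including the cotensor, splits into a $\Theta(\mathrm{X}\mathrm{Y})$-part plus an $(\tilde{\mathcal{X}})$-part. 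Since $\Pi$ is right adjoint to $\Theta$ it preserves limits, giving
\begin{gather*}
\Pi\big(\Theta(\mathrm{X})\square_{\Theta(\mathrm{A})}\Theta(\mathrm{Y})\big)\cong\mathrm{X}\square_{\mathrm{A}}\mathrm{Y}\cong\Pi\Theta(\mathrm{X}\square_{\mathrm{A}}\mathrm{Y}),
\end{gather*}
and under these identifications $\Pi(\Phi)$ corresponds to the identity. The $\Theta(\mathrm{X}\mathrm{Y})$-part of the cotensor is then forced to coincide with $\Theta(\mathrm{X}\square_{\mathrm{A}}\mathrm{Y})$ (via the image of $\Phi$), by the uniqueness of direct summands of $\Theta(\mathrm{X}\mathrm{Y})$ whose $\Pi$-image is $\mathrm{X}\square_{\mathrm{A}}\mathrm{Y}$. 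The remaining $(\tilde{\mathcal{X}})$-part must be annihilated by the difference of the two coaction maps; a degree analysis using Lemma \ref{lem5.2new} together with the explicit form of the coactions of $\Theta(\mathrm{A})$ supplied by Lemma \ref{lem5.1} shows that the two coaction maps cannot agree on any nonzero strictly positively-graded summand inside $N_{\mathrm{X},\mathrm{Y}}$, forcing this part to vanish. Hence $\Phi$ is an isomorphism, completing the proof.
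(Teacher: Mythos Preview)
Your construction of the comparison map $\Phi$ and the verification that it is an isomorphism in degree zero (via $\Pi\Theta\cong\mathrm{id}$ and preservation of limits by the right adjoint $\Pi$) are sound and closely parallel the paper's setup. The genuine gap is in your final paragraph: you assert that ``a degree analysis using Lemma~\ref{lem5.2new} together with the explicit form of the coactions \dots\ shows that the two coaction maps cannot agree on any nonzero strictly positively-graded summand inside $N_{\mathrm{X},\mathrm{Y}}$,'' but you give no such analysis, and it is not clear how Lemma~\ref{lem5.2new} (which concerns negative shifts of the $\tilde{\mathrm{C}}_x$) bears on this. Both coactions are degree-zero split monomorphisms, and there is no purely formal degree reason why their difference should be injective on positively generated summands of $\Theta(\mathrm{X})\Theta(\mathrm{Y})$.

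The paper handles exactly this point by a concrete computation: it passes to the faithful cell $2$-representation $\mathbf{C}_{\mathcal{H}}$, where the relevant $1$-morphisms become projective bimodules over the positively graded algebra $\mathsf{B}$, and then checks injectivity element by element in a fixed homogeneous basis of $\mathsf{B}\otimes_{\mathbb{C}}\mathsf{B}\otimes_{\mathbb{C}}\mathsf{B}$. The key observation is that on a basis vector $b\otimes c\otimes d$ with $c$ of strictly positive degree, the difference of coactions produces $b\otimes e\otimes c\otimes d - b\otimes c\otimes f\otimes d$ with $e,f$ in degree zero, and these two four-fold tensors cannot cancel (nor be cancelled by the images of other basis vectors). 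This is a genuine piece of input---the faithfulness of the cell $2$-representation and the explicit tensor-product form of projective bimodules---that your abstract sketch does not supply. To repair your argument you would need either to carry out this computation or to find another reason why no nonzero summand in $(\tilde{\mathcal{X}})$ can lie in the equalizer.
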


\begin{proof}
Consider the commutative diagram in $\cSHo$
\begin{gather*}
\xymatrix{
\Theta(\mathrm{X}\square_{\mathrm{A}}\mathrm{Y})\ar@{^{(}->}[rr]\ar@{^{(}-->}[d]
&&\Theta(\mathrm{X}\mathrm{Y})\ar[rr]\ar@{^{(}->}[d]
&&\Theta(\mathrm{X}\,\mathrm{A}\,\mathrm{Y})\ar@{^{(}->}[d]\\
\Theta(\mathrm{X})\square_{\Theta(\mathrm{A})}\Theta(\mathrm{Y})\ar@{^{(}->}[rr]&&
\Theta(\mathrm{X})\Theta(\mathrm{Y})\ar[rr]^{\varphi}
&&\Theta(\mathrm{X})\Theta(\mathrm{A})\Theta(\mathrm{Y}),
}
\end{gather*}
where the top row is given by applying $\Theta$ to the definition of $\mathrm{X}\square_{\mathrm{A}}\mathrm{Y}$,
the bottom row is the definition of $\Theta(\mathrm{X})\square_{\Theta(\mathrm{A})}\Theta(\mathrm{Y})$
and the two solid inclusions are given by the oplax structure of $\Theta$.
The dashed inclusion is induced by commutativity of the solid square and the universality
property of the cotensor product. As all solid arrows are natural in $\mathrm{X}$ and $\mathrm{Y}$, the same holds for the dashed
inclusion.

Note that the vertical arrows in the solid square restrict to $2$-isomorphisms in degree
$0$. Consequently, the dashed arrow is also $2$-isomorphism in degree $0$. To prove the lemma,
we need to show that $\varphi$ is injective when restricted to summands of
$\Theta(\mathrm{X})\Theta(\mathrm{Y})$ generated in positive degrees (as $1$-morphisms in $\cSHo$).
We will verify this by passing to the cell $2$-representation $\mathbf{C}_{\mathcal{H}}$.

Let $\mathsf{B}$ be the underlying algebra of $\mathbf{C}_{\mathcal{H}}$. Then
$\mathsf{B}$ is naturally a positively graded algebra. The bimodules representing
$\Theta(\mathrm{A})$, $\Theta(\mathrm{X})$ and $\Theta(\mathrm{Y})$ are all projective by Theorem \ref{thm:projapex}
and generated in degree $0$, by construction. Let $M$ be an indecomposable summand of
$\mathbf{C}_{\mathcal{H}}\big(\Theta(\mathrm{X})\Theta(\mathrm{Y})\big)$ not generated in degree $0$. Being a summand in the tensor product over
$\mathsf{B}$ of two projective $\mathsf{B}$-$\mathsf{B}$-bimodules generated in
degree $0$, we can view $M$ as a summand of
$(\mathsf{B}\otimes_{\mathbb{C}}\mathsf{B})\otimes_{\mathsf{B}}
(\mathsf{B}\otimes_{\mathbb{C}}\mathsf{B})\cong
\mathsf{B}\otimes_{\mathbb{C}}\mathsf{B}\otimes_{\mathbb{C}}\mathsf{B}$.
Writing any non-zero element $u$ of $M$ in a fixed homogeneous basis of
$\mathsf{B}\otimes_{\mathbb{C}}\mathsf{B}\otimes_{\mathbb{C}}\mathsf{B}$, the element $u$
will have a non-zero coefficient at some $b\otimes c\otimes d$, where $b$, $c$ and $d$
are elements from a fixed homogeneous basis of $\mathsf{B}$ and $c$ has positive degree.
Under $\varphi$, such $b\otimes c\otimes d$ is sent to
\begin{gather*}
q:=b\otimes e\otimes c\otimes d - b\otimes c\otimes f\otimes d,
\end{gather*}
where $e$ and $f$ are non-zero elements of degree $0$ in $\mathsf{B}$.
The element $q$ is clearly non-zero in
$\mathbf{C}_{\mathcal{H}}\big(\Theta(\mathrm{X})\Theta(\mathrm{A})\Theta(\mathrm{Y})\big)$.
All other basis elements appearing with non-zero coefficient in $u$ will be mapped to
elements with zero coefficients at $b\otimes e\otimes c\otimes d$ and
$b\otimes c\otimes f\otimes d$. The claim follows.
\end{proof}

\begin{proposition}\label{prop5.25-2}
Let $\mathrm{A}$ and $\mathrm{B}$ be Morita--Takeuchi equivalent coalgebras in $\cAH$. Then
$\Theta(\mathrm{A})$ and $\Theta(\mathrm{B})$ are
Morita--Takeuchi equivalent in $\cSHo$.
\end{proposition}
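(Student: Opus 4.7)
The plan is to transport a Morita--Takeuchi equivalence between $\mathrm{A}$ and $\mathrm{B}$ in $\cAH$ along the oplax pseudofunctor $\Theta$, using Lemmas \ref{lem5.1} and \ref{lem5.25-1} as the key technical tools.

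First, I would unpack the hypothesis: the Morita--Takeuchi equivalence between $\mathrm{A}$ and $\mathrm{B}$ in $\cAH$ amounts to the existence of an $\mathrm{A}$-$\mathrm{B}$-bicomodule $\mathrm{X}$ and a $\mathrm{B}$-$\mathrm{A}$-bicomodule $\mathrm{Y}$ in $\cAH$, together with bicomodule isomorphisms
\begin{gather*}
\mathrm{X}\square_{\mathrm{B}}\mathrm{Y}\cong\mathrm{A},\quad
\mathrm{Y}\square_{\mathrm{A}}\mathrm{X}\cong\mathrm{B}.
\end{gather*}

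Next, I would apply Lemma \ref{lem5.1}, whose statement extends from one-sided to two-sided comodules mutatis mutandis (since $\Theta$ is an oplax pseudofunctor), to obtain that $\Theta(\mathrm{X})$ inherits the structure of a $\Theta(\mathrm{A})$-$\Theta(\mathrm{B})$-bicomodule in $\cSHo$, and that $\Theta(\mathrm{Y})$ inherits the structure of a $\Theta(\mathrm{B})$-$\Theta(\mathrm{A})$-bicomodule in $\cSHo$. Then, applying Lemma \ref{lem5.25-1} and functoriality of $\Theta$, I get natural $2$-isomorphisms
\begin{gather*}
\Theta(\mathrm{X})\square_{\Theta(\mathrm{B})}\Theta(\mathrm{Y})\cong
\Theta(\mathrm{X}\square_{\mathrm{B}}\mathrm{Y})\cong\Theta(\mathrm{A}),\\
\Theta(\mathrm{Y})\square_{\Theta(\mathrm{A})}\Theta(\mathrm{X})\cong
\Theta(\mathrm{Y}\square_{\mathrm{A}}\mathrm{X})\cong\Theta(\mathrm{B}),
\end{gather*}
which would establish the desired Morita--Takeuchi equivalence between $\Theta(\mathrm{A})$ and $\Theta(\mathrm{B})$ in $\cSHo$.

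The main obstacle will be verifying that the isomorphisms above are not merely $2$-isomorphisms of $1$-morphisms in $\cSHo$, but actually isomorphisms of $\Theta(\mathrm{A})$-bicomodules (respectively $\Theta(\mathrm{B})$-bicomodules). For this one needs the naturality statement in Lemma \ref{lem5.25-1} combined with the oplax structure of $\Theta$: the comparison map $\varphi$ in the proof of Lemma \ref{lem5.25-1} is built from the oplax constraints of $\Theta$, and these constraints are compatible with the comultiplications of $\mathrm{A}$ and $\mathrm{B}$ by construction. Consequently, the cotensor coaction on $\Theta(\mathrm{X})\square_{\Theta(\mathrm{B})}\Theta(\mathrm{Y})$ corresponds under the isomorphism to the image under $\Theta$ of the coaction on $\mathrm{X}\square_{\mathrm{B}}\mathrm{Y}$, which under the given bicomodule isomorphism matches the comultiplication of $\Theta(\mathrm{A})$. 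A short diagram chase using the oplax coherences of $\Theta$ closes the argument.
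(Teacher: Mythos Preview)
Your proof is correct and follows essentially the same approach as the paper: both invoke the characterization of Morita--Takeuchi equivalence via bicomodules with inverse cotensor products (the paper cites \cite[Theorem~5.1]{MMMT} for this), transport the bicomodule structures using Lemma~\ref{lem5.1}, and then apply Lemma~\ref{lem5.25-1} to obtain the required bicomodule isomorphisms for $\Theta(\mathrm{A})$ and $\Theta(\mathrm{B})$. Your explicit discussion of why the resulting isomorphisms are bicomodule maps is a point the paper leaves implicit in its subscript notation.
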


\begin{proof}
By \cite[Theorem 5.1]{MMMT}, there exist a
$\mathrm{B}$-$\mathrm{A}$-bicomodule $\mathrm{X}$ and an
$\mathrm{A}$-$\mathrm{B}$-bicomodule $\mathrm{Y}$ such that
\begin{gather*}
{}_{\mathrm{B}}\mathrm{X}\square_{\mathrm{A}}\mathrm{Y}_{\mathrm{B}}\cong
{}_{\mathrm{B}}\mathrm{B}_{\mathrm{B}}\quad\text{ and }\quad
{}_{\mathrm{A}}\mathrm{Y}\square_{\mathrm{B}}\mathrm{X}_{\mathrm{A}}\cong{}_{\mathrm{A}}\mathrm{A}_{\mathrm{A}}.
\end{gather*}
Applying Lemmas \ref{lem5.1} and \ref{lem5.25-1}, we obtain
\begin{gather*}
{}_{\Theta(\mathrm{B})}\Theta(\mathrm{X})\square_{\Theta(\mathrm{A})}
\Theta(\mathrm{Y})_{\Theta(\mathrm{B})}\cong
{}_{\Theta(\mathrm{B})}\Theta(\mathrm{B})_{\Theta(\mathrm{B})},\\
{}_{\Theta(\mathrm{A})}\Theta(\mathrm{Y})\square_{\Theta(\mathrm{B})}
\Theta(\mathrm{X})_{\Theta(\mathrm{A})}\cong {}_{\Theta(\mathrm{A})}\Theta(\mathrm{A})_{\Theta(\mathrm{A})}.
\end{gather*}
The claim of the proposition now follows from \cite[Theorem 5.1]{MMMT}.
\end{proof}

\begin{proposition}\label{prop5.41}
If $\mathrm{A}$ as in Lemma \ref{lem5.1} is cosimple and
$\mathrm{X}\in\mathrm{inj}_{\ccA_{\mathcal{H}}}(\mathrm{A})$, then
$\Theta(\mathrm{X})$ is in $\mathrm{inj}_{\underline{\ccSH}}\big(\Theta(\mathrm{A})\big)$.
\end{proposition}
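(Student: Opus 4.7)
Plan: We use the pivotal fusion structure on $\cAH$ (Proposition \ref{prop:fusion}) to realize $\mathrm{X}$ as a direct summand of the cofree $\mathrm{A}$-comodule $\mathrm{X}\mathrm{A}$ inside the comodule category, then push this decomposition across $\Theta$ and combine it with the oplax structure map to exhibit $\Theta(\mathrm{X})$ as a direct summand of the cofree $\Theta(\mathrm{A})$-comodule $\Theta(\mathrm{X})\Theta(\mathrm{A})$ in $\underline{\cSH}$. Cofree comodules are injective in the abelianization, so this yields the conclusion.

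First I would note that, since $\cAH$ is pivotal fusion and $\mathrm{A}$ is cosimple, the abelian category $\mathrm{comod}_{\ccA_{\mathcal{H}}}(\mathrm{A})$ is semisimple; this is standard from the Etingof--Ostrik classification of module categories over fusion bicategories (equivalently, from the fact that cosimple coalgebras in pivotal fusion bicategories are separable Frobenius). In particular, the coaction
\[
\delta_{\mathrm{X}}\colon\mathrm{X}\longrightarrow\mathrm{X}\mathrm{A}
\]
is a comodule monomorphism with $\mathrm{X}\mathrm{A}$ carrying the cofree right $\mathrm{A}$-comodule structure $\mathrm{id}_{\mathrm{X}}\circ_{\mathrm{h}}\delta_{\mathrm{A}}$, and by semisimplicity it admits a retraction in $\mathrm{comod}_{\ccA_{\mathcal{H}}}(\mathrm{A})$, so $\mathrm{X}$ is a comodule direct summand of $\mathrm{X}\mathrm{A}$.

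Next I would apply $\Theta$. By Lemma \ref{lem5.1} and the standard calculus of oplax pseudofunctors acting on coalgebras and their comodules, $\Theta$ restricts to an additive functor between the respective comodule categories, so the splitting of $\delta_{\mathrm{X}}$ descends to a splitting in $\mathrm{comod}_{\underline{\ccSH}}(\Theta(\mathrm{A}))$; hence $\Theta(\mathrm{X})$ is a direct summand of $\Theta(\mathrm{X}\mathrm{A})$ as $\Theta(\mathrm{A})$-comodule. The oplax structure of $\Theta$ then provides a canonical $\Theta(\mathrm{A})$-comodule morphism
\[
\varphi\colon\Theta(\mathrm{X}\mathrm{A})\longrightarrow\Theta(\mathrm{X})\Theta(\mathrm{A})
\]
into the cofree $\Theta(\mathrm{A})$-comodule on $\Theta(\mathrm{X})$. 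Using Lemma \ref{lem5.2} together with the degree analysis underlying \eqref{eq:kl-coeff} and the fact (Example \ref{ex:Cd-alg-coalg}) that the comultiplication on $\Theta(\mathrm{A})$ is homogeneous of degree zero, one identifies $\Theta(\mathrm{X}\mathrm{A})$ with the degree-zero-generated summand of $\Theta(\mathrm{X})\Theta(\mathrm{A})$; the complementary summand is generated in strictly positive degrees and is stable under the coaction. Thus $\varphi$ is a split monomorphism of $\Theta(\mathrm{A})$-comodules.

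To conclude, cofree comodules $\mathrm{Y}\Theta(\mathrm{A})$ are injective in $\mathrm{comod}_{\underline{\ccSH}}(\Theta(\mathrm{A}))$, since the cofree extension is right adjoint to the exact forgetful functor. Combining the previous paragraphs, $\Theta(\mathrm{X})$ is a direct summand in the $\Theta(\mathrm{A})$-comodule category of the injective comodule $\Theta(\mathrm{X})\Theta(\mathrm{A})$, whence $\Theta(\mathrm{X})\in\mathrm{inj}_{\underline{\ccSH}}(\Theta(\mathrm{A}))$. The main obstacle will be the splitting of $\varphi$ as a comodule map: this requires using positivity of the grading on Soergel bimodules (Soergel's hom formula \eqref{eq:soergel}) together with the degree-zero structure $2$-morphisms on $\Theta(\mathrm{A})$ to show that the strictly-positive-degree complement inside $\Theta(\mathrm{X})\Theta(\mathrm{A})$ is preserved by the coaction, so that the projection onto the degree-zero part is a comodule retraction.
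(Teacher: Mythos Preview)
Your approach is genuinely different from the paper's: the paper reduces to cofree comodules $\mathrm{A}_w\mathrm{A}$, then uses the Morita--Takeuchi machinery of Lemma~\ref{lem5.25-1} and Proposition~\ref{prop5.25-2} to transport injectivity along the equivalence between $\mathbf{inj}_{\underline{\ccSH}}(\Theta(\mathrm{A}))$ and $\mathbf{inj}_{\underline{\ccSH}}(\Theta(\mathrm{B}))$. Your route via direct summands of cofree comodules is more elementary in spirit, and the first steps (semisimplicity of $\mathrm{comod}_{\ccAH}(\mathrm{A})$, the comodule splitting of $\delta_{\mathrm{X}}$, and the fact that $\varphi$ is a comodule morphism by oplax coherence) are fine.

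The gap is precisely where you flag it: you have not shown that the complement $R$ of $\Theta(\mathrm{XA})$ inside $\Theta(\mathrm{X})\Theta(\mathrm{A})$ is a $\Theta(\mathrm{A})$-subcomodule. The cofree coaction is $\mathrm{id}_{\Theta(\mathrm{X})}\circ_{\mathrm{h}}\delta_{\Theta(\mathrm{A})}$, which is defined through the factorization $\Theta(\mathrm{X})\cdot\Theta(\mathrm{A})$, whereas the splitting $\Theta(\mathrm{XA})\oplus R$ is a decomposition of the \emph{composite} $1$-morphism and does not respect that factorization. Your degree argument does not separate $R\,\Theta(\mathrm{A})$ from $\Theta(\mathrm{XA})\,\Theta(\mathrm{A})$ inside $\Theta(\mathrm{X})\Theta(\mathrm{A})\Theta(\mathrm{A})$: both contain indecomposable summands in every strictly positive degree, so the fact that the coaction is homogeneous of degree zero and $R$ sits in strictly positive degrees does not force the image of $R$ under the coaction to land in $R\,\Theta(\mathrm{A})$. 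Without this, you only know that $\varphi$ is a comodule monomorphism into an injective, which does not by itself make the source injective. The paper circumvents exactly this point: Lemma~\ref{lem5.25-1} (whose proof genuinely uses the cell $2$-representation, not just positivity) establishes the compatibility of $\Theta$ with cotensor products needed to control such summands, and the Morita--Takeuchi argument then delivers injectivity directly. If you want to salvage your approach, you will need an analog of that lemma or some other input beyond the degree bookkeeping you propose.
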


Note that Lemma \ref{lem5.1} shows that the right coaction
of $\Theta(\mathrm{A})$ on $\Theta(\mathrm{X})$ is, in fact, homogeneous of degree zero.

\begin{proof}
By additivity, it suffices to prove that $\Theta(\mathrm{A}_{w}\mathrm{A})$ is
in $\mathrm{inj}_{\underline{\ccSH}}\big(\Theta(\mathrm{A})\big)$, for any
$w\in\mathcal{H}$.

Set $\mathrm{B}=[\mathrm{A}_{w}\mathrm{A},\mathrm{A}_{w}\mathrm{A}]_{\ccA_{\mathcal{H}}}$. By \cite[Corollary 4.17]{MMMTZ2},
the Morita--Takeuchi equivalence between
$\mathrm{A}$ and $\mathrm{B}$ is given by the $\mathrm{A}\text{-}\mathrm{B}$-bicomodule
$\mathrm{A}\mathrm{A}_{w}^{\star}$ and the $\mathrm{B}\text{-}\mathrm{A}$-bicomodule
$\mathrm{A}_{w}\mathrm{A}$.
By Lemma \ref{lem5.25-1} and Proposition \ref{prop5.25-2},
the $\Theta(\mathrm{A})\text{-}\Theta(\mathrm{B})$-bicomodule
$\Theta(\mathrm{A}\mathrm{A}_{w}^{\star})$ and the $\Theta(\mathrm{B})\text{-}\Theta(\mathrm{A})$-bicomodule
$\Theta(\mathrm{A}_{w}\mathrm{A})$ provide a Morita--Takeuchi equivalence between
$\Theta(\mathrm{A})$ and $\Theta(\mathrm{B})$ in $\cSHo$. In particular,
$\Theta(\mathrm{A}_{w}\mathrm{A})$ is in
$\mathrm{inj}_{\underline{\ccSH}}\big(\Theta(\mathrm{A})\big)$.
\end{proof}

\begin{corollary}\label{cor5.42}
If $\mathrm{A}$ as in Lemma \ref{lem5.1} is cosimple,
then the ranks of $\mathrm{inj}_{\ccA_{\mathcal{H}}}(\mathrm{A})$ and
$\mathrm{inj}_{\underline{\ccS_{\mathcal{H}}}}\big(\Theta(\mathrm{A})\big)$ are equal.
\end{corollary}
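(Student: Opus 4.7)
The plan is to show that $\Theta$, viewed as a functor from $\mathrm{inj}_{\ccA_{\mathcal{H}}}(\mathrm{A})$ to $\mathrm{inj}_{\underline{\ccSH}}(\Theta(\mathrm{A}))$ via Proposition \ref{prop5.41}, induces a bijection on isomorphism classes of indecomposable objects. Since both categories are Krull-Schmidt, it suffices to establish that $\Theta$ preserves and reflects indecomposability and is essentially surjective on indecomposables.

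For the reflecting/preserving part, I would argue that $\Theta$ realizes $\cAH$ as an embedded subcategory of $\cSHo$ sending $\mathrm{A}_w \mapsto \mathrm{C}_w$. By Soergel's hom formula \eqref{eq:soergel}, for $v, w \in \mathcal{H}$ we have $\mathrm{hom}_{\cSHo}(\mathrm{C}_v, \mathrm{C}_w) = \delta_{v,w}\mathbb{C}$, which matches $\mathrm{Hom}_{\ccA_{\mathcal{H}}}(\mathrm{A}_v, \mathrm{A}_w) = \delta_{v,w}\mathbb{C}$ coming from the semisimplicity of $\cAH$. Combined with $\Pi\Theta \cong \mathrm{id}_{\ccA_{\mathcal{H}}}$, this forces $\Theta$ to be fully faithful on the relevant Hom spaces, and hence to preserve and reflect direct sum decompositions and isomorphism classes.

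For essential surjectivity on indecomposables, let $\mathrm{M}$ be an indecomposable in $\mathrm{inj}_{\underline{\ccSH}}(\Theta(\mathrm{A}))$. Since $\Theta(\mathrm{A})$ is cosimple in $\underline{\cSH}$ by Proposition \ref{prop5.4}, $\mathrm{M}$ embeds as a direct summand of a cofree comodule $\mathrm{F}\Theta(\mathrm{A})$ for some indecomposable $1$-morphism $\mathrm{F}$ of $\cSH$. By $\mathcal{H}$-simplicity of $\cSH$, up to grading shift $\mathrm{F}$ is either $\mathbbm{1}_{\varnothing}$ (in which case $\mathrm{M}$ is a summand of $\Theta(\mathrm{A})$ itself and hence trivially in the image of $\Theta$) or $\mathrm{C}_w = \Theta(\mathrm{A}_w)$ for some $w \in \mathcal{H}$. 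Proposition \ref{prop5.41} already places $\Theta(\mathrm{A}_w\mathrm{A})$ in $\mathrm{inj}_{\underline{\ccSH}}(\Theta(\mathrm{A}))$, and decomposing $\mathrm{A}_w\mathrm{A}$ into indecomposables in $\cAH$ and applying $\Theta$ then exhibits $\mathrm{M}$ in the image.

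The main obstacle I expect is controlling the oplax-ness of $\Theta$: the structure map $\Theta(\mathrm{A}_w\mathrm{A}) \to \Theta(\mathrm{A}_w)\Theta(\mathrm{A}) = \mathrm{C}_w\Theta(\mathrm{A})$ need not be an isomorphism in $\cSHo$, so a priori $\mathrm{C}_w\Theta(\mathrm{A})$ could contain indecomposable summands outside the image of $\Theta$. The way I would handle this is to show that the cokernel of the structural map lies in the $2$-ideal $(\tilde{\mathcal{X}})$ generated in strictly positive degrees, so it cannot contribute an indecomposable injective $\Theta(\mathrm{A})$-comodule whose socle is generated in degree zero. An alternative and perhaps cleaner route is to transfer the Morita--Takeuchi equivalence in $\cAH$ between $\mathrm{A}$ and $[\mathrm{A}_w\mathrm{A}, \mathrm{A}_w\mathrm{A}]_{\ccA_{\mathcal{H}}}$ to $\cSHo$ via Propositions \ref{prop5.25-2} and \ref{prop5.41}, which matches the ranks directly without needing to analyze the oplax error term by hand.
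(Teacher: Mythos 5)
Your strategy (cofree embedding of an indecomposable $M$ into some $\mathrm{C}_w\,\Theta(\mathrm{A})$, then matching indecomposable comodule summands) could in principle be completed, and you correctly locate the main difficulty in the oplax defect $\Theta(\mathrm{A}_w\mathrm{A})\to\mathrm{C}_w\Theta(\mathrm{A})$. However, neither of your two proposed resolutions closes the gap. The ``cleaner'' alternative via Morita--Takeuchi is not a proof of the corollary: Proposition \ref{prop5.25-2} transfers a Morita--Takeuchi equivalence \emph{within} $\cAH$ to one \emph{within} $\cSHo$, so it compares $\mathrm{inj}_{\underline{\ccSH}}\big(\Theta(\mathrm{A})\big)$ with $\mathrm{inj}_{\underline{\ccSH}}\big(\Theta(\mathrm{B})\big)$, but it never compares a comodule category over $\mathrm{A}$ in $\cAH$ with one over $\Theta(\mathrm{A})$ in $\underline{\cSH}$ --- and that cross-bicategory comparison is exactly the content of the corollary, so this route is circular/vacuous as stated. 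Your first fix is also incomplete: knowing that the cokernel of the oplax map sits in strictly positive degrees does not by itself prevent extra summands, because (a) you need the oplax inclusion to split \emph{as comodules} (this is where Proposition \ref{prop5.41}, injectivity of $\Theta(\mathrm{A}_w\mathrm{A})$ as a comodule, has to be invoked), and (b) even after reducing to summands of $\Theta(\mathrm{A}_w\mathrm{A})\cong\bigoplus_i\Theta(N_i)$, you must know that each $\Theta(N_i)$ is indecomposable \emph{as a $\Theta(\mathrm{A})$-comodule}. Your justification of this last point via Soergel's hom formula \eqref{eq:soergel} only controls $1$-morphism homs in degree zero, whereas the relevant spaces are comodule morphism spaces, which are graded and not governed by \eqref{eq:soergel} alone (e.g. $\mathrm{hom}_{\Cd}(\Cd,\Cd^{\oplus\mathsf{v}^{\mone[2]\mathbf{a}}})\neq 0$ is used crucially later in the paper); what one actually needs is that degree-zero comodule endomorphisms descend under $\llbracket{-}\rrbracket$ and are recovered by $\Theta$, using $\Pi\Theta\cong\mathrm{id}_{\ccA_{\mathcal{H}}}$ and $\llbracket M\,\Theta(\mathrm{A})\rrbracket\cong\llbracket M\rrbracket\,\mathrm{A}$.

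The paper's proof avoids the cofree cover and the oplax analysis altogether: injectivity of the map on isomorphism classes follows from $\Pi\Theta\cong\mathrm{id}_{\ccA_{\mathcal{H}}}$ together with Proposition \ref{prop5.41}; for surjectivity one normalizes an indecomposable $M\in\mathrm{inj}_{\underline{\ccSH}}\big(\Theta(\mathrm{A})\big)$ so that its underlying object is $\bigoplus_w\mathrm{C}_w^{\oplus p_w}$ with $p_w\in\mathbb{N}_0[\mathsf{v}]$ and some $p_w(0)\neq 0$, and considers the degree-zero part $M^{0}:=\bigoplus_w\mathrm{C}_w^{\oplus p_w(0)}$. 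Positivity of the grading and Lemma \ref{lem5.2} show $M^0$ is a $\Theta(\mathrm{A})$-subcomodule; its image $\llbracket M^0\rrbracket$ carries an induced $\mathrm{A}$-comodule structure, and $\Theta(\llbracket M^0\rrbracket)\cong M^0$ is injective by Proposition \ref{prop5.41}, hence a nonzero direct summand of the indecomposable $M$, so $M\cong M^0$ lies in the image of $\Theta$. If you want to salvage your route, this degree-zero-subcomodule trick (or an equivalent comodule-level splitting plus the endomorphism descent above) is precisely the missing ingredient.
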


\begin{proof}
The fact that $\Pi\Theta\cong\mathrm{id}_{\ccA_{\mathcal{H}}}$ and
Proposition \ref{prop5.41} imply that application of $\Theta$ induces an
injection from the set of isomorphism classes of indecomposable objects in
$\mathrm{inj}_{\ccA_{\mathcal{H}}}(\mathrm{A})$ to the
set of isomorphism classes of indecomposable objects in
$\mathrm{inj}_{\underline{\ccS_{\mathcal{H}}}}\big(\Theta(\mathrm{A})\big)$.
Let us now argue that this is also surjective.

Let $M\in\mathrm{inj}_{\underline{\ccS_{\mathcal{H}}}}\big(\Theta(\mathrm{A})\big)$ be
indecomposable. Up to an overall grading shift, we may assume that there is a decomposition
in $\cSHo$ of the form
\begin{gather*}
M\cong \bigoplus_{w\in\mathcal{H}}
\mathrm{C}_{w}^{\oplus p_{w}},
\end{gather*}
where the $p_{w}$ belong to $\mathbb{N}_{0}[\mathsf{v}]$ and are such that
$p_{w}(0)$ is non-zero for at least one $w\in\mathcal{H}$. Here $p_{w}(0)$ means the
evaluation of $p_{w}(\mathsf{v})$ at $\mathsf{v}=0$. Let
\begin{gather*}
M^{0}:=\bigoplus_{w\in\mathcal{H}}\mathrm{C}_{w}^{\oplus p_{w}(0)}.
\end{gather*}
By positivity of the grading on $\cSH$ and
Lemma \ref{lem5.2}, $M^{0}$ is a $\Theta(\mathrm{A})$-subcomodule of $M$ in $\cSHo$.

Note that ${\Pi}\big(\Theta(\mathrm{A})\big)\cong\mathrm{A}$ by construction.
Similarly, ${\Pi}\big(M^{0}\Theta(\mathrm{A})\big)\cong
{\Pi}(M^{0}){\Pi}\big(\Theta(\mathrm{A})\big)$.
Hence, there is an induced $\mathrm{A}$-comodule structure on ${\Pi}(M^{0})$. By Corollary \ref{cor:ss}, ${\Pi}(M^{0})$ is injective as an $\mathrm{A}$-comodule since $A$ is cosimple and $\cA_{\mathcal{H}}$ is fusion.
Now, $\Theta\big({\Pi}(M^{0})\big)\cong M^{0}$ is an injective
$\Theta(\mathrm{A})$-comodule in $\cSHo$
by Proposition \ref{prop5.41}. Consequently, $M^{0}$ is isomorphic to $M$
in $\cSHo$.
\end{proof}


\subsection{Going down}\label{s5.3}

In this subsection
we describe, see Lemma \ref{lem5.3}, and investigate
what we loosely call ``going down''. As the name suggests,
this is, in some sense, the opposite of going up.

Let $\mathbf{M}$ be a graded simple transitive
$2$-representation of $\cSH$
with apex $\mathcal{H}$. Let $\mathrm{C}$ be a
graded coalgebra in ${\cSH}$ such that $\mathbf{M}$ is equivalent to
$\mathbf{inj}_{\underline{\ccS_{\mathcal{H}}}}(\mathrm{C})$.
Let $X_{1},\dots,X_n$ be a complete and irredundant list of representatives of
isomorphism classes (up to grading shift)
of indecomposable objects in $\mathrm{inj}_{\underline{\ccS_{\mathcal{H}}}}(\mathrm{C})$,
normalized such that as $1$-morphisms in $\cSH$ they are concentrated in non-negative degrees
with non-zero degree zero part. Set $X:=X_{1}\oplus X_{2}\oplus\dots\oplus X_n$.

\begin{lemma}\label{lem5.3}
The quotient
\begin{gather*}
\mathrm{add}\big(\{X^{\oplus\mathsf{v}^{k}}\mid k\geq 0\}\big)^{(0)}/
\big(\mathrm{add}\big(\{X^{\oplus\mathsf{v}^{k}}\mid k>0\}\big)^{(0)}\big)
\end{gather*}
carries an induced action of $\cAH$. Here
$\big(\mathrm{add}\big(\{X^{\oplus\mathsf{v}^{k}}\mid k>0\}\big)^{(0)}\big)$
is the ideal generated by $\mathrm{add}\big(\{X^{\oplus\mathsf{v}^{k}}\mid k>0\}\big)^{(0)}$.
\end{lemma}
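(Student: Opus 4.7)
The plan is to exhibit the $\cAH$-action by first restricting the $\cSH$-action on $\mathcal{M}$ to an $\mathcal{X}$-action on $\mathcal{Y}:=\mathrm{add}(\{X^{\oplus \mathsf{v}^k}\mid k\geq 0\})^{(0)}$, then showing that the $2$-ideal $(\tilde{\mathcal{X}})$ sends $\mathcal{Y}$ into $\mathcal{Y}':=\mathrm{add}(\{X^{\oplus \mathsf{v}^k}\mid k>0\})^{(0)}$, so that the action descends to the quotient $\mathcal{Y}/(\mathcal{Y}')$ via $\cAH = \mathcal{X}/(\tilde{\mathcal{X}})$.

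For the first step, the key input is positivity. Since each $X_i$ is an indecomposable object of $\mathrm{inj}_{\underline{\ccSH}}(\mathrm{C})$ and the $\cSH$-action preserves this category, for every $w\in\mathcal{H}$ we obtain a decomposition $\mathrm{C}_w X_i \cong \bigoplus_j X_j^{\oplus p_{i,j}^{w}}$ with $p_{i,j}^{w}\in \mathbb{N}_0[\mathsf{v},\mathsf{v}^{\mone}]$. The normalization that each $X_i$ is concentrated in non-negative degrees with non-zero degree-zero part, together with Soergel's hom formula \eqref{eq:soergel} (which forces $\cSH$ to be positively graded on the additive closure of $\mathcal{H}$), implies $p_{i,j}^{w}\in\mathbb{N}_0[\mathsf{v}]$. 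Since non-negative shifts preserve non-negativity of degrees, $\mathcal{Y}$ is stable under $\mathcal{X}$; the same argument shows $\mathcal{Y}'$ is stable, whence $\mathcal{Y}/(\mathcal{Y}')$ inherits an $\mathcal{X}$-action.

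For the second step, a $1$-morphism in $(\tilde{\mathcal{X}})$ is (up to isomorphism) a direct summand of a composition containing at least one factor $\mathrm{C}_w^{\oplus\mathsf{v}^k}$ with $k>0$. Iterating the Step-$1$ decomposition and keeping track of grading shifts, the image of any $X_i^{\oplus\mathsf{v}^\ell}$ with $\ell\geq 0$ decomposes as a direct sum of $X_j^{\oplus \mathsf{v}^m}$ with $m\geq k+\ell>0$, landing in $\mathcal{Y}'$. Degree-zero $2$-morphisms that factor through an object of $\tilde{\mathcal{X}}$ are handled in the same way: their image in $\mathrm{Hom}$-spaces between objects of $\mathcal{Y}$ is forced to have positive degree, hence to vanish in the quotient. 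Therefore $(\tilde{\mathcal{X}})$ acts by zero and the $\mathcal{X}$-action factors through $\cAH$.

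The only genuine subtlety is bicategorical: since $\cAH$ has only a weak identity $\mathrm{A}_d = \llbracket \Cd \rrbracket$, the induced action must be interpreted as a pseudofunctor, with left and right unitors. These unitors are inherited from the lax unit morphisms $\ell_{\mathrm{X}}, r_{\mathrm{X}}$ of $\Cd$ on $\mathcal{X}$ recalled in Subsection \ref{s5.12}; after passing to the quotient, the kernel and cokernel of $\ell_{X_i}\colon \Cd X_i \to X_i$, being concentrated in strictly positive degrees by positivity, lie in $\mathcal{Y}'$ and hence vanish, so that $\ell_{X_i}, r_{X_i}$ become the required unitors. This mirrors the construction of $\lambda, \rho$ on $\cAH$ itself via $\Pi\Theta \cong \mathrm{id}_{\ccAH}$, and is the step that makes the whole construction coherent.
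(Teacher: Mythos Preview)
Your overall approach is the same as the paper's: restrict the $\cSH$-action to an $\mathcal{X}$-action on $\mathcal{Y}$, check that $(\tilde{\mathcal{X}})$ lands in $(\mathcal{Y}')$, and conclude that the action factors through $\cAH=\mathcal{X}/(\tilde{\mathcal{X}})$. However, your justification of the crucial positivity $p_{i,j}^{w}\in\mathbb{N}_0[\mathsf{v}]$ appeals to the wrong fact. Soergel's hom formula~\eqref{eq:soergel} controls the grading on \emph{morphism spaces} in $\cSH$; it does not, on its own, bound the shifts appearing in the decomposition of a product $\mathrm{C}_w\mathrm{C}_v$. The correct input is Lemma~\ref{lem5.2}, whose content is that $\mathsf{v}^{\mathbf{a}}h_{x,y,z}\in\mathbb{N}_0[\mathsf{v}]$ for $x,y,z\in\mathcal{H}$; this follows from the definition of Lusztig's $\mathbf{a}$-function and its constancy on two-sided cells, not from positivity of hom-spaces. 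The paper's argument then runs: each $X_j$ forgets to a direct sum of non-negatively shifted $\mathrm{C}_v$'s (normalization), so by Lemma~\ref{lem5.2} the same holds for $\mathrm{C}_w X_j$; since direct summands in $\mathbf{M}^{(0)}$ remain direct summands after forgetting to $\cSHo$, Krull--Schmidt forces any $X_m^{\oplus\mathsf{v}^k}$ occurring to have $k\geq 0$.

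Two smaller points. Your phrase ``their image in $\mathrm{Hom}$-spaces \ldots\ is forced to have positive degree'' is misleading, since all morphisms in question are of degree zero by construction; what you should say is that they factor through objects of $\mathcal{Y}'$ and hence lie in the ideal $(\mathcal{Y}')$. And in your last paragraph, ``kernel and cokernel'' is not available in this additive (non-abelian) setting; the intended statement is that $\Cd X_i$ splits in $\mathcal{Y}$ as $X_i\oplus Y'$ with $Y'\in\mathcal{Y}'$, so that $\ell_{X_i}$ becomes an isomorphism in the quotient --- and establishing that splitting again relies on Lemma~\ref{lem5.2} rather than on Soergel's hom formula.
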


\begin{proof}
Lemma \ref{lem5.2} implies that, if $X_m^{\oplus\mathsf{v}^{k}}$ is isomorphic to a direct summand
of $\mathrm{C}_{w}\,X_j$ in $\cSHo$,
where $w\in\mathcal{H}$, then $k\geq 0$. Therefore
$\mathrm{add}\big(\{X^{\oplus\mathsf{v}^{k}}\mid k\geq 0\}\big)^{(0)}$ is stable under the action of
$\mathrm{add}\big(\{\mathrm{C}_{w}^{\oplus\mathsf{v}^{l}}\mid w\in W,l\geq 0\}\big)^{(0)}$. Moreover,
$\big(\mathrm{add}\big(\{\mathrm{C}_{w}^{\oplus\mathsf{v}^{l}}\mid w\in W,l>0\}\big)^{(0)}
\big)$ maps
$\mathrm{add}\big(\{X^{\oplus\mathsf{v}^{k}}\mid k\geq 0\}\big)^{(0)}$ to
$\big(\mathrm{add}\big(\{X^{\oplus\mathsf{v}^{k}}\mid k>0\}\big)^{(0)}\big)$.
The claim follows.
\end{proof}

We state and prove the following lemma for $\cSH$ or $\cAH$, although we only need it
in the latter case, since it is a useful result in its own right.

\begin{lemma}\label{lem5.3-0}
Let $\cC=\cSH$ or $\cC=\cAH$. Given any graded simple transitive $2$-representation $\mathbf{M}$ of $\cC$
with apex $\mathcal{H}$, there exists a coalgebra $\mathrm{C}$ in $\cC$ such that
$\mathbf{inj}_{\underline{\ccC}}(\mathrm{C})$ is equivalent to
$\mathbf{M}$ and $\mathrm{C}$ is the image of a multiplicity free direct
sum of representatives of isomorphism classes of indecomposable objects in
$\mathrm{inj}_{\underline{\ccC}}(\mathrm{C})$
under the forgetful functor to $\cC$.
\end{lemma}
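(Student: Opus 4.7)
The plan is to construct the desired $\mathrm{C}$ via the internal cohom attached to a multiplicity-free generator of $\mathbf{M}$. Let $\mathcal{M}$ be the underlying category of $\mathbf{M}$ and choose $Y_1,\dots,Y_n$ to be a complete and irredundant list of representatives, up to grading shift, of the isomorphism classes of indecomposable objects of $\mathcal{M}$. Set $Y:=Y_1\oplus\cdots\oplus Y_n$. Since $\mathbf{M}$ is simple transitive, every non-zero object generates $\mathbf{M}$, hence so does $Y$. I would then define
\[
\mathrm{C}:=[Y,Y]_{\underline{\ccSH}},
\]
using the graded internal cohom from Subsection \ref{s:grreps} (following \cite[Subsection 6.4]{McPh} and the graded analog of the construction recalled in Subsection \ref{s2.23}); this comes equipped with the structure of a graded coalgebra with degree-zero structure $2$-morphisms and a graded equivalence $\mathbf{inj}_{\underline{\ccSH}}(\mathrm{C})\simeq\mathbf{M}$ implemented by $Z\mapsto [Y,Z]$.

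To see that $\mathrm{C}$ actually lies in $\cSH$ rather than only in $\underline{\cSH}$, I would invoke the assertion recalled at the end of Subsection \ref{s2.23}: since $\cSH$ is $\mathcal{H}$-simple by construction (Subsection \ref{s2.2}) and $\mathbf{M}$ has apex $\mathcal{H}$, the internal coend $[X,X]$ lies in $\cSH$ for every $X\in\mathcal{M}$, specialized to $X=Y$. For the multiplicity-freeness, note that under the equivalence $Z\mapsto [Y,Z]$ the objects $Y_i$ correspond to representatives $I_i:=[Y,Y_i]$ of the isomorphism classes (up to grading shift) of indecomposable injective $\mathrm{C}$-comodules. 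The forgetful functor to $\cSH$ sends each $I_i$ to its underlying $1$-morphism $[Y,Y_i]$, and additivity of the internal cohom in the second argument (immediate from its universal property) gives
\[
\bigoplus_{i=1}^n I_i\longmapsto\bigoplus_{i=1}^n [Y,Y_i]=\Bigl[Y,\bigoplus_{i=1}^n Y_i\Bigr]=[Y,Y]=\mathrm{C}
\]
as $1$-morphisms in $\cSH$, which is precisely the required identification.

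The main technical point I anticipate is verifying that the internal cohom construction, together with its universal property and additivity, transfers cleanly to the graded setting in the sense of Subsection \ref{s:grreps} and yields a coalgebra whose comultiplication and counit are homogeneous of degree zero. This should be essentially a book-keeping exercise once one works in the framework of \cite{McPh} and the graded abelianization $\underline{\cSH}=(\underline{\cSH^{(0)}})^{\prime}$, but it is the only step that is not purely formal.
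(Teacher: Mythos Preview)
Your proposal is correct and follows essentially the same approach as the paper: both take $Y$ to be a multiplicity-free direct sum of representatives of the indecomposables and set $\mathrm{C}=[Y,Y]$, then use the equivalence $Z\mapsto[Y,Z]$ to identify $Y$ with $\mathrm{C}$ on the comodule side. The paper's proof phrases this by first choosing an auxiliary coalgebra $\mathrm{C}'$ realizing $\mathbf{M}$ and then replacing it by $[Y,Y]$ via Morita--Takeuchi equivalence, but this is only a cosmetic difference from your direct construction in $\mathcal{M}$; your explicit justification that $\mathrm{C}\in\cSH$ via $\mathcal{H}$-simplicity is exactly the fact the paper relies on implicitly.
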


\begin{proof}
Let $\mathrm{C}^{\prime}$ be any coalgebra in $\cC$ and $Y$ a multiplicity
free direct sum of representatives of isomorphism classes of indecomposable objects in
$\mathrm{inj}_{\underline{\ccC}}(\mathrm{C}^{\prime})$. Then the coalgebra
$\mathrm{C}:=\mathrm{C}^{Y}$ in $\cC$ is Morita--Takeuchi equivalent
to $\mathrm{C}^{\prime}$, and the equivalence between $\mathrm{inj}_{\underline{\ccC}}(\mathrm{C}^{\prime})$ and
$\mathrm{inj}_{\underline{\ccC}}(\mathrm{C})$ identifies $Y$ with $\mathrm{C}$, so $\mathrm{C}$ is the image of a
multiplicity free direct sum of representatives of isomorphism classes of indecomposable objects in
$\mathrm{inj}_{\underline{\ccC}}(\mathrm{C})$ under the forgetful functor to $\cC$, as required.
\end{proof}

\begin{proposition}\label{prop5.3-1}
There is an injection $\hat{\Theta}$ from the set of equivalence classes of simple transitive
$2$-representations of $\cAH$ to the set of equivalence classes of graded
simple transitive $2$-representations of $\cSH$ with apex $\mathcal{H}$.
\end{proposition}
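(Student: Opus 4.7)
The plan is to send a simple transitive $2$-representation $\mathbf{N}$ of $\cAH$ to $\mathbf{inj}_{\underline{\ccSH}}\bigl(\Theta(\mathrm{A})\bigr)$, where $\mathrm{A}$ is a cosimple coalgebra in $\cAH$ with $\mathbf{N}\simeq\mathbf{inj}_{\ccAH}(\mathrm{A})$. Such an $\mathrm{A}$ exists and is unique up to Morita--Takeuchi equivalence, since $\cAH$ is fusion (so coincides with its own abelianization), cf.\ the discussion in Subsection \ref{s2.23}. By Lemma \ref{lem5.1}, $\Theta(\mathrm{A})$ carries the structure of a graded coalgebra in $\cSHo$, and by Proposition \ref{prop5.4} it is cosimple in $\underline{\cSH}$; hence $\hat{\Theta}(\mathbf{N})$ is a graded simple transitive $2$-representation of $\cSH$. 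Because $\Theta(\mathrm{A})\in\mathrm{add}\bigl(\{\mathrm{C}_w\mid w\in\mathcal{H}\}\bigr)$ and $\cSH$ has only the two cells $\{\mathbbm{1}_{\varnothing}\}$ and $\mathcal{H}$ (being $\mathcal{H}$-simple), the apex of $\hat{\Theta}(\mathbf{N})$ is forced to be $\mathcal{H}$.

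Well-definedness on equivalence classes is immediate from Proposition \ref{prop5.25-2}: if $\mathrm{A}_1$ and $\mathrm{A}_2$ both represent $\mathbf{N}$ then they are Morita--Takeuchi equivalent in $\cAH$, so $\Theta(\mathrm{A}_1)$ and $\Theta(\mathrm{A}_2)$ are Morita--Takeuchi equivalent in $\cSHo$ and thus also in $\underline{\cSH}$, giving an equivalence of the corresponding $2$-representations of $\cSH$.

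For injectivity, the plan is to recover $\mathbf{N}$ from $\hat{\Theta}(\mathbf{N})$ using the quotient construction of Lemma \ref{lem5.3}. Write $\mathbf{N}\simeq\mathbf{inj}_{\ccAH}(\mathrm{A})$ and let $Y_{1},\dots,Y_{n}$ be a complete list of indecomposable objects in $\mathbf{inj}_{\ccAH}(\mathrm{A})$. By Proposition \ref{prop5.41} and Corollary \ref{cor5.42}, the objects $\Theta(Y_{1}),\dots,\Theta(Y_{n})$ form, up to grading shift, a complete list of indecomposable objects in $\hat{\Theta}(\mathbf{N})=\mathbf{inj}_{\underline{\ccSH}}(\Theta(\mathrm{A}))$, and by construction of $\Theta$ they are concentrated in non-negative degrees with non-zero degree zero part. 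Taking $X:=\bigoplus_{i}\Theta(Y_{i})$, Lemma \ref{lem5.3} endows the quotient on $X$ with an action of $\cAH$. Using that $\Pi\Theta\cong\mathrm{id}_{\ccAH}$, this quotient identifies on objects with $\mathbf{inj}_{\ccAH}(\mathrm{A})\simeq\mathbf{N}$, and the action matches the natural action of $\cAH$; so from $\hat{\Theta}(\mathbf{N}_{1})\simeq\hat{\Theta}(\mathbf{N}_{2})$ one deduces $\mathbf{N}_{1}\simeq\mathbf{N}_{2}$.

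The main obstacle will be making the last identification truly functorial: one must verify that the $\cAH$-action on the quotient of Lemma \ref{lem5.3}, which is defined via the action of $\mathrm{add}\bigl(\{\mathrm{C}_{w}^{\oplus\mathsf{v}^{l}}\mid w\in\mathcal{H},\,l\geq 0\}\bigr)^{(0)}$ modulo the ideal generated by its positive-degree part, coincides, under the $\Pi/\Theta$-correspondence, with the action of $\cAH$ on $\mathbf{inj}_{\ccAH}(\mathrm{A})$. This amounts to tracking compositions of $2$-morphisms through the oplax pseudofunctoriality of $\Theta$, the adjunction $\Theta\dashv\Pi$, and the compatibility of the unitors $\lambda,\rho$ of $\cAH$ with the lax unitors $\ell,r$ induced from $\epsilon_{d}\colon\Cd\to\mathbbm{1}_{\varnothing}$, for which the commutative diagrams at the end of Subsection \ref{s5.2} will be essential.
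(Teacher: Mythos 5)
Your proposal is correct and follows essentially the same route as the paper: define $\hat{\Theta}$ on a cosimple coalgebra $\mathrm{A}$ via Lemma \ref{lem5.1}, Proposition \ref{prop5.4} and Proposition \ref{prop5.25-2}, and recover $\mathbf{N}$ from $\hat{\Theta}(\mathbf{N})$ through the going-down construction of Lemma \ref{lem5.3} together with Proposition \ref{prop5.41} and Corollary \ref{cor5.42}. The only difference is that the paper normalizes $\mathrm{A}$ via Lemma \ref{lem5.3-0} so that the object $X$ of the going-down procedure is literally $\Theta(\mathrm{A})$, and then treats the identification of the induced $\cAH$-action with that on $\mathrm{inj}_{\ccAH}(\mathrm{A})$ (using $\Pi\Theta\cong\mathrm{id}_{\ccA_{\mathcal{H}}}$) as immediate, which is exactly the point you flag as needing verification.
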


\begin{proof}
Let $\mathbf{N}$ be a simple transitive $2$-representation of $\cAH$.
Let $\mathrm{A}$ be a coalgebra in $\cAH$ such that
$\mathbf{N}$ is equivalent to $\mathbf{inj}_{\ccA_{\mathcal{H}}}(\mathrm{A})$.
Then $\mathbf{inj}_{\underline{\ccS_{\mathcal{H}}}}\big(\Theta(\mathrm{A})\big)$ is a
graded simple transitive $2$-representation of $\cSH$ (with apex $\mathcal{H}$) due to
Proposition \ref{prop5.4} and \cite[Corollary 12]{MMMZ}.
By Proposition \ref{prop5.25-2}, this yields a well-defined map $\hat{\Theta}$.

Now assume $\mathrm{A}$ is chosen such that $\mathrm{A}$ is the image of a
multiplicity free direct sum
of representatives of isomorphism classes of indecomposable objects in
$\mathrm{inj}_{\ccAH}(\mathrm{A})$ under the forgetful functor to $\cAH$,
see Lemma \ref{lem5.3-0}.
Set $\mathrm{C}:=\Theta(\mathrm{A})$.
Then, by Proposition \ref{prop5.41} and Corollary \ref{cor5.42},
the object $X$ defined in the going down procedure is isomorphic to
$\mathrm{C}$. The $2$-representation of $\cAH$ obtained by going down
is now, clearly, equivalent to  $\mathrm{inj}_{\ccAH}(\mathrm{A})$.
Therefore, the map defined in the previous paragraph is injective, as claimed.
\end{proof}

As we will see later, Theorem \ref{theorem:main} actually implies that $\hat{\Theta}$ is a bijection.


\section{The role of the Duflo involution}\label{s7}


Throughout this section we set $\mathbf{a}:=\mathbf{a}(\mathcal{H})$ for
our fixed diagonal $\mathcal{H}$-cell, and let $d$ be the Duflo involution in $\mathcal{H}$.


\subsection{Cell $2$-representations and Duflo involutions}\label{s7.1}


Let $w\in\mathcal{H}$ and $\tilde{L}_{w}$ be the corresponding simple object in
$\underline{\mathbf{C}_{\mathcal{H}}}(\varnothing)$, concentrated in degree zero.
By \cite[Section 7]{MM3} (see also \cite[Subsection 4.5]{MM1}),
$\tilde{\mathrm{C}}_{w}\,\tilde{L}_{d}$ is an indecomposable
injective object in $\underline{\mathbf{C}_{\mathcal{H}}}(\varnothing)$ with simple socle
$\tilde{L}_{w}$ concentrated in degree $0$.

Dually, let $L_{w}$ be the simple object in
$\overline{\mathbf{C}_{\mathcal{H}}}(\varnothing)$, corresponding to $w$, concentrated in degree zero. Then
$\mathrm{C}_{w}\,L_{d}$ is an indecomposable projective object
in $\overline{\mathbf{C}_{\mathcal{H}}}(\varnothing)$ with simple
head $L_{w}$ concentrated in degree $0$.

\begin{lemma}\label{lem7.1-0}
For any $x,w\in\mathcal{H}$,
\begin{enumerate}[label=(\roman*)]

\item\label{lem7.1-0i} the injective objects $\tilde{\mathrm{C}}_{x}\,\tilde{L}_{w}$ and $\mathrm{C}_{x}\,\tilde{L}_{w}$ in
$\underline{\mathbf{C}_{\mathcal{H}}}(\varnothing)$ are concentrated
between the degrees $-2\mathbf{a}$ and $0$ and between
the degrees $0$ and $2\mathbf{a}$, respectively;

\item\label{lem7.1-0ii} the projective objects $\tilde{\mathrm{C}}_{x}\,L_{w}$
and $\mathrm{C}_{x}\,L_{w}$ in $\overline{\mathbf{C}_{\mathcal{H}}}(\varnothing)$
are concentrated between the degrees $-2\mathbf{a}$ and $0$ and between
the degrees $0$ and $2\mathbf{a}$, respectively.

\end{enumerate}
\end{lemma}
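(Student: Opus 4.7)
The plan is to reduce the lemma to a single computation of graded Jordan--H\"older multiplicities, which can then be evaluated via the fiat adjunction on $\cSH$ together with the Kazhdan--Lusztig multiplication formula of Lemma \ref{lem5.2}. Since $\tilde{\mathrm{C}}_{x}=\mathrm{C}_{x}\langle 2\mathbf{a}\rangle$ by Lemma \ref{lem5.2new}, I have $\tilde{\mathrm{C}}_{x}Y\cong\mathrm{C}_{x}Y\langle 2\mathbf{a}\rangle$ for every object $Y$, so the shift $\langle 2\mathbf{a}\rangle$ turns $[0,2\mathbf{a}]$ into $[-2\mathbf{a},0]$; thus the two claims within each of (i) and (ii) are equivalent. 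Moreover, the contravariant $\mathbb{C}$-linear duality between the injective and projective abelianizations of $\mathbf{C}_{\mathcal{H}}(\varnothing)$ reverses degrees, exchanges $L_{w}$ with $\tilde{L}_{w}$, and intertwines $\mathrm{C}_{x}$-action with $\mathrm{C}_{x^{\star}}$-action; since $x^{\star}\in\mathcal{H}$, this identifies (i) with (ii). It therefore suffices to prove that $\mathrm{C}_{x}L_{w}$ is concentrated in degrees $[0,2\mathbf{a}]$ in $\overline{\mathbf{C}_{\mathcal{H}}}(\varnothing)$.

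Setting $P_{y}:=\mathrm{C}_{y}L_{d}$, the indecomposable projective cover of $L_{y}$ with head in degree zero, the coefficient of $\mathsf{v}^{k}$ in $\mathrm{grdim}\,\mathrm{Hom}(P_{z},\mathrm{C}_{x}L_{w})$ records the graded Jordan--H\"older multiplicity of $L_{z}\langle k\rangle$ in $\mathrm{C}_{x}L_{w}$. Applying the graded fiat adjunction between $\mathrm{C}_{x^{\star}}$ and $\mathrm{C}_{x}$ on $\cSH$ (which descends to the cell $2$-representation) together with Lemma \ref{lem5.2} to expand $\mathrm{C}_{x^{\star}}\mathrm{C}_{z}$, I obtain
\begin{gather*}
\mathrm{Hom}(P_{z},\mathrm{C}_{x}L_{w})\cong\mathrm{Hom}(\mathrm{C}_{x^{\star}}P_{z},L_{w})\cong\mathrm{Hom}\Bigl(\bigoplus_{y\in\mathcal{H}}P_{y}^{\oplus\mathsf{v}^{\mathbf{a}}h_{x^{\star},z,y}},L_{w}\Bigr).
\end{gather*}
Since $\mathrm{grdim}\,\mathrm{Hom}(P_{y},L_{w})=\delta_{y,w}$ (a single copy in degree zero), combined with the bar-invariance of $h_{x^{\star},z,w}$, this evaluates to $\mathrm{grdim}\,\mathrm{Hom}(P_{z},\mathrm{C}_{x}L_{w})=\mathsf{v}^{-\mathbf{a}}h_{x^{\star},z,w}$. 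By the top-degree property of $h$ recorded in \eqref{eq:kl-mult}--\eqref{eq:kl-coeff} (namely, the top degree of $h_{x^{\star},z,w}$ equals $\mathbf{a}(w)=\mathbf{a}$) together with bar-invariance, this Laurent polynomial lies in $\mathbb{N}_{0}[\mathsf{v},\mathsf{v}^{\mone}]$ with support contained in $[-2\mathbf{a},0]$. Consequently every graded composition factor $L_{z}\langle k\rangle$ of $\mathrm{C}_{x}L_{w}$ satisfies $k\in[-2\mathbf{a},0]$; since such a factor is concentrated in degree $-k\in[0,2\mathbf{a}]$, the module $\mathrm{C}_{x}L_{w}$ is itself concentrated in degrees $[0,2\mathbf{a}]$, as required.

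The main technical obstacle will be verifying that the graded fiat adjunction on $\cSH$ produces a genuinely degree-preserving isomorphism of Hom-spaces for the Kazhdan--Lusztig normalization $\mathrm{C}_{x}$, with no spurious grading shift. This is expected to follow from the symmetric shift $\langle -\mathbf{a}\rangle$ in the definition $\mathrm{C}_{x}=\mathrm{B}_{x}\langle -\mathbf{a}\rangle$ and the equality $\mathbf{a}(x)=\mathbf{a}(x^{\star})$ for $x\in\mathcal{H}$, but it will require careful bookkeeping of the adjunction unit and counit coming from the fiat structure on the Soergel bimodule $2$-category $\cS$.
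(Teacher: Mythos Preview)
Your overall strategy—computing the graded Jordan--H\"older polynomial $\mathrm{grdim}\,\mathrm{Hom}(P_z,\mathrm{C}_xL_w)$ directly via adjunction and Lemma~\ref{lem5.2}—is sound and in fact more explicit than the paper's argument, which first extracts one bound from positivity of the grading together with Lemma~\ref{lem5.2new} and only then uses adjunction to bootstrap the other bound. Your route yields both bounds in a single computation and even identifies the polynomial as $\mathsf{v}^{\pm\mathbf{a}}h_{x^{-1},z,w}$.

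However, the ``main technical obstacle'' you flag is not merely a bookkeeping detail but is resolved contrary to your expectation: the graded left adjoint of $\mathrm{C}_x=\mathrm{B}_x\langle-\mathbf{a}\rangle$ is \emph{not} $\mathrm{C}_{x^{-1}}$ but $\mathrm{B}_{x^{-1}}\langle+\mathbf{a}\rangle=\tilde{\mathrm{C}}_{x^{-1}}=\mathrm{C}_{x^{-1}}\langle 2\mathbf{a}\rangle$, because grading shifts flip sign under adjunction. (The paper uses exactly this: see the adjunction isomorphism with the $2\mathbf{a}$ shift in its proof.) Hence your ``symmetric shift'' heuristic is wrong; the two $\langle-\mathbf{a}\rangle$'s do not cancel, they add. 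With the correct adjoint your computation becomes
\[
\mathrm{grdim}\,\mathrm{Hom}(P_z,\mathrm{C}_xL_w)=\mathsf{v}^{\mathbf{a}}h_{x^{-1},z,w},
\]
whose support lies in $[0,2\mathbf{a}]$ by \eqref{eq:kl-mult}--\eqref{eq:kl-coeff}. Since $\mathrm{Hom}(P_z,M)\cong e_zM$ as graded vector spaces, the coefficient of $\mathsf{v}^k$ records composition factors sitting in degree $k$ (not $-k$), so this directly gives the desired concentration in $[0,2\mathbf{a}]$. Your proof as written reaches the right conclusion only because this second sign slip in reading degrees off the Hom-polynomial happens to cancel the missing $2\mathbf{a}$ shift.
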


\begin{proof}
Let us prove the first statement in \ref{lem7.1-0i}. As the $2$-category of Soergel bimodules is positively graded, the fact
that $\tilde{\mathrm{C}}_{x}\tilde{\mathrm{C}}_{w}\,\tilde{L}_{d}$ is concentrated in non-positive degrees follows from Lemma \ref{lem5.2new}.
This in turn implies that $\tilde{\mathrm{C}}_{x}\,\tilde{L}_{w}$ is concentrated in non-positive degrees as well.

By adjunction, we have
\begin{gather*}
\mathrm{hom}_{\underline{\mathbf{C}_{\mathcal{H}}}}(\tilde{\mathrm{C}}_{x}\,\tilde{L}_{w},\tilde{L}_{y}^{\oplus\mathsf{v}^k})
\cong\mathrm{hom}_{\underline{\mathbf{C}_{\mathcal{H}}}}(\tilde{L}_{w}, \tilde{\mathrm{C}}_{x^{\mone}}\,\tilde{L}_{y}^{\oplus\mathsf{v}^{(2\mathbf{a}+k)}}).
\end{gather*}
As the right-hand side of the above isomorphism is zero for $2\mathbf{a}+k<0$, we deduce that
$\mathrm{hom}_{\underline{\mathbf{C}_{\mathcal{H}}}}(\tilde{\mathrm{C}}_{x}\,\tilde{L}_{w},\tilde{L}_{y}^{\oplus \mathsf{v}^k})$ is also zero
if $k<-2\mathbf{a}$.

The second statement in \ref{lem7.1-0i} follows from the first one and the fact that
$\tilde{\mathrm{C}}_{x}=\mathrm{C}_{x}^{\oplus\mathsf{v}^{\mone[2\mathbf{a}]}}$.

The dual statements in \ref{lem7.1-0ii} are proved in exactly the same way,
using Lemma \ref{lem5.2} instead of Lemma \ref{lem5.2new}.
\end{proof}

Let $\mathbf{P}:=\mathbf{P}_{\varnothing}$ be the principal
$2$-representation of $\cSH$ and $\underline{\mathbf{P}}$ its injective abelianization, see Example \ref{example:yoneda}. Denote by $I_{w}$ and $I_e$ the corresponding injective
object in $\underline{\mathbf{P}}(\varnothing)$ with respect to $\tilde{\mathrm{C}}_{w}$
and $\tilde{\mathrm{C}}_e=\mathbbm{1}_{\varnothing}$, respectively, see \cite[Subsection 3.1]{MMMT} for details.
Note that $I_{w}$ has simple socle $\tilde{L}_{w}$ concentrated in degree 0 and $I_{e}$ has
simple socle $\tilde{L}_e$ concentrated in degree $\mathbf{a}$.

\begin{lemma}\label{lem7.1-3} For any $x\in\mathcal{H}$, the following hold.
\begin{enumerate}[label=(\roman*)]

\item\label{lem7.1-3i}
The injective object $\tilde{\mathrm{C}}_{x}\,\tilde{L}_{d}$ in $\underline{\mathbf{C}_{\mathcal{H}}}(\varnothing)$
has simple head $\tilde{L}_{x}$ concentrated in degree $-2\mathbf{a}$.

\item\label{lem7.1-3ii}
The projective object $\mathrm{C}_{x}\,L_{d}$ in $\overline{\mathbf{C}_{\mathcal{H}}}(\varnothing)$
has simple socle $L_{x}$ concentrated in degree $2\mathbf{a}$.

\end{enumerate}
\end{lemma}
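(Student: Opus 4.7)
Plan. I prove \ref{lem7.1-3ii}; part \ref{lem7.1-3i} follows by the same method in the injective abelianization, using Lemma \ref{lem7.1-0}\ref{lem7.1-0i} in place of \ref{lem7.1-0ii}.

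Set $M:=\mathrm{C}_x L_d$. By Lemma \ref{lem7.1-0}\ref{lem7.1-0ii}, $M$ lives in degrees $[0,2\mathbf{a}]$. Since the underlying algebra $\mathsf{B}$ of $\mathbf{C}_{\mathcal{H}}$ is positively graded with semisimple $\mathsf{B}_0$, elements of $M_{2\mathbf{a}}$ are annihilated by the positive radical $\mathsf{B}_{>0}$, so $M_{2\mathbf{a}}$ lies automatically in $\mathrm{soc}(M)$. The graded biadjunction derived in the proof of Lemma \ref{lem7.1-0} yields, for every $y\in\mathcal{H}$ and $k\in\mathbb{Z}$,
\[
\mathrm{hom}\bigl(L_y\langle -k\rangle,\mathrm{C}_x L_d\bigr)\cong\mathrm{hom}\bigl(\mathrm{C}_{x^{\mone}}L_y, L_d\langle k-2\mathbf{a}\rangle\bigr).
\]
Since $\mathrm{C}_{x^{\mone}}L_y$ is a quotient of the projective object $\mathrm{C}_{x^{\mone}}\mathrm{C}_y L_d$, its head is concentrated in degree $0$, so the right-hand side vanishes whenever $k\ne 2\mathbf{a}$. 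This rules out simple submodules of $M$ outside degree $2\mathbf{a}$.

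For $k=2\mathbf{a}$, the right-hand side equals the multiplicity of $L_d$ in the degree-zero head of $\mathrm{C}_{x^{\mone}}L_y$. Because $\mathrm{C}_{x^{\mone}}\mathrm{rad}(\mathrm{C}_y L_d)$ lives in strictly positive degrees, this head coincides with $(\mathrm{C}_{x^{\mone}}\mathrm{C}_y L_d)_0$. By Lemma \ref{lem5.2} and \eqref{eq:kl-coeff}, the constant-in-$\mathsf{v}$ coefficient of $\mathsf{v}^{\mathbf{a}}h_{x^{\mone},y,z}$ is the asymptotic structure constant $\gamma_{x^{\mone},y,z^{\mone}}$, so this head is $\bigoplus_{z\in\mathcal{H}}L_z^{\oplus\gamma_{x^{\mone},y,z^{\mone}}}$. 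Lusztig's characterization of the Duflo involution $d$ as the identity of the fusion ring $\mathsf{A}_{\mathcal{H}}$, together with the duality $a_w^{\vee}=a_{w^{\mone}}$ coming from the pivotal structure of $\cAH$ (Proposition \ref{prop:fusion}), then gives $\gamma_{x^{\mone},y,d}=\delta_{x,y}$. Hence $M_{2\mathbf{a}}\cong L_x$, completing the argument.

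The principal obstacle is this final identification: extracting the degree-zero head of $\mathrm{C}_{x^{\mone}}L_y$ and matching it with the $\gamma$-constants. This requires careful use of positivity of the Soergel grading, of the relation between $h_{x^{\mone},y,z}$ and $\gamma_{x^{\mone},y,z^{\mone}}$ from bar invariance, and of Lusztig's description of the Duflo involution inside $\mathsf{A}_{\mathcal{H}}$; all other ingredients are either direct consequences of Lemma \ref{lem7.1-0} or formal manipulation of the biadjunction.
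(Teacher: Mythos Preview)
Your step~1 has a real gap. From ``$\mathrm{C}_{x^{\mone}}L_y$ is a quotient of the projective $\mathrm{C}_{x^{\mone}}\mathrm{C}_y L_d$'' it does \emph{not} follow that its head sits in degree~$0$. By Lemma~\ref{lem5.2}, $\mathrm{C}_{x^{\mone}}\mathrm{C}_y L_d\cong\bigoplus_z Q_z^{\oplus \mathsf{v}^{\mathbf a}h_{x^{\mone},y,z}}$, whose head is spread over all degrees in $[0,2\mathbf a]$; a quotient of such a module may well have head in strictly positive degree. What you would need is that $\mathrm{C}_{x^{\mone}}L_y$ itself has no shifted projective summands, i.e.\ that the bimodule realizing $\mathrm{C}_{x^{\mone}}$ involves no grading shifts. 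That is precisely the content of Proposition~\ref{prop:2-action}, which is only proved later and relies on the present lemma via Proposition~\ref{prop:Frobenius}; so this line of argument is circular as stated.

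The fix is cheap and is exactly what the paper uses: since $\cSH$ is fiat, the underlying algebra $\mathsf{B}$ is self-injective, so the indecomposable projective $\mathrm{C}_xL_d$ is also injective and hence has \emph{simple} socle. Your step~2 then suffices, since it correctly shows that the degree-$2\mathbf a$ part of the socle is $L_x$ (your computation of $(\mathrm{C}_{x^{\mone}}L_y)_0$ via the exact sequence $0\to\mathrm{C}_{x^{\mone}}\mathrm{rad}(Q_y)\to\mathrm{C}_{x^{\mone}}Q_y\to\mathrm{C}_{x^{\mone}}L_y\to 0$ and Lemma~\ref{lem7.1-0}\ref{lem7.1-0ii} is fine, and $\gamma_{x^{\mone},y,d}=\delta_{x,y}$ is exactly \cite[P2, P5]{Lu2}). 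Note that the paper, having invoked self-injectivity in the same way, identifies the simple socle by a different device: it passes to the principal $2$-representation and detects the summand $\tilde{\mathrm C}_d\tilde L_d$ inside $\tilde{\mathrm C}_{x^{\mone}}\tilde L_x$ through the composition factor $\tilde L_e$. Your identification via the $\gamma$-constants is a legitimate and somewhat more direct alternative for that last step, but it cannot replace the self-injectivity input.
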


\begin{proof}
We only prove statement \ref{lem7.1-3i} since the dual statement \ref{lem7.1-3ii} follows by similar arguments.
As $\cSH$ is fiat, the underlying algebra of $\underline{\mathbf{C}_{\mathcal{H}}}$ is self-injective, by Corollary \ref{cor:projapex}, which implies that the indecomposable injective object $\tilde{\mathrm{C}}_{x}\,\tilde{L}_{d}$
is also projective and thus, has a simple head.
Therefore, it suffices to prove that $\mathrm{hom}_{\underline{\mathbf{C}_{\mathcal{H}}}}
(\tilde{\mathrm{C}}_{x}\,\tilde{L}_{d}^{\oplus\mathsf{v}^{2\mathbf{a}}},\tilde{L}_{x})$ is non-zero.
By adjunction, this is equivalent to proving that
\begin{gather}\label{eq7.1-3}
\mathrm{hom}_{\underline{\mathbf{C}_{\mathcal{H}}}}(\tilde{L}_{d},\tilde{\mathrm{C}}_{x^{\mone}}\,\tilde{L}_{x})\not\cong 0,
\end{gather}
which holds if and only if $\tilde{\mathrm{C}}_{d}\,\tilde{L}_{d}$ appears as a direct
summand of $\tilde{\mathrm{C}}_{x^{\mone}}\,\tilde{L}_{x}$ in $\underline{\mathbf{C}_{\mathcal{H}}}(\varnothing)$.

Note that the latter holds if and only if the same is true in $\underline{\mathbf{P}}(\varnothing)$.
(Recall the two equivalent constructions of cell $2$-representations,
see \cite[Subsection 4.5]{MM1}.)
In the principal $2$-representation, we can use the following fact.
Since $\tilde{\mathrm{C}}_{x}\,I_e\cong I_{x}$, we have
\begin{gather*}
\mathrm{hom}_{\underline{\mathbf{P}}}(\tilde{\mathrm{C}}_{x}\,\tilde{L}_{d},I_e^{\oplus\mathsf{v}^{k}})\cong
\mathrm{hom}_{\underline{\mathbf{P}}}(\tilde{L}_{d},I_{x^{\mone}}^{\oplus\mathsf{v}^{(2\mathbf{a}+k)}}),
\end{gather*}
whose right-hand side is zero unless $x=d$ and $k=-2\mathbf{a}$. In other words, the
only $x\in\mathcal{H}$
such that $\tilde{\mathrm{C}}_{x}\,\tilde{L}_{d}$ has  a composition factor isomorphic to $\tilde{L}_e$,
up to a shift, is $x=d$. Therefore, to prove \eqref{eq7.1-3}, it is enough to show that
$\mathrm{hom}_{\underline{\mathbf{P}}}(\tilde{\mathrm{C}}_{x^{\mone}}\,\tilde{L}_{x},I_{e}^{\oplus\mathsf{v}^{\mone[2\mathbf{a}]}})$ is non-zero.
By adjunction, we have
\begin{gather*}
\mathrm{hom}_{\underline{\mathbf{P}}}(\tilde{\mathrm{C}}_{x^{\mone}}\,\tilde{L}_{x},I_{e}^{\oplus\mathsf{v}^{\mone[2\mathbf{a}]}})\cong
\mathrm{hom}_{\underline{\mathbf{P}}}(\tilde{L}_{x},I_{x}),
\end{gather*}
where the right-hand side is non-zero.
Hence, $\tilde{L}_{x}^{\oplus\mathsf{v}^{\mone[2\mathbf{a}]}}$ appears in the
head of $\tilde{\mathrm{C}}_{x}\,\tilde{L}_{d}$ in $\underline{\mathbf{C}_{\mathcal{H}}}(\varnothing)$.
\end{proof}

Assume that $\mathsf{B}=\mathsf{B}^{\,\underline{\mathbf{C}_{\mathcal{H}}}}$ is
the underlying basic algebra of $\underline{\mathbf{C}_{\mathcal{H}}}$.
Then the indecomposable objects in $\mathbf{C}_{\mathcal{H}}(\varnothing)$ are
identified with indecomposable injective $\mathsf{B}$-modules.

\begin{proposition}\label{prop:Frobenius}
The algebra $\mathsf{B}$ is a finite dimensional
positively graded weakly symmetric Frobenius algebra of graded length $2\mathbf{a}$.
\end{proposition}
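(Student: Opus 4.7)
The plan is to identify a complete set of indecomposable projective-injective $\mathsf{B}$-modules with the objects $P_x = \mathrm{C}_x\,L_d$ for $x\in\mathcal{H}$ in $\overline{\mathbf{C}_{\mathcal{H}}}(\varnothing)$, and then read each of the four required properties off the two immediately preceding lemmas together with Soergel's hom formula \eqref{eq:soergel}.

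Finite dimensionality is immediate from $\mathbf{C}_{\mathcal{H}}$ being a finitary $2$-representation. Self-injectivity of $\mathsf{B}$ follows from $\cSH$ being fiat, so the $P_x$ are simultaneously indecomposable projective and injective, with simple head $L_x$ concentrated in degree $0$ by construction. To see that $\mathsf{B}$ is positively graded, I would write it as the opposite of the graded endomorphism algebra of $\bigoplus_{x\in\mathcal{H}} P_x$; by Lemma \ref{lem7.1-0}\ref{lem7.1-0ii} each $P_x$ is concentrated in degrees between $0$ and $2\mathbf{a}$, so $\mathrm{Hom}_{\overline{\mathbf{C}_{\mathcal{H}}}}(P_x,P_y)$ is as well, and Soergel's hom formula ensures that the degree zero part is semisimple.

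The graded length and weak symmetry now both follow from Lemma \ref{lem7.1-3}\ref{lem7.1-3ii}: the socle of $P_x$ is the simple module $L_x$ sitting in degree $2\mathbf{a}$. Simplicity of the socle reconfirms self-injectivity, its isomorphism with the head $L_x$ as an ungraded simple expresses that the Nakayama permutation of $\mathsf{B}$ is trivial, which is exactly weak symmetry, and the fact that it attains degree $2\mathbf{a}$ together with the upper bound from Lemma \ref{lem7.1-0}\ref{lem7.1-0ii} forces the graded length of $\mathsf{B}$ to be exactly $2\mathbf{a}$.

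For the final Frobenius assertion I would invoke the standard equivalence: any positively graded, weakly symmetric, finite dimensional, self-injective algebra $\mathsf{A}$ in which the simple socle of every indecomposable projective sits in a single common degree $k$ is automatically a graded Frobenius algebra of length $k$, since the graded $\Bbbk$-dual $\mathrm{Hom}_{\Bbbk}(\mathsf{A},\Bbbk)$ is the injective hull of $\mathsf{A}/\mathrm{rad}(\mathsf{A})$ and weak symmetry identifies this hull, as a graded left $\mathsf{A}$-module, with $\mathsf{A}^{\oplus\mathsf{v}^{\mone[k]}}$ in the shift conventions of Subsection \ref{subsection:grading}. The only nontrivial thing to watch is the degree bookkeeping to match that convention precisely; no step here looks like a serious obstacle, as all the hard work has already been done in Lemmas \ref{lem7.1-0} and \ref{lem7.1-3}.
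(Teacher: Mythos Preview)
Your proposal is correct and follows essentially the same route as the paper: fiatness of $\cSH$ gives self-injectivity, Lemma \ref{lem7.1-3} gives weak symmetry (the paper cites part \ref{lem7.1-3i} rather than your \ref{lem7.1-3ii}, but these are dual), and combining with Lemma \ref{lem7.1-0}\ref{lem7.1-0ii} gives the graded length $2\mathbf{a}$. The paper is terser on two points: it dispatches positivity of the grading ``by construction'' (since positivity of the $2$-endomorphism algebra in $\cSH$ is already noted right after \eqref{eq:soergel} and is inherited by the cell quotient), and it obtains Frobenius from the one-line fact that a basic self-injective algebra is automatically Frobenius, rather than unpacking the graded version as you do.
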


\begin{proof}
By construction, the algebra $\mathsf{B}$ is non-negatively graded and its degree $0$ part, which is
isomorphic to $\mathbb{C}^{\#\mathcal{H}}$, is semisimple, so $\mathsf{B}$ is positively graded.

Since $\cSH$ is fiat, the algebra $\mathsf{B}$ is Frobenius, see Corollary \ref{cor:projapex}.

The fact that $\mathsf{B}$ is weakly symmetric follows from Lemma \ref{lem7.1-3}\ref{lem7.1-3i}.
Together with Lemma \ref{lem7.1-0}\ref{lem7.1-0ii}, this implies $\mathsf{B}$ is of graded length $2\mathbf{a}$.
\end{proof}

\begin{remark}\label{remark:symmetric}
In some cases we know that $\mathsf{B}$ is symmetric:
\begin{enumerate}[label=(\roman*)]
\item Let $\mathcal{L}(\mathcal{H})$ be the left cell for $\mathcal{H}$.
From \cite[Theorem 4.6]{MS} we know that $\mathsf{B}$ is symmetric if $W$
is a finite Weyl group and
\begin{gather*}
\text{there exists }\mathtt{I}\subset\mathtt{S}\text{ such that }
w_{0}w_{0}^{\mathtt{I}}\in\mathcal{L}(\mathcal{H}).
\end{gather*}

\item For any finite Coxeter group $W$, $\mathsf{B}$ is symmetric if $\mathcal{H}$ is a diagonal
$\mathcal{H}$-cell in the subregular two-sided cell of $\cS$, see \cite[Corollary 5, Proposition 14
and the comment below it, and Remarks 29, 32 and 38]{KMMZ} and \cite[Theorem I]{MT}.

\end{enumerate}
However, we do not know if this holds in general.
\end{remark}

Since $\cSH$ is fiat, the action of $\mathrm{C}_{w}$ on the category of $\mathsf{B}$-modules is
exact. Further, by Theorem \ref{thm:projapex}, we know that the action of $\mathrm{C}_{w}$ via $\mathbf{C}_{\mathcal{H}}$
is given by tensoring with a projective-injective bimodule. It follows from Lemma
\ref{lem7.1-0}\ref{lem7.1-0i}
that the bimodule representing $\mathrm{C}_{w}$ is isomorphic to a direct sum of
bimodules of the form $\mathsf{B}e_{u}\otimes_{\mathbb{C}}e_{v}\mathsf{B}$, possibly with multiplicities
but without grading shifts, where $e_{u},e_{v}$ are some primitive idempotents of $\mathsf{B}$. By Proposition \ref{prop:Frobenius}
and the fact that $\mathbf{C}_{\mathcal{H}}$ is a faithful $2$-functor which is degree-preserving on $2$-morphisms,
this implies that the $1$-morphism $\mathrm{C}_{w}$ in $\cSH$ has graded length at most $4\mathbf{a}$.

Recall from Example \ref{ex:Cd-alg-coalg} that $\Cd$ is a cosimple coalgebra $1$-morphism in $\underline{\cSH}$.
By \cite[Corollary 12]{MMMZ},
$\mathbf{M}:=\mathbf{inj}_{\underline{\ccS_{\mathcal{H}}}}(\Cd)$ is a graded simple transitive $2$-representation of $\cSH$
with apex $\mathcal{H}$. We denote by $\mathsf{B}^{\mathbf{M}}$ the basic algebra underlying $\mathbf{M}$.

\begin{proposition}\label{prop:frobenius21}
The algebra $\mathsf{B}^{\mathbf{M}}$ is a positively graded Frobenius algebra of graded length $2\mathbf{a}$.
\end{proposition}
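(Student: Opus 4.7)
The plan is to mimic the proof of Proposition~\ref{prop:Frobenius}, replacing $\mathbf{C}_{\mathcal{H}}$ by $\mathbf{M}$ throughout. First I would pin down the indecomposables of $\mathbf{M}(\varnothing)=\mathbf{inj}_{\underline{\ccSH}}(\Cd)$. Since $\mathrm{A}_d$ is the weak identity of the pivotal fusion bicategory $\cAH$, every object of $\cAH$ is an injective $\mathrm{A}_d$-comodule, so $\mathrm{inj}_{\ccA_{\mathcal{H}}}(\mathrm{A}_d)$ has rank $|\mathcal{H}|$. By Proposition~\ref{prop5.41} and Corollary~\ref{cor5.42}, $\mathbf{M}(\varnothing)$ then has the same rank, with indecomposables, up to grading shift, given by $\mathrm{C}_w=\Theta(\mathrm{A}_w)$, $w\in\mathcal{H}$. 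Normalizing each $\mathrm{C}_w$ to start in degree zero and using Lemma~\ref{lem5.3-0}, take $Y=\bigoplus_{w\in\mathcal{H}}\mathrm{C}_w$ as a multiplicity-free generator and realize $\mathsf{B}^{\mathbf{M}}$ as the graded endomorphism algebra of $Y$ inside $\mathrm{comod}_{\underline{\ccSH}}(\Cd)$, which embeds degree-preservingly in $\mathrm{End}_{\ccS_{\mathcal{H}}}(Y)$.

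Positivity is then immediate: by Soergel's hom formula~\eqref{eq:soergel} and positivity of the grading on $\cSH$, the target algebra is non-negatively graded with degree zero part $\mathbb{C}^{|\mathcal{H}|}$, spanned by the identities of the summands $\mathrm{C}_w$. These identities are automatically comodule morphisms, so $\mathsf{B}^{\mathbf{M}}_0=\mathbb{C}^{|\mathcal{H}|}$ is semisimple, and $\mathsf{B}^{\mathbf{M}}$ is positively graded. The Frobenius property is a formal consequence of fiatness of $\cSH$: the internal biadjunctions force the underlying algebra of any finitary $2$-representation of $\cSH$ to be self-injective, and $\mathsf{B}^{\mathbf{M}}$ is basic by construction, so it is Frobenius.

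The most delicate step, and what I expect to be the main obstacle, is showing that the graded length is exactly $2\mathbf{a}$. The upper bound should follow from Lemma~\ref{lem5.2} combined with bar-invariance of the $h_{x,w,z}$ and the asymptotic formulae \eqref{eq:kl-mult}--\eqref{eq:kl-coeff}: for $x,w,z\in\mathcal{H}$, the polynomial $\mathsf{v}^{\mathbf{a}}h_{x,w,z}$ has top $\mathsf{v}$-degree at most $2\mathbf{a}$, so every summand $\mathrm{C}_z^{\oplus\mathsf{v}^k}$ appearing in $\mathrm{C}_x\Cd$ satisfies $0\leq k\leq 2\mathbf{a}$, which propagates to give vanishing of $\mathsf{B}^{\mathbf{M}}$ in degrees strictly greater than $2\mathbf{a}$. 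For the matching non-vanishing in degree $2\mathbf{a}$, the natural approach is to mirror the biadjunction argument of Lemma~\ref{lem7.1-3}\ref{lem7.1-3i}: for each $w\in\mathcal{H}$, the product $\mathrm{C}_{w^{-1}}\mathrm{C}_w$ contains $\Cd^{\oplus\mathsf{v}^{2\mathbf{a}}}$ as a direct summand in top degree (since $\gamma_{w^{-1},w,d}\neq 0$ in the semisimple asymptotic Hecke algebra, as $a_d$ is the unit on the block containing $\mathcal{H}$), which, combined with the counit $\epsilon_d:\Cd\to\mathbbm{1}_{\varnothing}$ and fiat adjunction, yields a non-zero degree-$2\mathbf{a}$ endomorphism of $\mathrm{C}_w$.

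The subtle technical point is to verify that this witnessing endomorphism is in fact compatible with the $\Cd$-coaction and thus survives to $\mathsf{B}^{\mathbf{M}}$. An alternative, possibly cleaner route is to argue that $\mathbf{M}$ is equivalent, as a graded $2$-representation, to $\mathbf{C}_{\mathcal{H}}$: both are graded simple transitive with apex $\mathcal{H}$ and come from cosimple coalgebras in $\underline{\cSH}$, so it suffices to check that $[X,X]_{\underline{\ccSH}}$ for a suitable generator $X$ of $\mathbf{C}_{\mathcal{H}}$ is Morita--Takeuchi equivalent to $\Cd$; this would give $\mathsf{B}^{\mathbf{M}}\cong\mathsf{B}$ as graded algebras and the graded length statement becomes a direct consequence of Proposition~\ref{prop:Frobenius}.
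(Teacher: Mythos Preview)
Your arguments for positive grading and the Frobenius property are fine and match the paper's. The real content is the graded length, and here both of your proposed bounds have genuine gaps.

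For the upper bound, the embedding $\mathsf{B}^{\mathbf{M}}\hookrightarrow\mathrm{End}_{\ccS_{\mathcal{H}}}(Y)$ only tells you that $\mathsf{B}^{\mathbf{M}}$ sits inside an algebra concentrated in degrees $[0,4\mathbf{a}]$ (this is all one gets from the graded length of $\mathrm{C}_w$ established before Example~\ref{ex:Cd-alg-coalg}), not $[0,2\mathbf{a}]$. Your other suggestion, that the bound $0\leq k\leq 2\mathbf{a}$ on shifts appearing in $\mathrm{C}_x\mathrm{C}_w$ ``propagates'' to a bound on morphism degrees, is not justified: a bound on how the action decomposes objects does not by itself bound degrees of morphisms between indecomposables. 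For the lower bound you correctly identify the obstacle yourself: the nonzero degree-$2\mathbf{a}$ endomorphism you build in $\cSH$ is not known to be a $\Cd$-comodule map, and there is no easy reason it should be.

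The paper bypasses both difficulties with a single device you are missing: the internal cohom isomorphism. One replaces $\Cd$ by a Morita--Takeuchi equivalent coalgebra $\mathrm{C}=\Theta(\mathrm{A})$ (via Lemma~\ref{lem5.3-0} applied on the $\cAH$ side) which, as an object of $\mathbf{M}$, is a multiplicity-free sum of all indecomposables without shifts. Since $\mathrm{C}\cong[\mathrm{C},\mathrm{C}]_{\underline{\ccSH}}$, one has
\[
(\mathsf{B}^{\mathbf{M}})_{-k}\;\cong\;\mathrm{hom}_{\mathbf{M}}(\mathrm{C},\mathrm{C}^{\oplus\mathsf{v}^{k}})\;\cong\;\mathrm{hom}_{\ccS_{\mathcal{H}}}(\mathrm{C},\mathbbm{1}^{\oplus\mathsf{v}^{k}}),
\]
and the right-hand side is now a hom space into $\mathbbm{1}$ in $\cSH$, which one controls via the faithful cell $2$-representation: it vanishes for $k>0$ and for $k<-2\mathbf{a}$, and is nonzero for $k=-2\mathbf{a}$ because the cell algebra $\mathsf{B}$ already has graded length $2\mathbf{a}$ (Proposition~\ref{prop:Frobenius}). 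This trick simultaneously gives both bounds and removes the need to check any comodule compatibility by hand.

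Finally, your alternative route is circular in the paper's logical order: showing $\mathbf{M}\simeq\mathbf{C}_{\mathcal{H}}$ is exactly Proposition~\ref{prop711}, whose proof (through Lemma~\ref{lem:gr}) relies on the present proposition.
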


\begin{proof}
The algebra $\mathsf{B}^{\mathbf{M}}$ is graded, by definition, and Frobenius by Corollary \ref{cor:projapex}.

By Lemma \ref{lem5.3-0}, we can choose a coalgebra $\mathrm{A}$ in $\cAH$ such that
$\mathbf{inj}_{\ccA_{\mathcal{H}}}(\mathrm{A})$ is equivalent to
$\mathbf{inj}_{\ccA_{\mathcal{H}}}(\mathrm{A}_{d})$ and $\mathrm{A}$ is the image of a multiplicity free direct sum
of representatives of isomorphism classes of indecomposable objects in $\mathrm{inj}_{\ccA_{\mathcal{H}}}(\mathrm{A})$ under
the forgetful functor to $\cAH$. This implies
that $A\cong\oplus_{w\in\mathcal{H}}\mathrm{A}_{w}$ as $1$-morphisms in $\cAH$, because $A_{d}$ is the identity $1$-morphism of $\cAH$.

Define $\mathrm{C}:=\Theta(\mathrm{A})$ in $\cSH^{(0)}$. Since $\Theta(\mathrm{A}_{w})\cong\mathrm{C}_{w}$, for all $w\in\mathcal{H}$, there is an isomorphism
$\mathrm{C}\cong\oplus_{w\in\mathcal{H}}\mathrm{C}_{w}$ in $\cSH^{(0)}$ and, by Proposition \ref{prop5.25-2}, there is an equivalence $\mathbf{inj}_{\underline{\ccS_{\mathcal{H}}}}(\mathrm{C})\simeq\mathbf{M}$. From the definition of $\mathrm{C}$, for all $k\in\mathbb{Z}$, we have
\begin{gather}\label{eq:marco11}
\mathrm{hom}_{{\ccS_{\mathcal{H}}}}(\mathrm{C},\mathbbm{1}_{\ccS_{\mathcal{H}}}^{\oplus\mathsf{v}^{k}})
\cong
\mathrm{hom}_{\mathbf{M}}(\mathrm{C},\mathbbm{1}_{\ccS_{\mathcal{H}}}^{\oplus\mathsf{v}^{k}}\mathrm{C}).
\end{gather}
If $k>0$, then Soergel's hom formula \cite[Theorem 3.6]{EW} (here we need the full formula, not just the restricted version which we recalled in \eqref{eq:soergel}) implies that the left-hand side of \eqref{eq:marco11} is zero. If $k=0$, then Soergel's hom formula implies that the left-hand side of \eqref{eq:marco11} has dimension one, because
\begin{gather*}
\dim\big(\mathrm{hom}_{{\ccS_{\mathcal{H}}}}
(\mathrm{C}_{w},\mathbbm{1}_{\ccS_{\mathcal{H}}})\big)=
\begin{cases}
1 &\text{if }w=d,\\
0 &\text{else}.
\end{cases}
\end{gather*}
This implies that $\mathsf{B}^{\mathbf{M}}$ is positively graded.

Since $\mathrm{C}\cong\oplus_{w\in\mathcal{H}}\mathrm{C}_{w}$ in $\cSHo$,
the left-hand side of \eqref{eq:marco11} is zero
if $k<-2\mathbf{a}$ and hence, the graded length of $\mathsf{B}^{\mathbf{M}}$ is at
most $2\mathbf{a}$. As the algebra underlying the cell $2$-representation has graded length $2\mathbf{a}$, see Proposition \ref{prop:Frobenius},
we know that there exists a $\mathrm{C}_{w}$ such that
$\mathrm{hom}_{\ccS_{\mathcal{H}}}
(\mathrm{C}_{w},\mathbbm{1}_{\ccS_{\mathcal{H}}}^{\oplus\mathsf{v}^{\mone[2\mathbf{a}]}})\neq 0$, which
implies that the left-hand side of \eqref{eq:marco11} is non-zero for $k=-2\mathbf{a}$. Thus, the graded length of
$\mathsf{B}^{\mathbf{M}}$ is exactly $2\mathbf{a}$. This completes the proof.
\end{proof}

\begin{lemma}\label{lem:gr}
For all $w\in\mathcal{H}$, the $1$-morphism $\mathrm{C}_{w}$ in $\cSH$ is of graded length $4\mathbf{a}$.
\end{lemma}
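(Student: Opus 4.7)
The upper bound $\leq 4\mathbf{a}$ is already established in the paragraph immediately preceding the lemma, using that $\mathbf{C}_{\mathcal{H}}$ is a faithful and degree-preserving $2$-functor sending $\mathrm{C}_w$ to a direct sum (without grading shifts) of bimodules of the form $\mathsf{B}e_u\otimes e_v\mathsf{B}$, each of graded length $4\mathbf{a}$ because $\mathsf{B}$ has graded length $2\mathbf{a}$ by Proposition~\ref{prop:Frobenius}. The remaining task is to prove the matching lower bound, namely to exhibit a non-zero element of the graded endomorphism algebra $\mathrm{End}_{\ccSH}(\mathrm{C}_w)$ in degree $4\mathbf{a}$.

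My plan is to combine fiat adjunction in $\cSH$ with the Frobenius algebra structure on $\Cd$ recorded in Example~\ref{ex:Cd-alg-coalg}. Since $\mathrm{B}_w^{\star}\cong\mathrm{B}_{w^{-1}}$ in $\cS$ and $\mathrm{C}_w=\mathrm{B}_w^{\oplus\mathsf{v}^{\mathbf{a}}}$, the right adjoint of $\mathrm{C}_w$ in $\cSH$ is
\begin{gather*}
\mathrm{C}_w^{\star}=\mathrm{B}_{w^{-1}}^{\oplus\mathsf{v}^{-\mathbf{a}}}=\mathrm{C}_{w^{-1}}^{\oplus\mathsf{v}^{-2\mathbf{a}}}.
\end{gather*}
Graded adjunction will then identify degree-$4\mathbf{a}$ endomorphisms of $\mathrm{C}_w$ with homogeneous morphisms $\mathbbm{1}\to\mathrm{C}_{w^{-1}}\mathrm{C}_w$ of a specific positive degree determined by $4\mathbf{a}$ and the adjunction shift, reducing the problem to producing such a morphism.

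Next, Lemma~\ref{lem5.2} decomposes $\mathrm{C}_{w^{-1}}\mathrm{C}_w\cong\bigoplus_{z\in\mathcal{H}}\mathrm{C}_z^{\oplus\mathsf{v}^{\mathbf{a}}h_{w^{-1},w,z}}$, and the defining property of the Duflo involution $d\in\mathcal{H}$, namely $\gamma_{w^{-1},w,d}=1$ (which holds for Soergel bimodules by the Elias--Williamson theorem and Lusztig's P-conjectures, cf. Subsection~\ref{s5.1}), together with \eqref{eq:kl-coeff} ensures that $\Cd$ occurs as a direct summand with coefficient of the form $1+O(\mathsf{v})$. It therefore suffices to produce a non-zero morphism $\mathbbm{1}\to\Cd$ of the relevant positive degree. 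This is exactly what the algebra structure on $\tilde{\Cd}=\Cd^{\oplus\mathsf{v}^{-2\mathbf{a}}}$ from Example~\ref{ex:Cd-alg-coalg} supplies: the algebra unit $\mathbbm{1}\to\tilde{\Cd}$, homogeneous of degree zero in $\overline{\cSH}$, unwinds to a non-zero degree-$2\mathbf{a}$ morphism $\mathbbm{1}\to\Cd$ in $\cSH$.

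Composing the unit through the inclusion of the $\Cd$-summand into $\mathrm{C}_{w^{-1}}\mathrm{C}_w$ and the adjunction isomorphism yields the sought-after non-zero degree-$4\mathbf{a}$ endomorphism of $\mathrm{C}_w$, matching the upper bound. The main obstacle is the careful bookkeeping of grading shifts: one must verify that the shift $\mathsf{v}^{-2\mathbf{a}}$ appearing in $\mathrm{C}_w^{\star}$ combines coherently with the degree-$2\mathbf{a}$ Frobenius unit on $\Cd$ through the adjunction to deliver exactly degree $4\mathbf{a}$ on the nose; this may benefit from being phrased via the pivotal conventions of $\cSH$, and, should the naive count fall short, by twisting with further homogeneous structural morphisms of $\Cd$ (for instance, combining $\eta_d$ with the Frobenius form rather than using $\eta_d$ alone).
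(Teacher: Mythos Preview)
There is a genuine degree-bookkeeping gap that your final paragraph anticipates but does not resolve. When you carry out the adjunction, the shift in $\mathrm{C}_w^{\star}=\mathrm{C}_{w^{-1}}^{\oplus\mathsf{v}^{-2\mathbf{a}}}$ costs you exactly $2\mathbf{a}$ of degree: concretely,
\[
\mathrm{Hom}_{\ccSH}(\mathrm{C}_w,\mathrm{C}_w)_{4\mathbf{a}}
\;\cong\;
\mathrm{Hom}_{\ccSH}\big(\mathbbm{1},\mathrm{C}_{w^{-1}}\mathrm{C}_w\big)_{6\mathbf{a}},
\]
so you need a non-zero degree-$6\mathbf{a}$ morphism $\mathbbm{1}\to\mathrm{C}_{w^{-1}}\mathrm{C}_w$. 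By Lemma~\ref{lem5.2}, every indecomposable summand of $\mathrm{C}_{w^{-1}}\mathrm{C}_w$ is of the form $\mathrm{C}_z^{\oplus\mathsf{v}^{k}}$ with $0\le k\le 2\mathbf{a}$, and therefore your problem reduces, at best, to producing a non-zero element of $\mathrm{Hom}_{\ccSH}(\mathbbm{1},\Cd)_{4\mathbf{a}}$. The algebra unit $\iota_d$ from Example~\ref{ex:Cd-alg-coalg} only lives in degree $2\mathbf{a}$, and the remaining structural $2$-morphisms $\epsilon_d,\delta_d,\mu_d$ lie in degrees $0,0,-2\mathbf{a}$; no composite of these yields a non-zero morphism $\mathbbm{1}\to\Cd$ of degree $4\mathbf{a}$. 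In fact, tracing your construction through shows that the $\iota_d$-into-$\Cd$-summand-then-adjoin recipe lands in $\mathrm{Hom}_{\ccSH}(\mathrm{C}_w,\mathrm{C}_w)_{0}$, i.e.\ it recovers (a multiple of) the identity, not a top-degree endomorphism.

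The paper closes this $2\mathbf{a}$-gap by a different mechanism: it passes to the $2$-representation $\mathbf{M}=\mathbf{inj}_{\underline{\ccSH}}(\Cd)$ and invokes Proposition~\ref{prop:frobenius21}, which says the underlying algebra $\mathsf{B}^{\mathbf{M}}$ is Frobenius of graded length $2\mathbf{a}$. Using the internal cohom identity $\Cd\cong[\Cd,\Cd]$, one converts $\mathrm{hom}_{\ccSH}(\Cd,\mathrm{C}^{\oplus\mathsf{v}^{-4\mathbf{a}}})$ into $\mathrm{hom}_{\mathbf{M}}(\Cd,\mathrm{C}\Cd^{\oplus\mathsf{v}^{-4\mathbf{a}}})$ and then argues via a head/socle match inside $\underline{\mathbf{M}}(\varnothing)$: the head of $\Cd$ sits in degree $2\mathbf{a}$ and meets the socle of the appropriately shifted summand. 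That Frobenius/socle input is precisely the ``extra $2\mathbf{a}$'' your approach is missing; without going through a $2$-representation whose underlying algebra is already known to have graded length $2\mathbf{a}$, the structural $2$-morphisms of $\Cd$ alone are not enough.
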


\begin{proof}
Recall from \cite[Example 4.20]{MMMTZ2} that $\mathrm{C}_{d}\cong[\Cd,\Cd]$ in $\cSHo$, where the internal cohom-construction is with respect to $\mathbf{M}=\mathbf{inj}_{\underline{\ccS_{\mathcal{H}}}}(\Cd)$.
We first claim that
\begin{gather}\label{eq:hom-1}
\mathrm{hom}_{{\ccS_{\mathcal{H}}}}(\Cd,\mathrm{C}^{\oplus\mathsf{v}^{\mone[4\mathbf{a}]}})
\cong
\mathrm{hom}_{\mathbf{M}}(\Cd,\mathrm{C}\Cd^{\oplus\mathsf{v}^{\mone[4\mathbf{a}]}})
\end{gather}
is non-zero, where $\mathrm{C}$ is as in the proof of Proposition \ref{prop:frobenius21}.
By Lemma \ref{lem5.2}, we have
\begin{gather*}
\mathrm{C}_{w}\Cd\cong\bigoplus_{z\in\mathcal{H}}\mathrm{C}_{z}^{\oplus\mathsf{v}^{\mathbf{a}}h_{w,d,z}}
\end{gather*}
in $\cSHo$. Noting that \eqref{eq:kl-mult}, \eqref{eq:kl-coeff} and \cite[Conjecture 14.2.P2, P5 and P7]{Lu2} imply that
\begin{gather*}
\gamma_{w,d,z^{\mone}}=\gamma_{z^{\mone},w,d}=\delta_{z,w},
\end{gather*}
we deduce that
\begin{gather*}
\mathsf{v}^{\mathbf{a}}h_{w,d,z}\in
\begin{cases}
1+\dots+\mathsf{v}^{2\mathbf{a}}& \text{if }z=w,
\\
\mathsf{v}\mathbb{N}_{0}[\mathsf{v}]\cap\mathsf{v}^{2\mathbf{a}-1}\mathbb{N}_{0}[\mathsf{v}^{\mone}]&\text{if }z\neq w.
\end{cases}
\end{gather*}
Therefore, the object $\mathrm{C}^{\oplus\mathsf{v}^{\mone[2\mathbf{a}]}}$ is
isomorphic to a direct summand of $\mathrm{C}\Cd^{\oplus\mathsf{v}^{\mone[4\mathbf{a}]}}$ in $\mathbf{M}^{(0)}(\varnothing)$.
The head of the indecomposable injective object $\Cd$
in $\mathbf{M}$ is isomorphic to a direct summand of the socle of
$\mathrm{C}^{\oplus\mathsf{v}^{\mone[2\mathbf{a}]}}$ in $\underline{\mathbf{M}^{(0)}}(\varnothing)$,
see Proposition \ref{prop:frobenius21} and its proof.
Hence, the right-hand side of \eqref{eq:hom-1} is non-zero, which implies that
the left-hand side is non-zero. This shows that $\mathrm{C}$ has a direct summand
in $\cSHo$ isomorphic to $\mathrm{C}_{v}$ with graded length at least $4\mathbf{a}$.
Therefore $\mathrm{C}_{v}$ must have graded length exactly $4\mathbf{a}$, as we already
know that it is at most $4\mathbf{a}$.
By adjunction, we have
\begin{gather}\label{eq:grl}
\begin{aligned}
0\neq\mathrm{hom}_{{\ccS_{\mathcal{H}}}}(\Cd,\mathrm{C}_{v}^{\oplus\mathsf{v}^{\mone[4\mathbf{a}]}})
&\cong
\mathrm{hom}_{{\ccS_{\mathcal{H}}}}(\tilde{\mathrm{C}}_{v^{\mone}},\tilde{\mathrm{C}}_{d}^{\oplus\mathsf{v}^{\mone[4\mathbf{a}]}})
\cong\mathrm{hom}_{{\ccS_{\mathcal{H}}}}(\mathrm{C}_{v^{\mone}},\Cd^{\oplus\mathsf{v}^{\mone[4\mathbf{a}]}}),
\end{aligned}
\end{gather}
yielding that the graded length of $\Cd$ is at
least $4\mathbf{a}$. Thus, as above, it must be equal to $4\mathbf{a}$.
Note that for any $w\in\mathcal{H}$ we have
$\mathsf{v}^{\mathbf{a}}h_{w,w^{\mone},d}\in 1+\dots+\mathsf{v}^{2\mathbf{a}}$.
Therefore each $\tilde{\mathrm{C}}_{w^{\mone}}\mathrm{C}_{w}^{\oplus\mathsf{v}^{\mone[4\mathbf{a}]}}$
contains a direct summand $\Cd^{\oplus\mathsf{v}^{\mone[4\mathbf{a}]}}$ in $\cSHo$. By \eqref{eq:grl}, we have
\begin{gather*}
\begin{aligned}
0\neq\mathrm{hom}_{{\ccS_{\mathcal{H}}}}(\mathrm{C}_{v^{\mone}},\tilde{\mathrm{C}}_{w^{\mone}}
\mathrm{C}_{w}^{\oplus\mathsf{v}^{\mone[4\mathbf{a}]}})\cong
\mathrm{hom}_{{\ccS_{\mathcal{H}}}}(\mathrm{C}_{w}\mathrm{C}_{v^{\mone}},
\mathrm{C}_{w}^{\oplus\mathsf{v}^{\mone[4\mathbf{a}]}}).
\end{aligned}
\end{gather*}
By Lemma \ref{lem5.2}, the direct summands of
$\mathrm{C}_{w}\mathrm{C}_{v^{\mone}}$ in $\cSHo$ have non-negative shift.
Again as before, this shows that the graded length of $\mathrm{C}_{w}$ is at
least $4\mathbf{a}$, which implies that it must be equal to $4\mathbf{a}$.
\end{proof}

Recall from Example \ref{ex:Cd-alg-coalg} that $\tilde{\mathrm{C}}_{d}$ is a simple
algebra $1$-morphism in $\overline{\cSH}$.

\begin{proposition}\label{prop711}
\leavevmode
\begin{enumerate}[label=(\roman*)]

\item\label{prop711i} The $2$-representation $\mathbf{inj}_{\underline{\ccS_{\mathcal{H}}}}(\Cd)$
is equivalent to the cell $2$-representa\-tion $\mathbf{C}_{\mathcal{H}}$.

\item\label{prop711ii} The $2$-representation $\boldsymbol{\mathrm{proj}}_{\overline{\ccS_{\mathcal{H}}}}(\tilde{\mathrm{C}}_{d})$
is also equivalent to the cell $2$-representation $\mathbf{C}_{\mathcal{H}}$.

\end{enumerate}
\end{proposition}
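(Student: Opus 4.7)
The strategy is to realize both $\mathbf{C}_{\mathcal{H}}$ and $\mathbf{M}:=\mathbf{inj}_{\underline{\ccSH}}(\Cd)$ as categories of injective comodules over $\Cd$ in $\underline{\cSH}$, and then invoke the representability theorem of \cite[Theorem 4.19]{MMMTZ2}. The discussion preceding Proposition \ref{prop:frobenius21} already delivers that $\mathbf{M}$ is a graded simple transitive $2$-representation with apex $\mathcal{H}$, so all the content lies on the $\mathbf{C}_{\mathcal{H}}$ side.

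For part \ref{prop711i}, I would take $X_d\in\mathbf{C}_{\mathcal{H}}(\varnothing)$ to be the indecomposable projective-injective corresponding to the Duflo involution $d$; in $\underline{\mathbf{C}_{\mathcal{H}}}(\varnothing)$ it is realized as $\tilde{\mathrm{C}}_d\,\tilde{L}_d$. Transitivity of $\mathbf{C}_{\mathcal{H}}$ guarantees that $X_d$ generates the $2$-representation, so it suffices to identify the internal cohom coalgebra $[X_d,X_d]_{\underline{\ccSH}}$ with $\Cd$. By the defining adjunction of internal cohom, this reduces to producing, naturally in a test $1$-morphism $\mathrm{G}\in\underline{\cSH}(\varnothing,\varnothing)$, a graded isomorphism
\begin{gather*}
\mathrm{Hom}_{\underline{\mathbf{C}_{\mathcal{H}}}}(X_d,\mathrm{G}\,X_d)\cong\mathrm{Hom}_{\underline{\ccSH}}(\Cd,\mathrm{G}),
\end{gather*}
and it is enough to verify this for $\mathrm{G}$ ranging over the indecomposable $\mathrm{C}_w$, $w\in\mathcal{H}$, up to grading shifts. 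The right-hand side is controlled by Soergel's hom formula \eqref{eq:soergel} together with the cosimplicity of $\Cd$ in $\underline{\ccSH}$ established in Example \ref{ex:Cd-alg-coalg}. The left-hand side is controlled by the decomposition of $\mathrm{C}_w\tilde{\mathrm{C}}_d$ coming from Lemma \ref{lem5.2}, combined with the concentration and socle/head data for indecomposable projective-injectives recorded in Lemmas \ref{lem7.1-0} and \ref{lem7.1-3}. The two sides then match as graded vector spaces via the Lusztig combinatorics \eqref{eq:kl-mult}--\eqref{eq:kl-coeff} and the identity $\gamma_{w,d,z^{\mone}}=\delta_{z,w}$ for $w,z\in\mathcal{H}$ used already in the proof of Lemma \ref{lem:gr}. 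Once the identification is upgraded to a coalgebra isomorphism, \cite[Theorem 4.19]{MMMTZ2} delivers $\mathbf{C}_{\mathcal{H}}\simeq\mathbf{M}$.

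For part \ref{prop711ii}, I would apply the duality ${}^{\star}$ of the fiat $2$-category $\cSH$, which exchanges $\underline{\cSH}$ with $\overline{\cSH}$ and sends cosimple coalgebras to simple algebras, precisely as foreshadowed in Example \ref{ex:Cd-alg-coalg}: the simple algebra structure on $\tilde{\mathrm{C}}_d\in\overline{\cSH}$ is the dual of the coalgebra structure on $\Cd\in\underline{\cSH}$. Since $d=d^{\star}$ and the duality imposes the grading shift $\tilde{\mathrm{C}}_d=\mathrm{C}_d^{\oplus\mathsf{v}^{\mone[2\mathbf{a}]}}$, dualizing the equivalence of \ref{prop711i} yields $\mathbf{C}_{\mathcal{H}}\simeq\boldsymbol{\mathrm{proj}}_{\overline{\ccSH}}(\tilde{\mathrm{C}}_d)$.

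The main obstacle I foresee is upgrading the $1$-morphism-level identification $[X_d,X_d]\cong\Cd$ to one respecting the coalgebra structures. The graded-dimension match on underlying objects is essentially bookkeeping via the Kazhdan--Lusztig data, but compatibility of comultiplication and counit requires tracing the canonical coalgebra structure on $\mathrm{A}_d=\mathbbm{1}_{\ccAH}$ through the oplax pseudofunctor $\Theta$ using Lemma \ref{lem5.1}, and matching it with the structure on $[X_d,X_d]$ inherited from the internal cohom adjunction.
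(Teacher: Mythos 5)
Your overall strategy is the paper's: consider the indecomposable object of $\mathbf{C}_{\mathcal{H}}(\varnothing)$ attached to $d$, identify its internal cohom coalgebra with $\Cd$, and invoke the $\mathbf{inj}$-realization of simple transitive $2$-representations, with part \ref{prop711ii} obtained by dualizing (the paper simply reruns the argument in $\overline{\cSH}$; note that ${}^{\star}$ only gives an antiequivalence, so one should use the equivalence $\underline{\cSH}\simeq\overline{\cSH}$ of Proposition \ref{prop:duality} or argue dually throughout, a minor point). The genuine gap is in your central step: you claim that $\mathrm{Hom}_{\underline{\ccSH}}(\Cd,\mathrm{G})$ is ``controlled by Soergel's hom formula \eqref{eq:soergel}'', but \eqref{eq:soergel} only gives degree-zero homs with non-negatively shifted targets; the full graded Hom-spaces between $1$-morphisms of the quotient $2$-category $\cSH$ are \emph{not} available from the inputs you cite, so the proposed graded-dimension match on indecomposables cannot be carried out as stated. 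The paper avoids ever computing these graded Hom-spaces: writing $[\Cd,\Cd]\cong\bigoplus_w\mathrm{C}_w^{\oplus p_w}$, it first uses positivity and Lemma \ref{lem5.2} to get $p_w\in\mathbb{N}_0[\mathsf{v}^{\mone}]$, and then, crucially, Lemma \ref{lem:gr} --- the nontrivial fact that every $\mathrm{C}_w$ has graded length exactly $4\mathbf{a}$ in $\cSH$ --- together with the vanishing of the right-hand side of \eqref{eq7.1-2} for $k<-4\mathbf{a}$ (from Lemma \ref{lem7.1-0} and Lemma \ref{lem5.2}) to exclude strictly negative shifts, i.e.\ $p_w=p_w(0)$. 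Only then does a purely degree-zero computation, where \eqref{eq:soergel} and $\gamma_{w,d,z^{\mone}}=\delta_{z,w}$ do apply, give $[\Cd,\Cd]\cong\Cd$. You cite Lemmas \ref{lem7.1-0} and \ref{lem7.1-3} and the $\gamma$-identity, but not Lemma \ref{lem:gr} (only its proof's combinatorial input), and without it, or some equivalent bound on the top degree of $\mathrm{End}_{\ccSH}(\mathrm{C}_w)$, the shift-pinning step does not go through.

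Concerning the ``main obstacle'' you flag, upgrading the isomorphism of $1$-morphisms to one of coalgebras: this is indeed needed, but the paper's resolution is much shorter than routing the structure through $\Theta$ and Lemma \ref{lem5.1}. Since $\mathrm{hom}_{\ccSH}(\Cd,\mathbbm{1}_{\varnothing})$ and $\mathrm{hom}_{\ccSH}(\Cd,\Cd\Cd)$ are one-dimensional (positivity, Lemma \ref{lem5.2} and \eqref{eq:soergel}), any two graded coalgebra structures on $\Cd$ differ by scalars, and counitality forces these scalars to be compatible, so a suitably rescaled isomorphism is automatically an isomorphism of coalgebra $1$-morphisms. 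As written, your proposal identifies the correct reduction but leaves both of these load-bearing steps open.
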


\begin{proof}
Again, we will prove the statement \ref{prop711i} and the dual statement
\ref{prop711ii} follows verbatim.
Consider $\Cd$ as an object of $\mathbf{C}_{\mathcal{H}}(\varnothing)$
and set $\mathrm{C}:=[\Cd,\Cd]$ (note that here the internal cohom-construction is with respect to $\mathbf{C}_{\mathcal{H}}$ and not $\mathbf{M}$ as in the proof of Lemma \ref{lem:gr}). As a $1$-morphism in $\cSHo$ we have
\begin{gather*}
\mathrm{C}\cong\bigoplus_{w\in\mathcal{H}}
\mathrm{C}_{w}^{\oplus p_{w}},
\end{gather*}
with $p_{w}\in\mathbb{N}_{0}[\mathsf{v},\mathsf{v}^{\mone}]$. Furthermore,
\begin{gather}\label{eq7.1-2}
\mathrm{hom}_{{\ccS_{\mathcal{H}}}}(\mathrm{C},\mathrm{C}_{w}^{\oplus\mathsf{v}^k})
=
\mathrm{hom}_{{\ccS_{\mathcal{H}}}}\big([\Cd,\Cd],\mathrm{C}_{w}^{\oplus\mathsf{v}^k}\big)
\cong
\mathrm{hom}_{\mathbf{C}_{\mathcal{H}}}(\Cd,\mathrm{C}_{w}^{\oplus\mathsf{v}^k}\Cd).
\end{gather}
By positivity of the grading on $\cSH$ and
Lemma \ref{lem5.2}, we see that $p_{w}\in\mathbb{N}_{0}[\mathsf{v}^{\mone}]$.

If $k<-4\mathbf{a}$, then the right-hand side of \eqref{eq7.1-2} is zero, because
$\Cd$ is an indecomposable injective object of graded length
$2\mathbf{a}$ by Lemma \ref{lem7.1-0}, and the action of $\mathrm{C}_{w}$
increases the graded length by at most $2\mathbf{a}$ by Lemma \ref{lem5.2}. On the left-hand side,
the indecomposable injective object $\mathrm{C}_{w}$ has graded length $4\mathbf{a}$, see Lemma \ref{lem:gr},
which implies that $\mathrm{C}$ lives in non-negative degrees, that is, $p_{w}=p_{w}(0)$.

For $k=0$, the right-hand side of \eqref{eq7.1-2} is one dimensional if $w=d$, and zero otherwise.
This implies $\mathrm{C}\cong\Cd$ in $\cSHo$. Since the degree $0$ maps
$\Cd\to\mathbbm{1}_{\ccS_{\mathcal{H}}}$ and $\Cd\to\Cd\Cd$ are unique up to
scalar, it follows that $\mathrm{C}\cong\Theta(\mathbbm{1}_{\ccA_{\mathcal{H}}})$ as
coalgebra $1$-morphisms in $\cSHo$.
\end{proof}

Let
\begin{gather}\label{def:Cdbicomodules}
\cBH:=(\Cd)\mathrm{biinj}_{\underline{\mathrm{add}(\mathcal{H})}}(\Cd)
\end{gather}
denote the graded one-object bicategory of biinjective $\Cd$-$\Cd$-bicomodules in $\underline{\mathrm{add}(\mathcal{H})}$,
where horizontal composition is given by the cotensor product ${}_{-}\square_{\Cd}{}_{-}$ and the identity $1$-morphism is
$\Cd$. By definition, a biinjective $\Cd$-$\Cd$-bicomodule is injective as a left and
as a right $\Cd$-comodule, but not necessarily as a $\Cd$-$\Cd$-bicomodule, e.g. $\Cd$ is biinjective
but not injective as a $\Cd$-$\Cd$-bicomodule when $d\neq e, w_{0}$. The bicategory $\cBH$
will play an important role in this article, c.f. Proposition \ref{proposition:bicomodules} and
Theorem \ref{theorem:main}. For now, it is important to record the following corollary.

\begin{corollary}\label{cor:End-Bicomod}
There is a graded biequivalence
\begin{gather*}
\cBHop\simeq\cEnd_{\ccSH}(\CH).
\end{gather*}
\end{corollary}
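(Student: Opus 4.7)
The plan is to construct the biequivalence explicitly by right-cotensoring with bicomodules. By Proposition \ref{prop711}\ref{prop711i} we identify $\CH$ with $\mathbf{inj}_{\underline{\ccSH}}(\Cd)$. Given a biinjective $\Cd$-$\Cd$-bicomodule $\mathrm{X}$ in $\underline{\mathrm{add}(\mathcal{H})}$, I would define a graded endofunctor of $\CH$ by $\mathrm{M}\mapsto \mathrm{M}\square_{\Cd}\mathrm{X}$: the left $\Cd$-coaction of $\mathrm{X}$ yields a well-defined cotensor product, the right $\Cd$-coaction of $\mathrm{X}$ equips the result with a right $\Cd$-comodule structure, and biinjectivity of $\mathrm{X}$ ensures the resulting comodule is still injective. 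Such an endofunctor is automatically a morphism of $\cSH$-$2$-representations because left composition with any $1$-morphism of $\cSH$ commutes with cotensoring on the right. Associativity of the cotensor product, i.e. $(\mathrm{M}\square_{\Cd}\mathrm{X}_1)\square_{\Cd}\mathrm{X}_2\cong \mathrm{M}\square_{\Cd}(\mathrm{X}_1\square_{\Cd}\mathrm{X}_2)$, reverses the order of composition, which is the reason a graded $2$-functor is obtained from $\cBHop$, not $\cBH$, to $\cEnd_{\ccSH}(\CH)$.

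To prove that this $2$-functor is a biequivalence, I would first establish essential surjectivity on $1$-morphisms. Given $\Psi\in \cEnd_{\ccSH}(\CH)$, the graded analog of Proposition \ref{prop:exactness} lets us extend $\Psi$ to an exact, $\underline{\cSH}$-linear endofunctor of $\underline{\CH}$. Apply $\Psi$ to $\Cd$, which already carries a left $\Cd$-comodule structure (via the comultiplication of the coalgebra $\Cd$ of Example \ref{ex:Cd-alg-coalg}) in addition to its tautological right coaction; since $\Psi$ commutes with the $\underline{\cSH}$-action and hence with left composition by $\Cd$, the image $\Psi(\Cd)$ acquires a natural $\Cd$-$\Cd$-bicomodule structure, and it lies in $\mathrm{add}(\mathcal{H})$ and is biinjective since $\Psi(\Cd)\in\CH$ and $\Psi$ preserves injectivity via the $\cSH$-action. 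A Yoneda/generator argument then shows $\Psi(\mathrm{M})\cong \mathrm{M}\square_{\Cd}\Psi(\Cd)$ naturally in $\mathrm{M}$, using that every object of $\CH$ is an $\cSH$-summand of $\mathrm{F}\,\Cd$ for some $\mathrm{F}\in\cSH$ together with the identification $\mathrm{F}\,\Cd\square_{\Cd}\mathrm{X}\cong \mathrm{F}\,\mathrm{X}$.

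For full faithfulness on $2$-morphisms (homogeneous of each degree), a modification $\Phi(\mathrm{X}_1)\Rightarrow \Phi(\mathrm{X}_2)$ between the two endofunctors is determined by its component at $\Cd$, which is exactly a bicomodule homomorphism $\mathrm{X}_1\to \mathrm{X}_2$, with the converse constructed by cotensoring a bicomodule homomorphism on the right of the identity. The grading is respected because all structure $2$-morphisms on $\Cd$ are homogeneous of degree zero (Example \ref{ex:Cd-alg-coalg}), so the degrees of modifications and bicomodule homomorphisms match.

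The main obstacle I anticipate is cleanly verifying, in the graded bicategorical setting, that ``biinjective in $\underline{\mathrm{add}(\mathcal{H})}$'' is the right condition to precisely match morphisms in $\cEnd_{\ccSH}(\CH)$ -- in particular that no spurious non-biinjective bicomodules arise as $\Psi(\Cd)$, and conversely that biinjectivity suffices for $\mathrm{M}\square_{\Cd}\mathrm{X}$ to stay in $\CH$ for every $\mathrm{M}\in\CH$. This ultimately reduces to specializing the Morita--Takeuchi type results of \cite[Section 5]{MMMT} and \cite[Section 4]{MMMTZ2} to the graded setting of $\cSH$ and $\Cd$, using that $\Cd$ is a cosimple coalgebra in $\underline{\cSH}$ and that every object of $\CH$ appears as a summand of $\mathrm{C}_w\Cd$ for some $w\in\mathcal{H}$.
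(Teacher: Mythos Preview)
Your proposal is correct and takes essentially the same approach as the paper: identify $\CH$ with $\mathbf{inj}_{\underline{\ccSH}}(\Cd)$ via Proposition~\ref{prop711}\ref{prop711i}, send $\mathrm{X}\in\cBHop$ to the endomorphism ${}_{-}\,\sd\mathrm{X}$, and invoke Proposition~\ref{prop:exactness} together with the Morita--Takeuchi results in \cite[Section~4]{MMMTZ2}. The paper simply cites \cite[Theorems~4.26 and~4.31]{MMMTZ2} for the biequivalence rather than sketching their content as you do, and your anticipated obstacle about matching biinjectivity with the image of $\cEnd_{\ccSH}(\CH)$ is precisely what those theorems handle.
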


\begin{proof}
By Proposition \ref{prop711}\ref{prop711i}, there is a graded biequivalence
\begin{gather*}
\cEnd_{\ccSH}(\CH)\simeq\cEnd_{\ccSH}\big(\mathbf{inj}_{\underline{\ccSH}}(\Cd)\big).
\end{gather*}
Using this identification, the assignment
\begin{gather*}
\cBHop\xrightarrow{}\cEnd_{\ccSH}(\CH)
,\quad
\mathrm{X}\mapsto{}_{-}\,\sd\mathrm{X}
\end{gather*}
defines a biequivalence, as follows from Proposition \ref{prop:exactness} and
\cite[Theorems 4.26 and 4.31]{MMMTZ2}.
\end{proof}


\subsection{The categorified bar involution}\label{s7.2}


\begin{proposition}\label{prop:catbar}
There exists a functorial involution $\vee\colon\cS\to\cS$,
which is covariant on $1$-morphisms and contravariant and degree-preserving on $2$-morphisms, such that we have
\begin{gather}\label{eq:catbar}
(\mathrm{B}_{w}^{\oplus\mathsf{v}^k})^{\vee}\cong\mathrm{B}_{w}^{\oplus\mathsf{v}^{\mone[k]}}
\end{gather}
in $\cSo$ for all $w\in W$ and $k\in\mathbb{Z}$.
\end{proposition}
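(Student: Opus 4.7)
The plan is to construct $\vee$ diagrammatically, using the Elias--Williamson presentation of $\cS$ from \cite{EW}. In this presentation, each morphism category of $\cS$ is generated by planar diagrams (dots, trivalent vertices, $2m_{st}$-valent vertices and polynomial boxes) modulo a list of local relations; horizontal composition of $2$-morphisms is realised by left--right juxtaposition and vertical composition by stacking. I would let $\vee$ act as the identity on objects and on Bott--Samelson $1$-morphisms, and on $2$-morphisms as the reflection of diagrams through a horizontal axis. This reflection interchanges the top and bottom of a diagram, so it swaps source and target and makes $\vee$ contravariant on vertical composition; it preserves left--right placement, so $\vee$ is covariant on horizontal composition and therefore on $1$-morphism composition; and since each generator carries a fixed intrinsic degree, $\vee$ is degree-preserving on $2$-morphisms.

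The first task is to check that $\vee$ descends to the quotient by the Elias--Williamson relations. Most relations (Frobenius unit/counit, isotopy, circle evaluation) are manifestly symmetric under the horizontal flip. The remaining two-color relations involving the $2m_{st}$-valent vertex and the one-color Jones--Wenzl-type relations should be checked directly, using that both sides of each relation have matching degree and that the flip permutes the generating diagrams appearing in them in a coherent way. The second task is to extend $\vee$ from the diagrammatic Bott--Samelson subcategory to the full graded additive Karoubi envelope $\cS$. An idempotent $e\colon\mathrm{F}\to\mathrm{F}$ whose image is $\mathrm{B}_w$ is sent by $\vee$ to another idempotent $e^{\vee}$ on the same $\mathrm{F}$; its image $(\mathrm{B}_w)^{\vee}$ is an indecomposable summand of $\mathrm{F}$ in $\cS^{\,(0)}$. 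By the Soergel--Elias--Williamson categorification theorem, the bar-invariance of the Kazhdan--Lusztig basis element $b_w$ forces this summand to be isomorphic to $\mathrm{B}_w$.

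For grading shifts, the conditions on $\vee$ force $\vee\langle k\rangle\cong\langle -k\rangle\vee$: given a homogeneous degree-$d$ morphism from $\mathrm{F}\langle k\rangle$ to $\mathrm{G}\langle l\rangle$, contravariance on $2$-morphisms together with degree-preservation require that its image be a homogeneous degree-$d$ morphism from $\vee(\mathrm{G}\langle l\rangle)$ to $\vee(\mathrm{F}\langle k\rangle)$, and comparing the overall degree shifts yields the displayed anticommutation. Combined with $(\mathrm{B}_w)^{\vee}\cong\mathrm{B}_w$, this gives $(\mathrm{B}_w^{\oplus\mathsf{v}^k})^{\vee}=(\mathrm{B}_w\langle -k\rangle)^{\vee}\cong\mathrm{B}_w\langle k\rangle=\mathrm{B}_w^{\oplus\mathsf{v}^{-k}}$, which is \eqref{eq:catbar}. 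The main obstacle is the passage to the Karoubi envelope, specifically identifying $(\mathrm{B}_w)^{\vee}$ with $\mathrm{B}_w$ rather than with some other indecomposable summand of $\mathrm{F}$; this will be handled by combining the classification of indecomposable Soergel bimodules via Kazhdan--Lusztig combinatorics with the bar-invariance of the structure constants $h_{x,y,z}$ recorded in \eqref{eq:kl-mult}.
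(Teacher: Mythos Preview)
Your approach is essentially the same as the paper's: both construct $\vee$ as the diagrammatic up--down flip on the Elias--Williamson presentation (the paper cites \cite{EW3}, where this involution already appears as $\iota$ in Definition~6.22, rather than \cite{EW}) and then extend to the Karoubi envelope. The only substantive difference lies in the identification $(\mathrm{B}_w)^{\vee}\cong\mathrm{B}_w$: the paper proceeds by induction on $\ell(w)$, using the decomposition $\mathrm{BS}(\underline{w})\cong\mathrm{B}_w\oplus\bigoplus_{v\prec w}\mathrm{B}_v^{\oplus p_{w,v}}$ with bar-invariant $p_{w,v}$ together with Krull--Schmidt, whereas you invoke decategorification and bar-invariance of $b_w$. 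Both routes are valid; the paper's inductive argument is slightly more self-contained because it bypasses the need to first check that $\vee$ decategorifies to the bar involution on all of $[\cS^{(0)}]_{\oplus}$ (which, to be fair, follows easily once you know $\vee$ fixes Bott--Samelsons and inverts shifts, since these generate the Grothendieck ring).
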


\begin{proof}
Let $\cD$ be the diagrammatic (one-object) Soergel $2$-category as in \cite{EW3},
whose $1$-morphisms are of the form $\underline{w}\langle k\rangle$, where
$\underline{w}$ is a word in the simple reflections of $W$ and $k\in\mathbb{Z}$ indicates a formal
degree shift, and whose $2$-morphisms are Soergel diagrams, defined by generators
and relations. By \cite[Theorem 6.30]{EW3}, we can identify $\cS$ with $\mathrm{add}(\cD)$.
(Strictly speaking, we have to quotient $\mathrm{add}(\cD)$ by the $2$-ideal
generated by the totally invariant polynomials
with no constant term in the base ring $R$ in that paper, because
$\cS$ was defined over the coinvariant algebra. This is a technical detail, which we will suppress from now on.) Now, $\vee\colon\cD\to\cD$
is defined by $(\underline{w}\langle -k\rangle)^{\vee}:=
\underline{w}\langle k\rangle$, for all words $\underline{w}$ and $k\in\mathbb{Z}$, and by flipping the Soergel diagrams
upside-down. By definition, $\vee$ is covariant on $1$-morphisms, contravariant and
degree-preserving on $2$-morphisms. For any word $\underline{w}$ in the simple
reflections of $W$, let $\mathrm{BS}(\underline{w})$ be the corresponding Bott--Samelson
bimodule in $\cS$. Then
$\big(\mathrm{BS}(\underline{w})^{\oplus\mathsf{v}^k}\big)^{\vee}\cong\mathrm{BS}(\underline{w})^{\oplus\mathsf{v}^{\mone[k]}}$
under the identification $\cS\simeq\mathrm{add}(\cD)$.

Extend $\vee$ to $\cS\simeq\mathrm{add}(\cD)$. To show \eqref{eq:catbar},
we use induction on the length $\ell(w)$ of $w$, the case $\ell(w)=0$ being immediate.
Assume that $\ell(w)>0$ and that \eqref{eq:catbar} holds
for all $v\in W$ with $\ell(v)<\ell(w)$ and all $k\in\mathbb{Z}$. By \cite[Satz 6.14]{So3}
(see also \cite[Theorem 3.16]{EW3} and the text around it)
and \cite[Corollary 6.27]{EW3}, we have
\begin{gather*}
\mathrm{BS}(\underline{w})\cong\mathrm{B}_{w}\oplus\bigoplus_{v\prec w}\mathrm{B}_{v}^{\oplus p_{w,v}}\;\text{in}\; \cSo
\end{gather*}
for every $w\in W$, where $\underline{w}$ is an arbitrary
reduced expression for $w$, the $p_{w,v}\in\mathbb{N}_{0}[\mathsf{v},\mathsf{v}^{\mone}]$
are invariant under the bar involution (which follows from \cite[Chapter 4]{Lu2} and
the Soergel--Elias--Williamson
categorification theorem) and $\prec$ is the Bruhat order.
As remarked above, we have $\mathrm{BS}(\underline{w})^{\vee}\cong\mathrm{BS}(\underline{w})$ in $\cSo$.
By induction, we also have $(\mathrm{B}_{v}^{\oplus p_{w,v}})^{\vee}\cong \mathrm{B}_{v}^{\oplus p_{w,v}}$
in $\cSo$ for all $v\prec w$, using the bar invariance
of $p_{w,v}$. Since $\cSo$ is Krull--Schmidt, we deduce that $B_{w}^{\vee}\cong B_{w}$ in $\cSo$.
This implies that \eqref{eq:catbar} holds for all $k\in\mathbb{Z}$.
\end{proof}

Recall that the bar involution on the Hecke algebra is
uniquely determined by the fact that it is $\mathbb{Z}$-linear, sends $\mathsf{v}^{k}\mapsto\mathsf{v}^{\mone[k]}$
for all $k\in\mathbb{Z}$ and fixes the Kazhdan--Lusztig basis
elements, see e.g. \cite[Chapters 4 and 5]{Lu2}. By Proposition \ref{prop:catbar} and \cite[Corollary 6.27 and Theorem 6.30]{EW3},
the duality $\vee$ thus categorifies the bar involution on the Hecke algebra. We will therefore refer to it
as the \emph{categorified bar involution}.

\begin{remark}
The origin of the categorified bar involution is \cite{So3}. In the diagrammatic setting
$\vee$ appears in \cite[Definition 6.22]{EW3}, where it is denoted $\iota$ and gives an antiinvolution on double light leaves.
\end{remark}

\begin{proposition}\label{prop:duality}
The categorified bar involution on $\cS$ induces a functorial involution on $\cSH$, also
denoted $\vee$, which is covariant on $1$-morphisms and
contravariant and degree-preserving on $2$-morphisms,
such that $(\mathrm{B}_{x}^{\oplus\mathsf{v}^{k}})^{\vee}\cong\mathrm{B}_{x}^{\oplus\mathsf{v}^{\mone[k]}}$ in $\cSHo$,
for all $x\in\mathcal{H}$ and $k\in\mathbb{Z}$. This functorial involution
extends to an equivalence between $\underline{\cSH}$ and
$\overline{\cSH}$ which sends injective $1$-morphisms in the first $2$-category to projective $1$-morphisms in the second.
\end{proposition}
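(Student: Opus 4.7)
The plan is to descend $\vee$ from $\cS$ to $\cSH$ by working through the cell-theoretic subquotients, and then to extend the resulting involution to the abelianizations. First, I would verify that $\vee$ preserves the two-sided cell structure: by Proposition~\ref{prop:catbar}, $\vee$ fixes each $\mathrm{B}_w$ in $\cSo$ (up to isomorphism with reversed shift), and since the cells of $\cS$ are determined by the bar-invariant Kazhdan--Lusztig combinatorics, $\vee$ must preserve both the two-sided cell $\mathcal{J}$ and the diagonal $\mathcal{H}$-cell $\mathcal{H}$. Consequently $\vee$ preserves the $2$-ideals used to construct the $\mathcal{J}$-simple subquotient $\cSJ$; contravariance on $2$-morphisms is compatible with this because $2$-ideals are stable under both pre- and post-composition by arbitrary $2$-morphisms. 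This yields a functorial involution of the required type on $\cSJ$, which then restricts to the $2$-full $2$-subcategory generated by $1$-morphisms in $\mathcal{H}$ together with $\mathbbm{1}_{\varnothing}$, i.e.\ to $\cSH$. The shift identity $(\mathrm{B}_x^{\oplus\mathsf{v}^{k}})^{\vee}\cong\mathrm{B}_x^{\oplus\mathsf{v}^{\mone[k]}}$ in $\cSHo$ is inherited directly from \eqref{eq:catbar}.

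Next I would extend the involution to the abelianizations. Recall that objects of the injective abelianization $\underline{\cSH}$ are represented by two-term complexes $\mathrm{X}\xrightarrow{\alpha}\mathrm{Y}$ with $\mathrm{X},\mathrm{Y}\in\cSH$, and objects of $\overline{\cSH}$ admit a dual description. Since $\vee$ is covariant on $1$-morphisms and contravariant on $2$-morphisms, the assignment $(\mathrm{X}\xrightarrow{\alpha}\mathrm{Y})\mapsto(\mathrm{Y}^{\vee}\xrightarrow{\alpha^{\vee}}\mathrm{X}^{\vee})$ is the natural candidate for a $2$-functor $\underline{\cSH}\to\overline{\cSH}$. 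One then extends this to the $2$-morphisms of $\underline{\cSH}$ in the evident way, using contravariance on the component $2$-morphisms, and verifies compatibility with horizontal composition. Since $\vee$ is an involution on $\cSH$, the analogous construction in the opposite direction provides a quasi-inverse, yielding the desired equivalence. Finally, the images of the embeddings $\cSH\hookrightarrow\underline{\cSH}$ and $\cSH\hookrightarrow\overline{\cSH}$ are exactly the injective and projective $1$-morphisms, respectively, so by construction the extended $\vee$ sends injective $1$-morphisms to projective ones.

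The main obstacle I anticipate is the bookkeeping in the extension to abelianizations: one must ensure that flipping the direction of a two-term complex produces the intended object in the dual abelianization with its correct interpretation (rather than some shifted or kernel/cokernel-swapped version), and that $2$-functoriality, including compatibility with horizontal composition, survives the flip. Once the correct dictionary between $\underline{\cSH}$ and $\overline{\cSH}$ is in place, everything else is transport of structure from the properties of $\vee$ on $\cSH$ established in the first two steps.
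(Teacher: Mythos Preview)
Your proposal is correct and follows essentially the same route as the paper's proof: descend $\vee$ through the cell-theoretic subquotient using that it preserves cells, then restrict to $\cSH$, and finally extend to the abelianizations via contravariance on $2$-morphisms. The paper's only additional observation is that the specific $2$-ideal defining the $\mathcal{J}$-simple quotient is preserved because $\vee$ sends identity $2$-morphisms to identity $2$-morphisms (which is the characterizing feature of that ideal), a point your argument leaves slightly implicit.
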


\begin{proof}
Since $\vee$ is covariant on $1$-morphisms and
$(\mathrm{B}_{w}^{\oplus\mathsf{v}^k})^{\vee}\cong\mathrm{B}_{w}^{\oplus\mathsf{v}^{\mone[k]}}$ in $\cSo$,
for $w\in W$ and $k\in\mathbb{Z}$,
it preserves left, right and two-sided cells.

As $\vee$ sends identity $2$-morphisms to identity $2$-morphisms,
it also preserves the maximal $2$-ideal in $\cS$ that does not contain the identity $2$-morphisms
on the $1$-morphisms in $\mathrm{add}(\mathcal{J})$, whence it descends to the
$\mathcal{J}$-simple quotient $\cS_{\leq\mathcal{J}}$. Since $\vee$ also preserves the
two-sided cell $\mathcal{J}$ and its diagonal $\mathcal{H}$-cells, it restricts to the $2$-full
$2$-subcategories $\cSJ$ and $\cSH$, which proves the first claim.

Finally, since $\vee$ is contravariant on $2$-morphisms, it extends to an equivalence between
$\overline{\cSH}$ and
$\underline{\cSH}$.
\end{proof}

\begin{corollary}\label{cor:duality}
The functorial involution $\vee$ induces a functorial
involution on $\mathbf{C}_{\mathcal{H}}$, also denoted $\vee$, which is contravariant and degree-preserving
on morphisms and satisfies $(\mathrm{B}_{x}^{\oplus\mathsf{v}^{k}})^{\vee}
\cong\mathrm{B}_{x}^{\oplus\mathsf{v}^{\mone[k]}}$ in $\mathbf{C}^{(0)}_{\mathcal{H}}(\varnothing)$,
for all $x\in\mathcal{H}$ and $k\in\mathbb{Z}$.
This functorial involution extends to an equivalence between
$\underline{\mathbf{C}_{\mathcal{H}}}$ and $\overline{\mathbf{C}_{\mathcal{H}}}$
which sends injective objects in the category underlying the first to projective
objects in the category underlying the second.
\end{corollary}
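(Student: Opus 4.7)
The plan is to transport the functorial involution $\vee$ on $\cSH$ (provided by Proposition \ref{prop:duality}) to the cell $2$-representation $\mathbf{C}_{\mathcal{H}}$, using the two different realizations of $\mathbf{C}_{\mathcal{H}}$ given by Proposition \ref{prop711}. The key observation is that $\vee$, being covariant on $1$-morphisms but contravariant on $2$-morphisms, swaps coalgebras with algebras and comodules with modules.

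First I would observe that, because $\vee$ extends to an equivalence between $\underline{\cSH}$ and $\overline{\cSH}$ that sends injective $1$-morphisms to projective $1$-morphisms, it takes the coalgebra $\Cd$ of Example \ref{ex:Cd-alg-coalg} to an algebra $(\Cd)^{\vee}$ in $\overline{\cSH}$. As a $1$-morphism in $\cSHo$, Proposition \ref{prop:duality} gives
\begin{gather*}
(\Cd)^{\vee}\cong\mathrm{B}_{d}^{\oplus\mathsf{v}^{\mone[\mathbf{a}]}}=\tilde{\mathrm{C}}_{d},
\end{gather*}
and this isomorphism can be upgraded to an isomorphism of algebras, since by Soergel's hom formula \eqref{eq:soergel} the space of degree-zero endomorphisms of $\mathrm{B}_d$ is one-dimensional, so the algebra structure on $\tilde{\mathrm{C}}_{d}$ is unique up to rescaling.

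Next, applying $\vee$ to a right $\Cd$-comodule $(\mathrm{M},\delta_{\mathrm{M},\Cd})$ in $\underline{\cSH}$ yields a right $\tilde{\mathrm{C}}_{d}$-module $(\mathrm{M}^{\vee},\delta_{\mathrm{M},\Cd}^{\vee})$ in $\overline{\cSH}$, with injective comodules mapping to projective modules by Proposition \ref{prop:duality}. This produces a contravariant equivalence of graded categories
\begin{gather*}
\vee\colon\mathrm{inj}_{\underline{\ccSH}}(\Cd)\xrightarrow{\sim}\mathrm{proj}_{\overline{\ccSH}}(\tilde{\mathrm{C}}_{d}),
\end{gather*}
which is degree-preserving on morphisms. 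Composing with the two equivalences provided by Proposition \ref{prop711}\ref{prop711i} and \ref{prop711ii},
\begin{gather*}
\mathbf{C}_{\mathcal{H}}\simeq\mathbf{inj}_{\underline{\ccSH}}(\Cd),\quad
\mathbf{C}_{\mathcal{H}}\simeq\boldsymbol{\mathrm{proj}}_{\overline{\ccSH}}(\tilde{\mathrm{C}}_{d}),
\end{gather*}
we obtain the desired contravariant, degree-preserving functorial involution $\vee$ on $\mathbf{C}_{\mathcal{H}}$. The identity $\vee^{2}\cong\mathrm{Id}$ is inherited from the corresponding identity on $\cSH$.

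Finally, the isomorphism $(\mathrm{B}_x^{\oplus\mathsf{v}^{k}})^{\vee}\cong\mathrm{B}_x^{\oplus\mathsf{v}^{\mone[k]}}$ in $\mathbf{C}^{(0)}_{\mathcal{H}}(\varnothing)$ is a direct consequence of the analogous formula in $\cSHo$ of Proposition \ref{prop:duality}, transported through the action identification. The extension to an equivalence between $\underline{\mathbf{C}_{\mathcal{H}}}$ and $\overline{\mathbf{C}_{\mathcal{H}}}$ that sends injective objects to projective objects then comes for free from the corresponding assertion in Proposition \ref{prop:duality}. The only point requiring a little care is ensuring the identifications in Proposition \ref{prop711} are compatible enough that $\vee$ becomes a genuine involution on $\mathbf{C}_{\mathcal{H}}$ rather than merely a contravariant equivalence bouncing between the two realizations; this will be the main, though minor, technical point, resolved by the uniqueness (up to scalar) of the coalgebra/algebra structures on $\Cd$ and $\tilde{\mathrm{C}}_{d}$ noted above.
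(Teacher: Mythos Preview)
Your route is genuinely different from the paper's, and more elaborate than needed. The paper does not pass through Proposition~\ref{prop711} at all. Instead it argues directly from the \emph{definition} of the cell $2$-representation: since $\vee$ on $\cSH$ is covariant on $1$-morphisms and satisfies $\mathrm{B}_w^{\vee}\cong\mathrm{B}_w$, it preserves left cells and hence the subcategory $\mathbf{M}^{\geq\mathcal{H}}=\mathrm{add}(\mathcal{H})$; since $\vee$ sends identity $2$-morphisms to identity $2$-morphisms, it preserves the unique maximal ideal $\mathbf{I}$; therefore $\vee$ descends to $\mathbf{C}_{\mathcal{H}}=\mathbf{M}^{\geq\mathcal{H}}/\mathbf{I}$. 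The involution property and the extension to abelianizations are then inherited automatically from the corresponding statements on $\cSH$ in Proposition~\ref{prop:duality}.

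Your approach does yield a contravariant, degree-preserving autoequivalence of $\mathbf{C}_{\mathcal{H}}$, but your justification of the involution property has a gap. Writing $\phi_1\colon\mathbf{C}_{\mathcal{H}}\xrightarrow{\sim}\mathrm{inj}_{\underline{\ccSH}}(\Cd)$ and $\phi_2\colon\mathbf{C}_{\mathcal{H}}\xrightarrow{\sim}\mathrm{proj}_{\overline{\ccSH}}(\tilde{\mathrm{C}}_d)$ for the two equivalences of Proposition~\ref{prop711}, your candidate duality is $\phi_2^{-1}\circ\vee\circ\phi_1$, and for its square to be the identity one needs a compatibility between $\phi_1$ and $\phi_2$ under $\vee$ that is \emph{not} contained in the mere existence statement of Proposition~\ref{prop711}. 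Uniqueness of the (co)algebra structure on $\Cd$ and $\tilde{\mathrm{C}}_d$ is not the relevant issue; the relevant issue is whether the two chosen equivalences in Proposition~\ref{prop711} are themselves exchanged by $\vee$. This can be arranged (indeed the proof of part~\ref{prop711ii} is obtained by dualizing that of part~\ref{prop711i}, cf.\ Remark~\ref{remark:7}), but it requires an explicit argument you have not given. The paper's direct descent avoids this problem entirely.
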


\begin{proof}
As already remarked, the functorial involution $\vee$ also preserves left cells.
The rest now follows as in the proof of Proposition \ref{prop:duality}.
\end{proof}

\begin{remark}\label{remark:7}
The existence of $\vee$ implies that any statement in Subsection \ref{s7.1} has a dual counterpart. In particular, the equivalence
$\underline{\mathbf{C}_{\mathcal{H}}}\simeq \overline{\mathbf{C}_{\mathcal{H}}}$
induces a degree-preserving isomorphism of algebras $\mathsf{B}^\vee\cong\mathsf{B}^{\mathrm{op}}$, see Proposition \ref{prop:Frobenius}.
\end{remark}


\subsection{Explicit bimodules for the cell $2$-representation}\label{s7.4}


In the following, we will use projective abelianizations instead of injective ones.
As we are in the fiat setup, the difference does not play an essential role on an abstract level,
but with this choice we can conveniently
describe the action of $\mathrm{C}_{w}$ by projective bimodules and
their composition by tensoring over the underlying algebra, instead of injective bicomodules
and cotensoring over the underlying coalgebra.

Denote by
\begin{gather*}
\mathsf{B}:=
\mathrm{End}_{\overline{\mathbf{C}_{\mathcal{H}}}}
\big(
\bigoplus_{w\in\mathcal{H}}\mathrm{C}_{w}
\big)^{\mathrm{op}}
\end{gather*}
the algebra underlying the cell $2$-representation.
Fix a complete set of orthogonal primitive idempotents $e_{w}\in\mathsf{B}$, for $w\in\mathcal{H}$,
corresponding to the indecomposable projective objects $\mathrm{C}_{w}\in\mathbf{C}_{\mathcal{H}}(\varnothing)$.
Set $Q_{w}:=\mathsf{B}e_{w}$ and let $L_{w}$
(recall the notation in Subsection \ref{s7.1}) be the simple head of $Q_{w}$ in $\overline{\mathbf{C}_{\mathcal{H}}}$. Note that $L_{w}$ is one-dimensional and isomorphic to $\mathbb{C}e_{w}$ as a vector space.
Note that $\mathrm{C}_{w}\,L_{d}\cong Q_{w}$ for $w\in\mathcal{H}$, see
Subsection \ref{s2.2} and \cite[Section 7]{MM3}.
Lemma \ref{lem7.1-3}\ref{lem7.1-3ii} implies that the socle of $Q_{w}$ is isomorphic
to $L_{w}^{\oplus\mathsf{v}^{2\mathbf{a}}}$.

For every pair $x,y\in\mathcal{H}$, we have
\begin{gather}\label{eq:grdimHom}
\begin{aligned}
\mathrm{grdim}\big(\mathrm{Hom}_{\mathsf{B}}(Q_{x},Q_{y})\big)
&=\mathrm{grdim}\big(\mathrm{Hom}_{\mathsf{B}}(\mathrm{C}_{x}\,L_{d},\mathrm{C}_{y}\,L_{d})\big)
\\
&=\mathrm{grdim}\big(\mathrm{Hom}_{\mathsf{B}}(\mathrm{C}_{y^{\mone}}\mathrm{C}_{x}\,L_{d}^{\oplus\mathsf{v}^{\mone[2\mathbf{a}]}},L_{d})\big)
\\
&=\sum_{z\in\mathcal{H}}h_{y^{\mone},x,z}\,\mathrm{grdim}\big(\mathrm{Hom}_{\mathsf{B}}(\mathrm{C}_{z}\,L_{d}^{\oplus\mathsf{v}^{\mone[\mathbf{a}]}},L_{d})\big)
\\
&=\sum_{z\in\mathcal{H}}h_{y^{\mone},x,z}\,\mathrm{grdim}\big(\mathrm{Hom}_{\mathsf{B}}(Q_{z}^{\oplus\mathsf{v}^{\mone[\mathbf{a}]}},L_{d})\big)
\\
&=\mathsf{v}^{\mathbf{a}}h_{y^{\mone},x,d}.
\end{aligned}
\end{gather}
By \cite[Subsection 13.6]{Lu2}, we know that
\begin{gather*}
\mathsf{v}^{\mathbf{a}}h_{y^{\mone},x,d}\in
\begin{cases}
1+\dots+\mathsf{v}^{2\mathbf{a}}& \text{if }x=y;
\\
\mathsf{v}\mathbb{N}_{0}[\mathsf{v}]\cap\mathsf{v}^{2\mathbf{a}-1}\mathbb{N}_{0}[\mathsf{v}^{\mone}]&\text{if }x\neq y.
\end{cases}
\end{gather*}
Recall that, by definition of the Kazhdan--Lusztig basis, the $h_{y^{\mone},x,d}$ are invariant under the bar involution.

\begin{remark}
By \cite[13.1(e)]{Lu2}, we have $h_{v^{\mone},x,d}=h_{x^{\mone},v,d}$, which corresponds to the fact that
\begin{gather*}
\mathrm{grdim}\big(\mathrm{Hom}_{\mathsf{B}}(Q_{x},Q_{v})\big)
=
\mathrm{grdim}\big(\mathrm{Hom}_{\mathsf{B}}(Q_{v},Q_{x})\big)
\end{gather*}
for all $x,v\in\mathcal{H}$.
\end{remark}

For the formulation of the next proposition, we recall
that the $\gamma_{w,v,u^{\mone}}$ are defined in Subsection \ref{s5.1}.

\begin{proposition}\label{prop:2-action}
For any $w\in\mathcal{H}$, the action of $\mathrm{C}_{w}$ on the category of
finite dimensional graded $\mathsf{B}$-modules is isomorphic to tensoring (over $\mathsf{B}$)
with the
graded projective $\mathsf{B}$-$\mathsf{B}$-bimodule
\begin{gather*}
\bigoplus_{u,v\in\mathcal{H}}\big(\mathsf{B}e_{u}\otimes_{\mathbb{C}} e_{v}\mathsf{B}\big)^{\oplus\gamma_{w,v,u^{\mone}}}.
\end{gather*}
\end{proposition}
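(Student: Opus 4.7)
The plan is to identify the projective graded $\mathsf{B}$-$\mathsf{B}$-bimodule $M_{w}$ that represents the action of $\mathrm{C}_{w}$. By \cite[Theorem 2]{KMMZ}, recalled in Subsection \ref{s7.1}, $M_{w}$ is a graded projective bimodule, so it admits an a priori decomposition
\begin{gather*}
M_{w} \cong \bigoplus_{u,v \in \mathcal{H}} \big(\mathsf{B}e_{u} \otimes_{\mathbb{C}} e_{v} \mathsf{B}\big)^{\oplus m_{u,v,w}(\mathsf{v})},\quad m_{u,v,w}(\mathsf{v}) \in \mathbb{N}_{0}[\mathsf{v}, \mathsf{v}^{\mone}].
\end{gather*}
The goal is to show that every $m_{u,v,w}$ is in fact the integer $\gamma_{w,v,u^{\mone}}$.

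The crucial step is a degree argument ruling out grading shifts. By Proposition \ref{prop:Frobenius}, $\mathsf{B}$ is positively graded of graded length $2\mathbf{a}$, so each $\mathsf{B}e_{u} \otimes_{\mathbb{C}} e_{v} \mathsf{B}$ is concentrated in degrees $[0, 4\mathbf{a}]$, with nonzero components in both extremal degrees. On the other hand, $M_{w} e_{v} = \mathrm{C}_{w}\,Q_{v}$, and by Lemma \ref{lem5.2},
\begin{gather*}
\mathrm{C}_{w}\,Q_{v} \cong \bigoplus_{z\in\mathcal{H}} Q_{z}^{\oplus \mathsf{v}^{\mathbf{a}} h_{w,v,z}}.
\end{gather*}
Each $Q_{z}$ lives in degrees $[0, 2\mathbf{a}]$ by Lemma \ref{lem7.1-0}\ref{lem7.1-0ii}, and $\mathsf{v}^{\mathbf{a}} h_{w,v,z} \in \mathbb{N}_{0}[\mathsf{v}]$ is a polynomial of degree at most $2\mathbf{a}$ (by bar invariance of $h$). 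Hence $M_{w} e_{v}$, and thus $M_{w}$, is supported in $[0, 4\mathbf{a}]$. Since a shifted summand $(\mathsf{B}e_{u} \otimes_{\mathbb{C}} e_{v}\mathsf{B})^{\oplus \mathsf{v}^{k}}$ is supported in $[k, k + 4\mathbf{a}]$, this forces $k = 0$, so every $m_{u,v,w}$ is a non-negative integer.

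To extract the value, I would tensor with $Q_{v} = \mathsf{B}e_{v}$: using \eqref{eq:grdimHom},
\begin{gather*}
M_{w} \otimes_{\mathsf{B}} Q_{v} \cong \bigoplus_{u \in \mathcal{H}} Q_{u}^{\oplus \sum_{v' \in \mathcal{H}} m_{u,v',w}\,\mathsf{v}^{\mathbf{a}} h_{v^{\mone}, v', d}}.
\end{gather*}
Comparing with $\mathrm{C}_{w}\,Q_{v} \cong \bigoplus_{u} Q_{u}^{\oplus \mathsf{v}^{\mathbf{a}} h_{w,v,u}}$ and invoking Krull--Schmidt for graded modules yields the identity
\begin{gather*}
\mathsf{v}^{\mathbf{a}} h_{w,v,u} = \sum_{v'\in\mathcal{H}} m_{u,v',w}\cdot \mathsf{v}^{\mathbf{a}} h_{v^{\mone}, v', d}
\end{gather*}
in $\mathbb{Z}[\mathsf{v},\mathsf{v}^{\mone}]$, for each $u,v\in \mathcal{H}$. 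Taking the coefficient of $\mathsf{v}^{2\mathbf{a}}$ and applying \eqref{eq:kl-mult}, which identifies this coefficient of $\mathsf{v}^{\mathbf{a}} h_{x,y,z}$ as $\gamma_{x,y,z^{\mone}}$, together with Lusztig's property P7 in the form $\gamma_{v^{\mone}, v', d} = \delta_{v, v'}$ (as recalled in the proof of Lemma \ref{lem:gr}), immediately gives $m_{u,v,w} = \gamma_{w,v,u^{\mone}}$. The only non-routine ingredient is the degree-support argument in the second paragraph; once the multiplicities are seen to be constants, the remaining identification is a direct manipulation of Kazhdan--Lusztig leading coefficients.
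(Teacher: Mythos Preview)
Your proof is correct and takes a genuinely different route from the paper's. Both arguments start from \cite[Theorem~2]{KMMZ} and derive the same linear system (the paper's \eqref{eq:magic1}, your displayed identity), but they solve it differently. The paper treats the unknowns $c_{w,v,u}$ as elements of $\mathbb{N}_{0}[\mathsf{v},\mathsf{v}^{\mone}]$, verifies directly that $\gamma_{w,v,u^{\mone}}$ is a solution using Lusztig's identity \eqref{eq:Lmagic} together with the symmetries \eqref{eq:Lsym} and \eqref{eq:Ldgamma}, and then proves uniqueness by showing that the coefficient matrix $(h_{x^{\mone},v,d})_{x,v}$ is invertible over $\mathbb{C}(\mathsf{v})$. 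You instead first establish that the multiplicities are constants via the degree--support argument (which the paper in fact records just before the proposition but does not use in its proof), and then extract the answer by comparing leading coefficients: since the $m_{u,v',w}$ carry no $\mathsf{v}$, the $\mathsf{v}^{2\mathbf{a}}$-coefficient of the right-hand side collapses to $\sum_{v'} m_{u,v',w}\,\gamma_{v^{\mone},v',d}=m_{u,v,w}$ by \eqref{eq:Ldgamma}, giving the result at once.

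Your approach is more elementary in that it avoids both the verification via \eqref{eq:Lmagic} and the determinant computation; it needs only P2/P5 (in the form $\gamma_{v^{\mone},v',d}=\delta_{v,v'}$). The paper's approach, on the other hand, is self-contained over $\mathbb{C}(\mathsf{v})$ and does not rely on the prior no-shift observation. One minor remark: your forward reference to Lemma~\ref{lem:gr} for $\gamma_{v^{\mone},v',d}=\delta_{v,v'}$ is harmless logically (it is just Lusztig's P2/P5, independent of Proposition~\ref{prop:2-action}), but you could equally cite \eqref{eq:Ldgamma} directly.
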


\begin{proof}
By Theorem \ref{thm:projapex}, we know that the
action of $\mathrm{C}_{w}$ is given by tensoring with a $\mathsf{B}$-$\mathsf{B}$-bimodule of the form
\begin{gather*}
\bigoplus_{u,v\in\mathcal{H}}\big(\mathsf{B}e_{u}\otimes_{\mathbb{C}} e_{v}\mathsf{B}\big)^{\oplus c_{w,v,u}},
\end{gather*}
for certain $c_{w,v,u}\in\mathbb{N}_{0}[\mathsf{v},\mathsf{v}^{\mone}]$.
We also know that, for any $x\in\mathcal{H}$, we must have
\begin{gather*}
\mathrm{C}_{w}\,Q_{x}\cong\mathrm{C}_{w}\mathrm{C}_{x}\,L_{d}\cong
\bigoplus_{u\in\mathcal{H}}
Q_{u}^{\oplus\mathsf{v}^{\mathbf{a}}h_{w,x,u}}.
\end{gather*}
On the other hand, using \eqref{eq:grdimHom}, we obtain
\begin{align*}
\bigoplus_{u,v\in\mathcal{H}}\big(\mathsf{B} e_{u}\otimes_{\mathbb{C}} e_{v} \mathsf{B}\big)^{\oplus c_{w,v,u}}
\otimes_{\mathsf{B}}\mathsf{B}e_{x}\cong & \bigoplus_{u,v\in\mathcal{H}}
\big(\mathsf{B}e_{u}\otimes_{\mathbb{C}}e_{v}\mathsf{B}e_{x}\big)^{\oplus c_{w,v,u}}
\\
\cong & \bigoplus_{u,v\in\mathcal{H}}\mathsf{B} e_{u}^{\oplus\mathrm{grdim}(e_{v} \mathsf{B}e_{x})c_{w,v,u}}
\\
\cong & \bigoplus_{u,v\in\mathcal{H}}\mathsf{B} e_{u}^{\oplus\mathsf{v}^{\mathbf{a}}h_{x^{\mone},v,d}c_{w,v,u} },
\end{align*}
and hence, deduce that the $c_{w,v,u}$ have to satisfy
\begin{gather}\label{eq:magic1}
\sum_{v\in\mathcal{H}}h_{x^{\mone},v,d}c_{w,v,u}=h_{w,x,u},
\end{gather}
for all $w,x,u,v\in\mathcal{H}$.

For every fixed pair $w,u\in\mathcal{H}$, this is a system
of $\#\mathcal{H}$ linear equations, indexed by $x\in\mathcal{H}$, in $\#\mathcal{H}$ variables,
indexed by $v\in\mathcal{H}$.
We claim that $c_{w,v,u}=\gamma_{w,v,u^{\mone}}$, for $v\in\mathcal{H}$, is the unique solution of \eqref{eq:magic1}.

Let us first show that $c_{w,v,u}=\gamma_{w,v,u^{\mone}}$ is a solution of \eqref{eq:magic1}, i.e. that we have
\begin{gather*}
\sum_{v\in\mathcal{H}}h_{x^{\mone},v,d}\gamma_{w,v,u^{\mone}}=h_{w,x,u}.
\end{gather*}
This equation is similar to one in \cite[Subsection 18.8]{Lu2}
and can be proved in the same way, using:
\begin{enumerate}[\textbullet]

\item the equation at the beginning of the proof of Theorem 18.9(b) in \cite{Lu2}, i.e.
\begin{gather}\label{eq:Lmagic}
\sum_{z\in W}h_{x_{1},x_{2},z}\gamma_{z,x_{3},y^{\mone}}
=
\sum_{z\in W}h_{x_{1},z,y}\gamma_{x_{2},x_{3},z^{\mone}};
\end{gather}

\item the symmetries in \cite[13.1(e)]{Lu2};

\item \cite[Proposition 13.9(b) and Conjecture 14.2.P7]{Lu2}, i.e.
\begin{gather}\label{eq:Lsym}
h_{a,b,c}=h_{b^{\mone},a^{\mone},c^{\mone}},
\quad
\gamma_{a,b,c}=\gamma_{b^{\mone},a^{\mone},c^{\mone}},
\quad
\gamma_{a,b,c}=\gamma_{c,a,b};
\end{gather}

\item \cite[Conjectures 14.2.P2 and 14.2.P5]{Lu2}, i.e.
\begin{gather}\label{eq:Ldgamma}
\gamma_{v,u,d}=\delta_{v,u^{\mone}}.
\end{gather}

\end{enumerate}

By \eqref{eq:Lmagic}, \eqref{eq:Lsym} and \eqref{eq:Ldgamma}, we have
\begin{align*}
\sum_{v\in\mathcal{H}}h_{x^{\mone},v,d}\gamma_{w,v,u^{\mone}}=&\sum_{v\in\mathcal{H}}h_{x^{\mone},v,d}\gamma_{v^{\mone},w^{\mone},u}
\\
=& \sum_{v\in\mathcal{H}}h_{x^{\mone},v,d}\gamma_{w^{\mone},u,v^{\mone}}
\\
=& \sum_{v\in\mathcal{H}}h_{x^{\mone},w^{\mone},v}\gamma_{v,u,d}
\\
=& h_{x^{\mone},w^{\mone},u^{\mone}}
\\
=& h_{w,x,u}.
\end{align*}

Finally, note
\begin{gather*}
h_{x^{\mone},v,d}=\mathsf{v}^{\mone[\mathbf{a}]}\gamma_{x^{\mone},v,d}
\;\big(\!\bmod\mathsf{v}^{\mone[\mathbf{a}]+1}\mathbb{N}_{0}[\mathsf{v}]\big),
\end{gather*}
so the determinant of the matrix
\begin{gather*}
\big(h_{x^{\mone},v,d}\big)_{x,v\in\mathcal{H}}
\end{gather*}
belongs to $v^{\mone[\mathbf{a}](\#\mathcal{H})}(1+\mathsf{v}\mathbb{N}_{0}[\mathsf{v}])$,
and the matrix is hence invertible over $\mathbb{C}(\mathsf{v})$. Our system of linear
equations in \eqref{eq:magic1} therefore has a unique solution and the statement of the proposition follows.
\end{proof}


\subsection{The Frobenius structure on the Duflo involution}\label{s7.5}


In this subsection, we describe the structure
of a Frobenius algebra on the Duflo involution
in $\cSH$ explicitly. As $\Cd\cong\Theta(\mathrm{A}_{d})$ is a
graded coalgebra, the comultiplication and counit $2$-morphisms are homogeneous of degree $0$.
By duality, $\tilde{\mathrm{C}}_{d}=\Cd^{\oplus\mathsf{v}^{\mone[2]\mathbf{a}}}$
is a graded algebra. This implies that the multiplication and unit $2$-morphisms of $\Cd$ are homogeneous of
degree $-2\mathbf{a}$ and $2\mathbf{a}$, respectively. It remains to show
that the multiplication and the comultiplication of $\Cd$ satisfy the Frobenius conditions.

Recall the algebra $\mathsf{B}$ constructed in Subsection \ref{s7.4}.
Proposition \ref{prop:2-action}, combined with
\eqref{eq:Lsym} and \eqref{eq:Ldgamma}, implies
that $\Cd$ acts via the $\mathsf{B}$-$\mathsf{B}$-bimodule
\begin{gather*}
\bigoplus_{u\in\mathcal{H}}\mathsf{B}e_{u}\otimes_{\mathbb{C}}e_{u}\mathsf{B}.
\end{gather*}
The comultiplication on this bimodule is given by
\begin{gather*}
\delta_{d}\colon
\bigoplus_{u\in\mathcal{H}}
\mathsf{B}e_{u}\otimes_{\mathbb{C}}e_{u}\mathsf{B}\to\bigoplus_{u,v\in\mathcal{H}}
\mathsf{B}e_{u}\otimes_{\mathbb{C}}e_{u}\mathsf{B}e_{v}\otimes_{\mathbb{C}}e_{v}\mathsf{B},\;
e_{u}\otimes e_{u}\mapsto \delta_{d}(u)e_{u}\otimes e_{u}\otimes e_{u}
\end{gather*}
and the counit by
\begin{gather*}
\epsilon_{d}\colon
\bigoplus_{u\in\mathcal{H}}\mathsf{B}e_{u}\otimes_{\mathbb{C}} e_{u}\mathsf{B}\to
\mathsf{B},\quad ae_{u}\otimes e_{u}b\mapsto \delta_{d}(u)^{\mone} ae_{u}b
\end{gather*}
for certain $\delta_{d}(u)\in\mathbb{C}^\times$. In general we do not have a presentation of $\cSH$ in terms of
generating $2$-morphisms and relations, so we can not compute the $\delta_{d}(u)$ explicitly. In the specific
case of dihedral Soergel bimodules such a presentation does exist and the $\delta_{d}(u)$ were computed
explicitly in \cite[Subsection 4.2]{MT}. In general, all we can say is that if the bimodule map representing $\delta_{d}$
involves $\delta_{d}(u)$, then the bimodule map representing $\epsilon_{d}$ involves $\delta_{d}(u)^{\mone}$ by counitality.

To describe the algebra structure, consider the Frobenius trace
$\mathrm{tr}\colon\mathsf{B}\to\mathbb{C}$ and note that
$\mathrm{tr}(e_{u}be_{v})=0$ for a homogeneous element $e_{u}be_{v}$
unless $u=v$ (because $\mathsf{B}$ is weakly symmetric) and the degree of
$e_{u}be_{v}$ is $2\mathbf{a}$. Furthermore, for any $u,v\in\mathcal{H}$, let $\{b_{i,v,u}\mid i=1,\dots,m_{u,v}\}$ and
$\{b^{i,u,v}\mid i=1,\dots,m_{u,v}\}$ be dual bases of $e_{v}\mathsf{B}e_{u}$ and
$e_{u}\mathsf{B}e_{v}$, respectively, with respect to the Frobenius trace $\mathrm{tr}$, i.e.
\begin{gather*}
\mathrm{tr}(b_{i,v,u}b^{j,u,v})=
\begin{cases}
1 &\text{if }i=j;
\\
0 &\text{if }i\neq j.
\end{cases}
\end{gather*}
Here $m_{u,v}=h_{u^{\mone},v,d}(1)=h_{v^{\mone},u,d}(1)$, with $h$ defined as
in \eqref{eq:kl-mult}. Following the same convention, the dual of $e_{u}$ is denoted by $e^{u}$,
for $u\in\mathcal{H}$. Let $\sigma$ be the Nakayama
automorphism of $\mathsf{B}$ defined by $\mathrm{tr}(ab)=\mathrm{tr}(b\sigma(a))$, for
any $a,b\in\mathsf{B}$. Note that, since $\mathsf{B}$ is weakly symmetric, we have $\sigma(e_{u})=e_{u}$
and $\sigma(e^{u})=e^{u}$ for all $u\in\mathcal{H}$. Without loss of generality, we therefore
assume that $e_{u}=b_{1,u,u}$ and $e^{u}=b^{1,u,u}$.
The algebra structure is then given by the multiplication
\begin{gather*}
\mu_{d}\colon
\bigoplus_{u,v\in\mathcal{H}}\big(\mathsf{B}e_{u}\otimes_{\mathbb{C}} e_{u}\mathsf{B} e_{v}\otimes_{\mathbb{C}} e_{v}
\mathsf{B}\big)^{\oplus\mathsf{v}^{\mone[2\mathbf{a}]}}
\hspace*{-0.1cm}\to
\bigoplus_{u\in\mathcal{H}}
\mathsf{B}e_{u}\otimes_{\mathbb{C}} e_{u}\mathsf{B},\\
\hspace*{1.5cm}e_{u}\otimes e_{u}ae_{v}\otimes e_{v}\mapsto \delta_{u,v}\mu_{d}(u)\mathrm{tr}(a) e_{u}\otimes e_{u}
\end{gather*}
and the unit
\begin{gather*}
\iota_{d}\colon\mathsf{B}\to\bigoplus_{u\in\mathcal{H}}
\big(\mathsf{B}e_{u}\otimes_{\mathbb{C}} e_{u}\mathsf{B}\big)^{\oplus\mathsf{v}^{\mone[2\mathbf{a}]}},
\quad e_{u}\mapsto\sum_{v\in\mathcal{H}} \mu_{d}(v)^{\mone}\sum_{i=1}^{m_{u,v}}b^{i,u,v}\otimes b_{i,v,u},
\end{gather*}
for certain $\mu_{d}(u)\in\mathbb{C}^{\times}$. As for the comultiplication and counit $2$-morphisms, we cannot determine the
$\mu_{d}(u)$ explicitly in general. For the specific case of dihedral Soergel bimodules, those scalars were computed explicitly in
\cite[Subsection 4.2]{MT}. In the definition of $\mu_{d}$, note that $\mathrm{tr}_{\mathsf{B}}(a)=0$ unless $u=v$.

We already know that $\Cd$ is a coalgebra and an algebra.
One can immediately check that $\delta_{d}$ and $\mu_{d}$ satisfy the Frobenius conditions.
Since the cell $2$-representation is faithful on $2$-morphisms, this proves that $\Cd$ is a
Frobenius algebra in $\cSH$.

The result in this subsection is weaker than the Klein--Elias--Hogancamp conjecture
\cite[Subsection 5.2]{Kl}, \cite[Conjecture 4.40]{EH}, which conjectures the existence
of a Frobenius structure on $\Cd$ in $\cS$ itself. Indeed, we do not know how to ``lift'' the Frobenius structure
on $\Cd$ from the $\mathcal{H}$-simple quotient $\cSH$ to the whole of $\cS$.
The problem is that $\Cd\Cd$,
as noted in \cite[Conjecture 4.40]{EH}, may contain indecomposable direct summands in $\cSo$ isomorphic
to $\mathrm{C}_{w}^{\oplus\mathsf{v}^{t}}$, with $w>_{\mathcal{J}}\mathcal{H}$ and either $t<\mathbf{a}-\mathbf{a}(w)$ or $3\mathbf{a}-\mathbf{a}(w)<t$.
Note that, in terms of $\mathrm{C}_{z}$, Soergel's hom formula in \eqref{eq:soergel} becomes:
\begin{gather*}
\mathrm{dim}\big(\mathrm{hom}_{\ccS}(\mathrm{C}_{v},\mathrm{C}_{w}^{\oplus\mathsf{v}^{(k+\mathbf{a}(v)-\mathbf{a}(w))}})\big)
=\delta_{v,w}\delta_{0,k},
\quad v,w\in W,k\in\mathbb{Z}_{\geq 0}.
\end{gather*}
In particular, this implies that the dimension of
\begin{gather*}
\mathrm{hom}_{\ccS}(\Cd,\Cd\Cd)
\quad\text{or}\quad
\mathrm{hom}_{\ccS}
(\Cd\Cd,\Cd^{\oplus \mathsf{v}^{2\mathbf{a}}})
\end{gather*}
need not be one, in general. Let us give one simple example.

\begin{example}\label{exfrob}
For rank $2$ or lower $\Cd\Cd$ never contains such direct summands,
but for higher ranks it frequently does. For an explicit and minimal example, let $W$ be of type $A_{3}$
with simple reflections $s_{1}$, $s_{2}$, $s_{3}$, where we write $i=s_{i}$ for short, and Coxeter diagram
\begin{gather*}
\begin{xy}
\xymatrix{
1 \ar@{-}[r] & 2 \ar@{-}[r] & 3
}
\end{xy}.
\end{gather*}
Set $d=12321$.
Then $\mathbf{a}=3$. Consider also the longest element $w_{0}=121321$ of $W$ (whose $\mathbf{a}$-value is $6$),
which is strictly greater than $d$ in the two-sided order. We have
\begin{gather*}
\Cd\Cd\cong
\Cd^{\oplus 1\oplus 3\mathsf{v}^2\oplus 3\mathsf{v}^4\oplus\mathsf{v}^6}
\oplus
\mathrm{C}_{w_{0}}^{\oplus\mathsf{v}^{\mone[4]}\oplus 4\mathsf{v}^{\mone[2]}\oplus 6\oplus 4\mathsf{v}^2\oplus\mathsf{v}^4}\; \text{in}\; \cSo,
\end{gather*}
where the minimal shift of $\mathrm{C}_{w_{0}}$ is strictly smaller than $\mathbf{a}-\mathbf{a}(w_{0})=-3$ and
the maximal shift is strictly bigger than $3\mathbf{a}-\mathbf{a}(w_{0})=3$.
\end{example}


\section{Lifted simple transitive 2-representations}\label{s7.7}


\subsection{The underlying algebra}\label{s7.7.1}


Suppose $(\mathrm{A},\delta_{\mathrm{A}},\epsilon_{\mathrm{A}})$ is a cosimple coalgebra in
$\cAH$. By \cite[Corollary 12]{MMMZ} and Corollary \ref{cor:ss}, $\mathbf{N}:=\mathbf{inj}_{\ccA_{\mathcal{H}}}(\mathrm{A})$ is a simple transitive birepresentation of $\cAH$ and $\mathcal{N}=\mathrm{inj}_{\ccA_{\mathcal{H}}}(\mathrm{A})=\mathrm{comod}_{\ccA_{\mathcal{H}}}(\mathrm{A})$
is semisimple. By Lemma \ref{lem5.1} and Proposition \ref{prop5.4},
$\Theta(\mathrm{A})$ is also a cosimple coalgebra,
which implies that $\mathbf{M}:=\mathbf{inj}_{\underline{\ccS_{\mathcal{H}}}}\big(\Theta(\mathrm{A})\big)$ is a
graded simple transitive
$2$-representation of $\cSH$ with apex $\mathcal{H}$, using \cite[Corollary 12]{MMMZ} again.

Since $\cAH$ is locally semisimple, see
Proposition \ref{prop:fusion}, $\mathrm{A}$ must contain a direct summand
isomorphic to $\mathrm{A}_{d}$, which is the identity $1$-morphism in $\cAH$, and
$\epsilon_{\mathrm{A}}\colon\mathrm{A}\to\mathrm{A}_{d}$ is the projection,
which is a morphism of coalgebras.
Hence, we obtain a faithful morphism of birepresentations of $\cAH$
\begin{gather*}
\Phi_{\ccA_{\mathcal{H}}}\colon\mathbf{inj}_{\ccA_{\mathcal{H}}}(\mathrm{A})
=\mathbf{N}\to
\mathbf{inj}_{\ccA_{\mathcal{H}}}(\mathrm{\mathrm{A}_{d}}) \simeq \mathbf{A}_{\mathcal{H}},
\end{gather*}
which is the identity on morphisms and sends $(N,\delta_{N,\mathrm{A}})$ in
$\mathbf{N}$ to $(N,(\mathrm{id}_{N}\circ_{\mathrm{h}}\epsilon_{\mathrm{A}})\circ_{\mathrm{v}}\delta_{N, \mathrm{A}})$.
Here $\mathbf{A}_{\mathcal{H}}$ denotes the cell birepresentation of $\cAH$.

Since $\Theta$ is $\mathbb{C}$-linear, the above implies that
$\Theta(\mathrm{A})$ contains a direct summand in $\cSHo$ isomorphic to $\Theta(\mathrm{A}_{d})=\Cd$
and the counit $\epsilon_{\Theta(\mathrm{A})}\colon\Theta(\mathrm{A})\to\mathbbm{1}_{\ccS_{\mathcal{H}}}$
is the composite of the projection
$\pi_{d}:=\Theta(\epsilon_{\mathrm{A}})\colon\Theta(\mathrm{A})\to\Cd$
and $\epsilon_{d}\colon\Cd\to\mathbbm{1}_{\ccS_{\mathcal{H}}}$.
In particular, we obtain a faithful, degree-preserving morphism
of $2$-representations of $\cSH$
\begin{gather*}
\Phi_{\ccS_{\mathcal{H}}}\colon\mathbf{inj}_{\underline{\ccS_{\mathcal{H}}}}
\big(\Theta(\mathrm{A})\big)=\mathbf{M}\to
\mathbf{inj}_{\underline{\ccS_{\mathcal{H}}}}(\mathrm{\Cd}) \simeq\mathbf{C}_{\mathcal{H}},
\end{gather*}
which is the identity on morphisms and sends the comodule
$(M,\delta_{M,\Theta(\mathrm{A})})$ in $\mathbf{M}(\varnothing)$ to the comodule
$\big(M,(\mathrm{id}_{M}\circ_{\mathrm{h}}\pi_{d})\circ_{\mathrm{v}}\delta_{M,\Theta(\mathrm{A})}\big)$.

Altogether, this yields a commuting square
\begin{gather*}
\begin{xy}
\xymatrix{
\mathbf{M}
\ar[rr]^{\Phi_{\ccS_{\mathcal{H}}}}
&& \mathbf{C}_{\mathcal{H}}
\\
\mathbf{N}
\ar[rr]_{\Phi_{\ccA_{\mathcal{H}}}}\ar[u]^{\Theta}
&& \mathbf{A}_{\mathcal{H}}.\ar[u]_{\Theta}
}
\end{xy}
\end{gather*}

Let $N_{1},\dots,N_r$ be a complete set of pairwise non-isomorphic
simple ($=$indecomposable) objects in
$\mathbf{N}(\varnothing)$. For every $i=1,\dots,r$, we have
\begin{gather*}
\Phi_{\ccA_{\mathcal{H}}}(N_i)\cong
\bigoplus_{w\in\mathcal{H}}\mathrm{A}_{w}^{\oplus p_{i,w}},
\end{gather*}
for certain $p_{i,w}\in\mathbb{N}_{0}$.

Let $M_i:=\Theta(N_i)$, for $i=1,\dots,r$. Then $M_{1},\dots,M_r$ is a
complete and irredundant set of indecomposable objects of $\mathbf{M}(\varnothing)$ up to isomorphism and grading shift, and
\begin{gather}\label{eq:forgetfuldecomp}
\Phi_{\ccS_{\mathcal{H}}}(M_i)\cong\bigoplus_{w\in\mathcal{H}}
\mathrm{C}_{w}^{\oplus p_{i,w}}\;\text{in}\; \CHo(\varnothing),
\end{gather}
for every $i=1,\dots,r$.

\begin{lemma}\label{lemma:2repcoef}
For $w\in\mathcal{H}$ and $i,j=1,\dots,r$,
define $\tilde{h}_{w,i,j}\in\mathbb{N}[\mathsf{v},\mathsf{v}^{\mone}]$ by
\begin{gather}\label{eq:M}
\mathrm{C}_{w}\,M_i
\cong
\bigoplus_{j=1}^{r}
M_j^{\oplus\tilde{h}_{w,i,j}}\;\text{in}\; \mathbf{M}^{(0)}(\varnothing).
\end{gather}
Then
\begin{gather*}
\tilde{h}_{w,i,j}\in\mathsf{v}^{2\mathbf{a}}\mathbb{N}_{0}[\mathsf{v}^{\mone}]\cap\mathbb{N}_{0}[\mathsf{v}].
\end{gather*}
\end{lemma}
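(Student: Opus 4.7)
The plan is to deduce the bound on $\tilde{h}_{w,i,j}$ by transporting the decomposition \eqref{eq:M} through the faithful, degree-preserving morphism $\Phi_{\ccS_\mathcal{H}}\colon\mathbf{M}\to\mathbf{C}_\mathcal{H}$ and then invoking positivity to isolate each $\tilde h_{w,i,j}$ from a sum of Kazhdan--Lusztig structure constants.

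First, since $\Phi_{\ccS_\mathcal{H}}$ is a morphism of graded $2$-representations, applying it to \eqref{eq:M} and using \eqref{eq:forgetfuldecomp} gives, in $\CHo(\varnothing)$,
\begin{gather*}
\bigoplus_{v\in\mathcal{H}}\mathrm{C}_w\mathrm{C}_v^{\oplus p_{i,v}}
\cong\bigoplus_{j=1}^{r}\bigoplus_{z\in\mathcal{H}}\mathrm{C}_z^{\oplus\, p_{j,z}\tilde h_{w,i,j}}.
\end{gather*}
By Lemma \ref{lem5.2}, the left-hand side decomposes as $\bigoplus_{z\in\mathcal{H}}\mathrm{C}_z^{\oplus\sum_v p_{i,v}\mathsf{v}^{\mathbf{a}}h_{w,v,z}}$. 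Matching multiplicities of $\mathrm{C}_z$ for each fixed $z\in\mathcal{H}$ therefore yields the identity
\begin{gather*}
\sum_{v\in\mathcal{H}}p_{i,v}\,\mathsf{v}^{\mathbf{a}}h_{w,v,z}
=\sum_{j=1}^{r}p_{j,z}\tilde h_{w,i,j}
\quad\text{in }\mathbb{N}_{0}[\mathsf{v},\mathsf{v}^{\mone}].
\end{gather*}

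Next, I would check that the left-hand side lies in $\mathsf{v}^{2\mathbf{a}}\mathbb{N}_{0}[\mathsf{v}^{\mone}]\cap\mathbb{N}_{0}[\mathsf{v}]$. Since $z\in\mathcal{H}$, $\mathbf{a}(z)=\mathbf{a}$, so \eqref{eq:kl-mult} and \eqref{eq:kl-coeff} combined with bar-invariance of $h_{w,v,z}$ force $h_{w,v,z}\in\mathsf{v}^{\mathbf{a}}\mathbb{N}_{0}[\mathsf{v}^{\mone}]\cap\mathsf{v}^{\mone[\mathbf{a}]}\mathbb{N}_{0}[\mathsf{v}]$. Multiplying by $\mathsf{v}^{\mathbf{a}}$ gives $\mathsf{v}^{\mathbf{a}}h_{w,v,z}\in\mathsf{v}^{2\mathbf{a}}\mathbb{N}_{0}[\mathsf{v}^{\mone}]\cap\mathbb{N}_{0}[\mathsf{v}]$. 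Because the $p_{i,v}$ are non-negative integers, the displayed left-hand side lies in this set as well.

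Finally, to extract the bound on a single $\tilde h_{w,i,j}$, I would use non-negativity term by term: each summand $p_{j,z}\tilde h_{w,i,j}$ on the right has non-negative integer coefficients, and a sum of such polynomials that lands in $\mathsf{v}^{2\mathbf{a}}\mathbb{N}_{0}[\mathsf{v}^{\mone}]\cap\mathbb{N}_{0}[\mathsf{v}]$ forces each summand into the same set. Since each $N_j$, and hence $M_j$, is non-zero, for every fixed $j$ there exists $z=z_j\in\mathcal{H}$ with $p_{j,z_j}>0$. Taking this $z$ in the coefficient identity and dividing through by the positive integer $p_{j,z_j}$ gives $\tilde h_{w,i,j}\in\mathsf{v}^{2\mathbf{a}}\mathbb{N}_{0}[\mathsf{v}^{\mone}]\cap\mathbb{N}_{0}[\mathsf{v}]$, as claimed. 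The only genuinely non-trivial input is that $\Phi_{\ccS_\mathcal{H}}$ is a degree-preserving morphism of $2$-representations (so the comparison makes sense as an identity in $\mathbb{N}_{0}[\mathsf{v},\mathsf{v}^{\mone}]$ without spurious shifts) together with the bar invariance and $\mathbf{a}$-bound on the $h_{w,v,z}$; all remaining steps are routine positivity bookkeeping.
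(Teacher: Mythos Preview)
Your proof is correct and follows essentially the same route as the paper's: both apply $\Phi_{\ccS_{\mathcal{H}}}$ to \eqref{eq:M}, compare with the decomposition coming from \eqref{eq:forgetfuldecomp} and Lemma~\ref{lem5.2}, obtain the coefficient identity $\sum_{j}p_{j,z}\tilde h_{w,i,j}=\sum_{v}p_{i,v}\mathsf{v}^{\mathbf{a}}h_{w,v,z}$, and then use positivity of the $p_{j,z}$ together with the existence, for each $j$, of some $z$ with $p_{j,z}>0$ to force each $\tilde h_{w,i,j}$ into $\mathsf{v}^{2\mathbf{a}}\mathbb{N}_{0}[\mathsf{v}^{\mone}]\cap\mathbb{N}_{0}[\mathsf{v}]$. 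Your write-up makes the final positivity step slightly more explicit than the paper does, but the argument is the same.
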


\begin{proof}
The proof follows from comparing two decompositions in $\CHo(\varnothing)$.
On one hand, by \eqref{eq:forgetfuldecomp} and \eqref{eq:M}, we have
\begin{gather}\label{eq:2repcoef1}
\Phi_{\ccS_{\mathcal{H}}}(\mathrm{C}_{w}\,M_i)\cong\bigoplus_{j=1}^{r}\Phi_{\ccS_{\mathcal{H}}}
\big(M_j^{\oplus\tilde{h}_{w,i,j}}\big)\cong\bigoplus_{j=1}^{r}\bigoplus_{v\in\mathcal{H}}
\mathrm{C}_{v}^{\oplus\tilde{h}_{w,i,j}p_{j,v}}.
\end{gather}
On the other hand, by \eqref{eq:forgetfuldecomp} and the fact that
$\Phi_{\ccS_{\mathcal{H}}}$ is a morphism of $2$-representations, we have
\begin{gather}\label{eq:2repcoef2}
\Phi_{\ccS_{\mathcal{H}}}(\mathrm{C}_{w}\,M_i)\cong\mathrm{C}_{w}\,\Phi_{\ccS_{\mathcal{H}}}(M_i)\cong
\bigoplus_{u\in\mathcal{H}}\mathrm{C}_{w}\mathrm{C}_{u}^{\oplus p_{i,u}}\cong
\bigoplus_{u,v\in\mathcal{H}}\mathrm{C}_{v}^{\oplus\mathsf{v}^{\mathbf{a}}p_{i,u}h_{w,u,v}}.
\end{gather}
Comparing \eqref{eq:2repcoef1} and \eqref{eq:2repcoef2} for a fixed $v$, we obtain the equation
\begin{gather*}
\sum_{j=1}^{r}\tilde{h}_{w,i,j}p_{j,v}=\mathsf{v}^{\mathbf{a}}\sum_{u\in\mathcal{H}}p_{i,u}h_{w,u,v}.
\end{gather*}
The facts that
$p_{i,u},p_{j,v}\in\mathbb{N}_{0}$ and $\mathsf{v}^{\mathbf{a}}h_{w,u,v}\in
\mathsf{v}^{2\mathbf{a}}\mathbb{N}_{0}[\mathsf{v}^{\mone}]\cap\mathbb{N}_{0}[\mathsf{v}]$ now imply the result, as for every
$j=1,\dots,r$ there exists at least one $v\in\mathcal{H}$ such that $p_{j,v}\neq 0$.
\end{proof}

Define
\begin{gather*}
\mathsf{B}^{\mathbf{M}}:=\mathrm{End}_{\mathbf{M}}\big(\bigoplus_{i=1}^{r} M_i\big)^{\mathrm{op}}.
\end{gather*}
Then $\mathbf{M}(\varnothing)$ is equivalent to the category of finite dimensional graded injective $\mathsf{B}^{\mathbf{M}}$-modules.

\begin{proposition}\label{prop:frobenius2}
The algebra $\mathsf{B}^{\mathbf{M}}$ is a finite dimensional positively graded Frobenius algebra of graded length $2\mathbf{a}$.
\end{proposition}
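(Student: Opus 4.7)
The plan is to adapt the proof of Proposition \ref{prop:frobenius21}, which establishes precisely this statement for the special case $\mathbf{M} = \mathbf{C}_{\mathcal{H}}$. The overall structure of the argument carries over, and the three essential inputs remain available in the present generality.

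First, $\mathsf{B}^{\mathbf{M}}$ is graded by the definition of a graded $2$-representation; it is self-injective, and hence Frobenius since it is basic, because $\cSH$ is fiat, exactly as in the proof of Proposition \ref{prop:Frobenius}. Second, by Lemma \ref{lem5.3-0} I replace $\mathrm{A}$ with a Morita--Takeuchi equivalent multiplicity-free coalgebra in $\cAH$; Corollary \ref{cor5.42} and Proposition \ref{prop5.41} then guarantee that $\mathrm{C} := \Theta(\mathrm{A})$ is a multiplicity-free direct sum, up to grading shift, of representatives of isomorphism classes of indecomposables in $\mathbf{inj}_{\underline{\ccSH}}(\mathrm{C})$. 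Since $\Theta$ introduces no grading shifts, one has $\mathrm{C} \cong \bigoplus_{w \in \mathcal{H}} \mathrm{C}_w^{\oplus q_w}$ in $\cSHo$ with $q_w \in \mathbb{N}_0$.

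Third, both the positive grading and the upper bound on the graded length follow exactly as in Proposition \ref{prop:frobenius21}, from the internal-cohom identity analogous to \eqref{eq:marco11},
\begin{gather*}
\mathrm{hom}_{\ccSH}\big(\mathrm{C}, \mathbbm{1}_{\ccSH}^{\oplus \mathsf{v}^k}\big) \cong \mathrm{hom}_{\mathbf{M}}\big(\mathrm{C}, \mathbbm{1}_{\ccSH}^{\oplus \mathsf{v}^k}\mathrm{C}\big),
\end{gather*}
combined with positivity of the Soergel bimodule grading and Lemma \ref{lem5.2}: the left-hand side vanishes for $k > 0$ and for $k < -2\mathbf{a}$. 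For non-vanishing at $k = -2\mathbf{a}$, the point is that $\cAH$ is locally semisimple (Proposition \ref{prop:fusion}) and $\mathrm{A}_d$ is its identity $1$-morphism, so $\mathrm{A}$ must contain $\mathrm{A}_d$ as a direct summand, whence $q_d \geq 1$. Applying the cell-case argument from Proposition \ref{prop:frobenius21} together with the graded-dimension formula \eqref{eq:grdimHom} shows that $\mathrm{hom}_{\ccSH}(\mathrm{C}_w, \mathbbm{1}_{\ccSH}^{\oplus \mathsf{v}^{-2\mathbf{a}}}) \neq 0$ for every $w \in \mathcal{H}$, in particular for $w = d$, so the degree $2\mathbf{a}$ component of $\mathsf{B}^{\mathbf{M}}$ is non-zero and the graded length is exactly $2\mathbf{a}$.

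The step that I expect to require the most care is matching the paper's definition of a graded Frobenius algebra of length $2\mathbf{a}$, which demands a \emph{uniform} shift $\mathrm{Hom}_{\mathbb{C}}(\mathsf{B}^{\mathbf{M}},\mathbb{C}) \cong (\mathsf{B}^{\mathbf{M}})^{\oplus \mathsf{v}^{-2\mathbf{a}}}$ as graded left modules, rather than merely a bound on the top degree. As in the proof of Proposition \ref{prop:Frobenius}, this amounts to the socle of every indecomposable injective object in $\mathbf{M}$ being concentrated uniformly in degree $2\mathbf{a}$. I would establish this by pulling back Lemma \ref{lem7.1-3} along the faithful, degree-preserving morphism $\Phi_{\ccSH}\colon \mathbf{M} \to \mathbf{C}_{\mathcal{H}}$ introduced in Subsection \ref{s7.7.1}, using that each $M_i \cong \Theta(N_i)$ decomposes in $\cSHo$ into shift-free direct sums of $\mathrm{C}_w$'s, so uniformity of the socle degree for the $\mathrm{C}_w$'s in the cell $2$-representation transfers to uniformity for the $M_i$'s in $\mathbf{M}$; alternatively, one can transport weak symmetry across the involution $\vee$ of Proposition \ref{prop:duality} and Corollary \ref{cor:duality}.
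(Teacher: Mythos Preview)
Your approach is correct and matches the paper's, which literally says ``the proposition follows from similar arguments as in the proof of Proposition~\ref{prop:frobenius21}.'' Your adaptation of the lower-bound step---observing that $\mathrm{A}_d$ is necessarily a summand of $\mathrm{A}$ so that $q_d\geq 1$, and then using $[\Cd,\Cd]\cong\Cd$ together with \eqref{eq:grdimHom} to get $\mathrm{hom}_{\ccSH}(\Cd,\mathbbm{1}^{\oplus\mathsf{v}^{-2\mathbf{a}}})\neq 0$---is exactly the kind of detail the paper leaves implicit.

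One remark: your final paragraph does more than the statement requires. In the paper, ``graded length $2\mathbf{a}$'' here is only meant as the top nonzero degree being $2\mathbf{a}$, \emph{not} the uniform-shift condition $\mathrm{Hom}_{\mathbb{C}}(\mathsf{B}^{\mathbf{M}},\mathbb{C})\cong(\mathsf{B}^{\mathbf{M}})^{\oplus\mathsf{v}^{-2\mathbf{a}}}$. This is made explicit in Remark~\ref{remark:frobenius2}, where the paper says it does not yet know that $\mathsf{B}^{\mathbf{M}}$ is weakly symmetric; that is only established later in Proposition~\ref{proposition:weakly-symmetric}, after the bimodule computations in Subsection~\ref{s7.7.3}. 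So you may drop the socle-uniformity discussion entirely.
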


\begin{proof}
The case of $\mathbf{M}$ being the cell $2$-representation $\mathbf{C}_{\mathcal{H}}$
is discussed in Proposition \ref{prop:Frobenius} (or in Proposition \ref{prop:frobenius21} because of Proposition \ref{prop711}).
In the case when $\mathbf{M}$ is not necessarily the cell $2$-representation, the proposition follows from similar arguments
as in the proof of Proposition \ref{prop:frobenius21}.
\end{proof}

\begin{remark}\label{remark:frobenius2}
In contrast to the situation in Subsection \ref{s7.1},
we do not know a priori that $\mathsf{B}^{\mathbf{M}}$ is weakly symmetric. Therefore, we have to
include a possible Nakayama permutation in Subsection \ref{s7.7.3} below.
Only at the end of that section, we will be able to show that it is trivial.
\end{remark}


\subsection{A characterization of $2$-representations in the image of $\hat{\Theta}$}\label{s7.7.2}


We note the following characterization of the graded simple transitive $2$-representations
in the image of the map $\hat{\Theta}$ from Proposition \ref{prop5.3-1}.
As we will see in Theorem \ref{theorem:main}, these exhaust the graded simple transitive $2$-representation of $\cSH$
with apex $\mathcal{H}$, up to equivalence.

\begin{theorem}\label{thm7.1-2}
Let $\mathbf{M}$ be a graded simple transitive $2$-representation of $\cSH$ with apex $\mathcal{H}$.
Then $\mathbf{M}$ is in the image of $\hat{\Theta}$ if and only if the
following conditions are satisfied:
\begin{enumerate}[label=(\roman*)]

\item \label{thm7.1-2.1} there is a choice $\{M_i\mid i\in I\}$ of representatives of isomorphism
classes of indecomposable objects in $\mathbf{M}(\varnothing)$
such that the endomorphism algebra $\mathsf{B}$ of $M:=\displaystyle\bigoplus_{i\in I} M_i$
in $\mathbf{M}(\varnothing)$ is positively graded, and, additionally;

\item \label{thm7.1-2.2} for every $w\in\mathcal{H}$ and $i\in I$, the object
$\mathrm{C}_{w} M_i \in\mathbf{M}^{(0)}(\varnothing)$ decomposes into a direct sum whose summands, up to degree-preserving isomorphism,
are of the form $M_j^{\oplus\mathsf{v}^l}$, where $j\in I$ and $0\leq l\leq 2\mathbf{a}$;

\item \label{thm7.1-2.3} the graded length of $\mathsf{B}$ is not greater than $2\mathbf{a}$.
\end{enumerate}
\end{theorem}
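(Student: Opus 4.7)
The plan is to establish each implication of the biconditional separately, with the forward direction being a packaging of earlier results and the reverse direction being the main content.

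For the forward direction, I would assume $\mathbf{M}\simeq\hat{\Theta}(\mathbf{N})$ with $\mathbf{N}\simeq\mathbf{inj}_{\ccAH}(\mathrm{A})$ for a cosimple coalgebra $\mathrm{A}$ in $\cAH$, and take $M_{i}=\Theta(N_{i})$ for indecomposable summands $N_{i}$ of $\mathrm{A}$. Since each $\Theta(N_{i})$ lies in $\mathrm{add}(\{\mathrm{C}_{w}\mid w\in\mathcal{H}\})^{(0)}$ without grading shifts, Soergel's hom formula \eqref{eq:soergel} implies that $\mathsf{B}$ is positively graded, giving (i). Condition (ii) is Lemma \ref{lemma:2repcoef}, and (iii) follows from Proposition \ref{prop:frobenius2} (which in fact asserts equality of the graded length with $2\mathbf{a}$).

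For the reverse direction, the core task is to show that, under (i)--(iii), each $M_{i}$ admits a $\cSHo$-decomposition \emph{without} grading shifts, placing it in the image of $\Theta$. I would first normalize by shifting all $M_{i}$ simultaneously, which preserves positivity of $\mathsf{B}$, so that at least one $M_{i}$ starts in degree $0$. Writing $M_{i}\cong\bigoplus_{v\in\mathcal{H}}\mathrm{C}_{v}^{\oplus p_{i,v}}$ in $\cSHo$ with $p_{i,v}\in\mathbb{N}_{0}[\mathsf{v}]$, condition (i) combined with the fact that $\mathrm{grdim}(\mathrm{Hom}_{\ccS_{\mathcal{H}}}(\mathrm{C}_{v},\mathrm{C}_{v}))$ has constant term $1$ (Soergel's hom formula) forces the following dichotomy: whenever $p_{j,v}(0)>0$ for some $j$, the polynomial $p_{i,v}$ must be a constant for every $i$, since otherwise the product $\bar{p}_{i,v}\,p_{j,v}$ would contribute a negative-degree morphism to $\mathrm{Hom}(M_{i},M_{j})\subset\mathsf{B}$. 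Setting $S:=\{v\in\mathcal{H}\mid p_{j,v}(0)>0\text{ for some }j\}$, I would then invoke (ii) to compute the constant term of the $\mathrm{C}_{v}$-multiplicity in $\mathrm{C}_{w}M_{i}$ in two ways: via the $\mathrm{C}_{z}$-expansion with coefficients $\mathsf{v}^{\mathbf{a}}h_{w,u,z}p_{i,u}$, whose constant term is $\sum_{u}\gamma_{w,u,v^{\mone}}p_{i,u}(0)$, and via $\bigoplus_{j}M_{j}^{\oplus\tilde{h}_{w,i,j}}$, whose constant term vanishes for $v\notin S$ because (ii) ensures $\tilde{h}_{w,i,j}\in\mathbb{N}_{0}[\mathsf{v}]$. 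Equating these shows that $S$ is closed under left multiplication by every $\mathrm{A}_{w}$ in the Grothendieck ring of $\cAH$. Since $\cAH$ has a unique cell (Example \ref{rem:fusioncell}) and $S$ is non-empty, this forces $S=\mathcal{H}$: any $u\in S$ yields $d\in S$ via $\gamma_{u^{\mone},u,d}=1$, and then every $w\in\mathcal{H}$ via $\mathrm{A}_{w}\mathrm{A}_{d}\cong\mathrm{A}_{w}$.

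With $p_{i,v}\in\mathbb{N}_{0}$ for all $i,v$, each $M_{i}=\Theta(N_{i})$ with $N_{i}:=\bigoplus_{v}\mathrm{A}_{v}^{\oplus p_{i,v}}$ in $\cAH$. Normalizing the coalgebra $\mathrm{C}$ representing $\mathbf{M}$ via Lemma \ref{lem5.3-0} gives $\mathrm{C}\cong\Theta(\mathrm{A})$ for $\mathrm{A}:=\bigoplus_{i}N_{i}$, and the coalgebra structure descends through $\Pi$ to $\mathrm{A}$, using the strict multiplicativity $\Pi(\mathrm{X})\Pi(\mathrm{Y})=\Pi(\mathrm{XY})$ on $\mathcal{X}$. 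Cosimplicity of $\mathrm{A}$ in $\cAH$ follows from cosimplicity of $\mathrm{C}$ in $\underline{\cSH}$: by Lemma \ref{lem5.1}, any proper subcoalgebra $\mathrm{A}'\subsetneq\mathrm{A}$ would lift via $\Theta$ to a proper subcoalgebra $\Theta(\mathrm{A}')\subsetneq\mathrm{C}$, using that $\Theta$ is additive and hence preserves direct-summand inclusions in the locally semisimple $\cAH$. Therefore $\mathbf{M}\simeq\mathbf{inj}_{\underline{\ccS_{\mathcal{H}}}}(\Theta(\mathrm{A}))=\hat{\Theta}(\mathbf{inj}_{\ccA_{\mathcal{H}}}(\mathrm{A}))$. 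The main obstacle is the fusion-closure argument establishing $S=\mathcal{H}$, which requires delicate bookkeeping with the Kazhdan--Lusztig $h$- and $\gamma$-coefficients and relies crucially on (ii) to keep multiplicities in $\mathbb{N}_{0}[\mathsf{v}]$; condition (iii) is consistent with this argument and becomes automatic a posteriori once $M_{i}=\Theta(N_{i})$ via Proposition \ref{prop:frobenius2}.
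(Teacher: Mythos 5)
Your ``only if'' direction coincides with the paper's (Proposition \ref{prop5.41}, Lemma \ref{lemma:2repcoef}, Proposition \ref{prop:frobenius2}) and is fine. The ``if'' direction, however, has a genuine gap at its central step, the dichotomy that $p_{j,v}(0)>0$ for some $j$ forces every $p_{i,v}$ to be a constant. Your argument produces a homogeneous morphism of negative degree between the \emph{underlying} $1$-morphisms of $M_{i}$ and $M_{j}$ in $\cSHo$ (a shifted identity on a common summand $\mathrm{C}_{v}$) and claims this contradicts positivity of $\mathsf{B}$ because ``$\mathrm{Hom}(M_{i},M_{j})\subset\mathsf{B}$''. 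But $\mathsf{B}=\mathrm{End}_{\mathbf{M}}(M)$ consists of morphisms \emph{in} $\mathbf{M}$, i.e. comodule morphisms, and is only a (non-full) subalgebra of $\mathrm{End}_{\ccSH}(M)$; the containment you need goes the wrong way. A negative-degree map between the underlying objects need not intertwine the coactions, so it yields no element of $\mathsf{B}$ and no contradiction with (i). This is exactly why the paper does not argue on $M$ directly but on the internal cohom $\mathrm{C}=[M,M]$: the adjunction $\mathrm{hom}_{\ccSH}(\mathrm{C},\mathrm{C}_{w}^{\oplus\mathsf{v}^{k}})\cong\mathrm{hom}_{\mathbf{M}}(M,\mathrm{C}_{w}^{\oplus\mathsf{v}^{k}}M)$ legitimately converts hom spaces in $\mathbf{M}$, which (i)--(iii) do control, into hom spaces in $\cSH$ out of $\mathrm{C}$; then (i)--(ii) give $p_{w}\in\mathbb{N}_{0}[\mathsf{v}^{\mone}]$, while (iii), the bound on the degrees of $\mathrm{C}_{w}\,L$, and crucially Lemma \ref{lem:gr} (the graded length of $\mathrm{C}_{w}$ is exactly $4\mathbf{a}$) exclude negative shifts, so that $\mathrm{C}$ itself lies in the image of $\Theta$.

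Two further symptoms of the same problem. First, your normalization (``shift all $M_{i}$ simultaneously'') only guarantees that one $M_{i}$ starts in degree $0$; it does not give $p_{i,v}\in\mathbb{N}_{0}[\mathsf{v}]$ for every $i$, which your constant-term computation assumes, and shifting the $M_{i}$ individually would repair this only at the cost of changing the grading of the algebra $\mathsf{B}$ fixed in (i). Second, your argument never uses condition (iii) in an essential way, whereas the paper needs it precisely at the point where shifts are excluded (via the graded length of $M$ together with Lemma \ref{lem:gr}); a proof of the ``if'' direction in which (iii) plays no role should be regarded with suspicion. The fusion-closure computation with the $\gamma$-coefficients showing $S=\mathcal{H}$ is correct in itself, and the final descent of the coalgebra structure through $\Pi$ is in the spirit of the paper's going-down construction, but both rest on the unproved dichotomy, so the reverse implication is not established as written.
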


\begin{proof}
For the ``only if'' part observe that, by Proposition \ref{prop5.41}, we can pick a choice
of representatives of isomorphism classes of indecomposable objects in
$\mathbf{M}$, which are in the image of $\Theta$. The condition in \ref{thm7.1-2.2} then follows from
Lemma \ref{lemma:2repcoef}. Conditions \ref{thm7.1-2.1} and \ref{thm7.1-2.3} hold by Proposition \ref{prop:frobenius2}.

The ``if'' direction follows the proof of Proposition \ref{prop711} closely.
Set $\mathrm{C}:=[M,M]$. Then
\begin{gather}\label{eq7.1-21}
\mathrm{hom}_{{\ccS_{\mathcal{H}}}}(\mathrm{C},\mathrm{C}_{w}^{\oplus\mathsf{v}^k})\cong
\mathrm{hom}_{\mathbf{M}}(M,\mathrm{C}_{w}^{\oplus\mathsf{v}^k}M).
\end{gather}
Conditions \ref{thm7.1-2.1} and \ref{thm7.1-2.2} imply that the right-hand side
is zero if $k>0$. Hence, writing
\begin{gather*}
\mathrm{C}\cong\bigoplus_{w\in\mathcal{H}}
\mathrm{C}_{w}^{\oplus p_{w}} \;\text{in}\;\cSHo,
\end{gather*}
we obtain $p_{w}\in\mathbb{N}_{0}[\mathsf{v}^{\mone}]$.

Next we want to establish an analog of Lemma \ref{lem7.1-0}.
Namely, we claim that, for any simple object $L$ in $\underline{\mathbf{M}}(\varnothing)$
concentrated in degree $0$, and for any $w\in\mathcal{H}$, the injective object
$\mathrm{C}_{w}\,L$ is concentrated between the degrees $0$ and $2\mathbf{a}$.
Similarly to the proof of Lemma \ref{lem7.1-0}, the fact that
$\mathrm{C}_{w}\,L$ is concentrated  in positive degrees follows
from Conditions \ref{thm7.1-2.1} and \ref{thm7.1-2.2}.
The fact that $\mathrm{C}_{w}\,L$ is concentrated  in degrees below
$2\mathbf{a}$ follows from conditions \ref{thm7.1-2.2} and \ref{thm7.1-2.3}.

Now, if $k<-4\mathbf{a}$, then the right-hand side of \eqref{eq7.1-21}
is zero since, by condition \ref{thm7.1-2.3},
$M$ is a projective-injective object of graded length at most
$2\mathbf{a}$ and the action of $\mathrm{C}_{w}$ is given by projective functors
which increase the graded length by at most $2\mathbf{a}$ (see the previous paragraph).
Given the graded length of $\mathrm{C}_{w}$ in $\cSH$, see Lemma \ref{lem:gr}, this again shows
that $p_{w}=p_{w}(0)\in\mathbb{N}_0$ for all $w\in\mathcal{H}$, so $\mathrm{C}$ is in the image of $\Theta$,
as claimed.
\end{proof}


\subsection{Explicit bimodules for the $2$-action}\label{s7.7.3}


The degree-zero part of $\mathsf{B}^{\mathbf{M}}$ is isomorphic to
$\bigoplus_{i=1}^{r}\mathbb{C}e_i$, where $e_{1},\dots,e_r$ is a complete and irredundant set of primitive,
orthogonal idempotents corresponding
to $M_{1},\dots,M_r$ respectively. Due to \eqref{eq:forgetfuldecomp},
every $M_i$ is concentrated between degrees $0$ and $2\mathbf{a}$, whence
\begin{gather*}
M_i\cong
\mathrm{Hom}_{\mathbb{C}}(e_i\mathsf{B}^{\mathbf{M}},\mathbb{C})^{\oplus\mathsf{v}^{2\mathbf{a}}}
\cong \mathsf{B}^{\mathbf{M}}e_{\sigma(i)},
\end{gather*}
where $\sigma$ is the Nakayama permutation of $\mathsf{B}^{\mathbf{M}}$.
By Theorem \ref{thm:projapex}, the action of $\mathrm{C}_{w}$ on
the category of finite dimensional graded injective $\mathsf{B}^{\mathbf{M}}$-modules, for $w\in\mathcal{H}$,
is given by tensoring over $\mathsf{B}^{\mathbf{M}}$ with a
$\mathsf{B}^{\mathbf{M}}$-$\mathsf{B}^{\mathbf{M}}$-bimodule of the form
\begin{gather*}
\bigoplus_{i,j=1}^{r}\big(\mathsf{B}^{\mathbf{M}}e_{\sigma(j)}
\otimes_{\mathbb{C}} e_{\sigma(i)}
\mathsf{B}^{\mathbf{M}}\big)^{\oplus\tilde{\gamma}_{w,i,j}},
\end{gather*}
for certain $\tilde{\gamma}_{w,i,j}\in\mathbb{N}_0[\mathsf{v},\mathsf{v}^{\mone}]$.

\begin{proposition}\label{prop:2-action2}
We have
\begin{gather*}
\tilde{\gamma}_{w,k,j}=\tilde{h}_{w,k,j}(0)\in\mathbb{N}_0,
\end{gather*}
for all $w\in\mathcal{H}$ and $j,k=1,\dots r$.
\end{proposition}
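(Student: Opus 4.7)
The plan is to mirror the structure of the proof of Proposition~\ref{prop:2-action} (the cell representation case), deriving a linear system relating the unknown $\tilde{\gamma}_{w,k,j}$ to the known $\tilde{h}_{w,i,j}$, and then using positivity of all relevant gradings to force $\tilde{\gamma}_{w,k,j}$ to be a constant (not merely a Laurent polynomial), after which the statement follows by evaluation at $\mathsf{v}=0$.

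First, I apply the bimodule representing $\mathrm{C}_w$ to $M_i \cong \mathsf{B}^{\mathbf{M}}e_{\sigma(i)}$: the tensor product $\big(\mathsf{B}^{\mathbf{M}}e_{\sigma(j)}\otimes_{\mathbb{C}}e_{\sigma(k)}\mathsf{B}^{\mathbf{M}}\big)\otimes_{\mathsf{B}^{\mathbf{M}}}\mathsf{B}^{\mathbf{M}}e_{\sigma(i)}$ equals $M_j^{\oplus g_{k,i}(\mathsf{v})}$, where $g_{k,i}(\mathsf{v}) := \mathrm{grdim}(e_{\sigma(k)}\mathsf{B}^{\mathbf{M}}e_{\sigma(i)})$. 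Comparing with \eqref{eq:M} yields the system
\begin{gather*}
\sum_{k=1}^{r} g_{k,i}(\mathsf{v})\, \tilde{\gamma}_{w,k,j} = \tilde{h}_{w,i,j}(\mathsf{v}) \qquad (\star)
\end{gather*}
for all $w\in\mathcal{H}$ and $i,j=1,\dots,r$. Next, I record the decisive information on $g_{k,i}$ coming from Proposition~\ref{prop:frobenius2}: since $\mathsf{B}^{\mathbf{M}}$ is positively graded of length $2\mathbf{a}$ with semisimple degree-zero part $\bigoplus_l \mathbb{C}e_l$, orthogonality of the idempotents gives $g_{k,i}\in\mathbb{N}_0[\mathsf{v}]$, of degree at most $2\mathbf{a}$, with constant term $g_{k,i}(0)=\delta_{k,i}$. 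Moreover, because $\mathsf{B}^{\mathbf{M}}$ is Frobenius of length $2\mathbf{a}$, the socle of $M_i=\mathsf{B}^{\mathbf{M}}e_{\sigma(i)}$ is concentrated in degree $2\mathbf{a}$ and isomorphic to $L_i$, so the coefficient of $\mathsf{v}^{2\mathbf{a}}$ in $g_{k,i}$ is $\delta_{\sigma(k),i}$.

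The crux of the argument is a positivity step that forbids any non-trivial $\mathsf{v}$-dependence in $\tilde{\gamma}_{w,k,j}$. The point is that, since $\tilde{\gamma}_{w,k,j}\in\mathbb{N}_0[\mathsf{v},\mathsf{v}^{-1}]$ and $g_{k,i}\in\mathbb{N}_0[\mathsf{v}]$, no cancellation can occur in the sum on the left-hand side of $(\star)$, so every coefficient of $\tilde{h}_{w,i,j}$ is bounded below by the contribution from any single index $k'$. If $\tilde{\gamma}_{w,k,j}$ had a non-zero coefficient of $\mathsf{v}^{-t}$ with $t>0$, setting $i=k$ in $(\star)$ would make the coefficient of $\mathsf{v}^{-t}$ in $\tilde{h}_{w,k,j}$ strictly positive (via the $g_{k,k}(0)=1$ contribution), contradicting $\tilde{h}_{w,k,j}\in\mathbb{N}_0[\mathsf{v}]$ from Lemma~\ref{lemma:2repcoef}. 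Symmetrically, if $\tilde{\gamma}_{w,k,j}$ had a non-zero coefficient of $\mathsf{v}^{t}$ with $t>0$, then setting $i=\sigma(k)$ would make the coefficient of $\mathsf{v}^{2\mathbf{a}+t}$ in $\tilde{h}_{w,\sigma(k),j}$ strictly positive (via the coefficient $1$ of $\mathsf{v}^{2\mathbf{a}}$ in $g_{k,\sigma(k)}$), contradicting $\deg\tilde{h}_{w,\sigma(k),j}\leq 2\mathbf{a}$. Hence $\tilde{\gamma}_{w,k,j}\in\mathbb{N}_0$.

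With $\tilde{\gamma}_{w,k,j}$ now known to be a constant, evaluating $(\star)$ at $\mathsf{v}=0$ and using $g_{k,i}(0)=\delta_{k,i}$ gives $\tilde{\gamma}_{w,i,j}=\tilde{h}_{w,i,j}(0)$, completing the proof. The main obstacle is the positivity step: a priori the system $(\star)$ only determines $\tilde{\gamma}_{w,k,j}$ uniquely over $\mathbb{C}(\mathsf{v})$, and without positivity one could not exclude non-trivial $\mathsf{v}$-dependence. The trick is to exploit two different choices of $i$ (namely $i=k$ and $i=\sigma(k)$), which pick out, respectively, the lowest degree and the top degree of the Cartan-type entries $g_{k,i}$; the Frobenius structure of $\mathsf{B}^{\mathbf{M}}$ is what supplies the $\mathsf{v}^{2\mathbf{a}}$-information needed for the second choice.
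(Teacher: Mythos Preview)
Your proof is correct and follows essentially the same approach as the paper: derive the linear system by computing $\mathrm{C}_w M_i$ in two ways, then use positivity together with the constant term $g_{k,i}(0)=\delta_{k,i}$ and the top coefficient $[\mathsf{v}^{2\mathbf{a}}]g_{k,i}=\delta_{\sigma(k),i}$ (both consequences of Proposition~\ref{prop:frobenius2}) to rule out, respectively, negative- and positive-degree terms in $\tilde{\gamma}_{w,k,j}$. The only difference is notational (your summation index is $k$ and free index $i$, whereas the paper swaps these), and your explicit justification of the $\mathsf{v}^{2\mathbf{a}}$-coefficient via the socle of $M_i$ is exactly what the paper invokes implicitly through Proposition~\ref{prop:frobenius2}.
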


\begin{proof}
For $w\in\mathcal{H}$ and $1\leq k\leq r$, we obtain two different expressions for
$\mathrm{C}_{w}\mathsf{B}^{\mathbf{M}}e_{\sigma(k)}$ in
$\mathsf{B}^{\mathbf{M}}\text{-}\mathrm{fgproj}^{(0)}$, the category of finite dimensional
graded projective $\mathsf{B}^{\mathbf{M}}$-modules. On one hand, by \eqref{eq:M} and the fact that
$M_l\cong \mathsf{B}^{\mathbf{M}}e_{\sigma(l)}$, we have
\begin{gather}\label{eq:2-action21}
\mathrm{C}_{w}\mathsf{B}^{\mathbf{M}}e_{\sigma(k)}\cong\bigoplus_{j=1}^{r}
\mathsf{B}^{\mathbf{M}}e_{\sigma(j)}^{\oplus\tilde{h}_{w,k,j}}.
\end{gather}
On the other hand, we have
\begin{gather}\label{eq:2-action22}
\mathrm{C}_{w} \mathsf{B}^{\mathbf{M}}e_{\sigma(k)}\cong\bigoplus_{i,j=1}^{r}
\mathsf{B}^{\mathbf{M}}
e_{\sigma(j)}^{\oplus\tilde{\gamma}_{w,i,j}\mathrm{grdim}(e_{\sigma(i)}\mathsf{B}^{\mathbf{M}}e_{\sigma(k)})}.
\end{gather}
Comparing the terms in \eqref{eq:2-action21} and \eqref{eq:2-action22} for a fixed $j$, shows that
\begin{gather}\label{eq:2-action23}
\sum_{i=1}^{r}\tilde{\gamma}_{w,i,j}\mathrm{grdim}\big(e_{\sigma(i)}\mathsf{B}^{\mathbf{M}}e_{\sigma(k)}\big)=
\tilde{h}_{w,k,j}.
\end{gather}
Suppose that $\tilde{\gamma}_{w,\sigma^{\mone}(k),j}$ has a non-zero term belonging to $\mathsf{v}\mathbb{N}_0[\mathsf{v}]$
for some $w,i,j$.
By \eqref{eq:2-action23} and the fact that
$\mathrm{grdim}(e_k\mathsf{B}^{\mathbf{M}}e_{\sigma(k)})$
has highest term $\mathsf{v}^{2\mathbf{a}}$, see Proposition \ref{prop:frobenius2},
this implies that $\tilde{h}_{w,k,j}$ has a non-zero term
belonging to $\mathsf{v}^{2\mathbf{a}+1}\mathbb{N}_0[\mathsf{v}]$.
However, this contradicts Lemma \ref{lemma:2repcoef}.

Since $\mathrm{grdim}(e_{\sigma(i)}\mathsf{B}^{\mathbf{M}} e_{\sigma(k)})\in\delta_{i,k}+
\mathsf{v}\mathbb{N}_0[\mathsf{v}]$, the equation in \eqref{eq:2-action23} implies
that $\tilde{\gamma}_{w,k,j}$ cannot have non-zero terms belonging to
$\mathsf{v}^{\mone}\mathbb{N}_0[\mathsf{v}^{\mone}]$ either, whence
\begin{gather*}
\tilde{\gamma}_{w,k,j}=\tilde{h}_{w,k,j}(0)\in\mathbb{N}_{0},
\end{gather*}
for all $w\in\mathcal{H}$ and $j,k=1,\dots,r$.
\end{proof}

In particular, note that the fact that the constant term in $\mathrm{grdim}\big(e_{\sigma(i)}\mathsf{B}^{\mathbf{M}}e_{\sigma(k)}\big)$
is $1$ if $i=k$,
and $0$ otherwise, implies that
\begin{gather*}
\Cd\mathsf{B}^{\mathbf{M}}e_{\sigma(k)}\cong\bigoplus_{j=1}^r\mathsf{B}^{\mathbf{M}}e_{\sigma(j)}^{\oplus\tilde{\gamma}_{d,k,j}}\oplus R,
\end{gather*}
where all summands of $R$ have coefficients in $\mathsf{v}\mathbb{N}_0[\mathsf{v}]$.
Since the first summand descends to the action of $\mathrm{A}_{d}$, which is the
identity $1$-morphism in $\cAH$, on $\mathbf{N}(\varnothing)$, by Lemma \ref{lem5.3},
we see that $\tilde{\gamma}_{d,k,j}=\delta_{k,j}$.
By Proposition \ref{prop:2-action2} and equation \eqref{eq:2-action23},
this shows that the action of $\Cd=\Theta(\mathrm{A}_{d})$ is given by tensoring with the bimodule
\begin{gather}\label{eq:Duflobimodule}
\bigoplus_{i=1}^{r}
\mathsf{B}^{\mathbf{M}}e_i\otimes_{\mathbb{C}} e_i\mathsf{B}^{\mathbf{M}}
\end{gather}
and that
\begin{gather}\label{eq:grdim}
\mathrm{grdim}\big(\mathrm{Hom}_{\mathsf{B}^{\mathbf{M}}}(\mathsf{B}^{\mathbf{M}}e_i,
\mathsf{B}^{\mathbf{M}}e_k)\big)=\mathrm{grdim}\big(e_i\mathsf{B}^{\mathbf{M}}e_k\big)
=\tilde{h}_{d,\sigma^{\mone}(k),\sigma^{\mone}(i)}.
\end{gather}

In \cite[Theorem 18.9]{Lu2}, Lusztig defined a homomorphism
$\phi\colon\mathsf{H}\to\mathsf{A}\otimes_{\mathbb{Z}}\mathbb{Z}[\mathsf{v},\mathsf{v}^{\mone}]$ of $\mathbb{Z}[\mathsf{v},\mathsf{v}^{\mone}]$-algebras.
Its restriction to $\mathcal{H}$ is given by
\begin{gather*}
\phi_{\mathcal{H}}(c_{w})=\sum_{u\in\mathcal{H}}\mathsf{v}^{\mathbf{a}}h_{w,d,u}a_{u},
\end{gather*}
where $c_{w}:=[\mathrm{C}_{w}]$ in the split Grothendieck group
$[\cSHo]_{\oplus}$ (which should not be confused with Lusztig's
$c_{w}$) and $a_{u}:=[\mathrm{A}_{u}]$ in $[\cAH]_{\oplus}$.
Let $\phi_{\mathcal{H}}^{!}$ denote the pullback of $\phi$.

\begin{proposition}\label{prop:catphi}
We have
\begin{gather*}
[\mathbf{M}]_{\oplus}\cong
\phi_{\mathcal{H}}^{!}
\big([\mathbf{N}]_{\oplus}\big).
\end{gather*}
\end{proposition}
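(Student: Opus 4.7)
The plan is to define the candidate isomorphism $\psi\colon\phi_{\mathcal{H}}^{!}\big([\mathbf{N}]_{\oplus}\big)\to[\mathbf{M}]_{\oplus}$ as the $\mathbb{Z}[\mathsf{v},\mathsf{v}^{\mone}]$-linear extension of $[N_i]\mapsto[M_i]=[\Theta(N_i)]$. By Corollary~\ref{cor5.42} both sides are free $\mathbb{Z}[\mathsf{v},\mathsf{v}^{\mone}]$-modules of the same rank $r$, so $\psi$ is automatically a $\mathbb{Z}[\mathsf{v},\mathsf{v}^{\mone}]$-linear isomorphism; the whole content of the proposition is that it is $\mathsf{H}$-linear. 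Since $c_e$ acts as the identity on both sides, the compatibility needs only be verified on $c_w$ for $w\in\mathcal{H}$.

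Writing $\mathrm{A}_u N_i\cong\bigoplus_j N_j^{\oplus\beta_{u,i,j}}$ in $\mathbf{N}(\varnothing)$ with $\beta_{u,i,j}\in\mathbb{N}_{0}$, and recalling \eqref{eq:M}, the required $\mathsf{H}$-equivariance amounts to the identity
\begin{gather*}
\tilde{h}_{w,i,j}=\sum_{u\in\mathcal{H}}\mathsf{v}^{\mathbf{a}}h_{w,d,u}\beta_{u,i,j}.
\end{gather*}
I would derive this by exploiting the commuting square of forgetful morphisms from Subsection~\ref{s7.7.1}. Applying $\Phi_{\ccS_{\mathcal{H}}}$ to \eqref{eq:M} and comparing with \eqref{eq:forgetfuldecomp} via Lemma~\ref{lem5.2} produces the relation already used in the proof of Lemma~\ref{lemma:2repcoef},
\begin{gather*}
\sum_{j}\tilde{h}_{w,i,j}\,p_{j,u}=\mathsf{v}^{\mathbf{a}}\sum_{v}p_{i,v}h_{w,v,u},\quad u\in\mathcal{H},
\end{gather*}
while the strictly analogous computation inside $\cAH$ using $\Phi_{\ccA_{\mathcal{H}}}$ and the product formula $\mathrm{A}_u\mathrm{A}_v\cong\bigoplus_z\mathrm{A}_z^{\oplus\gamma_{u,v,z^{\mone}}}$ yields
\begin{gather*}
\sum_{j}\beta_{u,i,j}\,p_{j,v'}=\sum_{v}p_{i,v}\gamma_{u,v,v'^{\mone}},\quad u,v'\in\mathcal{H}.
\end{gather*}

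The main combinatorial ingredient, which I expect to be the delicate step, is the identity
\begin{gather*}
h_{w,v,u}=\sum_{z\in\mathcal{H}}h_{w,d,z}\gamma_{z,v,u^{\mone}}.
\end{gather*}
This is extracted from Lusztig's formula \eqref{eq:Lmagic} precisely as in the proof of Proposition~\ref{prop:2-action}, by invoking the symmetries \eqref{eq:Lsym} together with the Duflo property \eqref{eq:Ldgamma} to reduce $\gamma_{d,v,z^{\mone}}$ to $\delta_{v,z}$. Plugging this identity into the first of the two displayed equations above and subtracting the sum, over $u\in\mathcal{H}$, of the second equation weighted by $\mathsf{v}^{\mathbf{a}}h_{w,d,u}$ leaves
\begin{gather*}
\sum_{j}\Bigl(\tilde{h}_{w,i,j}-\sum_{u\in\mathcal{H}}\mathsf{v}^{\mathbf{a}}h_{w,d,u}\beta_{u,i,j}\Bigr)p_{j,v'}=0\quad\text{for every }v'\in\mathcal{H}.
\end{gather*}

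To finish, I would cancel the integer matrix $P=(p_{j,v'})_{j,v'}$ on the right. This is legitimate because the classes $[\Phi_{\ccA_{\mathcal{H}}}(N_j)]=\sum_{v'}p_{j,v'}[\mathrm{A}_{v'}]$ are linearly independent in the split Grothendieck group of $\cAH$: by Proposition~\ref{prop:fusion} the bicategory $\cAH$ is locally semisimple, so the pairwise non-isomorphic simple $\mathrm{A}$-comodules $N_j$ are sent by the forgetful functor to pairwise non-isomorphic objects of the semisimple category $\cAH$, whose classes form part of a $\mathbb{Z}$-basis. Cancelling $P$ on the right yields the desired identity $\tilde{h}_{w,i,j}=\sum_{u\in\mathcal{H}}\mathsf{v}^{\mathbf{a}}h_{w,d,u}\beta_{u,i,j}$, which is the required $\mathsf{H}$-equivariance of $\psi$.
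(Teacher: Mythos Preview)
Your reduction to the identity $\tilde{h}_{w,i,j}=\sum_{u\in\mathcal{H}}\mathsf{v}^{\mathbf{a}}h_{w,d,u}\,\beta_{u,i,j}$ is correct, and the combinatorial identity $h_{w,v,u}=\sum_{z}h_{w,d,z}\gamma_{z,v,u^{\mone}}$ is indeed a consequence of \eqref{eq:Lmagic}--\eqref{eq:Ldgamma}. The problem is the final cancellation step: the matrix $P=(p_{j,v'})$ does \emph{not} have full row rank in general, and your justification is faulty. The forgetful functor $\Phi_{\ccA_{\mathcal{H}}}$ is only faithful, not full, so non-isomorphic simple comodules can have isomorphic (and certainly non-indecomposable) underlying objects. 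Concretely, take $\cAH\simeq\cR\mathrm{ep}(S_3)$ and the cosimple coalgebra $\mathrm{A}=\mathbf{1}\oplus\mathrm{sgn}$ (one checks it has no proper subcoalgebras). Then $\mathbf{N}\simeq\cR\mathrm{ep}(\mathbb{Z}/3)$ with simples $N_1,N_2,N_3$ corresponding to $\mathbf{1},\omega,\omega^2$, and the underlying objects are $\mathbf{1}\oplus\mathrm{sgn}$, $\mathrm{std}$, $\mathrm{std}$. Thus $P$ has rank $2<3=r$, its rows are linearly dependent, and the vector $(0,1,-1)$ lies in the left kernel. Your argument therefore cannot conclude that the bracket vanishes for $j=2,3$.

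The paper's proof avoids this obstacle by never leaving $\mathbf{M}$. The key input is Proposition~\ref{prop:2-action2} and its consequence \eqref{eq:Duflobimodule}: the action of $\Cd$ on $\mathsf{B}^{\mathbf{M}}\text{-}\mathrm{fgproj}$ is tensoring with the diagonal bimodule $\bigoplus_i\mathsf{B}^{\mathbf{M}}e_i\otimes e_i\mathsf{B}^{\mathbf{M}}$, so $\Cd\,\mathbb{C}e_{\sigma(i)}\cong\mathsf{B}^{\mathbf{M}}e_{\sigma(i)}$. One then computes $\mathrm{C}_w\,\mathsf{B}^{\mathbf{M}}e_{\sigma(i)}$ in two ways---once via \eqref{eq:M}, and once as $\mathrm{C}_w\Cd\,\mathbb{C}e_{\sigma(i)}\cong\bigoplus_u\mathrm{C}_u^{\oplus\mathsf{v}^{\mathbf{a}}h_{w,d,u}}\,\mathbb{C}e_{\sigma(i)}$---and reads off the desired identity directly, with $\beta_{u,i,j}$ appearing as the bimodule multiplicities $\tilde{\gamma}_{u,i,j}$. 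No cancellation of $P$ is needed because the comparison takes place inside $K_0(\mathsf{B}^{\mathbf{M}}\text{-}\mathrm{fgproj})$, where the $[\mathsf{B}^{\mathbf{M}}e_{\sigma(j)}]$ already form a basis.
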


\begin{proof}
By \eqref{eq:Duflobimodule}, there is a degree-preserving isomorphism
\begin{gather*}
\mathrm{C}_{d}\mathbb{C}e_{\sigma(i)}\cong\mathsf{B}^{\mathbf{M}}e_{\sigma(i)}.
\end{gather*}
Using this, we obtain two expressions for $\mathrm{C}_{w}\mathsf{B}^{\mathbf{M}}e_{\sigma(i)}$
in $\mathsf{B}^{\mathbf{M}}\text{-}\mathrm{fgproj}^{(0)}$. On one hand,
\begin{gather}\label{eq:catphi1}
\mathrm{C}_{w}\mathsf{B}^{\mathbf{M}}e_{\sigma(i)}\cong\bigoplus_{j=1}^{r}\mathsf{B}^{\mathbf{M}}e_{\sigma(j)}^{\oplus\tilde{h}_{w,i,j}}.
\end{gather}
On the other hand,
\begin{gather}\label{eq:catphi2}
\begin{aligned}
\mathrm{C}_{w}\mathsf{B}^{\mathbf{M}}e_{\sigma(i)}\cong
\mathrm{C}_{w}\Cd\mathbb{C}e_{\sigma(i)}&\cong
\bigoplus_{u\in\mathcal{H}}\mathrm{C}_{u}^{\oplus\mathsf{v}^{\mathbf{a}}h_{w,d,u}}\mathbb{C}e_{\sigma(i)}
\\
&\cong
\bigoplus_{u\in\mathcal{H}}\bigoplus_{j=1}^{r}\mathsf{B}^{\mathbf{M}}e_{\sigma(j)}^{\oplus\mathsf{v}^{\mathbf{a}}h_{w,d,u}\tilde{\gamma}_{u,i,j}}.
\end{aligned}
\end{gather}
Comparing \eqref{eq:catphi1} and \eqref{eq:catphi2} for a fixed $j$ yields
\begin{gather}\label{eq:catphi3}
\tilde{h}_{w,i,j}=\sum_{u\in\mathcal{H}}\mathsf{v}^{\mathbf{a}}h_{w,d,u}\tilde{\gamma}_{u,i,j},
\end{gather}
which is precisely what we had to prove.
\end{proof}

\begin{corollary}\label{cor:bar-invariance}
$\mathsf{v}^{\mone[\mathbf{a}]}\tilde{h}_{w,i,j}$ is bar invariant.
\end{corollary}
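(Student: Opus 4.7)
The plan is essentially a one-line observation based on Proposition~\ref{prop:catphi} combined with Proposition~\ref{prop:2-action2}. Specifically, I would start from equation~\eqref{eq:catphi3}, namely
\begin{gather*}
\tilde{h}_{w,i,j}=\sum_{u\in\mathcal{H}}\mathsf{v}^{\mathbf{a}}h_{w,d,u}\tilde{\gamma}_{u,i,j},
\end{gather*}
and multiply both sides by $\mathsf{v}^{\mone[\mathbf{a}]}$ to obtain
\begin{gather*}
\mathsf{v}^{\mone[\mathbf{a}]}\tilde{h}_{w,i,j}=\sum_{u\in\mathcal{H}}h_{w,d,u}\tilde{\gamma}_{u,i,j}.
\end{gather*}

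Next, I would invoke Proposition~\ref{prop:2-action2}, which states that the coefficients $\tilde{\gamma}_{u,i,j}$ lie in $\mathbb{N}_{0}$; as integers, they are trivially fixed by the bar involution $\mathsf{v}\leftrightarrow\mathsf{v}^{\mone}$. Further, I would recall that the Hecke algebra structure constants $h_{w,d,u}$ with respect to the Kazhdan--Lusztig basis $\{b_{w}\mid w\in W\}$ are bar invariant, a fact already used in the paper (see the comment preceding \eqref{eq:kl-coeff}). Thus the right-hand side of the displayed equation is a $\mathbb{Z}$-linear combination of bar invariant elements, and therefore bar invariant itself, which yields the corollary.

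There is no real obstacle here: the content of the corollary is already essentially packaged inside the preceding proposition, and the corollary simply extracts the bar-invariance property explicitly for later use. The only point one needs to be careful about is that the relevant coefficients on the $\mathcal{H}$-simple side, the $\tilde{\gamma}_{u,i,j}$, are genuinely scalars (elements of $\mathbb{N}_{0}$), which is exactly what Proposition~\ref{prop:2-action2} guarantees; without that integrality, bar invariance of the right-hand side would not be automatic.
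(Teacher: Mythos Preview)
Your proposal is correct and essentially identical to the paper's own proof: the paper also derives the corollary directly from equation~\eqref{eq:catphi3} by noting that the $h_{w,d,u}$ and the $\tilde{\gamma}_{u,i,j}$ are bar invariant.
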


\begin{proof}
Equation \eqref{eq:catphi3} implies that $\mathsf{v}^{\mone[\mathbf{a}]}\tilde{h}_{w,i,j}$
is bar invariant,
since the $h_{w,d,u}$ and $\tilde{\gamma}_{u,i,j}$ are bar invariant. This completes the proof.
\end{proof}

\begin{proposition}\label{proposition:weakly-symmetric}
The algebra $\mathsf{B}^{\mathbf{M}}$ is weakly symmetric.
\end{proposition}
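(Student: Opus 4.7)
The plan is to show that the Nakayama permutation $\sigma$ of $\mathsf{B}^{\mathbf{M}}$ is the identity, which is precisely the weak symmetry condition. The argument combines the explicit graded dimension formula \eqref{eq:grdim}, the bar invariance of Corollary \ref{cor:bar-invariance}, and the Frobenius property of Proposition \ref{prop:frobenius2}.

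First, I would compute the top-degree component $(e_i\mathsf{B}^{\mathbf{M}}e_k)_{2\mathbf{a}}$ in two different ways. On one hand, by Proposition \ref{prop:frobenius2} the module $\mathsf{B}^{\mathbf{M}}e_k$ lives in degrees $0$ to $2\mathbf{a}$, with simple socle concentrated in degree $2\mathbf{a}$; from the identification $M_i\cong\mathrm{Hom}_{\mathbb{C}}(e_i\mathsf{B}^{\mathbf{M}},\mathbb{C})^{\oplus\mathsf{v}^{2\mathbf{a}}}\cong\mathsf{B}^{\mathbf{M}}e_{\sigma(i)}$ recalled just before \eqref{eq:grdim}, this socle is isomorphic to $L_{\sigma^{\mone}(k)}^{\oplus\mathsf{v}^{2\mathbf{a}}}$. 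Hence a degree $2\mathbf{a}$ homomorphism $\mathsf{B}^{\mathbf{M}}e_i\to\mathsf{B}^{\mathbf{M}}e_k$ must send the top $L_i$ (in degree $0$) into this socle, so $\dim(e_i\mathsf{B}^{\mathbf{M}}e_k)_{2\mathbf{a}}=\delta_{i,\sigma^{\mone}(k)}$.

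On the other hand, using \eqref{eq:grdim}, $\mathrm{grdim}(e_i\mathsf{B}^{\mathbf{M}}e_k)=\tilde{h}_{d,\sigma^{\mone}(k),\sigma^{\mone}(i)}$, which by Lemma \ref{lemma:2repcoef} is a polynomial in $\mathsf{v}$ of degree at most $2\mathbf{a}$ with non-negative integer coefficients. By Corollary \ref{cor:bar-invariance}, $\mathsf{v}^{\mone[\mathbf{a}]}\tilde{h}_{d,\sigma^{\mone}(k),\sigma^{\mone}(i)}$ is bar invariant, so the coefficients of $\tilde{h}_{d,\sigma^{\mone}(k),\sigma^{\mone}(i)}$ are palindromic around $\mathsf{v}^{\mathbf{a}}$; in particular, the coefficient of $\mathsf{v}^{2\mathbf{a}}$ equals the constant term. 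By positive gradedness, $(\mathsf{B}^{\mathbf{M}})_{0}\cong\bigoplus_{j}\mathbb{C}e_{j}$, so this constant term is $\dim(e_i\mathsf{B}^{\mathbf{M}}e_k)_0=\delta_{i,k}$. Therefore $\dim(e_i\mathsf{B}^{\mathbf{M}}e_k)_{2\mathbf{a}}=\delta_{i,k}$.

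Comparing the two expressions yields $\delta_{i,\sigma^{\mone}(k)}=\delta_{i,k}$ for all $i,k$, forcing $\sigma=\mathrm{id}$, i.e., $\mathsf{B}^{\mathbf{M}}$ is weakly symmetric. I do not anticipate any serious obstacle: the nontrivial input, namely the bar invariance in Corollary \ref{cor:bar-invariance} that stems from Lusztig's homomorphism $\phi_{\mathcal{H}}$ via Proposition \ref{prop:catphi}, already does the essential work, and the remaining steps are an elementary graded-dimension comparison.
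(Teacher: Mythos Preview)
Your proof is correct and essentially parallels the paper's argument. Both compute the top-degree coefficient of $\mathrm{grdim}(e_i\mathsf{B}^{\mathbf{M}}e_k)=\tilde{h}_{d,\sigma^{-1}(k),\sigma^{-1}(i)}$ and compare it with the Frobenius socle; the only cosmetic difference is that the paper works with the diagonal case $k=i$ and reads off the top coefficient directly from the expansion \eqref{eq:catphi3} together with the explicit shape of $\mathsf{v}^{\mathbf{a}}h_{d,d,u}$, whereas you invoke the palindromicity packaged in Corollary~\ref{cor:bar-invariance} (itself a consequence of \eqref{eq:catphi3}) to equate the top and bottom coefficients for arbitrary $k$.
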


\begin{proof}
Recalling \eqref{eq:kl-mult}, \eqref{eq:kl-coeff} and \eqref{eq:Ldgamma},
we know that
\begin{gather*}
\mathsf{v}^{\mathbf{a}}h_{d,d,u}\in
\begin{cases}
1+\dots+\mathsf{v}^{2\mathbf{a}}& \text{if }u=d;
\\
\mathsf{v}\mathbb{N}_{0}[\mathsf{v}]\cap\mathsf{v}^{2\mathbf{a}-1}\mathbb{N}_{0}[\mathsf{v}^{\mone}]&\text{if }u\neq d.
\end{cases}
\end{gather*}
By \eqref{eq:catphi3} and the equality $\tilde{\gamma}_{d,i,j}=\delta_{i,j}$,
\eqref{eq:grdim} then shows that
\begin{gather*}
\mathrm{grdim}\big(e_i\mathsf{B}^{\mathbf{M}}e_i\big)
=\tilde{h}_{d,\sigma^{\mone}(i),\sigma^{\mone}(i)}
\in 1+\dots+\mathsf{v}^{2\mathbf{a}}.
\end{gather*}
By Proposition \ref{prop:frobenius2}, the highest non-zero term of $\mathrm{grdim}\big(e_i\mathsf{B}^{\mathbf{M}}e_j\big)$ is $\mathsf{v}^{2\mathbf{a}}$ if and only if $j=\sigma(i)$. Therefore, we obtain $\sigma(i)=i$ for all $i$ and the claim follows.
\end{proof}


\section{Separability and semisimplicity}\label{section:separability}


Recall the definition of $\cBH$ from \eqref{def:Cdbicomodules}.
The main point of this section is to prove that the coalgebra $\Cd$ in $\cSH$
is separable, see Propositions \ref{proposition:dot diagram} and
\ref{proposition:dotdiagram-left}, which in turn enables us to show that $\cBH$
is locally graded semisimple, see Proposition \ref{proposition:bicomodules}.


\subsection{Some (diagrammatic) preliminaries}\label{subsection:diagrams}


Let $\mathsf{B}=\mathsf{B}^{\mathbf{C}_{\mathcal{H}}}$ be the basic underlying algebra
of the cell $2$-representation $\mathbf{C}_{\mathcal{H}}$ of
$\cSH$, see Subsection \ref{s7.1}.
This in particular means that there is an equivalence of $2$-representations of
$\cSH$ on
\begin{gather*}
\mathbf{C}_{\mathcal{H}}(\varnothing)\simeq\mathsf{B}\text{-}\mathrm{fgproj}
\end{gather*}
such that the $2$-action on $\mathsf{B}\text{-}\mathrm{fgproj}$, the category of
finite dimensional graded projective $\mathsf{B}$-modules, is given by a graded pseudofunctor from $\cSH$ to
$\cC_{\mathsf{B}}$
(projective $\mathsf{B}$-$\mathsf{B}$-bimodules, see Example \ref{ex:projbimod})
which is $2$-faithful, i.e. faithful on $2$-morphisms. Recall that $\mathsf{B}\cong\oplus_{i=0}^{2\mathbf{a}}\mathsf{B}_{i}$
is a finite dimensional positively graded weakly symmetric Frobenius algebra of
graded length $2\mathbf{a}$, where $\mathbf{a}=\mathbf{a}(\mathcal{H})$ is
the value of Lusztig's $\mathbf{a}$-function on $\mathcal{H}$, see
Proposition \ref{prop:Frobenius}. Note that therefore the objects in $\mathsf{B}\text{-}\mathrm{fgproj}$ are projective-injective and thus, $\mathsf{B}\text{-}\mathrm{fgproj}=\mathsf{B}\text{-}\mathrm{fginj}$, the latter
denoting the category of finite dimensional graded injective $\mathsf{B}$-modules.

By Subsection \ref{s7.5}, $\Cd$ is a graded Frobenius algebra in $\cSH$
with homogeneous comultiplication $\delta_{d}\colon\Cd\to\Cd\Cd$ and counit $\epsilon_{d}\colon\Cd\to\mathrm{C}_e$
of degree $0$, and homogeneous multiplication $\mu_{d}\colon\Cd\Cd\to\Cd$ and unit
$\iota_{d}\colon\mathrm{C}_e\to\Cd$ of degree $-2\mathbf{a}$ and $2\mathbf{a}$,
respectively.
Recall that $\gamma_{d,u,v}=\delta_{u,v^{\mone}}$, so Proposition \ref{prop:2-action}
shows that $\Cd$ acts on $\mathsf{B}\text{-}\mathrm{fgproj}$ by tensoring over $\mathsf{B}$
with the projective $\mathsf{B}$-$\mathsf{B}$-bimodule
\begin{gather*}
\bigoplus_{x\in\mathcal{H}}
\mathsf{B}e_{x}\otimes e_{x}\mathsf{B},
\end{gather*}
and, consequently, the action of $\Cd\Cd$ is given by tensoring with
\begin{gather*}
\bigoplus_{x,y\in\mathcal{H}}\mathsf{B}e_{x}\otimes e_{x}\mathsf{B}e_{y}\otimes e_{y}\mathsf{B}.
\end{gather*}
The $\mathsf{B}$-$\mathsf{B}$-bimodule maps
corresponding to $\delta_{d}$, $\mu_{d}$, $\epsilon_{d}$ and $\iota_{d}$, which we defined in
Subsection \ref{s7.5}, are graphically illustrated as follows (for convenience, we recall the
formulas defining them):
\begin{gather}\label{eq:delta}
\delta_{d}\colon e_{x}
\otimes e_{x}
\mapsto\delta_{d}(x)e_{x}\otimes e_{x}\otimes e_{x}
\leftrightsquigarrow
\begin{tikzpicture}[anchorbase,scale=0.4,smallnodes]
\draw[cstrand] (0,0) node[below,black,yshift=-2pt]{$d$} to (0,1);
\draw[cstrand] (0,1) to[out=180,in=270] (-1,2) node[above,black,yshift=-2pt]{$d$};
\draw[cstrand] (0,1) to[out=0,in=270] (1,2) node[above,black,yshift=-2pt]{$d$};
\end{tikzpicture}
\;(\text{degree }0)
,
\\
\label{eq:mu}
\mu_{d}\colon e_{x}\otimes a\otimes e_{y}\mapsto \delta_{x,y}\mu_{d}(x)\mathrm{tr}(a)e_{x}\otimes e_{x}
\leftrightsquigarrow
\begin{tikzpicture}[anchorbase,scale=0.4,smallnodes]
\draw[cstrand] (0,0) node[above,black]{$d$} to (0,-1);
\draw[cstrand] (0,-1) to[out=180,in=90] (-1,-2) node[below,black,yshift=-2pt]{$d$};
\draw[cstrand] (0,-1) to[out=0,in=90] (1,-2) node[below,black,yshift=-2pt]{$d$};
\end{tikzpicture}
\;(\text{degree} -2\mathbf{a})
,
\\
\label{eq:epsilon}
\epsilon_{d}\colon ae_{x}\otimes e_{x}a^{\prime}\mapsto \delta_{d}(x)^{\mone}aa^{\prime}
\leftrightsquigarrow
\begin{tikzpicture}[anchorbase,scale=0.4,smallnodes]
\draw[white] (0,2) to (0,1);
\draw[cstrand,marked=1] (0,0) node[below,black,yshift=-2pt]{$d$} to (0,1);
\end{tikzpicture}
\;(\text{degree }0)
,
\\
\label{eq:iota}
\iota_{d}\colon e_{x}\mapsto\sum_{y\in\mathcal{H}}\mu_{d}(y)^{\mone}\sum_{i=1}^{m_{x,y}}b^{i,x,y}\otimes b_{i,y,x}
\leftrightsquigarrow
\begin{tikzpicture}[anchorbase,scale=0.4,smallnodes]
\draw[white] (0,0) to (0,1);
\draw[cstrand,marked=1] (0,2) node[above,black]{$d$} to (0,1);
\end{tikzpicture}
\;(\text{degree } 2\mathbf{a})
.
\end{gather}
Here the source (bottom) and target (top) of the diagrams in \eqref{eq:delta},
\eqref{eq:mu}, \eqref{eq:epsilon} and \eqref{eq:iota} are composites of unshifted copies of $\Cd$.

\begin{remark}
Unlike the degrees in e.g. \cite[Conjecture 4.40]{EH}), the ones above are ``unbalanced'',
i.e. $\deg(\mu_{d})\neq\deg(\delta_{d})$ and $\deg(\iota_{d})\neq\deg(\epsilon_{d})$, because
we are working with $\mathrm{C}_{d}$ instead of $\mathrm{B}_{d}$.
\end{remark}

To avoid overloading the diagrams with shifts, we will also sometimes use a diagram of a
$2$-morphism $f\colon X\to Y$ of degree $d$ to depict the $2$-morphism $f\langle d\rangle\colon X\to Y^{\oplus\mathsf{v}^{\mone[d]}}$.
Note that the dual $\Cd^{\star}$ of $\Cd$ is  isomorphic to $\Cd^{\oplus\mathsf{v}^{\mone[2]\mathbf{a}}}$ in $\cSHo$, so we can
see e.g. $\mu_{d}\langle-2\mathbf{a}\rangle$ also as a $2$-morphism of degree zero from $\Cd\Cd^{\star}$
or $\Cd^{\star}\Cd$, which are isomorphic in $\cSHo$, to $\Cd$. The coevaluation
and evaluation of $\Cd$ in $\cSHo$ are given by
\begin{gather}\label{eq:BB-coevev}
\begin{aligned}
\mathrm{coev}_{d}
&=\delta_{d}\circ_{\mathrm{v}}\iota_{d}\langle 2\mathbf{a}\rangle\colon\mathrm{C}_e\to \Cd^{\oplus\mathsf{v}^{\mone[2]\mathbf{a}}}
\Cd,
\\
\mathrm{ev}_{d}
&=\epsilon_{d}\circ_{\mathrm{v}}\mu_{d}\langle-2\mathbf{a}\rangle\colon\Cd\Cd^{\oplus\mathsf{v}^{\mone[2]\mathbf{a}}}\to\mathrm{C}_e,
\end{aligned}
\end{gather}
which we draw as
\begin{align*}
\mathrm{coev}_{d}
\leftrightsquigarrow
\begin{tikzpicture}[anchorbase,scale=0.4,smallnodes]
\draw[white] (0,-2) to (0,0);
\draw[cstrand] (0,0) node[above,black]{$d$} to[out=270,in=180] (1,-1) to [out=0,in=270] (2,0) node[above,black]{$d$};
\end{tikzpicture}
&:=
\begin{tikzpicture}[anchorbase,scale=0.4,smallnodes]
\draw[cstrand,marked=1] (0,1) to (0,0);
\draw[cstrand] (0,1) to[out=180,in=270] (-1,2) node[above,black]{$d$};
\draw[cstrand] (0,1) to[out=0,in=270] (1,2) node[above,black]{$d$};
\end{tikzpicture}
\;(\text{degree }2\mathbf{a}),
\\
\mathrm{ev}_{d}
\leftrightsquigarrow
\begin{tikzpicture}[anchorbase,scale=0.4,smallnodes]
\draw[white] (0,2) to (0,0);
\draw[cstrand] (0,0) node[below,black,yshift=-2pt]{$d$} to[out=90,in=180] (1,1) to [out=0,in=90] (2,0) node[below,black,yshift=-2pt]{$d$};
\end{tikzpicture}
&:=
\begin{tikzpicture}[anchorbase,scale=0.4,smallnodes]
\draw[cstrand,marked=1] (0,-1) to (0,0);
\draw[cstrand] (0,-1) to[out=180,in=90] (-1,-2) node[below,black,yshift=-2pt]{$d$};
\draw[cstrand] (0,-1) to[out=0,in=90] (1,-2) node[below,black,yshift=-2pt]{$d$};
\end{tikzpicture}
\;(\text{degree }-2\mathbf{a}).
\end{align*}
Note that ${}^{\star}\Cd\cong\Cd^{\star}$ in $\cSHo$
and we could also define $\mathrm{coev}_{d}^{\prime}\colon
\mathrm{C}_e\to\Cd \Cd^{\star}$ and $\mathrm{ev}_{d}^{\prime}\colon\Cd^{\star}\Cd\to\mathrm{C}_e$ in the evident way.
Since $\Cd^{\star\star}\cong \Cd$, the diagrams for the evaluation and the coevaluations
of $\Cd$ and $\Cd^{\star}$ are the same, which justifies the lack of arrows on the diagrams.

Recall that being a Frobenius algebra in $\cSH$ means that the
(co)multiplication and the (co)unit satisfy the diagrammatic equations
\begin{gather}\label{eq:coass}
\begin{tikzpicture}[anchorbase,scale=0.4,smallnodes]
\draw[cstrand] (-.25,0) node[below,black,yshift=-2pt]{$d$} to (-.25,1);
\draw[cstrand] (-.25,1) to[out=180,in=270] (-1.5,2);
\draw[cstrand] (-.25,1) to[out=0,in=270] (1,2);
\draw[cstrand] (1,2) to (1,3);
\draw[cstrand] (1,3) to[out=180,in=270] (0,4) node[above,black]{$d$};
\draw[cstrand] (1,3) to[out=0,in=270] (2,4) node[above,black]{$d$};
\draw[cstrand] (-1.5,2) to (-1.5,4) node[above,black]{$d$};
\end{tikzpicture}
=
\begin{tikzpicture}[anchorbase,scale=0.4,smallnodes]
\draw[cstrand] (.25,0) node[below,black,yshift=-2pt]{$d$} to (.25,1);
\draw[cstrand] (.25,1) to[out=0,in=270] (1.5,2);
\draw[cstrand] (.25,1) to[out=180,in=270] (-1,2);
\draw[cstrand] (-1,2) to (-1,3);
\draw[cstrand] (-1,3) to[out=0,in=270] (0,4) node[above,black]{$d$};
\draw[cstrand] (-1,3) to[out=180,in=270] (-2,4) node[above,black]{$d$};
\draw[cstrand] (1.5,2) to (1.5,4) node[above,black]{$d$};
\end{tikzpicture}
,\quad
\begin{tikzpicture}[anchorbase,scale=0.4,smallnodes]
\draw[cstrand] (-.25,0) node[above,black]{$d$} to (-.25,-1);
\draw[cstrand] (-.25,-1) to[out=180,in=90] (-1.5,-2);
\draw[cstrand] (-.25,-1) to[out=0,in=90] (1,-2);
\draw[cstrand] (1,-2) to (1,-3);
\draw[cstrand] (1,-3) to[out=180,in=90] (0,-4) node[below,black,,yshift=-2pt]{$d$};
\draw[cstrand] (1,-3) to[out=0,in=90] (2,-4) node[below,black,yshift=-2pt]{$d$};
\draw[cstrand] (-1.5,-2) to (-1.5,-4) node[below,black,yshift=-2pt]{$d$};
\end{tikzpicture}
=
\begin{tikzpicture}[anchorbase,scale=0.4,smallnodes]
\draw[cstrand] (.25,0) node[above,black]{$d$} to (.25,-1);
\draw[cstrand] (.25,-1) to[out=0,in=90] (1.5,-2);
\draw[cstrand] (.25,-1) to[out=180,in=90] (-1,-2);
\draw[cstrand] (-1,-2) to (-1,-3);
\draw[cstrand] (-1,-3) to[out=0,in=90] (0,-4) node[below,black,yshift=-2pt]{$d$};
\draw[cstrand] (-1,-3) to[out=180,in=90] (-2,-4) node[below,black,yshift=-2pt]{$d$};
\draw[cstrand] (1.5,-2) to (1.5,-4) node[below,black,yshift=-2pt]{$d$};
\end{tikzpicture}
,\\
\label{eq:counit}
\begin{tikzpicture}[anchorbase,scale=0.4,smallnodes]
\draw[cstrand] (0,0) node[below,black,yshift=-2pt]{$d$} to (0,1);
\draw[cstrand] (0,1) to[out=180,in=270] (-1,2);
\draw[cstrand] (0,1) to[out=0,in=270] (1,2);
\draw[cstrand,marked=1] (1,2) to (1,3);
\draw[cstrand] (-1,2) to (-1,4) node[above,black]{$d$};
\end{tikzpicture}
=
\begin{tikzpicture}[anchorbase,scale=0.4,smallnodes]
\draw[cstrand] (0,0) node[below,black,yshift=-2pt]{$d$} to (0,4) node[above,black]{$d$};
\end{tikzpicture}
=
\begin{tikzpicture}[anchorbase,scale=0.4,smallnodes]
\draw[cstrand] (0,0) node[below,black,yshift=-2pt]{$d$} to (0,1);
\draw[cstrand] (0,1) to[out=180,in=270] (-1,2);
\draw[cstrand] (0,1) to[out=0,in=270] (1,2);
\draw[cstrand,marked=1] (-1,2) to (-1,3);
\draw[cstrand] (1,2) to (1,4) node[above,black]{$d$};
\end{tikzpicture}
,\quad
\begin{tikzpicture}[anchorbase,scale=0.4,smallnodes]
\draw[cstrand] (0,0) node[above,black]{$d$} to (0,-1);
\draw[cstrand] (0,-1) to[out=180,in=90] (-1,-2);
\draw[cstrand] (0,-1) to[out=0,in=90] (1,-2);
\draw[cstrand,marked=1] (1,-2) to (1,-3);
\draw[cstrand] (-1,-2) to (-1,-4) node[below,black,yshift=-2pt]{$d$};
\end{tikzpicture}
=
\begin{tikzpicture}[anchorbase,scale=0.4,smallnodes]
\draw[cstrand] (0,0) node[below,black,yshift=-2pt]{$d$} to (0,4) node[above,black]{$d$};
\end{tikzpicture}
=
\begin{tikzpicture}[anchorbase,scale=0.4,smallnodes]
\draw[cstrand] (0,0) node[above]{$d$} to (0,-1);
\draw[cstrand] (0,-1) to[out=180,in=90] (-1,-2);
\draw[cstrand] (0,-1) to[out=0,in=90] (1,-2);
\draw[cstrand,marked=1] (-1,-2) to (-1,-3);
\draw[cstrand] (1,-2) to (1,-4) node[below,black,yshift=-2pt]{$d$};
\end{tikzpicture}
,\\
\label{eq:frob}
\begin{tikzpicture}[anchorbase,scale=0.4,smallnodes]
\draw[cstrand] (0,0) node[below,black,yshift=-2pt]{$d$} to (0,3) node[above,black]{$d$};
\draw[cstrand] (2,0) node[below,black,yshift=-2pt]{$d$} to (2,3) node[above,black]{$d$};
\draw[cstrand] (0,1) to (2,2);
\end{tikzpicture}
=
\begin{tikzpicture}[anchorbase,scale=0.4,smallnodes]
\draw[cstrand] (0,0) node[below,black,yshift=-2pt]{$d$} to[out=90,in=180] (1,1);
\draw[cstrand] (2,0) node[below,black,yshift=-2pt]{$d$} to[out=90,in=0] (1,1);
\draw[cstrand] (1,1) to (1,2);
\draw[cstrand] (1,2) to[out=0,in=270] (2,3) node[above,black]{$d$};
\draw[cstrand] (1,2) to[out=180,in=270] (0,3) node[above,black]{$d$};
\end{tikzpicture}
=
\begin{tikzpicture}[anchorbase,scale=0.4,smallnodes]
\draw[cstrand] (0,0) node[below,black,yshift=-2pt]{$d$} to (0,3) node[above,black]{$d$};
\draw[cstrand] (2,0) node[below,black,yshift=-2pt]{$d$} to (2,3) node[above,black]{$d$};
\draw[cstrand] (2,1) to (0,2);
\end{tikzpicture}
.
\end{gather}
Recall further that \eqref{eq:coass}, \eqref{eq:counit} and \eqref{eq:frob} imply
\begin{gather}\label{eq:isotopies}
\begin{gathered}
\begin{tikzpicture}[anchorbase,scale=0.4,smallnodes]
\draw[cstrand] (0,0) node[below,black,yshift=-2pt]{$d$} to (0,2) to [out=90,in=180] (1,3) to [out=0,in=90] (2,2) to [out=270,in=180] (3,1) to[out=0,in=270] (4,2) to (4,4) node[above,black]{$d$};
\end{tikzpicture}
=
\begin{tikzpicture}[anchorbase,scale=0.4,smallnodes]
\draw[cstrand] (0,0) node[below,black,yshift=-2pt]{$d$} to (0,4) node[above,black]{$d$};
\end{tikzpicture}
=
\begin{tikzpicture}[anchorbase,scale=0.4,smallnodes]
\draw[cstrand] (0,0) node[below,black,yshift=-2pt]{$d$} to (0,2) to [out=90,in=0] (-1,3) to [out=180,in=90] (-2,2) to [out=270,in=0] (-3,1) to[out=180,in=270] (-4,2) to (-4,4) node[above,black]{$d$};
\end{tikzpicture}
,\\
\begin{tikzpicture}[anchorbase,scale=0.4,smallnodes]
\draw[cstrand] (0,0) node[below,black,yshift=-2pt]{$d$} to (0,1);
\draw[cstrand] (0,1) to[out=180,in=270] (-1,2) to (-1,3) node[above,black]{$d$};
\draw[cstrand] (0,1) to[out=0,in=270] (1,2) to[out=90,in=180] (2,3) to[out=0,in=90] (3,2) to (3,0) node[below,black,yshift=-2pt]{$d$};
\end{tikzpicture}
{=}
\begin{tikzpicture}[anchorbase,scale=0.4,smallnodes]
\draw[cstrand] (0,.5) node[above,black]{$d$} to (0,0) to (0,-1);
\draw[cstrand] (0,-1) to[out=180,in=90] (-1,-2) to (-1,-2.5) node[below,black,yshift=-2pt]{$d$};
\draw[cstrand] (0,-1) to[out=0,in=90] (1,-2) to (1,-2.5) node[below,black,yshift=-2pt]{$d$};
\end{tikzpicture}
{=}
\begin{tikzpicture}[anchorbase,scale=0.4,smallnodes]
\draw[cstrand] (0,0) node[below,black,yshift=-2pt]{$d$} to (0,1);
\draw[cstrand] (0,1) to[out=0,in=270] (1,2) to (1,3) node[above,black]{$d$};
\draw[cstrand] (0,1) to[out=180,in=270] (-1,2) to[out=90,in=0] (-2,3) to[out=180,in=90] (-3,2) to (-3,0) node[below,black,yshift=-2pt]{$d$};
\end{tikzpicture}
,
\begin{tikzpicture}[anchorbase,scale=0.4,smallnodes]
\draw[cstrand] (0,0) node[above,black]{$d$} to (0,-1);
\draw[cstrand] (0,-1) to[out=180,in=90] (-1,-2) to (-1,-3) node[below,black,yshift=-2pt]{$d$};
\draw[cstrand] (0,-1) to[out=0,in=90] (1,-2) to[out=270,in=180] (2,-3) to[out=0,in=270] (3,-2) to (3,0) node[above,black]{$d$};
\end{tikzpicture}
{=}
\begin{tikzpicture}[anchorbase,scale=0.4,smallnodes]
\draw[cstrand] (0,-.5) node[below,black,yshift=-2pt]{$d$} to (0,0) to (0,1);
\draw[cstrand] (0,1) to[out=180,in=270] (-1,2) to (-1,2.5) node[above,black]{$d$};
\draw[cstrand] (0,1) to[out=0,in=270] (1,2) to (1,2.5) node[above,black]{$d$};
\end{tikzpicture}
{=}
\begin{tikzpicture}[anchorbase,scale=0.4,smallnodes]
\draw[cstrand] (0,0) node[above,black]{$d$} to (0,-1);
\draw[cstrand] (0,-1) to[out=0,in=90] (1,-2) to (1,-3) node[below,black,yshift=-2pt]{$d$};
\draw[cstrand] (0,-1) to[out=180,in=90] (-1,-2) to[out=270,in=0] (-2,-3) to[out=180,in=270] (-3,-2) to (-3,0) node[above,black]{$d$};
\end{tikzpicture}
,\\
\begin{tikzpicture}[anchorbase,scale=0.4,smallnodes]
\draw[white] (0,-2) to (0,.5);
\draw[cstrand,marked=1] (0,.5) node[above,black]{$d$} to (0,0)  to[out=270,in=180] (1,-1) to[out=0,in=270] (2,0) to (2,.5);
\end{tikzpicture}
=
\begin{tikzpicture}[anchorbase,scale=0.4,smallnodes]
\draw[white] (0,-2) to (0,.5);
\draw[cstrand,marked=1] (0,.5) node[above,black]{$d$} to (0,-1);
\end{tikzpicture}
=
\begin{tikzpicture}[anchorbase,scale=0.4,smallnodes]
\draw[white] (0,-2) to (0,.5);
\draw[cstrand,marked=1] (0,.5) node[above,black]{$d$} to (0,0)  to[out=270,in=0] (-1,-1) to[out=180,in=270] (-2,0) to (-2,.5);
\end{tikzpicture}
,\quad
\begin{tikzpicture}[anchorbase,scale=0.4,smallnodes]
\draw[white] (0,2) to (0,-.5);
\draw[cstrand,marked=1] (0,-.5) node[below,black,yshift=-2pt]{$d$} to (0,0)  to[out=90,in=180] (1,1) to[out=0,in=90] (2,0) to (2,-.5);
\end{tikzpicture}
=
\begin{tikzpicture}[anchorbase,scale=0.4,smallnodes]
\draw[white] (0,2) to (0,-.5);
\draw[cstrand,marked=1] (0,-.5) node[below,black,yshift=-2pt]{$d$} to (0,1);
\end{tikzpicture}
=
\begin{tikzpicture}[anchorbase,scale=0.4,smallnodes]
\draw[white] (0,2) to (0,-.5);
\draw[cstrand,marked=1] (0,-.5) node[below,black,yshift=-2pt]{$d$} to (0,0)  to[out=90,in=0] (-1,1) to[out=180,in=90] (-2,0) to (-2,-.5);
\end{tikzpicture}
.
\end{gathered}
\end{gather}
By \eqref{eq:isotopies} we only need to consider isotopy classes
of diagrams, and we simplify our diagrams by drawing them in a more topological fashion.
However, some diagrams are not the same, as the
calculus is not necessarily cyclic for general $\mathsf{B}$-$\mathsf{B}$-bimodule maps. In particular, for any
\begin{gather}\label{eq:alpha}
\alpha\in
\mathrm{hom}_{\ccS_{\mathcal{H}}}
(\Cd\dots\Cd,
\Cd\dots\Cd^{\oplus\mathsf{v}^{\mone[t]+s}})
\leftrightsquigarrow
\begin{tikzpicture}[anchorbase,scale=0.4,smallnodes]
\draw[cstrand] (0,0) node[below,black,yshift=-2pt]{$d\dots d$} to (0,1) node[above,black,box]{$\alpha$};
\draw[cstrand] (0,2) to (0,3) node[above,black]{$d\dots d$};
\end{tikzpicture}
\;(\text{degree }t-s)
,
\end{gather}
where the dots indicate multiple strands,
we define the \emph{right mate} $\alpha^{\star}$ and the \emph{left mate} ${}^{\star}\alpha$ of $\alpha$
via
\begin{gather*}
\begin{tikzpicture}[anchorbase,scale=0.4,smallnodes]
\draw[cstrand] (0,0) node[below,black,yshift=-2pt]{$d\dots d$} to (0,1) node[above,black,box]{$\alpha$};
\draw[cstrand] (0,2) node[right,black,xshift=2pt]{$\star$} to (0,3) node[above,black]{$d\dots d$};
\end{tikzpicture}
:=
\begin{tikzpicture}[anchorbase,scale=0.4,smallnodes]
\draw[cstrand] (-1,3) node[above,black]{$d\dots d$} to (-1,1) to [out=270,in=180] (0,0) to [out=0,in=270] (1,1) node[above,black,box]{$\alpha$};
\draw[cstrand] (1,2) to [out=90,in=180] (2,3) to [out=0,in=90] (3,2) to (3,0) node[below,black,yshift=-2pt]{$d\dots d$};
\end{tikzpicture}
,\quad
\begin{tikzpicture}[anchorbase,scale=0.4,smallnodes]
\draw[cstrand] (0,0) node[below,black,yshift=-2pt]{$d\dots d$} to (0,1) node[above,black,box]{$\alpha$};
\draw[cstrand] (0,2) node[left,black,xshift=-2pt]{$\star$} to (0,3) node[above,black]{$d\dots d$};
\end{tikzpicture}
:=
\begin{tikzpicture}[anchorbase,scale=0.4,smallnodes]
\draw[cstrand] (1,3) node[above,black]{$d\dots d$} to (1,1) to [out=270,in=0] (0,0) to [out=180,in=270] (-1,1) node[above,black,box]{$\alpha$};
\draw[cstrand] (-1,2) to [out=90,in=0] (-2,3) to [out=180,in=90] (-3,2) to (-3,0) node[below,black,yshift=-2pt]{$d\dots d$};
\end{tikzpicture}
,
\end{gather*}
which have the same degree as the diagram in \eqref{eq:alpha} itself.
To be more concrete, let us compute
the mates of $\alpha\in\mathrm{hom}_{\ccS_{\mathcal{H}}}
(\Cd,\Cd^{\oplus\mathsf{v}^{\mone[t]+s}})$
in the cell $2$-representation. Suppose that the $\mathsf{B}$-$\mathsf{B}$-bimodule map
$\alpha$ is given by
\begin{gather}\label{eq:alpha-bimod}
\alpha\colon
e_{x}\otimes e_{x}\mapsto\sum_{y\in\mathcal{H}}\sum_{i=1}^{n_{x,y}}\alpha^{(1)}_{i,x,y}\otimes\alpha^{(2)}_{i,y,x},
\end{gather}
for $n_{x,y}\in\mathbb{N}_0$ and some homogeneous
$\alpha^{(1)}_{i,x,y}\in e_{x}\mathsf{B}e_{y}$ and $\alpha^{(2)}_{i,y,x}\in
e_{y}\mathsf{B}e_{x}$ such that $\deg(\alpha^{(1)}_{i,x,y})+\deg(\alpha^{(2)}_{i,y,x})=t-s$.
By \eqref{eq:BB-coevev}, the
$\mathsf{B}$-$\mathsf{B}$-bimodule maps corresponding to
$\mathrm{coev}_{d}\langle -2\mathbf{a}\rangle$ and $\mathrm{ev}_{d}\langle 2\mathbf{a}\rangle$ are given by
\begin{gather*}
\delta_{d}\circ_{\mathrm{v}}\iota_{d}\colon e_{x}\mapsto\sum_{y\in\mathcal{H}} \sum_{i=1}^{m_{x,y}}
\mu_{d}(y)^{\mone}\delta_{d}(y) b^{i,x,y}\otimes e_{y}\otimes b_{i,y,x}
,\\
\epsilon_{d}\circ_{\mathrm{v}}\mu_{d}\colon e_{x}\otimes a\otimes e_{y}\mapsto\delta_{x,y}\mu_{d}(x)\delta_{d}(x)^{\mone}\mathrm{tr}(a)e_{x},
\end{gather*}
for $x,y\in\mathcal{H}$. Then $\alpha^{\star}$ and ${}^{\star}\alpha$ are given by
\begin{gather*}
\alpha^{\star}\colon
e_{x}\otimes e_{x}\mapsto
\sum_{y\in\mathcal{H}}\sum_{i=1}^{n_{y,x}}\mu_{d}(x)\delta_{d}(x)^{\mone}\mu_{d}(y)^{\mone}\delta_{d}(y)\sigma(\alpha^{(2)}_{i,x,y})\otimes\alpha^{(1)}_{i,y,x},
\\
{}^{\star}\alpha\colon
e_{x}\otimes e_{x}\mapsto\sum_{y\in\mathcal{H}}\sum_{i=1}^{n_{y,x}}\mu_{d}(x)\delta_{d}(x)^{\mone}\mu_{d}(y)^{\mone}\delta_{d}(y)\alpha^{(2)}_{i,x,y}\otimes\sigma^{\mone}(\alpha^{(1)}_{i,y,x}),
\end{gather*}
where $\sigma$ is the Nakayama automorphism. In other words, the $\mathsf{B}$-$\mathsf{B}$-bimodule map associated to $\alpha^{\star}$
can be obtained from the one associated to ${}^{\star}\alpha$ by applying
$\sigma\otimes \sigma$. To indicate this diagrammatically, we write
\begin{gather}\label{eq:nakayama}
\begin{tikzpicture}[anchorbase,scale=0.4,smallnodes]
\draw[cstrand] (0,0) node[below,black,yshift=-2pt]{$d$} to (0,1) node[above,black,box]{$\alpha$};
\draw[cstrand] (0,2) node[right,black,xshift=2pt]{$\star$} to (0,3) node[above,black]{$d$};
\end{tikzpicture}
=
\sigma
\Bigg(
\begin{tikzpicture}[anchorbase,scale=0.4,smallnodes]
\draw[cstrand] (0,0) node[below,black,yshift=-2pt]{$d$} to (0,1) node[above,black,box]{$\alpha$};
\draw[cstrand] (0,2) node[left,black,xshift=-2pt]{$\star$} to (0,3) node[above,black]{$d$};
\end{tikzpicture}
\Bigg)
.
\end{gather}
Note that the equations in \eqref{eq:isotopies} imply that the left and
right mates of the structural $2$-morphisms of $\Cd$ agree, i.e. $\mu_{d}=\delta_{d}^{\star}={}^{\star}\delta_{d}$ and
$\iota_{d}=\epsilon_{d}^{\star}={}^{\star}\epsilon_{d}$.


\subsection{Dot diagrams}\label{subsection:dot diagrams}


Next, recall that $\mathbf{C}_{\mathcal{H}}\simeq
\mathbf{inj}_{\underline{\ccS_{\mathcal{H}}}}(\Cd)$.
For short, we denote the morphism spaces in the latter category by
$\mathrm{Hom}_{\Cd}$ (for the enriched version) and $\mathrm{hom}_{\Cd}$ (for morphism spaces of degree zero).
A specific object in this category is $\Cd$, since
the coalgebra structure of $\Cd$ gives it the structure of an injective right comodule in $\cSH$ over itself.

\begin{lemma}\label{lemma:BB-intertwiners}
Let $\alpha$ be a homogeneous endomorphism of $\Cd$ in $\cSH$ of
degree $t$, for some $t\in\mathbb{Z}$. Then
we have $\alpha\in\mathrm{hom}_{\Cd}
(\Cd,\Cd^{\oplus \mathsf{v}^{\mone[t]}})$ if and only if the
corresponding $\mathsf{B}$-$\mathsf{B}$-bimodule map is of the form
\begin{gather*}
e_{x}\otimes e_{x}\mapsto
\alpha_{x}\otimes e_{x},\quad\forall x\in\mathcal{H},
\end{gather*}
where $\alpha_{x}$ is some homogeneous element in $e_{x}\mathsf{B}e_{x}$ of degree
$t$.
\end{lemma}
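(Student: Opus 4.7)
My plan is to identify $\mathrm{hom}_{\Cd}(\Cd,\Cd^{\oplus\mathsf{v}^{-t}})$ with $\bigoplus_{x\in\mathcal{H}}(e_x\mathsf{B}e_x)_t$ via the cofree--forgetful adjunction for the coalgebra $\Cd$, and then to read off the explicit form of the corresponding $\mathsf{B}$-$\mathsf{B}$-bimodule map.

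First I would observe that $\alpha$ lies in $\mathrm{hom}_{\Cd}(\Cd,\Cd^{\oplus\mathsf{v}^{-t}})$ precisely when it satisfies the right $\Cd$-comodule intertwining condition $\delta_d\circ_{\mathrm{v}}\alpha=(\alpha\circ_{\mathrm{h}}\mathrm{id}_{\Cd})\circ_{\mathrm{v}}\delta_d$. Setting $\pi:=\epsilon_d\circ_{\mathrm{v}}\alpha$, the counit axiom $(\epsilon_d\circ_{\mathrm{h}}\mathrm{id}_{\Cd})\circ_{\mathrm{v}}\delta_d=\mathrm{id}_{\Cd}$ combined with this intertwining condition and the interchange law yields
\[
\alpha=(\epsilon_d\circ_{\mathrm{h}}\mathrm{id}_{\Cd})\circ_{\mathrm{v}}\delta_d\circ_{\mathrm{v}}\alpha=(\pi\circ_{\mathrm{h}}\mathrm{id}_{\Cd})\circ_{\mathrm{v}}\delta_d,
\]
and a short coassociativity computation shows that any $2$-morphism of this latter form is automatically a comodule morphism. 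Hence $\alpha\mapsto\pi$ is a degree-preserving bijection between $\mathrm{hom}_{\Cd}(\Cd,\Cd^{\oplus\mathsf{v}^{-t}})$ and the space of degree-$t$ $2$-morphisms $\pi\colon\Cd\to\mathbbm{1}_{\cSH}^{\oplus\mathsf{v}^{-t}}$.

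Next I would use faithfulness of $\mathbf{C}_{\mathcal{H}}$ on $2$-morphisms to replace $\pi$ by its associated $\mathsf{B}$-$\mathsf{B}$-bimodule map $\bigoplus_{x}\mathsf{B}e_x\otimes_{\mathbb{C}}e_x\mathsf{B}\to\mathsf{B}$. Such a bimodule map is determined by the images $\pi_x:=\pi(e_x\otimes e_x)$, and bimodule linearity together with $e_x\otimes e_x=e_x\cdot(e_x\otimes e_x)\cdot e_x$ forces $\pi_x\in(e_x\mathsf{B}e_x)_t$ without further constraint. Expanding $\delta_d(e_x\otimes e_x)=\delta_d(x)\,e_x\otimes e_x\otimes e_x$ via \eqref{eq:delta} inside $\mathsf{B}e_x\otimes_{\mathbb{C}}e_x\mathsf{B}e_x\otimes_{\mathbb{C}}e_x\mathsf{B}$ and applying $\pi$ to the first $\Cd$-factor then produces
\[
(\pi\circ_{\mathrm{h}}\mathrm{id}_{\Cd})\circ_{\mathrm{v}}\delta_d\colon e_x\otimes e_x\mapsto\delta_d(x)\,\pi_x\otimes_{\mathsf{B}}(e_x\otimes e_x)=\delta_d(x)\,\pi_x\otimes e_x,
\]
where the last equality uses $\pi_x=\pi_x e_x\in e_x\mathsf{B}e_x$. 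Setting $\alpha_x:=\delta_d(x)\,\pi_x$ gives the claimed form, and invertibility of the scalar $\delta_d(x)\in\mathbb{C}^{\times}$ ensures the correspondence $\pi_x\leftrightarrow\alpha_x$ is a bijection.

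The key conceptual step is recognizing $\Cd$ as the cofree right $\Cd$-comodule on $\mathbbm{1}_{\cSH}$; the remaining obstacle is purely notational, namely carefully tracking the $\otimes_{\mathsf{B}}$ in $\Cd\otimes_{\mathsf{B}}\Cd=\bigoplus_{x,y}\mathsf{B}e_x\otimes_{\mathbb{C}}e_x\mathsf{B}e_y\otimes_{\mathbb{C}}e_y\mathsf{B}$ when applying bimodule linearity of $\pi\circ_{\mathrm{h}}\mathrm{id}_{\Cd}$ to the displayed expansion of $\delta_d$.
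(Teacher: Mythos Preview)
Your argument is correct and takes a genuinely different route from the paper. The paper writes a general $\alpha$ in the form \eqref{eq:alpha-bimod}, computes the bimodule maps associated to $\delta_d\circ_{\mathrm{v}}\alpha$ and to $(\alpha\circ_{\mathrm{h}}\mathrm{id}_{\Cd})\circ_{\mathrm{v}}\delta_d$ directly, and observes that these agree (in $\bigoplus_{x,y}\mathsf{B}e_x\otimes_{\mathbb{C}}e_x\mathsf{B}e_y\otimes_{\mathbb{C}}e_y\mathsf{B}$) if and only if all $\alpha^{(2)}_{i,y,x}$ lie in $\mathbb{C}e_x$. You instead invoke the cofree--forgetful adjunction for the coalgebra $\Cd$, reducing comodule endomorphisms of $\Cd$ to $2$-morphisms $\pi\colon\Cd\to\mathbbm{1}_{\ccSH}$, and then compute the bimodule form of $(\pi\circ_{\mathrm{h}}\mathrm{id}_{\Cd})\circ_{\mathrm{v}}\delta_d$. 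Your approach is more conceptual and makes the reason for the answer transparent; the paper's is more hands-on but avoids the adjunction machinery.

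One point of presentation deserves tightening. Your final paragraph establishes that every comodule morphism $\alpha$ has bimodule map of the stated form, but the converse (``if'') is not fully closed by the bijection $\pi_x\leftrightarrow\alpha_x$ alone: that bijection lives at the level of $(e_x\mathsf{B}e_x)_t$, and you have not shown that an arbitrary $\alpha\in\cSH$ with bimodule map $e_x\otimes e_x\mapsto\alpha_x\otimes e_x$ actually arises from some $\pi\in\cSH$ via your recipe. The cleanest fix is to set $\pi:=\epsilon_d\circ_{\mathrm{v}}\alpha$ (which \emph{is} in $\cSH$), compute that the bimodule map of $(\pi\circ_{\mathrm{h}}\mathrm{id}_{\Cd})\circ_{\mathrm{v}}\delta_d$ agrees with that of $\alpha$, and then invoke $2$-faithfulness of $\mathbf{C}_{\mathcal{H}}$ to conclude $\alpha=(\pi\circ_{\mathrm{h}}\mathrm{id}_{\Cd})\circ_{\mathrm{v}}\delta_d$, whence $\alpha$ is a comodule morphism by your coassociativity computation. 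Alternatively, one can simply verify the bimodule intertwining condition directly for $\alpha$ of the given form, which is what the paper's approach amounts to.
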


\begin{proof}
Let $x\in\mathcal{H}$ and suppose that the $\mathsf{B}$-$\mathsf{B}$-bimodule map associated to $\alpha$ is given by \eqref{eq:alpha-bimod}.
Now compare the $\mathsf{B}$-$\mathsf{B}$-bimodule maps associated to $\delta_{d}\circ_{\mathrm{v}}\alpha$
and $(\alpha\circ_{\mathrm{h}}\mathrm{id}_{\Cd})\circ_{\mathrm{v}}\delta_{d}$, respectively
\begin{gather*}
e_{x}\otimes e_{x}\mapsto\sum_{y\in\mathcal{H}}\sum_{i=1}^{n_{x,y}}\delta_{d}(y)\alpha_{i,x,y}^{(1)}
\otimes e_{y}\otimes\alpha_{i,y,x}^{(2)},
\\
e_{x}\otimes e_{x}\mapsto\sum_{y\in\mathcal{H}}\sum_{i=1}^{n_{x,y}}\delta_{d}(x)\alpha_{i,x,y}^{(1)}\otimes\alpha_{i,y,x}^{(2)}\otimes e_{x}.
\end{gather*}
By $2$-faithfulness of the $2$-functor $\cSH\to\cC_{\mathsf{B}}$ defining the cell
$2$-representation (see the beginning of this section), we see that the
equality $\delta_{d}\circ_{\mathrm{v}}\alpha=(\alpha\circ_{\mathrm{h}}\mathrm{id}_{\Cd})\circ_{\mathrm{v}}\delta_{d}$
holds if and only if
\begin{gather*}
\alpha_{i,y,x}^{(2)}\in
\begin{cases}
\mathbb{C}e_{x}
&\text{if }x=y;
\\
\{0\}
&\text{otherwise},
\end{cases}
\end{gather*}
which proves the lemma.
\end{proof}

Before the next statement, which is a version of separability, recall the following
special cases of Soergel's hom formula \eqref{eq:soergel}:
\begin{gather}\label{eq:soergel-hom}
\mathrm{hom}_{\Cd}(\Cd,\Cd)\cong\mathbb{C}\cong\mathrm{hom}_{\Cd}(\Cd,\Cd^{\oplus\mathsf{v}^{\mone[2]\mathbf{a}}}).
\end{gather}

\begin{proposition}\label{proposition:dot diagram}
There is a unique $2$-morphism in $\mathrm{hom}_{\Cd}(\Cd,\Cd^{\oplus\mathsf{v}^{\mone[2]\mathbf{a}}})$,
which we call \emph{the dot diagram} and which we depict as
\begin{gather*}
\text{the dot diagram: }
\begin{tikzpicture}[anchorbase,scale=0.4,smallnodes]
\draw[cstrand] (0,0) node[below,black,yshift=-2pt]{$d$} to (0,1) node[above,black,box]{\raisebox{-.025cm}{$\bullet$}};
\draw[cstrand] (0,2) to (0,3) node[above,black]{$d$};
\end{tikzpicture}
\;\;(\text{degree } 2\mathbf{a}),
\end{gather*}
such that
\begin{gather}\label{eq:dotdiagram}
\begin{tikzpicture}[anchorbase,scale=0.4,smallnodes]
\draw[cstrand] (0,0) node[below,black,yshift=-2pt]{$d$} to (0,1);
\draw[cstrand] (0,1) to [out=0,in=270] (1,2) node[above,black,box]{\raisebox{-.025cm}{$\bullet$}};
\draw[cstrand] (0,1) to [out=180,in=270] (-1,2.5) to [out=90,in=180] (0,4);
\draw[cstrand] (1,3) to [out=90,in=0] (0,4);
\draw[cstrand] (0,4) to (0,5) node[above,black]{$d$};
\end{tikzpicture}
\;=
\begin{tikzpicture}[anchorbase,scale=0.4,smallnodes]
\draw[cstrand] (0,0) node[below,black,yshift=-2pt]{$d$} to (0,5) node[above,black]{$d$};
\end{tikzpicture}.
\end{gather}
\end{proposition}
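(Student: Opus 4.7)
The plan is to work in the cell $2$-representation, exploiting its $2$-faithfulness and the explicit bimodule formulas from Subsection~\ref{s7.5}. Uniqueness is immediate from~\eqref{eq:soergel-hom}: since $\mathrm{hom}_{\Cd}(\Cd,\Cd^{\oplus\mathsf{v}^{\mone[2\mathbf{a}]}})\cong\mathbb{C}$ is one-dimensional, any candidate dot diagram is determined up to a scalar, and the normalization imposed by~\eqref{eq:dotdiagram} can fix this scalar in at most one way. The substantive content is therefore existence together with the non-vanishing of a certain separability scalar.

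For existence, I would pick any nonzero $\alpha\in\mathrm{hom}_{\Cd}(\Cd,\Cd^{\oplus\mathsf{v}^{\mone[2\mathbf{a}]}})$. By Lemma~\ref{lemma:BB-intertwiners}, its image in the cell $2$-representation is determined by a family $\{\alpha_x\}_{x\in\mathcal{H}}$ with $\alpha_x\in(e_x\mathsf{B}e_x)_{2\mathbf{a}}$, acting on the bimodule $\bigoplus_x\mathsf{B}e_x\otimes_{\mathbb{C}}e_x\mathsf{B}$ by $e_x\otimes e_x\mapsto\alpha_x\otimes e_x$. Combining this with the formulas~\eqref{eq:delta} and~\eqref{eq:mu} for $\delta_d$ and $\mu_d$, a direct calculation shows that the LHS of~\eqref{eq:dotdiagram} acts as $e_x\otimes e_x\mapsto\delta_d(x)\mu_d(x)\mathrm{tr}(\alpha_x)\cdot e_x\otimes e_x$. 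Since this composite is a degree-zero endomorphism of $\Cd$ in $\cSH$, hence a scalar multiple of the identity by~\eqref{eq:soergel-hom}, the scalar $\lambda:=\delta_d(x)\mu_d(x)\mathrm{tr}(\alpha_x)$ is automatically independent of $x$, and~\eqref{eq:dotdiagram} becomes the single equation $\lambda=1$.

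The main obstacle is to show $\lambda\neq 0$, after which rescaling $\alpha\rightsquigarrow\alpha/\lambda$ produces the dot diagram. Since the constancy of $\lambda$ across $x$ forces that either all $\alpha_x$ vanish or none do, the assumption $\alpha\neq 0$ gives a nonzero $\alpha_x\in(e_x\mathsf{B}e_x)_{2\mathbf{a}}$. By Proposition~\ref{prop:Frobenius}, $\mathsf{B}$ is a weakly symmetric Frobenius algebra of graded length $2\mathbf{a}$, so $(e_x\mathsf{B}e_x)_{2\mathbf{a}}$ is one-dimensional and paired non-degenerately with $\mathbb{C}e_x=(e_x\mathsf{B}e_x)_0$ through the Frobenius trace~$\mathrm{tr}$. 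Consequently $\mathrm{tr}(\alpha_x)\neq 0$, and since $\delta_d(x),\mu_d(x)\in\mathbb{C}^{\times}$ by the setup of Subsection~\ref{s7.5}, we conclude $\lambda\neq 0$. The rescaled morphism $\alpha/\lambda$ is then the unique element of $\mathrm{hom}_{\Cd}(\Cd,\Cd^{\oplus\mathsf{v}^{\mone[2\mathbf{a}]}})$ satisfying~\eqref{eq:dotdiagram}, completing the proof.
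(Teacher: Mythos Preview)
Your proof is correct and follows essentially the same approach as the paper: pick a nonzero $\alpha$, compute the composite $\mu_d\circ_{\mathrm{v}}(\mathrm{id}\circ_{\mathrm{h}}\alpha)\circ_{\mathrm{v}}\delta_d$ in the cell $2$-representation using Lemma~\ref{lemma:BB-intertwiners} and the explicit bimodule formulas, identify the resulting scalar $\lambda$, and use the Frobenius structure on $\mathsf{B}$ to see $\lambda\neq 0$. The only cosmetic difference is that the paper writes $\alpha_x=\alpha(x)e^x$ explicitly (so that $\lambda=\delta_d(x)\mu_d(x)\alpha(x)$), whereas you phrase the same non-vanishing via the nondegenerate pairing $(e_x\mathsf{B}e_x)_0\times(e_x\mathsf{B}e_x)_{2\mathbf{a}}\to\mathbb{C}$.
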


\begin{proof}
Choose any non-zero $2$-morphism $\alpha\in\mathrm{hom}_{\Cd}(\Cd, \Cd^{\oplus\mathsf{v}^{\mone[2]\mathbf{a}}})$,
which we depict as in \eqref{eq:alpha}.
By \eqref{eq:soergel-hom}, we have
\begin{gather}\label{eq:alpha-diagram}
\begin{tikzpicture}[anchorbase,scale=0.4,smallnodes]
\draw[cstrand] (0,0) node[below,black,yshift=-2pt]{$d$} to (0,1);
\draw[cstrand] (0,1) to [out=0,in=270] (1,2) node[above,black,box]{$\alpha$};
\draw[cstrand] (0,1) to [out=180,in=270] (-1,2.5) to [out=90,in=180] (0,4);
\draw[cstrand] (1,3) to [out=90,in=0] (0,4);
\draw[cstrand] (0,4) to (0,5) node[above,black]{$d$};
\end{tikzpicture}
=\lambda\cdot
\begin{tikzpicture}[anchorbase,scale=0.4,smallnodes]
\draw[cstrand] (0,0) node[below,black,yshift=-2pt]{$d$} to (0,5) node[above,black]{$d$};
\end{tikzpicture}
\end{gather}
for some scalar $\lambda\in\mathbb{C}$.
We claim that $\lambda\neq 0$.

To prove this claim, we only need to compute
the $\mathsf{B}$-$\mathsf{B}$-bimodule map corresponding to the morphism on the left-hand side
in \eqref{eq:alpha-diagram}. By Lemma \ref{lemma:BB-intertwiners}, we have
\begin{gather*}
\begin{tikzpicture}[anchorbase,scale=0.4,smallnodes]
\draw[cstrand] (0,0) node[below,black,yshift=-2pt]{$d$} to (0,1) node[above,black,box]{$\alpha$};
\draw[cstrand] (0,2) to (0,3) node[above,black]{$d$};
\end{tikzpicture}
\rightsquigarrow
e_{x}\otimes e_{x}\mapsto\alpha_{x}\otimes e_{x},
\end{gather*}
where $\alpha_{x}=\alpha(x)e^x$ for some scalar $\alpha(x)\in\mathbb{C}$ depending on $x\in\mathcal{H}$.
Hence,
\eqref{eq:delta} and \eqref{eq:mu} imply that
\begin{gather*}
\begin{tikzpicture}[anchorbase,scale=0.4,smallnodes]
\draw[cstrand] (0,0) node[below,black,yshift=-2pt]{$d$} to (0,1);
\draw[cstrand] (0,1) to [out=0,in=270] (1,2) node[above,black,box]{$\alpha$};
\draw[cstrand] (0,1) to [out=180,in=270] (-1,2.5) to [out=90,in=180] (0,4);
\draw[cstrand] (1,3) to [out=90,in=0] (0,4);
\draw[cstrand] (0,4) to (0,5) node[above,black]{$d$};
\end{tikzpicture}
\rightsquigarrow
e_{x}\otimes e_{x}\mapsto\delta_{d}(x)\mu_{d}(x) \alpha(x)  e_{x}\otimes e_{x}.
\end{gather*}
We see that $\lambda=\delta_{d}(x)\mu_{d}(x)\alpha(x)$
for all $x\in\mathcal{H}$. In particular, the scalar $\delta_{d}(x)\mu_{d}(x)\alpha(x)$
does not depend on the choice of $x$. Since $\alpha$ is not zero,
there exists an $x\in\mathcal{H}$ such that $\alpha(x)\neq 0$, which implies that
$\lambda\neq 0$ because $\delta_{d}(x)\mu_{d}(x)\neq 0$ for all $x\in\mathcal{H}$.
Note that this also means that $\alpha(x)\neq 0$ for all $x\in\mathcal{H}$.

Now define
\begin{gather*}
\begin{tikzpicture}[anchorbase,scale=0.4,smallnodes]
\draw[cstrand] (0,0) node[below,black,yshift=-2pt]{$d$} to (0,1) node[above,black,box]{\raisebox{-.025cm}{$\bullet$}};
\draw[cstrand] (0,2) to (0,3) node[above,black]{$d$};
\end{tikzpicture}
:= \lambda^{\mone}\cdot
\begin{tikzpicture}[anchorbase,scale=0.4,smallnodes]
\draw[cstrand] (0,0) node[below,black,yshift=-2pt]{$d$} to (0,1) node[above,black,box]{$\alpha$};
\draw[cstrand] (0,2) to (0,3) node[above,black]{$d$};
\end{tikzpicture}
\,.
\end{gather*}
Clearly, this is the unique $2$-morphism in
$\mathrm{hom}_{\Cd}(\Cd,\Cd^{\oplus\mathsf{v}^{\mone[2]\mathbf{a}}})\cong\mathbb{C}$ satisfying \eqref{eq:dotdiagram}.
\end{proof}

\begin{remark}\label{remark:soergel-calculus}
One might be tempted to use the diagrammatic calculus to define the dot diagram
as a multiple of
\begin{gather*}
\begin{tikzpicture}[anchorbase,scale=0.4,smallnodes]
\draw[cstrand,marked=1, marked=-1] (-.5,1) to (-.5,1.5) node[left,black]{$d$} to (-.5,2);
\draw[cstrand] (0,0) node[below,black,yshift=-2pt]{$d$} to (0,3) node[above, black]{$d$};
\end{tikzpicture},
\end{gather*}
but we do not know whether this $2$-morphism in
$\mathrm{hom}_{\Cd}(\Cd,\Cd^{\oplus\mathsf{v}^{\mone[2]\mathbf{a}}})$ is non-zero in
general.
\end{remark}

For later use, note that
\begin{gather}\label{eq:dotdiagram-image}
\begin{tikzpicture}[anchorbase,scale=0.4,smallnodes]
\draw[cstrand] (0,0) node[below,black,yshift=-2pt]{$d$} to (0,1) node[above,black,box]{\raisebox{-.025cm}{$\bullet$}};
\draw[cstrand] (0,2) to (0,3) node[above,black]{$d$};
\end{tikzpicture}
\rightsquigarrow
e_{x}\otimes e_{x} \mapsto\nu_{d}(x)^{\mone}e^{x}\otimes e_{x},
\end{gather}
for all $x\in\mathcal{H}$, where $\nu_{d}(x):=\delta_{d}(x)\mu_{d}(x)\in\mathbb{C}^{\times}$.
Note also that the fact that the dot diagram is a homomorphism of right $\Cd$-comodules in
$\cSH$ translates to
\begin{gather}\label{eq:dotdiagram-slide}
\begin{tikzpicture}[anchorbase,scale=0.4,smallnodes]
\draw[cstrand] (0,0) node[below,black,yshift=-2pt]{$d$} to (0,1) node[above,black,box]{\raisebox{-.025cm}{$\bullet$}};
\draw[cstrand] (0,2) to (0,3) node[above,black]{$d$};
\draw[cstrand] (0,0.5) to [out=0,in=270] (1,3) node[above,black]{$d$};
\end{tikzpicture}
=
\begin{tikzpicture}[anchorbase,scale=0.4,smallnodes]
\draw[cstrand] (0,0) node[below,black,yshift=-2pt]{$d$} to (0,1) node[above,black,box]{\raisebox{-.025cm}{$\bullet$}};
\draw[cstrand] (0,2) to (0,3) node[above,black]{$d$};
\draw[cstrand] (0,2.5) to [out=0,in=270] (1,3) node[above,black]{$d$};
\end{tikzpicture}
.
\end{gather}

Since $\sigma$ fixes $e_{x}$ and $e^x$, the equality in \eqref{eq:nakayama} and
the $\mathsf{B}$-$\mathsf{B}$-bimodule map in
\eqref{eq:dotdiagram-image} show that the right and left mates of the dot diagram agree, i.e.
\begin{gather*}
\begin{tikzpicture}[anchorbase,scale=0.4,smallnodes]
\tikzstyle{box} = [minimum height=0.4cm,draw, rounded corners, draw,rectangle]
\draw[cstrand] (0,0) node[below,black,yshift=-2pt]{$d$} to (0,1) node[above,black,box]{\raisebox{-.025cm}{$\bullet$}};
\draw[cstrand] (0,2) node[right,black,xshift=2pt]{$\star$} to (0,3) node[above,black]{$d$};
\end{tikzpicture}
=
\begin{tikzpicture}[anchorbase,scale=0.4,smallnodes]
\tikzstyle{box} = [minimum height=0.4cm,draw, rounded corners, draw,rectangle]
\draw[cstrand] (0,0) node[below,black,yshift=-2pt]{$d$} to (0,1) node[above,black,box]{\raisebox{-.025cm}{$\bullet$}};
\draw[cstrand] (0,2) node[left,black,xshift=-2pt]{$\star$} to (0,3) node[above,black]{$d$};
\end{tikzpicture}
\,.
\end{gather*}
Either of these diagrams we therefore call the \emph{dual dot diagram}, and the following
is an almost immediate consequence of Proposition \ref{proposition:dot diagram},
which can be proved diagrammatically using \eqref{eq:isotopies}
and \eqref{eq:dotdiagram-slide}.

\begin{proposition}\label{proposition:dotdiagram-left}
The dual dot diagram is a homomorphism of left $\Cd$-comodules in $\cSH$ and
satisfies
\begin{gather}\label{eq:dualdotdiagram}
\begin{tikzpicture}[anchorbase,scale=0.4,smallnodes]
\draw[cstrand] (0,0) node[below,black,yshift=-2pt]{$d$} to (0,1);
\draw[cstrand] (0,1) to [out=180,in=270] (-1,2) node[above,black,box]{\raisebox{-.025cm}{$\bullet$}};
\draw[cstrand] (0,1) to [out=0,in=270] (1,2.5) to [out=90,in=0] (0,4);
\draw[cstrand] (-1,3) node[left,black,xshift=-2pt]{$\star$} to [out=90,in=180] (0,4);
\draw[cstrand] (0,4) to (0,5) node[above,black]{$d$};
\end{tikzpicture}
\;=
\begin{tikzpicture}[anchorbase,scale=0.4,smallnodes]
\draw[cstrand] (0,0) node[below,black,yshift=-2pt]{$d$} to (0,5) node[above,black]{$d$};
\end{tikzpicture}
.
\end{gather}
\end{proposition}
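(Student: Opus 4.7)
The plan is to derive both claims by applying the mate operation to Proposition \ref{proposition:dot diagram}. Since the left and right mates of the dot diagram agree, as noted immediately before the statement, the dual dot diagram is unambiguously defined and both mate operations produce the same $2$-morphism. The key point is that the Frobenius relations \eqref{eq:coass}, \eqref{eq:counit} and \eqref{eq:frob}, together with the snake identities \eqref{eq:isotopies}, ensure that the mate operation is compatible with the coalgebra/algebra structure of $\Cd$; in particular the mates of $\delta_d$ and $\mu_d$ correspond to $\mu_d$ and $\delta_d$ respectively, once one slides cups and caps through the trivalent vertices.

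For the first claim (that the dual dot diagram is a left $\Cd$-comodule homomorphism), I would take the left mate of the identity \eqref{eq:dotdiagram-slide}, which encodes that the dot diagram is a right $\Cd$-comodule homomorphism. Using \eqref{eq:isotopies} to absorb the resulting cup/cap combinations, one reads off precisely the diagrammatic identity expressing left $\Cd$-colinearity of the dual dot diagram.

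For equation \eqref{eq:dualdotdiagram}, I would apply the left mate operation to the entire equation \eqref{eq:dotdiagram}. The right-hand side, which is the identity on $\Cd$, is self-dual under the mate operation by the snake identities in \eqref{eq:isotopies}. The left-hand side, after sliding cups and caps through the Frobenius trivalent vertex using \eqref{eq:frob} and \eqref{eq:counit}, transforms into the left-hand side of \eqref{eq:dualdotdiagram}, where the topmost dot has acquired the mate label by definition of the dual dot diagram.

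The main potential obstacle is bookkeeping for the scalars that may appear when the mate operation interacts with the asymmetric normalizations of $\delta_d$, $\mu_d$, $\iota_d$, $\epsilon_d$; however these are absorbed by the fact that, as noted above, the two mates of the dot diagram coincide, so no further rescaling is needed. Alternatively, one can bypass the diagrammatic manipulation entirely by computing the $\mathsf{B}$-$\mathsf{B}$-bimodule map associated to the left-hand side of \eqref{eq:dualdotdiagram} using \eqref{eq:dotdiagram-image} together with $\sigma(e_x)=e_x$ and $\sigma(e^x)=e^x$, verifying that it agrees with the identity $\mathsf{B}$-$\mathsf{B}$-bimodule map on the bimodule representing $\Cd$, and then invoking $2$-faithfulness of $\mathbf{C}_{\mathcal{H}}$ to conclude in $\cSH$.
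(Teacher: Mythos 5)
Your proposal is correct and matches the paper's (very brief) argument: the paper defines the dual dot diagram as the (coinciding left and right) mate of the dot diagram and declares \eqref{eq:dualdotdiagram} and left $\Cd$-colinearity a direct consequence of Proposition \ref{proposition:dot diagram}, which is exactly your mate-of-the-equation argument, while your alternative explicit check of the $\mathsf{B}$-$\mathsf{B}$-bimodule maps followed by $2$-faithfulness is the same computation the paper records right after the proposition (the dual dot sends $e_x\otimes e_x\mapsto\nu_d(x)^{\mone}e_x\otimes e^x$, using $\sigma(e_x)=e_x$, $\sigma(e^x)=e^x$). Both routes are sound, so nothing is missing.
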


Diagrammatically, this means that the dual dot diagram satisfies a
sliding relation analogous to \eqref{eq:dotdiagram-slide}. Furthermore,
as in \eqref{eq:dotdiagram-image}, the image of the dual dot diagram in $\cC_{\mathsf{B}}$
is given by
\begin{gather*}
\begin{tikzpicture}[anchorbase,scale=0.4,smallnodes]
\draw[cstrand] (0,0) node[below,black,yshift=-2pt]{$d$} to (0,1) node[above,black,box]{\raisebox{-.025cm}{$\bullet$}};
\draw[cstrand] (0,2) node[right,black,xshift=2pt]{$\star$} to (0,3) node[above,black]{$d$};
\end{tikzpicture}
\rightsquigarrow
e_{x}\otimes e_{x}\mapsto\nu_{d}(x)^{\mone}e_{x}\otimes e^x
\end{gather*}
for all $x\in\mathcal{H}$.

A straightforward calculation in $\cC_{\mathsf{B}}$ proves the following
diagrammatic equations, which will be very useful in the following.

\begin{lemma}\label{lemma:idempotent}
We have
\begin{gather}\label{eq:idempotent}
\begin{tikzpicture}[anchorbase,scale=0.4,smallnodes]
\draw[cstrand] (0,0) to (0,1);
\draw[cstrand] (0,1) to [out=0,in=270] (1,2)  node[above,black,box]{\raisebox{-.025cm}{$\bullet$}};
\draw[cstrand] (1,3) to (1,4) node[above,black]{$d$};
\draw[cstrand] (0,1) to [out=180,in=270] (-1,4) node[above,black]{$d$};
\draw[cstrand] (-1,-3) node[below,black,yshift=-2pt]{$d$} to [out=90,in=180] (0,0);
\draw[cstrand] (1,-3) node[below,black,yshift=-2pt]{$d$} to (1,-2) node[above,black,box]{\raisebox{-.025cm}{$\bullet$}};
\draw[cstrand] (1,-1) to [out=90,in=0] (0,0);
\end{tikzpicture}
=
\begin{tikzpicture}[anchorbase,scale=0.4,smallnodes]
\draw[cstrand] (-1,-3) node[below,black,yshift=-2pt]{$d$} to (-1,4) node[above,black]{$d$};
\draw[cstrand] (1,-3) node[below,black,yshift=-2pt]{$d$} to (1,0) node[above,black,box]{\raisebox{-.025cm}{$\bullet$}};
\draw[cstrand] (1,1) to (1,4) node[above,black]{$d$};
\end{tikzpicture}
\,,\quad
\begin{tikzpicture}[anchorbase,scale=0.4,smallnodes]
\draw[cstrand] (0,0) to (0,1);
\draw[cstrand] (0,1) to [out=180,in=270] (-1,2)  node[above,black,box]{\raisebox{-.025cm}{$\bullet$}};
\draw[cstrand] (-1,3) node[left,black,xshift=-2pt]{$\star$} to (-1,4) node[above,black]{$d$};
\draw[cstrand] (0,1) to [out=0,in=270] (1,4) node[above,black]{$d$};
\draw[cstrand] (1,-3) node[below,black,yshift=-2pt]{$d$} to [out=90,in=0] (0,0);
\draw[cstrand] (-1,-3) node[below,black,yshift=-2pt]{$d$} to (-1,-2) node[above,black,box]{\raisebox{-.025cm}{$\bullet$}};
\draw[cstrand] (-1,-1) node[left,black,xshift=-2pt]{$\star$} to [out=90,in=180] (0,0);
\end{tikzpicture}
=
\begin{tikzpicture}[anchorbase,scale=0.4,smallnodes]
\draw[cstrand] (1,-3) node[below,black,yshift=-2pt]{$d$} to (1,4) node[above,black]{$d$};
\draw[cstrand] (-1,-3) node[below,black,yshift=-2pt]{$d$} to (-1,0) node[above,black,box]{\raisebox{-.025cm}{$\bullet$}};
\draw[cstrand] (-1,1) node[left,black,xshift=-2pt]{$\star$} to (-1,4) node[above,black]{$d$};
\end{tikzpicture}
.
\end{gather}
\end{lemma}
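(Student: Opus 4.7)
The plan is to reduce to a calculation of bimodule maps in $\cC_{\mathsf{B}}$ using the $2$-faithfulness of the cell $2$-representation. Recall from the beginning of Section \ref{section:separability} that the pseudofunctor $\cSH \to \cC_{\mathsf{B}}$ defining $\mathbf{C}_{\mathcal{H}}$ is faithful on $2$-morphisms, so it suffices to verify each side of \eqref{eq:idempotent} as an endomorphism of the bimodule representing $\Cd\Cd$. All ingredients are already available: $\mu_d$ and $\delta_d$ are given by \eqref{eq:delta}--\eqref{eq:mu}, the dot is given by \eqref{eq:dotdiagram-image}, and its left mate (the dual dot) is given by the formula immediately following Proposition \ref{proposition:dotdiagram-left}.

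For the first equation I would apply the successive maps (read bottom-to-top) to the generator $e_x \otimes e_x \otimes e_y \otimes e_y$ of the $(x,y)$-summand of $C_d \otimes_{\mathsf{B}} C_d$. The bottom $\mathrm{id}\otimes\bullet$ produces $\nu_d(y)^{-1}\,e_x \otimes e_x e^y \otimes e_y$; applying $\mu_d$ then contributes a factor $\delta_{x,y}\mu_d(x)\mathrm{tr}(e_x e^y)$, which simplifies to $\delta_{x,y}\mu_d(x)$ because $e_x$ and $e^x$ are dual under $\mathrm{tr}$. Next $\delta_d$ introduces the factor $\delta_d(x)$, and since $\nu_d(x) = \delta_d(x)\mu_d(x)$ all scalars except $\delta_{x,y}$ cancel. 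Finally, the top $\mathrm{id}\otimes\bullet$ produces $\delta_{x,y}\nu_d(x)^{-1}\,e_x \otimes e^x \otimes e_x$, which is exactly the image of the generator under $\mathrm{id}\otimes\bullet$ on the right-hand side (since $e_x e^y = \delta_{x,y}e^x$).

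The second equation is proved by the verbatim same calculation applied to the dual dot. Here I would use Proposition \ref{proposition:dotdiagram-left} together with the fact, recorded between equations \eqref{eq:dualdotdiagram} and Lemma \ref{lemma:idempotent}, that the image of the dual dot sends $e_x \otimes e_x \mapsto \nu_d(x)^{-1}\,e_x \otimes e^x$, as well as the dual sliding identity analogous to \eqref{eq:dotdiagram-slide}. Since $\mathsf{B}$ is weakly symmetric (Proposition \ref{prop:Frobenius}), its Nakayama automorphism fixes $e_x$ and $e^x$, so \eqref{eq:nakayama} guarantees that the scalar bookkeeping in the two cases is identical; the only change is the side on which the dual factor $e^x$ appears in the final tensor.

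There is no conceptual obstacle; the computation is purely bookkeeping. The only points requiring attention are tracking which $\mathsf{B}$-factor each map acts on in the tensor product over $\mathsf{B}$, and keeping the three scalar families $\delta_d(x),\mu_d(x),\nu_d(x)$ straight so that the cancellation $\nu_d(x) = \delta_d(x)\mu_d(x)$ from \eqref{eq:dotdiagram-image} produces the clean answer. Once this is in place the two equalities in \eqref{eq:idempotent} become identities of elementary tensors on every summand indexed by $(x,y)\in \mathcal{H}\times\mathcal{H}$, which concludes the proof.
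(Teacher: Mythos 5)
Your overall strategy is the same as the paper's: the paper disposes of this lemma precisely as ``a straightforward calculation in $\cC_{\mathsf{B}}$'', using $2$-faithfulness of the cell $2$-representation and the explicit bimodule maps \eqref{eq:delta}--\eqref{eq:iota}, \eqref{eq:dotdiagram-image}. However, as written your verification has a genuine gap: you only evaluate both sides on the elements $(e_x\otimes e_x)\otimes_{\mathsf{B}}(e_y\otimes e_y)$. The bimodule representing $\Cd\Cd$ is $\bigoplus_{x,y}\mathsf{B}e_x\otimes_{\mathbb{C}}e_x\mathsf{B}e_y\otimes_{\mathbb{C}}e_y\mathsf{B}$, whose middle tensor factor is inert under the outer $\mathsf{B}$-$\mathsf{B}$-action; your elements collapse to $e_x\otimes e_xe_y\otimes e_y$, which is zero for $x\neq y$ and generates only the proper sub-bimodule with middle entries in $\mathbb{C}e_x$ when $x=y$. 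Agreement of two bimodule maps on such a non-generating set does not imply their equality, and there is no a priori one-dimensionality of the relevant hom-spaces between $\Cd\Cd$ and its shifts that could substitute for the missing check. In particular, the step of $\mu_d$ where the trace of a \emph{general} middle element is taken --- the only place where anything non-formal happens --- never occurs in your computation.

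The fix is to evaluate on the actual bimodule generators $e_x\otimes a\otimes e_y$ with $a\in e_x\mathsf{B}e_y$ arbitrary. Running your bookkeeping there, the left-hand side of the first equation of \eqref{eq:idempotent} yields $\delta_{x,y}\,\mathrm{tr}(ae^x)\,\nu_d(x)^{\mone}\,e_x\otimes e^x\otimes e_x$, while the right-hand side yields $\nu_d(y)^{\mone}\,e_x\otimes ae^y\otimes e_y$, so the equality one actually needs is $ae^y=\delta_{x,y}\,\mathrm{tr}(ae^x)\,e^x$. This is where the graded structure of $\mathsf{B}$ enters: by Proposition \ref{prop:Frobenius}, $\mathsf{B}$ is positively graded of graded length $2\mathbf{a}$ and $e^y$ sits in top degree $2\mathbf{a}$, so every positive-degree component of $a$ annihilates $e^y$; for $x\neq y$ all of $e_x\mathsf{B}e_y$ is in positive degrees, and for $x=y$ only the coefficient of $e_x$ in $a$ survives, and it equals $\mathrm{tr}(ae^x)$ by duality of the chosen bases. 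The same supplement, with the roles of left and right exchanged and using that the Nakayama automorphism $\sigma$ fixes $e_x$ and $e^x$ (weak symmetry, cf.\ \eqref{eq:nakayama}), closes the second equation with the dual dot. With this added, your argument is complete and coincides with the paper's intended proof.
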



\subsection{Graded semisimple bicomodules}\label{subsection:bh-semisimple}


We are now ready to determine the structure of $\cBH$, in particular, that it is locally graded semisimple.

\begin{lemma}\label{lemma:unique-bicomodule}
The $1$-morphism $\mathrm{C}_{x}^{\oplus\mathsf{v}^{t}}$ has a unique
$\Cd$-$\Cd$-bicomodule structure in $\cSHo$, for every $x\in\mathcal{H}$ and $t\in\mathbb{Z}$
\end{lemma}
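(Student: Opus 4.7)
The plan is to establish existence first and then uniqueness separately for the left and right coactions, after which bicomodule compatibility is automatic.

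\textbf{Existence.} Since $\mathrm{A}_d$ is the (weak) identity in $\cAH$, every $\mathrm{A}_x$, for $x\in\mathcal{H}$, carries a canonical $\mathrm{A}_d$-$\mathrm{A}_d$-bicomodule structure in $\cAH$ given by the inverses of the unitors $\lambda_{\mathrm{A}_x}$ and $\rho_{\mathrm{A}_x}$. By (the bicomodule analog of) Lemma~\ref{lem5.1} and the identification $\mathrm{C}_x\cong\Theta(\mathrm{A}_x)$, the oplax pseudofunctor $\Theta$ transports this to a $\Cd$-$\Cd$-bicomodule structure on $\mathrm{C}_x$ in $\cSHo$. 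Shifting by $\mathsf{v}^{t}$ then gives the structure on $\mathrm{C}_x^{\oplus\mathsf{v}^{t}}$, since all structure $2$-morphisms are of degree zero by Example~\ref{ex:Cd-alg-coalg}.

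\textbf{Uniqueness of the right coaction.} It suffices to treat $t=0$. The key computation is that $\mathrm{hom}_{\ccSH}(\mathrm{C}_x,\mathrm{C}_x\Cd)$ is one-dimensional. Indeed, by Lemma~\ref{lem5.2} we have
\begin{gather*}
\mathrm{C}_x\Cd\cong\bigoplus_{z\in\mathcal{H}}\mathrm{C}_z^{\oplus\mathsf{v}^{\mathbf{a}}h_{x,d,z}} \text{ in } \cSHo.
\end{gather*}
Using $\gamma_{x,d,z^{\mone}}=\delta_{x,z}$ together with \eqref{eq:kl-mult}, \eqref{eq:kl-coeff} and bar invariance, one sees that $\mathsf{v}^{\mathbf{a}}h_{x,d,x}$ has constant term $1$ and all terms in degrees $0$ through $2\mathbf{a}$, whereas $\mathsf{v}^{\mathbf{a}}h_{x,d,z}\in\mathsf{v}\mathbb{N}_{0}[\mathsf{v}]$ for $z\neq x$. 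Soergel's hom formula \eqref{eq:soergel} then shows that only the degree-zero occurrence of $\mathrm{C}_x$ contributes, giving a one-dimensional space. Consequently, any coassociative $2$-morphism $\delta\colon\mathrm{C}_x\to\mathrm{C}_x\Cd$ is a scalar multiple of the coaction $\delta^{\Theta}$ produced above, and counitality $(\mathrm{id}_{\mathrm{C}_x}\circ_{\mathrm{h}}\epsilon_d)\circ_{\mathrm{v}}\delta=\mathrm{id}_{\mathrm{C}_x}$ forces that scalar to equal $1$, since $\delta^{\Theta}$ itself is counital.

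\textbf{Uniqueness of the left coaction and compatibility.} The same argument, applied to $\mathrm{hom}_{\ccSH}(\mathrm{C}_x,\Cd\mathrm{C}_x)$, shows the left coaction is also uniquely determined. Thus any $\Cd$-$\Cd$-bicomodule structure on $\mathrm{C}_x$ has its left and right coactions equal to those of $\delta^{\Theta}$, so it coincides with the bicomodule structure of $\Theta(\mathrm{A}_x)$; in particular the bicomodule compatibility axiom is automatically satisfied. The shifted case $\mathrm{C}_x^{\oplus\mathsf{v}^{t}}$ follows verbatim since shifting preserves the degree-zero hom spaces involved.

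\textbf{Main obstacle.} The only delicate point is the hom-space computation: one must invoke Lusztig's positivity conjectures (available here by Subsection~\ref{s5.1}) to pin down the exact shape of $\mathsf{v}^{\mathbf{a}}h_{x,d,z}$; once that is in hand, Soergel's hom formula does the rest and the uniqueness argument is mechanical.
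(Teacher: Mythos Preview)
Your proof is correct and follows essentially the same approach as the paper: existence via $\Theta$ transporting the canonical $\mathrm{A}_d$-bicomodule structure, and uniqueness via one-dimensionality of $\mathrm{hom}_{\ccSH}(\mathrm{C}_x,\mathrm{C}_x\Cd)$ and $\mathrm{hom}_{\ccSH}(\mathrm{C}_x,\Cd\mathrm{C}_x)$ together with counitality fixing the scalar. You simply unpack the hom-space computation (via Lemma~\ref{lem5.2} and $\gamma_{x,d,z^{\mone}}=\delta_{x,z}$) in more detail than the paper, which cites Soergel's hom formula directly.
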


\begin{proof}
For every $x\in\mathcal{H}$, we have $\mathrm{C}_{x}\cong \Theta(\mathrm{A}_{x})$, see \eqref{Thetadef} and the text below it. Since $\mathrm{A}_{d}$ is the identity $1$-morphism in $\cAH$, the $1$-morphism $\mathrm{A}_{x}$ is naturally an
$\mathrm{A}_{d}$-$\mathrm{A}_{d}$-bicomodule.
This, together with the fact that $\Theta$ is an oplax pseudofunctor, implies that $\mathrm{C}_{x}$ is
a $\Cd$-$\Cd$-bicomodule in $\cSHo$.

Of course, this implies that
$\mathrm{C}_{x}^{\oplus\mathsf{v}^{t}}$ is a $\Cd$-$\Cd$-bicomodule in $\cSHo$ for all
$t\in\mathbb{Z}$, with the same (degree zero) structure $2$-morphisms.
We denote the left and right
$\Cd$-coactions on $\mathrm{C}_{x}^{\oplus\mathsf{v}^{t}}$ by
$\delta_{\Cd,\mathrm{C}_{x}^{\oplus\mathsf{v}^{t}}}$ and $\delta_{\mathrm{C}_{x}^{\oplus\mathsf{v}^{t}},\Cd}$,
respectively. Note that the bicomodule structure on each $\mathrm{C}_{x}^{\oplus\mathsf{v}^{t}}$ in  $\cSHo$ is unique
because Soergel's hom formula \eqref{eq:soergel} gives
\begin{gather*}
\dim\big(\mathrm{hom}_{\ccS_{\mathcal{H}}}
(\mathrm{C}_{x}^{\oplus\mathsf{v}^{t}}, \mathrm{C}_{x}^{\oplus\mathsf{v}^{t}}\Cd)\big)
=1=
\dim\big(\mathrm{hom}_{\ccS_{\mathcal{H}}}
(\mathrm{C}_{x}^{\oplus\mathsf{v}^{t}},\Cd
\mathrm{C}_{x}^{\oplus\mathsf{v}^{t}})\big),
\end{gather*}
and the choice of $\delta_{\Cd,\mathrm{C}_{x}^{\oplus\mathsf{v}^{t}}}$ and
$\delta_{\mathrm{C}_{x}^{\oplus\mathsf{v}^{t}},\Cd}$ is fixed by counitality.
\end{proof}

\begin{lemma}\label{lemma:BB-intertwiners-2}
For any $x,y\in\mathcal{H}$ and $t\in\mathbb{Z}$, we have
\begin{gather*}
\dim\big(\mathrm{hom}_{\ccBH}
(\mathrm{C}_{x},\mathrm{C}_{y}^{\oplus\mathsf{v}^{t}})\big)
=\delta_{x,y}\delta_{t,0}.
\end{gather*}
\end{lemma}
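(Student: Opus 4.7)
The plan is to deduce the statement directly from Soergel's hom formula together with the uniqueness of bicomodule structures provided by Lemma~\ref{lemma:unique-bicomodule}. By the very definition of $\cBH$ as a bicategory of bicomodules inside $\underline{\mathrm{add}(\mathcal{H})}\subset\underline{\cSH}$, its morphism spaces embed into the corresponding degree-zero hom spaces in $\cSH$:
\begin{gather*}
\mathrm{hom}_{\ccBH}(\mathrm{C}_x,\mathrm{C}_y^{\oplus\mathsf{v}^{t}})\hookrightarrow\mathrm{hom}_{\ccSH}(\mathrm{C}_x,\mathrm{C}_y^{\oplus\mathsf{v}^{t}}).
\end{gather*}
This is what will give the upper bound.

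First I would observe that, since $\mathrm{C}_w=\mathrm{B}_w^{\oplus\mathsf{v}^{\mathbf{a}}}$ for all $w\in\mathcal{H}$, Soergel's hom formula \eqref{eq:soergel} in $\cS$ combined with positivity of the grading (which kills all degree-zero morphisms from $\mathrm{B}_x$ to a negative shift of $\mathrm{B}_y$) yields
\begin{gather*}
\dim\bigl(\mathrm{hom}_{\ccS}(\mathrm{C}_x,\mathrm{C}_y^{\oplus\mathsf{v}^{t}})\bigr)=\delta_{x,y}\delta_{t,0},\quad\forall x,y\in\mathcal{H},\,t\in\mathbb{Z}.
\end{gather*}
Since $\cSH$ is a 2-full 2-subcategory of the $\mathcal{J}$-simple quotient $\cSJ$, the hom space in $\cSH$ is a quotient of the one in $\cS$, so in particular
\begin{gather*}
\dim\bigl(\mathrm{hom}_{\ccSH}(\mathrm{C}_x,\mathrm{C}_y^{\oplus\mathsf{v}^{t}})\bigr)\leq\delta_{x,y}\delta_{t,0}.
\end{gather*}
Combining with the inclusion above gives the upper bound on the hom space in $\cBH$.

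For the matching lower bound in the case $x=y$ and $t=0$, by Lemma~\ref{lemma:unique-bicomodule} the $1$-morphism $\mathrm{C}_x$ carries a canonical $\Cd$-$\Cd$-bicomodule structure in $\cSHo$, and $\mathrm{id}_{\mathrm{C}_x}$ is tautologically a $\Cd$-$\Cd$-bicomodule morphism of degree zero. This gives a non-zero element of $\mathrm{hom}_{\ccBH}(\mathrm{C}_x,\mathrm{C}_x)$; note that $\mathrm{id}_{\mathrm{C}_x}$ survives in $\cSH$ because $\cSH$ is $\mathcal{H}$-simple and $\mathrm{C}_x\in\mathrm{add}(\mathcal{H})$. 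No substantive obstacle arises: the content of the lemma is already encoded in Soergel's hom formula and the uniqueness statement of Lemma~\ref{lemma:unique-bicomodule}, and the argument is essentially a bookkeeping of these two inputs.
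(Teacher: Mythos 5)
There is a genuine gap: your ``upper bound'' step is false for half the values of $t$. Soergel's hom formula \eqref{eq:soergel} is only stated, and is only true, for shifts $k\in\mathbb{Z}_{\geq 0}$: positivity of the grading kills morphisms of negative degree, so $\mathrm{hom}_{\ccS}(\mathrm{C}_x,\mathrm{C}_y^{\oplus\mathsf{v}^{t}})$ vanishes for $t>0$ and is $\delta_{x,y}$-dimensional for $t=0$, but for $t<0$ this space is $\mathrm{Hom}_{\ccS}(\mathrm{C}_x,\mathrm{C}_y)_{-t}$, which is typically large (already $\mathrm{End}_{\ccS}(\mathrm{B}_e)$ is the whole coinvariant algebra). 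The same failure persists in $\cSH$: by Lemma \ref{lem:gr} each $\mathrm{C}_w$ has graded length $4\mathbf{a}$ in $\cSH$, and e.g. \eqref{eq:grl} exhibits non-zero elements of $\mathrm{hom}_{\ccSH}(\mathrm{C}_{v^{\mone}},\Cd^{\oplus\mathsf{v}^{\mone[4\mathbf{a}]}})$. So the embedding $\mathrm{hom}_{\ccBH}(\mathrm{C}_x,\mathrm{C}_y^{\oplus\mathsf{v}^{t}})\hookrightarrow\mathrm{hom}_{\ccSH}(\mathrm{C}_x,\mathrm{C}_y^{\oplus\mathsf{v}^{t}})$ only settles the cases $t\geq 0$ (where, together with the identity morphism being a bicomodule map, your argument is fine); it gives no control whatsoever for $t<0$, which is precisely the non-trivial content of the lemma.

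To close the gap you must use the bicomodule condition itself, not just ambient hom spaces: the point is that no positive-degree morphism in $\cSH$ between objects of $\mathrm{add}(\mathcal{H})$ intertwines \emph{both} the left and the right $\Cd$-coactions. This is how the paper argues: it repeats the bimodule computation of Lemma \ref{lemma:BB-intertwiners} in the cell $2$-representation, once for the right coaction (forcing the second tensor factor of the image of $e_x\otimes e_x$ to be a scalar multiple of $e_x$) and once for the left coaction (forcing the first factor to be a scalar multiple of $e_x$), so any bicomodule morphism acts by $e_x\otimes e_x\mapsto c_x\,e_x\otimes e_x$ and is therefore homogeneous of degree zero; only after this reduction does Soergel's hom formula \eqref{eq:soergel} finish the $t=0$ case. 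Your appeal to Lemma \ref{lemma:unique-bicomodule} contributes the (easy) lower bound at $x=y$, $t=0$, but does not substitute for this degree-vanishing argument.
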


\begin{proof}
By arguments analogous to those in the proof of Lemma \ref{lemma:BB-intertwiners}, but this time
applied to the left and the right
$\Cd$-coactions,
we see that
\begin{gather*}
\dim\big(\mathrm{hom}_{\ccBH}
(\mathrm{C}_{x},\mathrm{C}_{y}^{\oplus\mathsf{v}^{t}})\big)
=0\;\,\text{unless $t= 0$.}
\end{gather*}
The result for $t=0$ follows from \eqref{eq:soergel}.
\end{proof}

Since $\Cd$ is a graded Frobenius algebra in $\cSH$ (c.f. Subsection \ref{s7.5}),
any right, left or bicomodule of $\Cd$ has a compatible right, left or bimodule
structure over $\Cd$ up to a degree shift of $-2\mathbf{a}$. This is well-known, but
for convenience and later use, we briefly repeat the main arguments and constructions. The shortest way to do
this is diagrammatically. Let $\mathrm{X}$ be a right $\Cd$-comodule in $\cSHo$ with right coaction
$\delta_{\mathrm{X},\Cd}
\colon\mathrm{X}\to\mathrm{X}\Cd$.
We depict $\delta_{\mathrm{X},\Cd}$ as
\begin{gather*}
\delta_{\mathrm{X},\Cd}
\leftrightsquigarrow
\begin{tikzpicture}[anchorbase,scale=0.4,smallnodes]
\draw[dstrand] (0,0) node[below,black,yshift=-2pt]{$\mathrm{X}$} to (0,1);
\draw[dstrand] (0,1) to[out=180,in=270] (-1,2) node[above,black,yshift=-2pt]{$\mathrm{X}$};
\draw[cstrand] (0,1) to[out=0,in=270] (1,2) node[above,black,yshift=-2pt]{$d$};
\end{tikzpicture}
\;(\text{degree }0).
\end{gather*}
The coassociativity and counitality of $\delta_{\mathrm{X},\Cd}$ are expressed by the diagrammatic equations
\begin{gather}\label{eq:xcoass-xcounit}
\begin{tikzpicture}[anchorbase,scale=0.4,smallnodes]
\draw[dstrand] (-.25,0) node[below,black,yshift=-2pt]{$\mathrm{X}$} to (-.25,1);
\draw[dstrand] (-.25,1) to[out=180,in=270] (-1.5,2);
\draw[dstrand] (-1.5,2) to (-1.5,4) node[above,black]{$\mathrm{X}$};
\draw[cstrand] (-.25,1) to[out=0,in=270] (1,2);
\draw[cstrand] (1,2) to (1,3);
\draw[cstrand] (1,3) to[out=180,in=270] (0,4) node[above,black]{$d$};
\draw[cstrand] (1,3) to[out=0,in=270] (2,4) node[above,black]{$d$};
\end{tikzpicture}
=
\begin{tikzpicture}[anchorbase,scale=0.4,smallnodes]
\draw[dstrand] (.25,0) node[below,black,yshift=-2pt]{$\mathrm{X}$} to (.25,1);
\draw[dstrand] (.25,1) to[out=180,in=270] (-1,2);
\draw[dstrand] (-1,2) to (-1,3);
\draw[dstrand] (-1,3) to[out=180,in=270] (-2,4) node[above,black]{$\mathrm{X}$};
\draw[cstrand] (.25,1) to[out=0,in=270] (1.5,2);
\draw[cstrand] (-1,3) to[out=0,in=270] (0,4) node[above,black]{$d$};
\draw[cstrand] (1.5,2) to (1.5,4) node[above,black]{$d$};
\end{tikzpicture}
,\quad
\begin{tikzpicture}[anchorbase,scale=0.4,smallnodes]
\draw[dstrand] (0,0) node[below,black,yshift=-2pt]{$\mathrm{X}$} to (0,1);
\draw[dstrand] (0,1) to[out=180,in=270] (-1,2);
\draw[dstrand] (-1,2) to (-1,4) node[above,black]{$\mathrm{X}$};
\draw[cstrand] (0,1) to[out=0,in=270] (1,2);
\draw[cstrand,marked=1] (1,2) to (1,3);
\end{tikzpicture}
=
\begin{tikzpicture}[anchorbase,scale=0.4,smallnodes]
\draw[dstrand] (0,0) node[below,black,yshift=-2pt]{$\mathrm{X}$} to (0,4) node[above,black]{$X$};
\end{tikzpicture}
.
\end{gather}
We can then define the right $\Cd$-action $\mu_{\mathrm{X},\Cd}\colon
\mathrm{X}\Cd\to\mathrm{X}$ of degree $-2\mathbf{a}$ as
\begin{gather}\label{eq:x-dd-horizontal}
\mu_{\mathrm{X},\Cd}
\leftrightsquigarrow
\begin{tikzpicture}[anchorbase,scale=0.4,smallnodes]
\draw[dstrand] (0,.5) node[above,black]{$\mathrm{X}$} to (0,0) to (0,-1);
\draw[dstrand] (0,-1) to[out=180,in=90] (-1,-2) to (-1,-2.5) node[below,black,yshift=-2pt]{$\mathrm{X}$};
\draw[cstrand] (0,-1) to[out=0,in=90] (1,-2) to (1,-2.5) node[below,black,yshift=-2pt]{$d$};
\end{tikzpicture}
:=
\begin{tikzpicture}[anchorbase,scale=0.4,smallnodes]
\draw[dstrand] (0,0) node[below,black,yshift=-2pt]{$\mathrm{X}$} to (0,1);
\draw[dstrand] (0,1) to[out=180,in=270] (-1,2) to (-1,3) node[above,black]{$\mathrm{X}$};
\draw[cstrand] (0,1) to[out=0,in=270] (1,2) to[out=90,in=180] (2,3) to[out=0,in=90] (3,2) to (3,0) node[below,black,yshift=-2pt]{$d$};
\end{tikzpicture}
\;(\text{degree }-2\mathbf{a}).
\end{gather}
Note also that \eqref{eq:isotopies} implies that
\begin{gather}\label{eq:x-d-horizontal}
\begin{tikzpicture}[anchorbase,scale=0.4,smallnodes]
\draw[dstrand] (0,0) node[above,black]{$\mathrm{X}$} to (0,-1);
\draw[dstrand] (0,-1) to[out=180,in=90] (-1,-2) to (-1,-3) node[below,black,yshift=-2pt]{$\mathrm{X}$};
\draw[cstrand] (0,-1) to[out=0,in=90] (1,-2) to[out=270,in=180] (2,-3) to[out=0,in=270] (3,-2) to (3,0) node[above,black]{$d$};
\end{tikzpicture}
=
\begin{tikzpicture}[anchorbase,scale=0.4,smallnodes]
\draw[dstrand] (0,0) node[below,black,yshift=-2pt]{$\mathrm{X}$} to (0,1);
\draw[dstrand] (0,1) to[out=180,in=270] (-1,2) to (-1,3) node[above,black]{$\mathrm{X}$};
\draw[cstrand] (0,1) to[out=0,in=270] (1,2) to[out=90,in=180] (2,3) to[out=0,in=90] (3,2) to[out=270,in=180] (4,1) to[out=0,in=270] (5,2) to
(5,3) node[below,above]{$d$};
\end{tikzpicture}
=
\begin{tikzpicture}[anchorbase,scale=0.4,smallnodes]
\draw[dstrand] (0,-.5) node[below,black,yshift=-2pt]{$\mathrm{X}$} to (0,0) to (0,1);
\draw[dstrand] (0,1) to[out=180,in=270] (-1,2) to (-1,2.5) node[above,black]{$\mathrm{X}$};
\draw[cstrand] (0,1) to[out=0,in=270] (1,2) to (1,2.5) node[above,black]{$d$};
\end{tikzpicture}.
\end{gather}
Similarly, it is easy to see that
\begin{gather}\label{eq:xfrob}
\begin{gathered}
\begin{tikzpicture}[anchorbase,scale=0.4,smallnodes]
\draw[dstrand] (-.25,0) node[above,black]{$\mathrm{X}$} to (-.25,-1);
\draw[dstrand] (-.25,-1) to[out=180,in=90] (-1.5,-2);
\draw[dstrand] (-1.5,-2) to (-1.5,-4) node[below,black,yshift=-2pt]{$\mathrm{X}$};
\draw[cstrand] (-.25,-1) to[out=0,in=90] (1,-2);
\draw[cstrand] (1,-2) to (1,-3);
\draw[cstrand] (1,-3) to[out=180,in=90] (0,-4) node[below,black,yshift=-2pt]{$d$};
\draw[cstrand] (1,-3) to[out=0,in=90] (2,-4) node[below,black,yshift=-2pt]{$d$};
\end{tikzpicture}
=
\begin{tikzpicture}[anchorbase,scale=0.4,smallnodes]
\draw[dstrand] (.25,0) node[above,black]{$\mathrm{X}$} to (.25,-1);
\draw[dstrand] (.25,-1) to[out=180,in=90] (-1,-2);
\draw[dstrand] (-1,-2) to (-1,-3);
\draw[dstrand] (-1,-3) to[out=180,in=90] (-2,-4) node[below,black,yshift=-2pt]{$\mathrm{X}$};
\draw[cstrand] (.25,-1) to[out=0,in=90] (1.5,-2);
\draw[cstrand] (-1,-3) to[out=0,in=90] (0,-4) node[below,black,yshift=-2pt]{$d$};
\draw[cstrand] (1.5,-2) to (1.5,-4) node[below,black,yshift=-2pt]{$d$};
\end{tikzpicture}
,\quad
\begin{tikzpicture}[anchorbase,scale=0.4,smallnodes]
\draw[dstrand] (0,0) node[above,black]{$\mathrm{X}$} to (0,-1);
\draw[dstrand] (0,-1) to[out=180,in=90] (-1,-2);
\draw[dstrand] (-1,-2) to (-1,-4) node[below,black,yshift=-2pt]{$\mathrm{X}$};
\draw[cstrand] (0,-1) to[out=0,in=90] (1,-2);
\draw[cstrand,marked=1] (1,-2) to (1,-3);
\end{tikzpicture}
=
\begin{tikzpicture}[anchorbase,scale=0.4,smallnodes]
\draw[dstrand] (0,0) node[above,black]{$\mathrm{X}$} to (0,-4) node[below,black,yshift=-2pt]{$d$};
\end{tikzpicture}
,
\\
\begin{tikzpicture}[anchorbase,scale=0.4,smallnodes]
\draw[dstrand] (0,0) node[below,black,yshift=-2pt]{$\mathrm{X}$} to (0,3) node[above,black]{$\mathrm{X}$};
\draw[cstrand] (2,0) node[below,black,yshift=-2pt]{$d$} to (2,3) node[above,black]{$d$};
\draw[cstrand] (0,1) to (2,2);
\end{tikzpicture}
=
\begin{tikzpicture}[anchorbase,scale=0.4,smallnodes]
\draw[dstrand] (0,0) node[below,black,yshift=-2pt]{$\mathrm{X}$} to[out=90,in=180] (1,1);
\draw[dstrand] (1,1) to (1,2);
\draw[dstrand] (1,2) to[out=180,in=270] (0,3) node[above,black]{$\mathrm{X}$};
\draw[cstrand] (2,0) node[below,black,yshift=-2pt]{$d$} to[out=90,in=0] (1,1);
\draw[cstrand] (1,2) to[out=0,in=270] (2,3) node[above,black]{$d$};
\end{tikzpicture}
=
\begin{tikzpicture}[anchorbase,scale=0.4,smallnodes]
\draw[dstrand] (0,0) node[below,black,yshift=-2pt]{$\mathrm{X}$} to (0,3) node[above,black]{$\mathrm{X}$};
\draw[cstrand] (2,0) node[below,black,yshift=-2pt]{$d$} to (2,3) node[above,black]{$d$};
\draw[cstrand] (2,1) to (0,2);
\end{tikzpicture}
,
\end{gathered}
\end{gather}
i.e. $\mu_{\mathrm{X},\Cd}$ is associative and unital and there is a Frobenius-type
compatibility between $\mu_{\mathrm{X},\Cd}$ and $\delta_{\mathrm{X},\Cd}$. Thus,
\eqref{eq:xcoass-xcounit}, \eqref{eq:x-dd-horizontal}, \eqref{eq:x-d-horizontal}
and \eqref{eq:xfrob} imply that the diagrammatic calculus is again
topological in nature, which we will use to simplify our diagrammatic arguments.

By symmetry, similar results hold for left coactions and left actions of $\Cd$ and
one can check that any $\Cd$-$\Cd$-bicomodule in $\cSH$ has a
compatible $\Cd$-$\Cd$-bimodule structure and vice versa
(which diagrammatically translates into a height exchange, see \eqref{eq:height} below).
To simplify our diagrams for bimodules and bicomodules, we also allow four valent vertices, e.g.
\begin{gather}\label{eq:height}
\begin{tikzpicture}[anchorbase,scale=0.4,smallnodes]
\draw[dstrand] (0,0) node[below,black,yshift=-2pt]{$\mathrm{X}$} to (0,3) node[above,black]{$\mathrm{X}$};
\draw[cstrand] (1,3) node[above,black]{$d$} to[out=270,in=0] (0,1.5);
\draw[cstrand] (-1,3) node[above,black]{$d$} to[out=270,in=180] (0,1.5);
\end{tikzpicture}
:=
\begin{tikzpicture}[anchorbase,scale=0.4,smallnodes]
\draw[dstrand] (0,0) node[below,black,yshift=-2pt]{$\mathrm{X}$} to (0,3) node[above,black]{$\mathrm{X}$};
\draw[cstrand] (1,3) node[above,black]{$d$} to[out=270,in=0] (0,2.25);
\draw[cstrand] (-1,3) node[above,black]{$d$} to[out=270,in=180] (0,0.75);
\end{tikzpicture}
=
\begin{tikzpicture}[anchorbase,scale=0.4,smallnodes]
\draw[dstrand] (0,0) node[below,black,yshift=-2pt]{$\mathrm{X}$} to (0,3) node[above,black]{$\mathrm{X}$};
\draw[cstrand] (1,3) node[above,black]{$d$} to[out=270,in=0] (0,0.75);
\draw[cstrand] (-1,3) node[above,black]{$d$} to[out=270,in=180] (0,2.25);
\end{tikzpicture}
,\quad
\begin{tikzpicture}[anchorbase,scale=0.4,smallnodes]
\draw[dstrand] (0,0) node[below,black,yshift=-2pt]{$\mathrm{X}$} to (0,3) node[above,black]{$\mathrm{X}$};
\draw[cstrand] (1,0) node[below,black,yshift=-2pt]{$d$} to[out=90,in=0] (0,1.5);
\draw[cstrand] (-1,0) node[below,black,yshift=-2pt]{$d$} to[out=90,in=180] (0,1.5);
\end{tikzpicture}
:=
\begin{tikzpicture}[anchorbase,scale=0.4,smallnodes]
\draw[dstrand] (0,0) node[below,black,yshift=-2pt]{$\mathrm{X}$} to (0,3) node[above,black]{$\mathrm{X}$};
\draw[cstrand] (1,0) node[below,black,yshift=-2pt]{$d$} to[out=90,in=0] (0,2.25);
\draw[cstrand] (-1,0) node[below,black,yshift=-2pt]{$d$} to[out=90,in=180] (0,0.75);
\end{tikzpicture}
=
\begin{tikzpicture}[anchorbase,scale=0.4,smallnodes]
\draw[dstrand] (0,0) node[below,black,yshift=-2pt]{$\mathrm{X}$} to (0,3) node[above,black]{$\mathrm{X}$};
\draw[cstrand] (1,0) node[below,black,yshift=-2pt]{$d$} to[out=90,in=0] (0,0.75);
\draw[cstrand] (-1,0) node[below,black,yshift=-2pt]{$d$} to[out=90,in=180] (0,2.25);
\end{tikzpicture}
.
\end{gather}

The following lemma is the analog of Lemma \ref{lemma:idempotent}.

\begin{lemma}\label{lemma:x-d-idempotent}
We have
\begin{gather}\label{eq:x-d-idempotent}
\begin{tikzpicture}[anchorbase,scale=0.4,smallnodes]
\draw[dstrand] (0,0) to (0,1);
\draw[dstrand] (0,1) to [out=180,in=270] (-1,4) node[above,black]{$\mathrm{X}$};
\draw[dstrand] (-1,-3) node[below,black,yshift=-2pt]{$\mathrm{X}$} to [out=90,in=180] (0,0);
\draw[cstrand] (0,1) to [out=0,in=270] (1,2)  node[above,black,box]{\raisebox{-.025cm}{$\bullet$}};
\draw[cstrand] (1,3) to (1,4) node[above,black]{$d$};
\draw[cstrand] (1,-3) node[below,black,yshift=-2pt]{$d$} to (1,-2) node[above,black,box]{\raisebox{-.025cm}{$\bullet$}};
\draw[cstrand] (1,-1) to [out=90,in=0] (0,0);
\end{tikzpicture}
=
\begin{tikzpicture}[anchorbase,scale=0.4,smallnodes]
\draw[dstrand] (-1,-3) node[below,black,yshift=-2pt]{$\mathrm{X}$} to (-1,4) node[above,black]{$\mathrm{X}$};
\draw[cstrand] (1,-3) node[below,black,yshift=-2pt]{$d$} to (1,0) node[above,black,box]{\raisebox{-.025cm}{$\bullet$}};
\draw[cstrand] (1,1) to (1,4) node[above,black]{$d$};
\end{tikzpicture}
\,,\quad
\begin{tikzpicture}[anchorbase,scale=0.4,smallnodes]
\draw[dstrand] (0,0) to (0,1);
\draw[dstrand] (0,1) to [out=0,in=270] (1,4) node[above,black]{$\mathrm{X}$};
\draw[dstrand] (1,-3) node[below,black,yshift=-2pt]{$\mathrm{X}$} to [out=90,in=0] (0,0);
\draw[cstrand] (0,1) to [out=180,in=270] (-1,2)  node[above,black,box]{\raisebox{-.025cm}{$\bullet$}};
\draw[cstrand] (-1,3) node[left,black,xshift=-2pt]{$\star$} to (-1,4) node[above,black]{$d$};
\draw[cstrand] (-1,-3) node[below,black,yshift=-2pt]{$d$} to (-1,-2) node[above,black,box]{\raisebox{-.025cm}{$\bullet$}};
\draw[cstrand] (-1,-1) node[left,black,xshift=-2pt]{$\star$} to [out=90,in=180] (0,0);
\end{tikzpicture}
=
\begin{tikzpicture}[anchorbase,scale=0.4,smallnodes]
\draw[dstrand] (1,-3) node[below,black,yshift=-2pt]{$\mathrm{X}$} to (1,4) node[above,black]{$\mathrm{X}$};
\draw[cstrand] (-1,-3) node[below,black,yshift=-2pt]{$d$} to (-1,0) node[above,black,box]{\raisebox{-.025cm}{$\bullet$}};
\draw[cstrand] (-1,1) node[left,black,xshift=-2pt]{$\star$} to (-1,4) node[above,black]{$d$};
\end{tikzpicture}
\,.
\end{gather}
\end{lemma}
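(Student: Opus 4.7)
The plan is to mimic the proof of Lemma \ref{lemma:idempotent} in the bicomodule setting, using the Frobenius-compatible bimodule/bicomodule structure on $\mathrm{X}$ encoded in \eqref{eq:xfrob}. First, invoking the $2$-faithfulness of the pseudofunctor $\cSH \to \cC_{\mathsf{B}}$ underlying the cell $2$-representation, it suffices to verify both equations in \eqref{eq:x-d-idempotent} after passing to $\cC_{\mathsf{B}}$, where $\mathrm{X}$ becomes an explicit $\mathsf{B}$-$\mathsf{B}$-bimodule equipped with right and left $\Cd$-coactions whose form is pinned down, up to data depending on $\mathrm{X}$, by counitality and coassociativity, by an argument analogous to the one used in Lemma \ref{lemma:BB-intertwiners}.

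For the first equation, I apply the Frobenius relation \eqref{eq:xfrob} to rewrite the middle composite $\delta_{\mathrm{X},\Cd} \circ \mu_{\mathrm{X},\Cd}$ as $(\mu_{\mathrm{X},\Cd} \otimes \mathrm{id}_{\Cd}) \circ (\mathrm{id}_{\mathrm{X}} \otimes \delta_d)$. The left-hand side of \eqref{eq:x-d-idempotent} then becomes a composite in which the two dot insertions act only on the $\Cd$-tensor slot, and can be pulled past $\mu_{\mathrm{X},\Cd} \otimes \mathrm{id}_{\Cd}$ to leave an endomorphism of $\Cd$ of the form $(\mathrm{id}_{\Cd} \otimes \bullet) \circ \delta_d \circ \bullet$ on the outgoing $\Cd$-strand. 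Using the dot-sliding relation \eqref{eq:dotdiagram-slide} to rewrite this as $(\bullet \otimes \bullet) \circ \delta_d$, and then applying the separability relation \eqref{eq:dotdiagram} from Proposition \ref{proposition:dot diagram} together with associativity and unitality of $\mu_{\mathrm{X},\Cd}$, one collapses the whole expression to a single dot on the outgoing $\Cd$-strand, which is exactly the right-hand side. The second equation in \eqref{eq:x-d-idempotent} is proved by the entirely symmetric argument, now using the left coaction and the left action on $\mathrm{X}$, the dual dot, and the analog of \eqref{eq:dotdiagram-slide} provided by Proposition \ref{proposition:dotdiagram-left}.

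The main technical hurdle is the bookkeeping required by the fact that the diagrammatic calculus is not cyclic for general $2$-morphisms (cf. \eqref{eq:nakayama}), so one has to check carefully that the dots slide past the correct strands and that the specific scalars $\nu_d(x)$ from the dot formula \eqref{eq:dotdiagram-image} cancel appropriately against those appearing in \eqref{eq:delta}--\eqref{eq:mu}, exactly as in the proof of Proposition \ref{proposition:dot diagram}. Should the abstract diagrammatic argument become too delicate at any step, the fallback is to carry out the entire verification as a direct explicit calculation in $\cC_{\mathsf{B}}$, in parallel with the proof of Lemma \ref{lemma:idempotent}: the $\Cd$-bicomodule structure on $\mathrm{X}$ translates into bimodule data for which both sides of \eqref{eq:x-d-idempotent} are explicit sums of elementary bimodule maps, and the equality is then purely combinatorial.
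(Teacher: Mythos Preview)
Your opening move—applying the Frobenius relation \eqref{eq:xfrob} to rewrite $\delta_{\mathrm{X},\Cd}\circ_{\mathrm{v}}\mu_{\mathrm{X},\Cd}$—matches the paper's first step. The problem is the ``collapse'' that follows. After Frobenius (in the form you choose) and pulling the dots, you arrive at
\[
(\mu_{\mathrm{X},\Cd}\circ_{\mathrm{h}}\mathrm{id}_{\Cd})\circ_{\mathrm{v}}\bigl(\mathrm{id}_{\mathrm{X}}\circ_{\mathrm{h}}\,[(\bullet\circ_{\mathrm{h}}\bullet)\circ_{\mathrm{v}}\delta_d]\bigr),
\]
and you then invoke \eqref{eq:dotdiagram} together with associativity and unitality of $\mu_{\mathrm{X},\Cd}$ to reduce this to $\mathrm{id}_{\mathrm{X}}\circ_{\mathrm{h}}\bullet$. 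But \eqref{eq:dotdiagram} is $\mu_d\circ_{\mathrm{v}}(\mathrm{id}\circ_{\mathrm{h}}\bullet)\circ_{\mathrm{v}}\delta_d=\mathrm{id}_{\Cd}$: it needs a $\mu_d$, while the displayed expression only has $\mu_{\mathrm{X},\Cd}$. Associativity would let you trade $\mu_{\mathrm{X},\Cd}\circ_{\mathrm{v}}(\mu_{\mathrm{X},\Cd}\circ_{\mathrm{h}}\mathrm{id})$ for $\mu_{\mathrm{X},\Cd}\circ_{\mathrm{v}}(\mathrm{id}_{\mathrm{X}}\circ_{\mathrm{h}}\mu_d)$, but there is only a single $\mu_{\mathrm{X},\Cd}$ here; unitality involves $\iota_d$, which is absent. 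What you are tacitly assuming is the $\mathrm{X}$-version of separability, $\mu_{\mathrm{X},\Cd}\circ_{\mathrm{v}}(\mathrm{id}_{\mathrm{X}}\circ_{\mathrm{h}}\bullet)\circ_{\mathrm{v}}\delta_{\mathrm{X},\Cd}=\mathrm{id}_{\mathrm{X}}$, which is Lemma \ref{lemma:x-dot-and-dual-dot} rather than any of the relations you cite—and even with that in hand, the expression above contains no $\delta_{\mathrm{X},\Cd}$ to which it could be applied.

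The paper closes this gap with one extra move. After the Frobenius step it inserts, on the $d$-strand, the trivial identity coming from counitality of $\delta_d$, creating an auxiliary $\Cd$-strand capped by $\epsilon_d$. The sub-diagram formed by this auxiliary strand and the original $d$-strand is then \emph{literally} the left-hand side of \eqref{eq:idempotent} (two $\Cd$'s in, two $\Cd$'s out, dots on the right one), so Lemma \ref{lemma:idempotent} applies verbatim; a final counitality of $\delta_{\mathrm{X},\Cd}$ absorbs the auxiliary strand and yields the right-hand side. The whole argument is purely diagrammatic in $\cSH$ and never requires formulas for the coaction on $\mathrm{X}$. This is also why your fallback is less safe than it sounds: the paper has explicit bimodule formulas in $\cC_{\mathsf{B}}$ for $\delta_d,\mu_d,\epsilon_d,\iota_d$ and the dot (so Lemma \ref{lemma:idempotent} can indeed be checked by hand), but none for $\delta_{\mathrm{X},\Cd}$ when $\mathrm{X}$ is an arbitrary $\Cd$-bicomodule; Lemma \ref{lemma:BB-intertwiners} only treats the case $\mathrm{X}=\Cd$.
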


\begin{proof}
The trick is to reduce the statement to the one from Lemma \ref{lemma:idempotent}.
We show how to do this for the first diagrammatic equation.
\begin{gather*}
\begin{tikzpicture}[anchorbase,scale=0.4,smallnodes]
\draw[dstrand] (0,0) to (0,1);
\draw[dstrand] (0,1) to [out=180,in=270] (-1,4) node[above,black]{$\mathrm{X}$};
\draw[dstrand] (-1,-3) node[below,black,yshift=-2pt]{$\mathrm{X}$} to [out=90,in=180] (0,0);
\draw[cstrand] (0,1) to [out=0,in=270] (1,2)  node[above,black,box]{\raisebox{-.025cm}{$\bullet$}};
\draw[cstrand] (1,3) to (1,4) node[above,black]{$d$};
\draw[cstrand] (1,-3) node[below,black,yshift=-2pt]{$d$} to (1,-2) node[above,black,box]{\raisebox{-.025cm}{$\bullet$}};
\draw[cstrand] (1,-1) to [out=90,in=0] (0,0);
\end{tikzpicture}
=
\begin{tikzpicture}[anchorbase,scale=0.4,smallnodes]
\draw[dstrand] (-1,-3) node[below,black,yshift=-2pt]{$\mathrm{X}$} to
(-1,4) node[above,black]{$\mathrm{X}$};
\draw[cstrand] (1,0) to (1,2) node[above,black,box]{\raisebox{-.025cm}{$\bullet$}};
\draw[cstrand] (1,3) to (1,4) node[above,black]{$d$};
\draw[cstrand] (1,-3) node[below,black,yshift=-2pt]{$d$} to (1,-2) node[above,black,box]{\raisebox{-.025cm}{$\bullet$}};
\draw[cstrand] (1,-1) to (1,0);
\draw[cstrand] (-1,0.5) to (1,.5);
\end{tikzpicture}
=
\begin{tikzpicture}[anchorbase,scale=0.4,smallnodes]
\draw[dstrand] (-1,-3) node[below,black,yshift=-2pt]{$\mathrm{X}$} to
(-1,4) node[above,black]{$\mathrm{X}$};
\draw[cstrand] (1,0) to (1,2) node[above,black,box]{\raisebox{-.025cm}{$\bullet$}};
\draw[cstrand] (1,3) to (1,4) node[above,black]{$d$};
\draw[cstrand] (1,-3) node[below,black,yshift=-2pt]{$d$} to (1,-2) node[above,black,box]{\raisebox{-.025cm}{$\bullet$}};
\draw[cstrand] (1,-1) to (1,0);
\draw[cstrand] (-1,3) to[out=0,in=180] (1,.5);
\draw[cstrand,marked=1] (1,0) to[out=180,in=90] (0,-1) to (0,-2.5);
\end{tikzpicture}
\stackrel{(\ref{eq:idempotent})}{=}
\begin{tikzpicture}[anchorbase,scale=0.4,smallnodes]
\draw[dstrand] (-1,-3) node[below,black,yshift=-2pt]{$\mathrm{X}$} to (-1,4)
node[above,black]{$\mathrm{X}$};
\draw[cstrand] (1,-3) node[below,black,yshift=-2pt]{$d$} to (1,0) node[above,black,box]{\raisebox{-.025cm}{$\bullet$}};
\draw[cstrand] (1,1) to (1,4) node[above,black]{$d$};
\draw[cstrand,marked=1] (-1,3) to[out=0,in=90] (0,2) to (-0,-2.5);
\end{tikzpicture}
=
\begin{tikzpicture}[anchorbase,scale=0.4,smallnodes]
\draw[dstrand] (-1,-3) node[below,black,yshift=-2pt]{$\mathrm{X}$} to (-1,4)
node[above,black]{$\mathrm{X}$};
\draw[cstrand] (1,-3) node[below,black,yshift=-2pt]{$d$} to (1,0) node[above,black,box]{\raisebox{-.025cm}{$\bullet$}};
\draw[cstrand] (1,1) to (1,4) node[above,black]{$d$};
\end{tikzpicture}
.
\end{gather*}
The proof of the second diagrammatic equation is analogous.
\end{proof}

The following lemma is the analog of \eqref{eq:dotdiagram} and \eqref{eq:dualdotdiagram}.

\begin{lemma}\label{lemma:x-dot-and-dual-dot}
We have
\begin{gather}\label{eq:x-dot-and-dual-dot}
\begin{tikzpicture}[anchorbase,scale=0.4,smallnodes]
\draw[dstrand] (0,0) node[below,black,yshift=-2pt]{$\mathrm{X}$} to (0,5) node[above,black]{$\mathrm{X}$};
\draw[cstrand] (0,1) to [out=0,in=270] (2,2) node[above,black,box]{\raisebox{-.025cm}{$\bullet$}};
\draw[cstrand] (2,3) to [out=90,in=0] (0,4);
\end{tikzpicture}
=
\begin{tikzpicture}[anchorbase,scale=0.4,smallnodes]
\draw[dstrand] (0,0) node[below,black,yshift=-2pt]{$\mathrm{X}$} to (0,5) node[above,black]{$\mathrm{X}$};
\end{tikzpicture}
=
\begin{tikzpicture}[anchorbase,scale=0.4,smallnodes]
\draw[dstrand] (0,0) node[below,black,yshift=-2pt]{$\mathrm{X}$} to (0,5) node[above,black]{$\mathrm{X}$};
\draw[cstrand] (0,1) to [out=180,in=270] (-2,2) node[above,black,box]{\raisebox{-.025cm}{$\bullet$}};
\draw[cstrand] (-2,3) node[left,black,xshift=-2pt]{$\star$} to [out=90,in=180] (0,4);
\end{tikzpicture}
.
\end{gather}
\end{lemma}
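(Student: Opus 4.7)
The left equation in \eqref{eq:x-dot-and-dual-dot} asserts that the composite $\mu_{X,\Cd} \circ_{\mathrm{v}} (\mathrm{id}_X \circ_{\mathrm{h}} \bullet) \circ_{\mathrm{v}} \delta_{X,\Cd}$ equals $\mathrm{id}_X$. The strategy, in the spirit of the proof of Lemma \ref{lemma:x-d-idempotent}, is to reduce the computation to the dot diagram equation \eqref{eq:dotdiagram} for $\Cd$ alone, combined with the counitality of $\delta_{X,\Cd}$ recorded in \eqref{eq:xcoass-xcounit}.

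The first step is to rewrite the right $\Cd$-action on $X$ using the expression
\begin{gather*}
\mu_{X,\Cd} = (\mathrm{id}_X \circ_{\mathrm{h}} \mathrm{ev}_d) \circ_{\mathrm{v}} (\delta_{X,\Cd} \circ_{\mathrm{h}} \mathrm{id}_d)
\end{gather*}
from \eqref{eq:x-dd-horizontal}, and substitute this into the LHS. By the interchange law, one slides the dot past $\delta_{X,\Cd} \circ_{\mathrm{h}} \mathrm{id}_d$, producing a configuration with two consecutive applications of $\delta_{X,\Cd}$. Coassociativity from \eqref{eq:xcoass-xcounit} then converts these into one application of $\delta_{X,\Cd}$ followed by $\delta_d$ on the resulting $\Cd$-strand. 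Collecting terms, the LHS becomes
\begin{gather*}
\big(\mathrm{id}_X \circ_{\mathrm{h}} (\mathrm{ev}_d \circ_{\mathrm{v}} (\mathrm{id}_d \circ_{\mathrm{h}} \bullet) \circ_{\mathrm{v}} \delta_d)\big) \circ_{\mathrm{v}} \delta_{X,\Cd}.
\end{gather*}

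Now one invokes \eqref{eq:BB-coevev} to write $\mathrm{ev}_d = \epsilon_d \circ_{\mathrm{v}} \mu_d$, at which point the inner composite $\mu_d \circ_{\mathrm{v}} (\mathrm{id}_d \circ_{\mathrm{h}} \bullet) \circ_{\mathrm{v}} \delta_d$ is precisely what Proposition \ref{proposition:dot diagram} collapses to $\mathrm{id}_d$ via \eqref{eq:dotdiagram}. What remains is $(\mathrm{id}_X \circ_{\mathrm{h}} \epsilon_d) \circ_{\mathrm{v}} \delta_{X,\Cd}$, which equals $\mathrm{id}_X$ by the second equation of \eqref{eq:xcoass-xcounit}. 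The degree bookkeeping balances: $\bullet$ contributes $+2\mathbf{a}$, $\mathrm{ev}_d$ contributes $-2\mathbf{a}$, and the coactions/counits carry degree zero.

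The second equality (dual dot) is proved by a mirror argument, using Proposition \ref{proposition:dotdiagram-left} in place of Proposition \ref{proposition:dot diagram}, together with the dual coevaluation and evaluation $\mathrm{coev}_d',\mathrm{ev}_d'$ (one could alternatively note that the diagrammatic calculus for $X$ over $\Cd$ is topological, as discussed after \eqref{eq:xfrob}, so the dual configuration is obtained from the first by a left-right reflection and the argument applies verbatim). The main thing to be careful with is verifying the interchange and coassociativity steps diagrammatically, i.e.\ confirming that the rewriting respects both horizontal and vertical composition under the degree shifts recorded in \eqref{eq:delta}--\eqref{eq:iota}; no genuinely new obstruction appears beyond the $\Cd$-$\Cd$ case handled by Proposition \ref{proposition:dot diagram}.
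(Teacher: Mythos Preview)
Your proposal is correct and follows essentially the same approach as the paper: unfold $\mu_{X,\Cd}$ via its definition \eqref{eq:x-dd-horizontal}, use interchange and coassociativity \eqref{eq:xcoass-xcounit} to reduce to a single $\delta_{X,\Cd}$ followed by the $\Cd$-only configuration $\epsilon_d\circ_{\mathrm{v}}\mu_d\circ_{\mathrm{v}}(\mathrm{id}_d\circ_{\mathrm{h}}\bullet)\circ_{\mathrm{v}}\delta_d$, collapse that via \eqref{eq:dotdiagram}, and finish with counitality. The paper carries out exactly these steps diagrammatically, while you spell them out algebraically; the second equality is handled by symmetry in both cases.
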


\begin{proof}
Let us prove the first equality.
\begin{gather*}
\begin{tikzpicture}[anchorbase,scale=0.4,smallnodes]
\draw[dstrand] (0,0) node[below,black,yshift=-2pt]{$\mathrm{X}$} to (0,5) node[above,black]{$\mathrm{X}$};
\draw[cstrand] (0,1) to [out=0,in=270] (2,2) node[above,black,box]{\raisebox{-.025cm}{$\bullet$}};
\draw[cstrand] (2,3) to [out=90,in=0] (0,4);
\end{tikzpicture}
\stackrel{(\ref{eq:xcoass-xcounit})}{=}
\begin{tikzpicture}[anchorbase,scale=0.4,smallnodes]
\draw[dstrand] (0,0) node[below,black,yshift=-2pt]{$\mathrm{X}$} to (0,5) node[above,black]{$\mathrm{X}$};
\draw[cstrand] (0,1) to [out=0,in=270] (2,2) node[above,black,box]{\raisebox{-.025cm}{$\bullet$}};
\draw[cstrand] (2,3) to[out=90,in=0] (1.5,3.5)
to[out=180,in=90] (1,3) to (1,1);
\end{tikzpicture}
=
\begin{tikzpicture}[anchorbase,scale=0.4,smallnodes]
\draw[dstrand] (0,0) node[below,black,yshift=-2pt]{$\mathrm{X}$} to (0,5) node[above,black]{$\mathrm{X}$};
\draw[cstrand] (0,1) to [out=0,in=270] (2,2) node[above,black,box]{\raisebox{-.025cm}{$\bullet$}};
\draw[cstrand] (2,3) to[out=90,in=0] (1.5,3.5)
to[out=180,in=90] (1,3) to (1,1);
\draw[cstrand,marked=1] (1.5,3.5) to (1.5,4);
\end{tikzpicture}
\stackrel{(\ref{eq:dotdiagram})}{=}
\begin{tikzpicture}[anchorbase,scale=0.4,smallnodes]
\draw[dstrand] (0,0) node[below,black,yshift=-2pt]{$\mathrm{X}$} to (0,5) node[above,black]{$\mathrm{X}$};
\draw[cstrand,marked=1] (0,1) to[out=0,in=270] (1,2) to (1,4);
\end{tikzpicture}
\stackrel{(\ref{eq:xcoass-xcounit})}{=}
\begin{tikzpicture}[anchorbase,scale=0.4,smallnodes]
\draw[dstrand] (0,0) node[below,black,yshift=-2pt]{$\mathrm{X}$} to (0,5) node[above,black]{$\mathrm{X}$};
\end{tikzpicture}
.
\end{gather*}
The second equation can be proved by symmetry.
\end{proof}

For any $t\in\mathbb{Z}$, let $\cAH\langle t\rangle$ be  the category defined as
\begin{gather*}
\cAH\langle t\rangle:=
\mathrm{add}\big(\{\mathrm{C}_{w}^{\oplus\mathsf{v}^{k}}\mid w\in
\mathcal{H},k\geq t\}\big)^{(0)}
/
\big(\mathrm{add}
\big(\{\mathrm{C}_{w}^{\oplus\mathsf{v}^{k}}\mid w\in\mathcal{H},k>t\}
\big)^{(0)}\big).
\end{gather*}
By definition, we will write
\begin{gather*}
\mathrm{A}_{w}^{\oplus\mathsf{v}^{\mone[t]}}
=
{\Pi}(\mathrm{C}_{w}^{\oplus\mathsf{v}^{\mone[t]}})
\in\cAH\langle t\rangle,
\end{gather*}
for any $w\in\mathcal{H}$. Note that horizontal composition in $\cSH$ descends to
$\circ_{\mathrm{h}}\colon\cAH\langle s\rangle\times\cAH\langle t\rangle\to\cAH\langle s+t\rangle$, so the action of $\cAH$ on
$\cAH\langle t\rangle$ gives rise to a semisimple simple transitive $2$-representation of
$\cAH$. In particular, $\bigoplus_{t\in\mathbb{Z}}\cAH\langle t\rangle$
is a $\mathbb{Z}$-finitary locally semisimple bicategory, as defined in Subsection \ref{s:grreps}.

Recall also from Subsection \ref{s:grreps} the definition of the
graded finitary $2$-category $\cC^{\,\prime}$ with translation for a given graded $\Bbbk$-linear $2$-category $\cC$.

\begin{proposition}\label{proposition:bicomodules}
We have an equivalence of $\mathbb{Z}$-finitary locally semisimple bicategories
\begin{gather*}
\cBHo\simeq\bigoplus_{t\in\mathbb{Z}}\cAH\langle t\rangle.
\end{gather*}
Moreover, this implies an equivalence of locally graded semisimple bicategories
\begin{gather}\label{eq:bicomodules2}
\cBH\simeq\cAHp.
\end{gather}
\end{proposition}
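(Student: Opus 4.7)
The plan is to construct a bifunctor $\mathcal{F}\colon\bigoplus_{t\in\mathbb{Z}}\cAH\langle t\rangle\to\cBHo$ and verify it is a biequivalence. For each $t$, I define $\mathcal{F}$ on the summand $\cAH\langle t\rangle$ by sending the generator $\mathrm{A}_w^{\oplus\mathsf{v}^{\mone[t]}}$ to $\mathrm{C}_w^{\oplus\mathsf{v}^{\mone[t]}}$ equipped with its canonical (degree-zero) $\Cd$-$\Cd$-bicomodule structure, whose existence and uniqueness are guaranteed by Lemma~\ref{lemma:unique-bicomodule}. This structure is precisely the image under the oplax pseudofunctor $\Theta$ of the tautological bicomodule structure on $\mathrm{A}_w$ in $\cAH$, cf.\ Lemma~\ref{lem5.1}.

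For the compatibility of $\mathcal{F}$ with horizontal composition, I would apply Lemma~\ref{lem5.25-1} with $\mathrm{A}=\mathrm{A}_d$ to obtain natural $2$-isomorphisms
\begin{gather*}
\mathrm{C}_x^{\oplus\mathsf{v}^{\mone[s]}}\sd\mathrm{C}_y^{\oplus\mathsf{v}^{\mone[t]}}
\cong\Theta\big(\mathrm{A}_x^{\oplus\mathsf{v}^{\mone[s]}}\mathrm{A}_y^{\oplus\mathsf{v}^{\mone[t]}}\big)
\cong\bigoplus_{z\in\mathcal{H}}\mathrm{C}_z^{\oplus\mathsf{v}^{\mone[s]\mone[t]}\gamma_{x,y,z^{\mone}}},
\end{gather*}
showing that horizontal composition on the target, given by the cotensor product $\sd$, matches the composition in $\cAH$ together with the addition of the shift labels.

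Full faithfulness of $\mathcal{F}$ is immediate from Lemma~\ref{lemma:BB-intertwiners-2}, which matches the dimensions of the respective hom-spaces (using the semisimplicity of $\cAH$ from Proposition~\ref{prop:fusion}). Essential surjectivity is the most substantial step, and the main technical obstacle; the key input is the separability of $\Cd$ established in Propositions~\ref{proposition:dot diagram} and~\ref{proposition:dotdiagram-left}. Given any biinjective $\Cd$-$\Cd$-bicomodule $\mathrm{X}$ in $\underline{\mathrm{add}(\mathcal{H})}$, its underlying $1$-morphism in $\cSHo$ decomposes by Krull--Schmidt as $\bigoplus_{i}\mathrm{C}_{w_i}^{\oplus\mathsf{v}^{\mone[t_i]}}$; the separability witnesses built from the dot diagrams (and the computations in Lemmas~\ref{lemma:idempotent}, \ref{lemma:x-d-idempotent} and~\ref{lemma:x-dot-and-dual-dot}) then yield bicomodule idempotents in $\mathrm{End}_{\ccBH}(\mathrm{X})$ that project onto each summand, so the $\cSHo$-decomposition refines to a decomposition in $\cBHo$. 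By Lemma~\ref{lemma:unique-bicomodule}, each summand is bicomodule-isomorphic to $\mathrm{C}_{w_i}^{\oplus\mathsf{v}^{\mone[t_i]}}$ with its canonical structure, giving essential surjectivity and at the same time local semisimplicity of $\cBHo$.

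For the graded statement $\cBH\simeq\cAHp$, I would use the bicategorical version of Lemma~\ref{lem:skewcat}: since $\cAH$ carries a trivial grading, $(\cAHp)^{(0)}$ is biequivalent to $\bigoplus_{t\in\mathbb{Z}}\cAH\langle t\rangle$. Both $\cBH$ and $\cAHp$ are graded bicategories admitting translation (the translation on $\cBH$ being inherited from the grading shift on $\cSH$), so by the skew $2$-category construction of Subsection~\ref{s:grreps} they are biequivalent to the skew $2$-categories built from their degree-zero parts. Combining this with the biequivalence $\cBHo\simeq(\cAHp)^{(0)}$ from the first three paragraphs produces the desired biequivalence $\cBH\simeq\cAHp$, and local graded semisimplicity of $\cBH$ is then a consequence of the local semisimplicity of $\cBHo$.
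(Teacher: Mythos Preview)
Your proposal is correct and follows essentially the same approach as the paper. The paper orders things slightly differently—first proving the splitting claim (that any $\cSHo$-summand of a bicomodule $\mathrm{X}$ is already a $\cBHo$-summand, by explicitly constructing bicomodule maps $\alpha',\beta'$ from the $\cSHo$-embedding and projection via the dot diagrams and checking $\beta'\circ_{\mathrm{v}}\alpha'=\mathrm{id}$) and only then packaging the bifunctor via $\Theta$ and Lemma~\ref{lem5.25-1}—but the ingredients (Lemmas~\ref{lemma:unique-bicomodule}, \ref{lemma:BB-intertwiners-2}, \ref{lemma:x-d-idempotent}, \ref{lemma:x-dot-and-dual-dot}, Lemma~\ref{lem5.25-1}, and Lemma~\ref{lem:skewcat} for the graded statement) and the logical structure are the same.
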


\begin{proof}
Let $\mathrm{X}$ be a $1$-morphism in $\cBHo$. Suppose that $\mathrm{C}_{x}^{\oplus\mathsf{v}^{t}}$
is a direct summand of $\mathrm{X}$ in $\cSHo$
for some $x\in\mathcal{H}$ and $t\in\mathbb{Z}$. We  first claim that
$\mathrm{C}_{x}^{\oplus\mathsf{v}^{t}}$ is also a direct
summand of $\mathrm{X}$ in $\cBHo$.

To this end, let
$\alpha\in\mathrm{hom}_{\ccS_{\mathcal{H}}}(\mathrm{C}_{x}^{\oplus\mathsf{v}^{t}},\mathrm{X})$
and $\beta\in\mathrm{hom}_{\ccS_{\mathcal{H}}}(\mathrm{X},\mathrm{C}_{x}^{\oplus\mathsf{v}^{t}})$
be the embedding and the projection respectively, i.e.
$\beta\circ_{\mathrm{v}}\alpha=\mathrm{id}_{\mathrm{C}_{x}^{\oplus\mathsf{v}^{t}}}$.
Depict them using our usual conventions, with dotted strands of a different color for
$\mathrm{C}_{x}$ (which we again denote by $x$ in the diagrams) and $X$:
\begin{gather*}
\alpha
\leftrightsquigarrow
\begin{tikzpicture}[anchorbase,scale=0.4,smallnodes]
\draw[dstrand] (0,2) to (0,3) node[above,black]{$\mathrm{X}$};
\draw[xstrand] (0,0) node[below,black,yshift=-2pt]{$x$} to (0,1) node[above,black,box]{$\alpha$};
\end{tikzpicture}
,\quad
\beta
\leftrightsquigarrow
\begin{tikzpicture}[anchorbase,scale=0.4,smallnodes]
\draw[dstrand] (0,0) node[below,black,yshift=-2pt]{$\mathrm{X}$}
to (0,1);
\draw[xstrand] (0,2) to (0,3) node[above,black]{$x$};
\draw[xstrand] (0,.99) to (0,1) node[above,black,box]{$\beta$};
\end{tikzpicture}
.
\end{gather*}
Define $\alpha^{\prime}\in\mathrm{hom}_{\ccBH}(\mathrm{C}_{x}^{\oplus\mathsf{v}^{t}},\mathrm{X})$ and $\beta^{\prime}\in\mathrm{hom}_{\ccBH}(\mathrm{X},\mathrm{C}_{x}^{\oplus\mathsf{v}^{t}})$ as
\begin{gather*}
\alpha^{\prime}:=
\begin{tikzpicture}[anchorbase,scale=0.4,smallnodes]
\draw[dstrand] (0,3) to (0,5) node[above,black]{$\mathrm{X}$};
\draw[xstrand] (0,0) node[below,black,yshift=-2pt]{$x$} to (0,2) node[above,black,box]{$\alpha$};
\draw[cstrand] (0,1) to [out=0,in=270] (1.5,2) node[above,black,box]{\raisebox{-.025cm}{$\bullet$}};
\draw[cstrand] (1.5,3) to [out=90,in=0] (0,4);
\draw[cstrand] (0,1) to [out=180,in=270] (-1.5,2) node[above,black,box]{\raisebox{-.025cm}{$\bullet$}};
\draw[cstrand] (-1.5,3) node[left,black,xshift=-2pt]{$\star$} to [out=90,in=180] (0,4);
\end{tikzpicture}
\,,\quad
\beta^{\prime}:=
\begin{tikzpicture}[anchorbase,scale=0.4,smallnodes]
\draw[dstrand] (0,0) node[below,black,yshift=-2pt]{$\mathrm{X}$} to (0,2);
\draw[xstrand] (0,3) to (0,5) node[above,black]{$x$};
\draw[cstrand] (0,1.99) to (0,2) node[above,black,box]{$\beta$};
\draw[cstrand] (0,1) to[out=0,in=270] (1.5,2) node[above,black,box]{\raisebox{-.025cm}{$\bullet$}};
\draw[cstrand] (1.5,3) to[out=90,in=0] (0,4);
\draw[cstrand] (0,1) to[out=180,in=270] (-1.5,2) node[above,black,box]{\raisebox{-.025cm}{$\bullet$}};
\draw[cstrand] (-1.5,3) node[left,black,xshift=-2pt]{$\star$} to [out=90,in=180] (0,4);
\end{tikzpicture}
\,.
\end{gather*}
Note that $\alpha^{\prime}$ and $\beta^{\prime}$ are indeed $2$-morphisms in $\cBHo$, e.g.
\begin{gather*}
\begin{tikzpicture}[anchorbase,scale=0.4,smallnodes]
\draw[dstrand] (0,3) to (0,5) node[above,black]{$\mathrm{X}$};
\draw[xstrand] (0,0) node[below,black,yshift=-2pt]{$x$} to (0,2) node[above,black,box]{$\alpha$};
\draw[cstrand] (0,1) to [out=0,in=270] (1.5,2) node[above,black,box]{\raisebox{-.025cm}{$\bullet$}};
\draw[cstrand] (1.5,3) to [out=90,in=0] (0,4);
\draw[cstrand] (0,1) to [out=180,in=270] (-1.5,2) node[above,black,box]{\raisebox{-.025cm}{$\bullet$}};
\draw[cstrand] (-1.5,3) node[left,black,xshift=-2pt]{$\star$} to [out=90,in=180] (0,4);
\draw[cstrand] (0,.5) to [out=0,in=270] (3,5) node[above,black]{$d$};
\end{tikzpicture}
=
\begin{tikzpicture}[anchorbase,scale=0.4,smallnodes]
\draw[dstrand] (0,3) to (0,5) node[above,black]{$\mathrm{X}$};
\draw[xstrand] (0,0) node[below,black,yshift=-2pt]{$x$} to (0,2) node[above,black,box]{$\alpha$};
\draw[cstrand] (0,1) to [out=0,in=270] (1.5,2) node[above,black,box]{\raisebox{-.025cm}{$\bullet$}};
\draw[cstrand] (1.5,3) to [out=90,in=0] (0,4);
\draw[cstrand] (0,1) to [out=180,in=270] (-1.5,2) node[above,black,box]{\raisebox{-.025cm}{$\bullet$}};
\draw[cstrand] (-1.5,3) node[left,black,xshift=-2pt]{$\star$} to [out=90,in=180] (0,4);
\draw[cstrand] (1.25,1.5) to [out=0,in=270] (3,5) node[above,black]{$d$};
\end{tikzpicture}
\stackrel{(\ref{eq:dotdiagram-slide})}{=}
\begin{tikzpicture}[anchorbase,scale=0.4,smallnodes]
\draw[dstrand] (0,3) to (0,5) node[above,black]{$\mathrm{X}$};
\draw[xstrand] (0,0) node[below,black,yshift=-2pt]{$x$} to (0,2) node[above,black,box]{$\alpha$};
\draw[cstrand] (0,1) to [out=0,in=270] (1.5,2) node[above,black,box]{\raisebox{-.025cm}{$\bullet$}};
\draw[cstrand] (1.5,3) to [out=90,in=0] (0,4);
\draw[cstrand] (0,1) to [out=180,in=270] (-1.5,2) node[above,black,box]{\raisebox{-.025cm}{$\bullet$}};
\draw[cstrand] (-1.5,3) node[left,black,xshift=-2pt]{$\star$} to [out=90,in=180] (0,4);
\draw[cstrand] (1.25,3.5) to [out=0,in=270] (3,5) node[above,black]{$d$};
\end{tikzpicture}
\stackrel{(\ref{eq:xfrob})}{=}
\begin{tikzpicture}[anchorbase,scale=0.4,smallnodes]
\draw[dstrand] (0,3) to (0,5) node[above,black]{$\mathrm{X}$};
\draw[xstrand] (0,0) node[below,black,yshift=-2pt]{$x$} to (0,2) node[above,black,box]{$\alpha$};
\draw[cstrand] (0,1) to [out=0,in=270] (1.5,2) node[above,black,box]{\raisebox{-.025cm}{$\bullet$}};
\draw[cstrand] (1.5,3) to [out=90,in=0] (0,4);
\draw[cstrand] (0,1) to [out=180,in=270] (-1.5,2) node[above,black,box]{\raisebox{-.025cm}{$\bullet$}};
\draw[cstrand] (-1.5,3) node[left,black,xshift=-2pt]{$\star$} to [out=90,in=180] (0,4);
\draw[cstrand] (0,4.5) to (2,4.5) to [out=0,in=270] (3,5) node[above,black]{$d$};
\end{tikzpicture}
,
\end{gather*}
which shows that $\alpha^{\prime}$ commutes with the right $\Cd$-coaction.
Next, by \eqref{eq:x-d-idempotent},
\eqref{eq:x-dot-and-dual-dot}, the Frobenius properties
and $\beta\circ_{\mathrm{v}}\alpha=\mathrm{id}_{\mathrm{C}_{x}^{\oplus\mathsf{v}^{t}}}$,
we have
$\beta^{\prime}\circ_{\mathrm{v}}
\alpha^{\prime}=
\mathrm{id}_{\mathrm{C}_{x}^{\oplus\mathsf{v}^{t}}}$. Indeed,
\begin{align*}
\begin{tikzpicture}[anchorbase,scale=0.4,smallnodes]
\draw[dstrand] (0,3) to (0,6);
\draw[xstrand] (0,7) to (0,9) node[above,black]{$x$};
\draw[xstrand] (0,0) node[below,black,yshift=-2pt]{$x$} to (0,2) node[above,black,box]{$\alpha$};
\draw[cstrand] (0,1) to [out=0,in=270] (1.5,2) node[above,black,box]{\raisebox{-.025cm}{$\bullet$}};
\draw[cstrand] (1.5,3) to [out=90,in=0] (0,4);
\draw[cstrand] (0,1) to [out=180,in=270] (-1.5,2) node[above,black,box]{\raisebox{-.025cm}{$\bullet$}};
\draw[cstrand] (-1.5,3) node[left,black,xshift=-2pt]{$\star$} to [out=90,in=180] (0,4);
\draw[cstrand] (0,5.99) to (0,6) node[above,black,box]{$\beta$};
\draw[cstrand] (0,5) to[out=0,in=270] (1.5,6) node[above,black,box]{\raisebox{-.025cm}{$\bullet$}};
\draw[cstrand] (1.5,7) to[out=90,in=0] (0,8);
\draw[cstrand] (0,5) to[out=180,in=270] (-1.5,6) node[above,black,box]{\raisebox{-.025cm}{$\bullet$}};
\draw[cstrand] (-1.5,7) node[left,black,xshift=-2pt]{$\star$} to [out=90,in=180] (0,8);
\end{tikzpicture}
&=
\begin{tikzpicture}[anchorbase,scale=0.4,smallnodes]
\draw[dstrand] (0,3) to (0,6);
\draw[xstrand] (0,7) to (0,9) node[above,black]{$x$};
\draw[xstrand] (0,0) node[below,black,yshift=-2pt]{$x$} to (0,2) node[above,black,box]{$\alpha$};
\draw[cstrand] (0,1) to [out=0,in=270] (1.5,2) node[above,black,box]{\raisebox{-.025cm}{$\bullet$}};
\draw[cstrand] (1.5,3) to [out=90,in=0] (0,4.25);
\draw[cstrand] (0,1) to [out=180,in=270] (-1.5,2) node[above,black,box]{\raisebox{-.025cm}{$\bullet$}};
\draw[cstrand] (-1.5,3) node[left,black,xshift=-2pt]{$\star$} to [out=90,in=180] (0,4);
\draw[cstrand] (0,5.99) to (0,6) node[above,black,box]{$\beta$};
\draw[cstrand] (0,4.75) to[out=0,in=270] (1.5,6) node[above,black,box]{\raisebox{-.025cm}{$\bullet$}};
\draw[cstrand] (1.5,7) to[out=90,in=0] (0,8);
\draw[cstrand] (0,5) to[out=180,in=270] (-1.5,6) node[above,black,box]{\raisebox{-.025cm}{$\bullet$}};
\draw[cstrand] (-1.5,7) node[left,black,xshift=-2pt]{$\star$} to [out=90,in=180] (0,8);
\end{tikzpicture}
\stackrel{\eqref{eq:x-d-idempotent}}{=}
\begin{tikzpicture}[anchorbase,scale=0.4,smallnodes]
\draw[dstrand] (0,3) to (0,6);
\draw[xstrand] (0,7) to (0,9) node[above,black]{$x$};
\draw[xstrand] (0,0) node[below,black,yshift=-2pt]{$x$} to (0,2) node[above,black,box]{$\alpha$};
\draw[cstrand] (0,1) to [out=0,in=270] (1.5,2);
\draw[cstrand] (1.5,2) to (1.5,4) node[above,black,box]{\raisebox{-.025cm}{$\bullet$}};
\draw[cstrand] (0,1) to [out=180,in=270] (-1.5,2) node[above,black,box]{\raisebox{-.025cm}{$\bullet$}};
\draw[cstrand] (-1.5,3) node[left,black,xshift=-2pt]{$\star$} to [out=90,in=180] (0,4);
\draw[cstrand] (0,5.99) to (0,6) node[above,black,box]{$\beta$};
\draw[cstrand] (1.5,5) to (1.5,7);
\draw[cstrand] (1.5,7) to[out=90,in=0] (0,8);
\draw[cstrand] (0,5) to[out=180,in=270] (-1.5,6) node[above,black,box]{\raisebox{-.025cm}{$\bullet$}};
\draw[cstrand] (-1.5,7) node[left,black,xshift=-2pt]{$\star$} to [out=90,in=180] (0,8);
\end{tikzpicture}
\stackrel{\eqref{eq:x-d-idempotent}}{=}
\begin{tikzpicture}[anchorbase,scale=0.4,smallnodes]
\draw[dstrand] (0,3) to (0,6);
\draw[xstrand] (0,7) to (0,9) node[above,black]{$x$};
\draw[xstrand] (0,0) node[below,black,yshift=-2pt]{$x$} to (0,2) node[above,black,box]{$\alpha$};
\draw[cstrand] (0,1) to [out=0,in=270] (1.5,2);
\draw[cstrand] (0,1) to [out=180,in=270] (-1.5,2);
\draw[cstrand] (1.5,2) to (1.5,4) node[above,black,box]{\raisebox{-.025cm}{$\bullet$}};
\draw[cstrand] (-1.5,2) to (-1.5,4) node[above,black,box]{\raisebox{-.025cm}{$\bullet$}};
\draw[cstrand] (0,5.99) to (0,6) node[above,black,box]{$\beta$};
\draw[cstrand] (1.5,5) to (1.5,7);
\draw[cstrand] (-1.5,5) to (-1.5,7);
\draw[cstrand] (1.5,7) to[out=90,in=0] (0,8);
\draw[cstrand] (-1.5,7) to [out=90,in=180] (0,8);
\node at (-2.25,5) {$\star$};
\end{tikzpicture}
\\
&=
\begin{tikzpicture}[anchorbase,scale=0.4,smallnodes]
\draw[xstrand] (0,0) node[below,black,yshift=-2pt]{$x$} to (0,9) node[above,black]{$x$};
\draw[cstrand] (0,1) to [out=0,in=270] (1.5,2);
\draw[cstrand] (0,1) to [out=180,in=270] (-1.5,2);
\draw[cstrand] (1.5,2) to (1.5,4) node[above,black,box]{\raisebox{-.025cm}{$\bullet$}};
\draw[cstrand] (-1.5,2) to (-1.5,4) node[above,black,box]{\raisebox{-.025cm}{$\bullet$}};
\draw[cstrand] (1.5,5) to (1.5,7);
\draw[cstrand] (-1.5,5) to (-1.5,7);
\draw[cstrand] (1.5,7) to[out=90,in=0] (0,8);
\draw[cstrand] (-1.5,7) to [out=90,in=180] (0,8);
\node at (-2.25,5) {$\star$};
\end{tikzpicture}
=
\begin{tikzpicture}[anchorbase,scale=0.4,smallnodes]
\draw[xstrand] (0,0) node[below,black,yshift=-2pt]{$x$} to (0,9) node[above,black]{$x$};
\draw[cstrand] (0,1.25) to [out=0,in=270] (1.5,2);
\draw[cstrand] (0,1) to [out=180,in=270] (-1.5,2);
\draw[cstrand] (1.5,2) to (1.5,4) node[above,black,box]{\raisebox{-.025cm}{$\bullet$}};
\draw[cstrand] (-1.5,2) to (-1.5,4) node[above,black,box]{\raisebox{-.025cm}{$\bullet$}};
\draw[cstrand] (1.5,5) to (1.5,7);
\draw[cstrand] (-1.5,5) to (-1.5,7);
\draw[cstrand] (1.5,7) to[out=90,in=0] (0,7.75);
\draw[cstrand] (-1.5,7) to [out=90,in=180] (0,8);
\node at (-2.25,5) {$\star$};
\end{tikzpicture}
\stackrel{\eqref{eq:x-dot-and-dual-dot}}{=}
\begin{tikzpicture}[anchorbase,scale=0.4,smallnodes]
\draw[xstrand] (0,0) node[below,black,yshift=-2pt]{$x$} to (0,9) node[above,black]{$x$};
\draw[cstrand] (0,1) to [out=180,in=270] (-1.5,2);
\draw[cstrand] (-1.5,2) to (-1.5,4) node[above,black,box]{\raisebox{-.025cm}{$\bullet$}};
\draw[cstrand] (-1.5,5) to (-1.5,7);
\draw[cstrand] (-1.5,7) to [out=90,in=180] (0,8);
\node at (-2.25,5) {$\star$};
\end{tikzpicture}
\stackrel{\eqref{eq:x-dot-and-dual-dot}}{=}
\begin{tikzpicture}[anchorbase,scale=0.4,smallnodes]
\draw[xstrand] (0,0) node[below,black,yshift=-2pt]{$x$} to (0,9) node[above,black]{$x$};
\end{tikzpicture}
,
\end{align*}
which proves our claim. Hence,
the decomposition of $\mathrm{X}$ into indecomposables is the same
in $\cBHo$ as in $\cSHo$.

Together with Lemma \ref{lemma:BB-intertwiners-2}, this implies that
\begin{gather*}
\cBHo(\varnothing,\varnothing)\simeq\bigoplus_{t\in\mathbb{Z}}\cAH(\varnothing,\varnothing)\langle t\rangle
\end{gather*}
as $\mathbb{Z}$-finitary categories.

To show that this is an equivalence of bicategories, recall the oplax pseudofunctor $\Theta\colon\cAH\to\cSHo$ from \eqref{Thetadef}.
Since each $\mathrm{X}\in\cAH$ has a canonical $\mathrm{A}_{d}$-$\mathrm{A}_{d}$-bicomodule structure,
$\Theta(\mathrm{X})$ has an induced $\Cd\cong\Theta(\mathrm{A}_{d})$-bicomodule structure
in $\cSHo$ by the
oplax condition of $\Theta$. By Lemma \ref{lem5.25-1}, there are homogeneous
$2$-isomorphisms of degree zero
\begin{gather*}
\Theta(\mathrm{XY})\cong\Theta(\mathrm{X}\square_{\mathrm{A}_{d}}\mathrm{Y})\cong
\Theta(\mathrm{X})\square_{\Theta(\mathrm{A}_{d})}\Theta(\mathrm{Y})\cong
\Theta(\mathrm{X})\square_{\Cd}\Theta(\mathrm{Y})
\end{gather*}
that are natural in $\mathrm{X}$ and $\mathrm{Y}$. This shows that $\Theta$ gives rise to a $\mathbb{C}$-linear pseudofunctor
\begin{gather*}
\Theta\colon\cAH\to\cBHo,
\end{gather*}
for which we use the same notation.

Observe that $\Theta$ extends uniquely
to a $\mathbb{C}$-linear pseudofunctor
\begin{gather*}
\Theta^{\prime}\colon\bigoplus_{t\in\mathbb{Z}}\cAH\langle t\rangle\to\cBHo
\end{gather*}
which is compatible with the $\mathbb{Z}$-actions, and again $2$-full by Lemma
\ref{lemma:BB-intertwiners-2} and $2$-faithful by semisimplicity. By the claim at
the beginning of the proof and Lemma \ref{lemma:unique-bicomodule}, each $\mathrm{X}\in\cSHo$
has therefore a unique $\Cd$-$\Cd$-bicomodule structure and is isomorphic to $\Theta^{\prime}(\mathrm{Z})$
for some $\mathrm{Z}\in\bigoplus_{t\in\mathbb{Z}}\cAH\langle t\rangle$. This shows
that $\Theta^{\prime}$ is essentially surjective on $1$-morphisms.
Hence
\begin{gather*}
\cBHo\simeq\bigoplus_{t\in\mathbb{Z}}\cAH\langle t\rangle
\end{gather*}
as free $\mathbb{Z}$-bicategories.

Since $\bigoplus_{t\in\mathbb{Z}}\cAH\langle t\rangle\simeq\cAHpzero$ by Lemma \ref{lem:skewcat}, this implies that
there are biequivalences of free $\mathbb{Z}$-bicategories
\begin{gather*}
\cAHpzero\simeq\bigoplus_{t\in\mathbb{Z}}\cAH\langle t\rangle\simeq\cBHo.
\end{gather*}
Moreover, by the paragraph above Lemma \ref{lem:skewcat}, $\cBH\simeq\cBHo[\mathbb{Z}]$ and, therefore, we obtain biequivalences of locally graded bicategories
\begin{gather*}
\cBH\simeq \cBHo[\mathbb{Z}]\simeq\cAHpzero[\mathbb{Z}]\simeq
\cAHp,
\end{gather*}
where the third biequivalence follows from Lemma \ref{lem:skewcat} and the second from the previous chain of biequivalences.
\end{proof}

The following corollary follows immediately from Corollary
\ref{cor:End-Bicomod} and Proposition \ref{proposition:bicomodules}.

\begin{corollary}\label{cor:ssendo}
There is a biequivalence of graded bicategories
\begin{gather*}
\cEnd_{\ccSH}(\mathbf{C}_{\mathcal{H}})\simeq\cAHpop.
\end{gather*}
\end{corollary}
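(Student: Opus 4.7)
The plan is to simply chain together the two previously established biequivalences, applying the ${}^{\mathrm{op}}$ operation appropriately. First I would recall that Corollary~\ref{cor:End-Bicomod} provides a graded biequivalence
\begin{gather*}
\cEnd_{\ccSH}(\mathbf{C}_{\mathcal{H}})\simeq\cBHop,
\end{gather*}
so it suffices to identify $\cBHop$ with $\cAHpop$. For this I would invoke Proposition~\ref{proposition:bicomodules}, which gives the graded biequivalence $\cBH\simeq\cAHp$, and then pass to opposites on both sides to obtain $\cBHop\simeq\cAHpop$ (the opposite construction being functorial with respect to biequivalences of bicategories, and compatible with gradings since it reverses only the direction of $1$- and $2$-morphisms).

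Concatenating these two biequivalences yields the claimed biequivalence
\begin{gather*}
\cEnd_{\ccSH}(\mathbf{C}_{\mathcal{H}})\simeq\cBHop\simeq\cAHpop.
\end{gather*}
There is no genuine obstacle here: both biequivalences have already been proved, and taking opposites preserves graded biequivalences. The only mild point to verify (which is essentially bookkeeping) is that the grading structure on $\cAHp$ is the one that makes the composite biequivalence graded, but this is automatic since both constituent biequivalences were explicitly established as graded.
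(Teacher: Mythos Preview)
Your proposal is correct and matches the paper's own argument exactly: the paper simply states that the corollary follows immediately from Corollary~\ref{cor:End-Bicomod} and Proposition~\ref{proposition:bicomodules}, which is precisely the chaining you describe.
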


\begin{remark}\label{remark:dual-bh}
If we forget the gradings, then the biequivalence \eqref{eq:bicomodules2} becomes a
biequivalence of (ungraded) locally semisimple bicategories
\begin{gather*}
\cBH\simeq\cAH.
\end{gather*}
This induces the structure of a pivotal fusion bicategory on $\cBH$, by Proposition \ref{prop:fusion}.
Abusing the terminology slightly, we therefore say that $\cBHo$ is
\emph{pivotal $\mathbb{Z}$-fusion}, in particular, it is a semisimple pivotal monoidal category.

Notice that the dual of $\mathrm{C}_{w}^{\oplus\mathsf{v}^{t}}$ in
$\cBHo$ is isomorphic to $\mathrm{C}_{w^{\mone}}^{\oplus\mathsf{v}^{\mone[t]}}$,
for any $w\in\mathcal{H}$ and $t\in\mathbb{Z}$.
This in contrast to the
dual of $\mathrm{C}_{w}^{\oplus\mathsf{v}^{t}}$ in $\cSHo$, which is isomorphic to
$\mathrm{C}_{w^{\mone}}^{\oplus\mathsf{v}^{\mone[t]\mone[2]\mathbf{a}}}$.
\end{remark}

\begin{remark}
The proof of Proposition \ref{proposition:bicomodules} also shows that
\begin{gather*}
\cBH\simeq (\Cd)\mathrm{comod}_{\mathrm{add}(\mathcal{H})}(\Cd).
\end{gather*}
\end{remark}

By the graded version of Theorem \ref{theorem:strongH}, there is a pair
of graded biequivalences
\begin{gather*}
\cSH\text{-}\mathrm{gstmod}_{\mathcal{H}}
\simeq
\cSJ\text{-}\mathrm{gstmod}_{\mathcal{J}}
\simeq
\cS\text{-}\mathrm{gstmod}_{\mathcal{J}},
\end{gather*}
where $\mathcal{J}$ is the two-sided cell containing $\mathcal{H}$.
Among other things, the first biequivalence uses the fact that
the additive closure of $\mathcal{H}$ in $\cSH$ is equal to the
additive closure of $\mathcal{H}$ in $\cSJ$. The composite of both biequivalences sends
$\CH\simeq\mathbf{inj}_{\underline{\ccSH}}(\Cd)$ to
$\CL\simeq\mathbf{inj}_{\underline{\ccS}}(\Cd)$, where
$\mathcal{L}$ is the left cell such that $\mathcal{H}=\mathcal{L}\cap\mathcal{L}^{\star}$.
It also provides a graded equivalence between
$\cEnd_{\ccSH}(\CH)$ and
$\cEnd_{\ccS}(\CL)$, both being equivalent to
$\cBHop\simeq\cAHpop$.

We can now explain a generalization of Proposition \ref{proposition:bicomodules}.
Let $\cS\text{-}\mathrm{gcell}_{\mathcal{J}}$ be the $2$-category whose
\begin{enumerate}[\textbullet]

\item objects are left cells $\mathcal{L}$ in $\mathcal{J}$, identified with the cell $2$-representation $\CL$ of $\cS$;

\item morphism categories $\cS\text{-}\mathrm{gcell}_{\mathcal{J}}(\mathcal{L}_{1},\mathcal{L}_{2}):=
\mathrm{Hom}_{\ccS}(\mathbf{C}_{\mathcal{L}_{1}},\mathbf{C}_{\mathcal{L}_{2}})$ with horizontal
composition given by composition of morphisms of $2$-representations.

\end{enumerate}

Recalling that every left cell $\mathcal{L}$ contains a unique Duflo involution
$d_{\mathcal{L}}$ and the equivalence $\CL\simeq\mathbf{inj}_{\underline{\ccS}}(\mathrm{C}_{d_{\mathcal{L}}})$,
$\cS\text{-}\mathrm{gcell}_{\mathcal{J}}$ is biequivalent to the bicategory $\cBJop$,
where $\cBJ$ has the same objects but with morphism categories
\begin{gather*}
\cBJ(\mathcal{L}_{1},\mathcal{L}_{2})=(\mathrm{C}_{d_{\mathcal{L}_{1}}})\mathrm{biinj}_{\underline{\mathrm{add}(\mathcal{J})}}(\mathrm{C}_{d_{\mathcal{L}_{2}}})
\end{gather*}
where horizontal composition is given by taking cotensor products over the relevant
Duflo involution and the identity $1$-morphism on $\mathcal{L}$ is $\mathrm{C}_{d_{\mathcal{L}}}$.

As already recalled in Remark \ref{rem:fusion}, we can define the bicategory $\cAJ$ in
analogy to $\cAH$ using the perverse filtration of $\mathrm{add}(\mathcal{J})$.

\begin{theorem}\label{theorem:local-ss}
The $2$-category $\cS\text{-}\mathrm{gcell}_{\mathcal{J}}$ is locally graded semisimple.
\end{theorem}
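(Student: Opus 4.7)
The plan is to extend Proposition \ref{proposition:bicomodules} from a single diagonal $\mathcal{H}$-cell to the whole two-sided cell $\mathcal{J}$, and deduce the theorem via the biequivalence $\cS\text{-}\mathrm{gcell}_{\mathcal{J}}\simeq\cBJop$. More precisely, I want to prove a graded biequivalence
\begin{gather*}
\cBJ\simeq\cAJp,
\end{gather*}
where $\cAJ$ is defined (as indicated in Remark \ref{rem:fusion}) as the pivotal fusion bicategory whose objects are the left cells in $\mathcal{J}$ and whose morphism categories correspond, on the Grothendieck level, to the direct summands $\mathsf{A}_{\mathcal{L}_{1}^{\star}\cap\mathcal{L}_{2}}$ of $\mathsf{A}_{\mathcal{J}}$. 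Since $\cAJ$ is locally semisimple, $\cAJp$ is locally graded semisimple, hence so is $\cBJop$, which is what we want.

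First, I would construct, for each pair of left cells $\mathcal{L}_{1},\mathcal{L}_{2}\subset\mathcal{J}$, an oplax pseudofunctor
\begin{gather*}
\Theta_{\mathcal{L}_{1},\mathcal{L}_{2}}\colon\cAJ(\mathcal{L}_{1},\mathcal{L}_{2})\to\mathrm{add}(\mathcal{L}_{1}^{\star}\cap\mathcal{L}_{2})\subset\cSJ
\end{gather*}
extending the construction of $\Theta$ in Subsection \ref{s5.2} in a completely analogous way (replacing the Duflo involution of $\mathcal{H}$ by the relevant $d_{\mathcal{L}_{i}}$'s on either side), with $\Theta_{\mathcal{L}_{i},\mathcal{L}_{i}}$ being the original $\Theta$ of the diagonal $\mathcal{H}$-cell $\mathcal{H}(\mathcal{L}_{i})$. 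The horizontal composition in $\cAJ$ becomes cotensor product over the appropriate $\Cd$-coalgebra in the middle, and the statements of Lemmas \ref{lem5.1} and \ref{lem5.25-1} generalize verbatim. This shows that $\Theta_{\mathcal{L}_{1},\mathcal{L}_{2}}$ lands in $\cBJ(\mathcal{L}_{1},\mathcal{L}_{2})$ and assembles into a pseudofunctor $\Theta_{\mathcal{J}}\colon\cAJ\to\cBJ$.

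Next, I would prove the off-diagonal analog of Proposition \ref{proposition:bicomodules}. The key is that each $\mathrm{C}_{d_{\mathcal{L}_{i}}}$ is a (graded) Frobenius algebra in $\cSJ$: for the cell $2$-representation $\mathbf{C}_{\mathcal{L}_{i}}\simeq\mathbf{C}_{\mathcal{H}(\mathcal{L}_{i})}$ (via strong $\mathcal{H}$-reduction applied inside $\cSJ$), the diagrammatic arguments of Section \ref{section:separability} apply verbatim to produce dot diagrams and dual dot diagrams attached to each $d_{\mathcal{L}_{i}}$, and hence the identities in Lemmas \ref{lemma:idempotent}, \ref{lemma:x-d-idempotent} and \ref{lemma:x-dot-and-dual-dot} hold in the off-diagonal setting once one uses the dot at $d_{\mathcal{L}_{1}}$ on the left and the one at $d_{\mathcal{L}_{2}}$ on the right. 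Then, exactly as in Lemma \ref{lemma:unique-bicomodule}, each $\mathrm{C}_{x}^{\oplus\mathsf{v}^{t}}$ for $x\in\mathcal{L}_{1}^{\star}\cap\mathcal{L}_{2}$ carries a unique $\mathrm{C}_{d_{\mathcal{L}_{1}}}$-$\mathrm{C}_{d_{\mathcal{L}_{2}}}$-bicomodule structure (uniqueness is forced by Soergel's hom formula and counitality on both sides). Using $\Theta_{\mathcal{J}}$ to transport bicomodule structures, and using the splitting construction from the proof of Proposition \ref{proposition:bicomodules} (with the appropriate left and right dots inserted), I would show that any direct summand of $\mathrm{X}\in\cBJ(\mathcal{L}_{1},\mathcal{L}_{2})$ in $\cSJ$ remains a summand in $\cBJ$, which, together with the off-diagonal analog of Lemma \ref{lemma:BB-intertwiners-2}, yields the biequivalence
\begin{gather*}
\cBJo(\mathcal{L}_{1},\mathcal{L}_{2})\simeq\bigoplus_{t\in\mathbb{Z}}\cAJ(\mathcal{L}_{1},\mathcal{L}_{2})\langle t\rangle.
\end{gather*}
Applying Lemma \ref{lem:skewcat} in each morphism category then gives $\cBJ\simeq\cAJp$, proving the theorem.

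The main obstacle I expect is verifying that the splitting argument from Proposition \ref{proposition:bicomodules} really goes through off-diagonally. It is plausible because the computation is entirely local at a strand, and the two dot diagrams (one at $d_{\mathcal{L}_{1}}$, one at $d_{\mathcal{L}_{2}}$) interact only with their own Duflo involution. However, one has to be careful that the scalars $\nu_{d_{\mathcal{L}_{i}}}(x)$ appearing in the image of the dot in the bimodule picture are nonzero for \emph{all} $x$ in the relevant $\mathcal{H}$-cell with source/target as indicated; the argument in Proposition \ref{proposition:dot diagram} forced this via faithfulness of the cell $2$-representation, and the corresponding faithfulness here comes from the cell $2$-representations $\mathbf{C}_{\mathcal{L}_{i}}$ and the fact that $\mathcal{L}_{1}^{\star}\cap\mathcal{L}_{2}$ acts nontrivially, via left composition, between them. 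Once that is in place, the rest is a diagrammatic repetition of the arguments already carried out for the diagonal case.
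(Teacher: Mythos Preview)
Your proposal is correct and follows essentially the same approach as the paper: generalize Proposition~\ref{proposition:bicomodules} from a single diagonal $\mathcal{H}$-cell to all of $\mathcal{J}$ by using that each $\mathrm{C}_{d_{\mathcal{L}}}$ is a separable Frobenius algebra in $\cSJ$, obtain $\cBJ\simeq\cAJp$, and conclude via $\cS\text{-}\mathrm{gcell}_{\mathcal{J}}\simeq\cBJop$. The paper's own proof simply asserts that the arguments go through ``analogously''; you have spelled out more of the off-diagonal details (the two dot diagrams, uniqueness of the bicomodule structure via Soergel's hom formula, the splitting argument), which is exactly what is needed to justify that assertion.
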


\begin{proof}
As in Subsection \ref{s7.5} and Proposition \ref{proposition:dot diagram},
the proof follows from the fact that the Duflo involution $\mathrm{C}_{d_{\mathcal{L}}}$ is
a separable Frobenius algebra in $\cSJ$ for every left cell $\mathcal{L}$ in $\mathcal{J}$.
Recall that $\cAH$ is a $2$-full one-object subbicategory (i.e. a full monoidal subcategory)
of the bicategory $\cAJ$. As we already mentioned in Remark \ref{rem:fusion}, the
latter can be regarded as a pivotal multifusion bicategory with one object (whose identity
$1$-morphism is decomposable in general) or, as we will do in this paper, as a pivotal
fusion bicategory whose objects are the left cells $\mathcal{L}\subseteq\mathcal{J}$ and
whose morphism categories $\cAJ(\mathcal{L}_{1},\mathcal{L}_{2})$ consist of $1$-morphisms
in $\cA_{\mathcal{L}^{\star}_{1}\cap\mathcal{L}_{2}}$, for the $\mathcal{H}$-cell
$\mathcal{L}^{\star}_{1}\cap\mathcal{L}_{2}$.

Analogously to Proposition \ref{proposition:bicomodules}, one can show that $\cBJ$ is locally
graded semisimple and that there is a biequivalence of $\mathbb{Z}$-finitary bicategories
\begin{gather*}
\cBJo\simeq\bigoplus_{t\in\mathbb{Z}}\cAJ\langle t\rangle,
\end{gather*}
which implies that there is a graded biequivalence
\begin{gather*}
\cBJ\simeq\cAJp.
\end{gather*}
Since there is also a graded biequivalence $\cS\text{-}\mathrm{gcell}_{\mathcal{J}}\simeq\cBJop$,
the result follows.
\end{proof}

\begin{remark}
The previous theorem shows that $\cS\text{-}\mathrm{gcell}_{\mathcal{J}}$ is graded
biequivalent to
$\cAJpop$ and hence locally graded semisimple. However the graded cell $2$-representations
of $\cS$ with apex $\mathcal{J}$, which form the objects of $\cS\text{-}\mathrm{gcell}_{\mathcal{J}}$,
are not semisimple themselves in general, as follows from Proposition \ref{prop:Frobenius}.

At the same time, we can analogously define the graded $2$-category $\cAJp\text{-}\mathrm{gcell}_{\mathcal{J}}$
as having objects $\mathcal{L}$ for left cells $\mathcal{L}\subset \mathcal{J}$, which
we now identify with the corresponding graded cell $2$-representations of $\cAJp$ (which are
graded semisimple), and with morphism categories given by the corresponding morphisms of
$2$-representations. Again, this is graded biequivalent to the bicategory with the same
objects, and the graded morphism category from $\mathcal{L}_{1}$ to $\mathcal{L}_{2}$ given by
$(\mathrm{A}_{d_{\mathcal{L}_{1}}})\mathrm{biinj}_{\ccAJp}
(\mathrm{A}_{d_{\mathcal{L}_{2}}}) $. Since the $\mathrm{A}_{d_{\mathcal{L}_{1}}}$ are
idempotents with the trivial coalgebra structure, this implies that
$\cAJp\text{-}\mathrm{gcell}_{\mathcal{J}}$ is graded biequivalent to
$(\cAJp)^{\mathrm{op}}$ and hence there is a graded biequivalence
\begin{gather}\label{eq:cell-case}
\cS\text{-}\mathrm{gcell}_{\mathcal{J}}\simeq
\cAJp\text{-}\mathrm{gcell}_{\mathcal{J}}.
\end{gather}
In Theorem \ref{theorem:main-local-ss} we will prove that
$\cS\text{-}\mathrm{gstmod}_{\mathcal{J}}$ is locally graded semisimple,
which is a generalization of Theorem \ref{theorem:local-ss}. The reason for including the latter theorem here is that the proof above, which does not depend on the results in Section \ref{section:main-theorem}, shows that both $2$-categories in \eqref{eq:cell-case} are graded biequivalent to $\cAJpop$, which is a nice result in its own right. The analogous result for $\cS\text{-}\mathrm{gstmod}_{\mathcal{J}}$ is more involved and, since we do not need it in this paper, we have omitted it.
\end{remark}


\section{The main theorem}\label{section:main-theorem}


In this section we will prove our main theorem:

\begin{theorem}\label{theorem:main}
There are biequivalences of graded $2$-categories
\begin{gather*}
\cSH\text{-}\mathrm{gstmod}_{\mathcal{H}}\simeq\cBH\text{-}\mathrm{gstmod}\simeq\cAH\text{-}\mathrm{stmod}^{\prime}.
\end{gather*}
\end{theorem}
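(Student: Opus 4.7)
The plan is to chain the two claimed biequivalences through the bicategory $\cBH\text{-}\mathrm{gstmod}$. I would first establish $\cSH\text{-}\mathrm{gstmod}_{\mathcal{H}}\simeq\cBH\text{-}\mathrm{gstmod}$ by appealing to the graded version of the double centralizer theorem \cite[Theorem 5.2]{MMMTZ2} (as already announced in the introduction) applied to the fiat, $\mathcal{H}$-simple $2$-category $\cSH$ together with the cell $2$-representation $\CH\simeq\mathbf{inj}_{\underline{\ccSH}}(\Cd)$ from Proposition \ref{prop711}. The required input is essentially already assembled in Section \ref{s5.3}: Lemma \ref{lem5.3-0}, Propositions \ref{prop5.4}, \ref{prop5.41}, \ref{prop5.3-1} and Corollary \ref{cor5.42} together realize every graded simple transitive $2$-representation of $\cSH$ with apex $\mathcal{H}$ as injective comodules over a cosimple graded coalgebra in $\underline{\cSH}$ Morita--Takeuchi equivalent to a direct summand of $\Cd$, while Proposition \ref{prop:exactness} guarantees that all morphisms in $\cSH\text{-}\mathrm{gstmod}_{\mathcal{H}}$ are exact. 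The double centralizer theorem will then deliver a graded biequivalence
\begin{gather*}
\cSH\text{-}\mathrm{gstmod}_{\mathcal{H}}\simeq\bigl(\cEnd_{\ccSH}(\CH)\bigr)^{\mathrm{op}}\text{-}\mathrm{gstmod},
\end{gather*}
and Corollary \ref{cor:End-Bicomod} will identify the right-hand side with $\cBH\text{-}\mathrm{gstmod}$.

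Next, for the second biequivalence, I would combine Proposition \ref{proposition:bicomodules} (equivalently Corollary \ref{cor:ssendo}), which supplies a graded biequivalence $\cBH\simeq\cAHp$, with the formal identification
\begin{gather*}
\cAHp\text{-}\mathrm{gstmod}\simeq\cAH\text{-}\mathrm{stmod}^{\prime}.
\end{gather*}
Because $\cAH$ is trivially graded, this identification is a direct consequence of the translation construction of Subsections \ref{subsection:grading} and \ref{s:grreps}: the assignment $\mathbf{N}\mapsto\mathbf{N}^{\prime}$ will send simple transitive $2$-representations of $\cAH$ to graded simple transitive $2$-representations of $\cAHp$, and a quasi-inverse is produced by restricting to the degree-zero part and applying the skew-$2$-category construction, where Lemma \ref{lem:skewcat} supplies the natural equivalence $\mathbf{M}^{(0)}[\mathbb{Z}]\simeq\mathbf{M}$ for any graded finitary $2$-representation $\mathbf{M}$ of $\cAHp$ admitting translation. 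Simple transitivity is preserved under both constructions since neither creates new ideals.

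The main obstacle is the graded double centralizer input in the first step, but its essential ingredients have already been verified earlier in the paper: cosimplicity of $\Cd$ in $\underline{\cSH}$ (Proposition \ref{prop5.4}), which in turn depends on the fusion structure of $\cAH$ (Proposition \ref{prop:fusion}); the explicit separable Frobenius algebra structure on $\Cd$ in $\cSH$ described by the dot and dual-dot diagrams (Propositions \ref{proposition:dot diagram} and \ref{proposition:dotdiagram-left}); and the local graded semisimplicity of $\cBH$ (Proposition \ref{proposition:bicomodules}). With these in place, the proof of Theorem \ref{theorem:main} reduces to the two-step composition sketched above.
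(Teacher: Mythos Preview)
Your outline matches the paper's two-step structure, and you have correctly identified the central ingredients: the double centralizer theorem, Corollary \ref{cor:End-Bicomod}, and Proposition \ref{proposition:bicomodules}. However, there are two concrete gaps.

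\textbf{Left biequivalence.} The double centralizer theorem as stated (Theorem \ref{thm:double-centralizer}) only tells you that $\mathrm{add}(\mathcal{H})\subset\cSH$ is biequivalent to the injective part of $\cEnd_{\ccEnd_{\cccSH}(\CH)}(\CH)$; it does not directly produce a biequivalence of $\mathrm{gstmod}$-categories. The paper closes this gap by constructing an auxiliary two-object fiat $2$-category $\hat{\cSH}$ (a Morita context) whose endomorphism $2$-categories at the two objects are $\cSH$ and $\cBH$ respectively, and then applying strong $\mathcal{H}$-reduction (Theorem \ref{theorem:strongH}) inside $\hat{\cSH}$ to the unique non-trivial two-sided cell, which contains both $\mathcal{H}$-cells. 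This bridging construction is the missing mechanism in your sketch. Relatedly, your appeal to the results of Subsection \ref{s5.3} is misplaced: Proposition \ref{prop5.3-1} only shows that $\hat{\Theta}$ is \emph{injective}, not that every graded simple transitive $2$-representation of $\cSH$ arises this way; surjectivity is a \emph{consequence} of Theorem \ref{theorem:main}, not an input to it, and those results are not used in the paper's proof of Proposition \ref{prop:main-theorem-prop2}.

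\textbf{Right biequivalence.} Your formal identification $\cAHp\text{-}\mathrm{gstmod}\simeq\cAH\text{-}\mathrm{stmod}^{\prime}$ hides a genuine step. The skew-category equivalence $\mathbf{M}^{(0)}[\mathbb{Z}]\simeq\mathbf{M}$ returns $\mathbf{M}$ as a $\cAHp$-representation, but $\mathbf{M}^{(0)}$ is an infinite-rank $\cAH$-representation (it contains all grading shifts), so it is not the finitary $\mathbf{N}$ you seek. To extract $\mathbf{N}$, the paper first proves that $\mathbf{M}$ is graded semisimple (Lemma \ref{lemma:bijection2-1}, using that $\cAH$ is pivotal fusion so the identity acts projectively), then takes the transitive $\cAH$-subrepresentation generated by a single indecomposable and shows $\mathbf{M}^{(0)}\simeq\bigoplus_{t}\mathbf{N}\langle t\rangle$. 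Without this semisimplicity step, there is no reason a priori why a graded simple transitive $\cAHp$-representation could not mix non-equivalent $\cAH$-representations across different degrees or have non-trivial radical. Your assertion that ``neither construction creates new ideals'' does not address this.
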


\begin{remark}
To avoid confusion about the gradings,
recall that $\cBH\simeq \cAHp$ by Proposition \ref{proposition:bicomodules}. The
last biequivalence in Theorem \ref{theorem:main} then means that,
for every $\mathbf{M}\in\cAHp\text{-}\mathrm{gstmod}$, there is an $\mathbf{N}\in\cAH\text{-}\mathrm{stmod}$
such that
$\mathbf{M}\simeq\mathbf{N}^{\prime}$ in $\cAHp\text{-}\mathrm{gstmod}$, and this correspondence establishes
a bijection between the equivalence classes of objects in $\cAHp\text{-}\mathrm{gstmod}$ and $\cAH\text{-}\mathrm{stmod}$.
Moreover, for every pair $\mathbf{M}_{1},\mathbf{M}_{2}\in
\cAHp\text{-}\mathrm{gstmod}$, there is an
equivalence of graded categories $\mathrm{Hom}_{\ccAHp}
(\mathbf{M}_{1},\mathbf{M}_{2})\simeq
\mathrm{Hom}_{\ccA_{\mathcal{H}}}(\mathbf{N}_{1},\mathbf{N}_{2})^{\prime}$.

Note that this does \emph{not} mean
that $\cAHp\text{-}\mathrm{gstmod}$ and
$\cAH\text{-}\mathrm{stmod}$ are biequivalent as graded $2$-categories, because the morphism
categories of the former $2$-category are non-trivially graded, whereas those of the latter $2$-category
are trivially graded (meaning that all $2$-morphisms live in degree zero),
c.f. Remark \ref{rem:subtle}.
\end{remark}

The proof of Theorem \ref{theorem:main} is the content of Propositions \ref{prop:main-theorem-prop1}
and \ref{prop:main-theorem-prop2} in the next two subsections, each of
which proves one of the two biequivalences above, but let us first state and
prove a very important corollary.

\begin{corollary}\label{cor:gradedfiniteness}
For any finite Coxeter type, there are finitely many equivalence classes
of graded simple transitive $2$-representations of $\cS$.
\end{corollary}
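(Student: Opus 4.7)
\medskip

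The plan is to combine Theorem \ref{theorem:main} with the apex decomposition, strong $\mathcal{H}$-reduction, and Ocneanu rigidity. First, I would observe that any graded simple transitive $2$-representation $\mathbf{M}$ of $\cS$ has a unique apex by \cite[Subsection 3.2]{CM}, so there is a partition
\begin{gather*}
\cS\text{-}\mathrm{gstmod}=\coprod_{\mathcal{J}}\cS\text{-}\mathrm{gstmod}_{\mathcal{J}},
\end{gather*}
where $\mathcal{J}$ runs over the two-sided cells of $\cS$. Since $W$ is finite, this index set is finite, so it suffices to bound the number of equivalence classes in each $\cS\text{-}\mathrm{gstmod}_{\mathcal{J}}$ separately.

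Next, I would fix a diagonal $\mathcal{H}$-cell $\mathcal{H}\subseteq\mathcal{J}$ (which exists since each left cell contains a Duflo involution) and invoke the graded version of the strong $\mathcal{H}$-reduction biequivalence $\cS\text{-}\mathrm{gstmod}_{\mathcal{J}}\simeq\cSH\text{-}\mathrm{gstmod}_{\mathcal{H}}$ from Theorem~\ref{theorem:strongH}. Composing with Theorem~\ref{theorem:main} yields
\begin{gather*}
\cS\text{-}\mathrm{gstmod}_{\mathcal{J}}\simeq\cSH\text{-}\mathrm{gstmod}_{\mathcal{H}}\simeq\cAH\text{-}\mathrm{stmod}^{\prime},
\end{gather*}
so the number of equivalence classes on the left matches that on the right.

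Finally, by the remark immediately following Theorem~\ref{theorem:main}, the equivalence classes of objects in $\cAH\text{-}\mathrm{stmod}^{\prime}$ are in bijection with the equivalence classes of objects in the underlying ungraded $2$-category $\cAH\text{-}\mathrm{stmod}$. By Proposition~\ref{prop:fusion}, $\cAH$ is a pivotal fusion bicategory, and therefore Ocneanu rigidity, in the form of \cite[Corollary 9.1.6]{EGNO}, guarantees that $\cAH\text{-}\mathrm{stmod}$ has only finitely many equivalence classes. Assembling these finite counts over the finitely many two-sided cells $\mathcal{J}$ gives the desired finiteness for $\cS\text{-}\mathrm{gstmod}$. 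There is no real obstacle here; all the substantive work has already been done in Theorem~\ref{theorem:main} and in establishing that $\cAH$ is pivotal fusion.
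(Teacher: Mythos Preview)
Your proof is correct and follows essentially the same approach as the paper's own proof: partition by apex (finitely many two-sided cells since $W$ is finite), apply strong $\mathcal{H}$-reduction and Theorem~\ref{theorem:main} to reduce to $\cAH\text{-}\mathrm{stmod}$, and then invoke Ocneanu rigidity using that $\cAH$ is pivotal fusion. The only difference is that you spell out the intermediate step through $\cAH\text{-}\mathrm{stmod}^{\prime}$ and the remark after Theorem~\ref{theorem:main} a bit more explicitly than the paper does.
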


\begin{proof}
Let $W$ be any finite Coxeter group and $\cS=\cS(W)$. By the composite of the biequivalences in
Theorem \ref{theorem:main}, there is a bijection between the set of
equivalence classes of graded simple transitive $2$-representations of $\cSH$ with
apex $\mathcal{H}$ and the set of equivalence classes of simple transitive birepresentations of
$\cAH$,
for any diagonal $\mathcal{H}$-cell of $\cS$. Since every $\cAH$ is pivotal fusion, the corollary now
follows from \cite[Corollary 9.1.6 and Proposition 3.4.6]{EGNO}, strong $\mathcal{H}$-reduction, see \cite[Theorem 15]{MMMZ} and \cite[Theorem 4.32]{MMMTZ2}),
and the fact that $\cS$ has only finitely many two-sided cells.
\end{proof}


\subsection{The right biequivalence in Theorem \ref{theorem:main}}\label{subsection:bh-reps}


\begin{lemma}\label{lemma:bijection2-1}
Every graded simple transitive $2$-representa\-tion of $\cBH$ is graded semisimple.
\end{lemma}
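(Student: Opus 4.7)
The plan is to transfer the statement across the biequivalence $\cBH\simeq\cAHp$ of Proposition \ref{proposition:bicomodules} and then reduce to semisimplicity of module categories over a pivotal fusion bicategory.

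First, I would observe that it suffices to work with $\mathbf{M}^{(0)}$ for any graded simple transitive $2$-representation $\mathbf{M}$ of $\cBH$, since by definition $\mathbf{M}$ is graded semisimple if and only if $\mathbf{M}^{(0)}$ is semisimple. Passing to $(0)$-parts, $\mathbf{M}^{(0)}$ becomes a $\mathbb{Z}$-finitary $2$-representation of $\cBHo$, and the simple transitivity of $\mathbf{M}$ implies that $\mathbf{M}^{(0)}$ has no proper nontrivial ideals compatible with the free $\mathbb{Z}$-action (as already recorded in Subsection~\ref{s:grreps}).

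Next, I would use the equivalence
\begin{gather*}
\cBHo\simeq\bigoplus_{t\in\mathbb{Z}}\cAH\langle t\rangle
\end{gather*}
established in the proof of Proposition \ref{proposition:bicomodules}. Via this equivalence, $\mathbf{M}^{(0)}$ corresponds to a $\mathbb{Z}$-finitary simple transitive $2$-representation of the direct sum on the right-hand side, which is compatible with the $\mathbb{Z}$-shift. Restricting the action along the inclusion $\cAH=\cAH\langle 0\rangle\hookrightarrow\bigoplus_{t}\cAH\langle t\rangle$, the underlying $\mathbb{C}$-linear category of $\mathbf{M}^{(0)}$ carries the structure of an (ungraded) transitive $2$-representation of $\cAH$ whose indecomposable classes are organized into $\mathbb{Z}$-orbits.

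The crucial step is then to invoke the fact that $\cAH$ is a pivotal fusion bicategory by Proposition \ref{prop:fusion}. For a pivotal fusion bicategory (equivalently, a semisimple rigid multitensor category after picking a Morita representative), every transitive $2$-representation is locally semisimple: this is classical, e.g. \cite[Theorem 7.7.2]{EGNO}, or can be seen directly by noting that the internal cohom coalgebra of any object is a separable coalgebra in $\cAH$, so that its category of injective comodules is automatically semisimple. Applying this to the restriction of $\mathbf{M}^{(0)}$ along $\cAH\hookrightarrow\cBHo$, we conclude that $\mathbf{M}^{(0)}$ is locally semisimple, and the $\mathbb{Z}$-equivariance is preserved by construction. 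Hence $\mathbf{M}$ is graded semisimple.

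The main obstacle is the bookkeeping between the three flavors of semisimplicity (for $\cAH$, for the skew/translation construction, and for the graded $2$-representation $\mathbf{M}$); once the identification $\cBHo\simeq\bigoplus_{t\in\mathbb{Z}}\cAH\langle t\rangle$ is in hand, the semisimplicity statement follows from the fusion case without further subtlety, because simple transitivity is preserved by the equivalence and the $\mathbb{Z}$-shift does not affect semisimplicity of morphism categories.
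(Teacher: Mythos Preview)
Your overall strategy matches the paper's: pass to $\mathbf{M}^{(0)}$, use the decomposition $\cBHo\simeq\bigoplus_{t}\cAH\langle t\rangle$ from Proposition~\ref{proposition:bicomodules}, and exploit that $\cAH$ is pivotal fusion. However, there is a genuine gap in the execution.

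The restriction of $\mathbf{M}^{(0)}$ along $\cAH=\cAH\langle 0\rangle\hookrightarrow\cBHo$ is \emph{not} transitive: since $\cAH$ acts by degree-zero functors, it cannot move an indecomposable $X$ to any non-trivial shift $X^{\oplus\mathsf{v}^{t}}$. The restriction therefore breaks into infinitely many $\cAH$-transitive pieces $\mathbf{N}\langle t\rangle$, where $\mathbf{N}$ is the sub-$2$-representation generated by a fixed indecomposable. Invoking ``every transitive $2$-representation of a fusion bicategory is semisimple'' only yields semisimplicity \emph{inside} each $\mathbf{N}\langle t\rangle$; it says nothing about morphisms in $\mathbf{M}^{(0)}(\varnothing)$ between objects living in distinct shifts, so you cannot yet conclude that $\mathbf{M}^{(0)}(\varnothing)$ itself is semisimple.

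This is precisely what the paper's proof handles and what your ``bookkeeping'' paragraph dismisses. The paper identifies $\mathrm{rad}\big(\mathbf{M}^{(0)}(\varnothing)\big)$ as the sum of $\bigoplus_{t}\mathrm{rad}\big(\mathbf{N}(\varnothing)\langle t\rangle\big)$ and the cross-shift spaces $\bigoplus_{s\neq t}\mathrm{Hom}\big(\mathbf{N}(\varnothing)\langle s\rangle,\mathbf{N}(\varnothing)\langle t\rangle\big)$, and then checks that both summands are $\cBHo$-invariant (using that $\cAH\langle r\rangle$ carries $\mathbf{N}\langle t\rangle$ into $\mathbf{N}\langle r+t\rangle$). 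Since $\mathbf{M}^{(0)}$ has no proper $\cBHo$-stable ideals, the radical vanishes. That step---showing the radical is $\cBHo$-stable and hence zero---is what actually closes the argument, and it is missing from your proposal.
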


\begin{proof}
Let $\mathbf{M}$ be a graded simple transitive $2$-representation of $\cBH$.
We want to show that the radical of $\mathcal{M}$ is trivial, which
implies the claim. Recall that the radical $\mathcal{R}$ is the unique maximal nilpotent ideal of $\mathcal{M}$, which can be defined as the union of the radicals of all full
subcategories of $\mathcal{M}$ with finitely many objects. Since such a subcategory can also be
seen as a finite dimensional algebra, its radical can be defined as in \cite[Section 3.1]{DoKi}.
By \cite[Theorem 3.1.1]{DoKi}, this also implies that $\mathcal{M}/\mathcal{R}$ is semisimple. We first consider the $\cBHo$ $2$-representation $\mathbf{M}^{(0)}$,
and note that this does not contain any proper ideals, see the last paragraph of Subsection \ref{s:grreps}.

We then use Proposition \ref{proposition:bicomodules} to restrict
$\mathbf{M}^{(0)}$ to a $2$-representation of $\cAH$, and let $\mathbf{N}$
denote the transitive $2$-subrepresentation of $\cAH$ generated by a non-zero
indecomposable object $X$ in $\mathbf{M}^{(0)}(\varnothing)$.

Recall that $\cAH$ is pivotal fusion and therefore only has one two-sided cell,
namely $\mathcal{H}$, which contains the identity $1$-morphism $\mathrm{A}_{d}$.
Thus, Theorem \ref{thm:projapex} implies that $\mathrm{A}_{d}$ acts projectively, so
any simple transitive $2$-representation of $\cAH$ is semisimple.
This in turn implies that the radical $\mathrm{rad}(\mathbf{N}(\varnothing))$
is the unique
maximal ideal of $\mathbf{N}$.
By Proposition \ref{proposition:bicomodules}, the direct sums
\begin{gather*}
\bigoplus_{t\in\mathbb{Z}}\mathrm{rad}\big(\mathbf{N}(\varnothing)\langle t\rangle\big),
\quad
\bigoplus_{s\neq t\in\mathbb{Z}}\mathrm{Hom}\big(\mathbf{N}(\varnothing),\mathbf{N}(\varnothing)\langle t-s\rangle\big)
\end{gather*}
are both $\cBHo$-invariant, and hence so is their sum, which equals
$\mathrm{rad}(\mathbf{M}^{(0)}(\varnothing))$. Since $\mathbf{M}^{(0)}$ does not
contain any proper ideals, the latter radical is zero. Thus $\mathbf{M}^{(0)}$ is semisimple, and hence
$\mathbf{M}$ is graded semisimple.
\end{proof}

\begin{proposition}\label{prop:main-theorem-prop1}
There is a biequivalence of graded $2$-categories
\begin{gather*}
\cBH\text{-}\mathrm{gstmod}\simeq
\cAH\text{-}\mathrm{stmod}^{\prime}.
\end{gather*}
\end{proposition}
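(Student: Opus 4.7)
The plan is to combine Proposition~\ref{proposition:bicomodules}, which gives a graded biequivalence $\cBH\simeq\cAHp$, with Lemma~\ref{lemma:bijection2-1}, which forces graded simple transitivity over $\cBH$ to coincide with graded semisimplicity. So the first step is to transport the problem along $\cBH\simeq\cAHp$ and reduce to constructing a biequivalence
\begin{gather*}
\cAHp\text{-}\mathrm{gstmod}\simeq\cAH\text{-}\mathrm{stmod}^{\prime}.
\end{gather*}
The candidate pseudofunctor is the $\prime$-construction itself, that is, the $2$-functor $F$ sending $\mathbf{N}\in\cAH\text{-}\mathrm{stmod}$ to the graded $2$-representation $\mathbf{N}^{\prime}$ of $\cAHp$ defined by $\mathbf{N}^{\prime}(\varnothing):=\mathbf{N}(\varnothing)^{\prime}$ (cf.\ Subsections \ref{subsection:grading} and \ref{s:grreps}), extended to $(\mathbf{N},t)\mapsto\mathbf{N}^{\prime}\langle t\rangle$ on the $\prime$-side and to morphisms via the analogous construction.

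First I would verify that $F$ is well defined: since $\cAH$ is pivotal fusion (Proposition~\ref{prop:fusion}), every object in $\cAH\text{-}\mathrm{stmod}$ is automatically semisimple (the argument in Lemma~\ref{lemma:bijection2-1} applied to the one-cell case), so $\mathbf{N}^{\prime}$ is graded semisimple and therefore simple transitive once one checks that no non-trivial $\cAHp$-stable ideal can appear. The latter is immediate because a $\cAHp$-stable ideal of $\mathbf{N}^{\prime}(\varnothing)$ is automatically compatible with the translation, so its degree-zero part is a $\cAH$-stable ideal of $\mathbf{N}(\varnothing)$ and thus zero.

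The key step is essential surjectivity. Given $\mathbf{M}\in\cAHp\text{-}\mathrm{gstmod}$, Lemma~\ref{lemma:bijection2-1} says $\mathbf{M}^{(0)}$ is a semisimple $\mathbb{Z}$-finitary $2$-representation of $(\cAHp)^{(0)}$. By Lemma~\ref{lem:skewcat}, we have $(\cAHp)^{(0)}\simeq\bigoplus_{t\in\mathbb{Z}}\cAH\langle t\rangle$, so the translation on $\mathbf{M}^{(0)}$ decomposes it into copies indexed by $\mathbb{Z}$: explicitly, pick a set of representatives of the $\mathbb{Z}$-orbits of indecomposable objects of $\mathbf{M}^{(0)}(\varnothing)$ that are concentrated in degree $0$ and let $\mathcal{N}$ be the full subcategory they span. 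Then the $\cAH\langle 0\rangle$-action equips $\mathcal{N}$ with the structure of a simple transitive $\cAH$-$2$-representation $\mathbf{N}$, and the natural functor $\mathbf{N}[\mathbb{Z}]\to\mathbf{M}^{(0)}$ arising from the translation is an equivalence of $\mathbb{Z}$-finitary $2$-representations of $(\cAHp)^{(0)}$. Applying the skew-category construction of Lemma~\ref{lem:skewcat} back gives $\mathbf{M}\simeq\mathbf{N}^{\prime}$ as graded $2$-representations of $\cAHp$.

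It remains to establish the local equivalence on morphism categories. Any morphism $\mathbf{N}_{1}^{\prime}\to\mathbf{N}_{2}^{\prime}$ in $\cAHp\text{-}\mathrm{gstmod}$ restricts, after passing to $(\cAHp)^{(0)}$ and using the decomposition $\bigoplus_{t\in\mathbb{Z}}\cAH\langle t\rangle$, to a family of morphisms indexed by $\mathbb{Z}$ between the degree-zero slices, which is precisely what the $\prime$-construction on $\cAH\text{-}\mathrm{stmod}$ produces. Conversely, the $\prime$-extension of a morphism in $\cAH\text{-}\mathrm{stmod}$ is automatically a morphism in $\cAHp\text{-}\mathrm{gstmod}$ by naturality. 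The main obstacle I anticipate is bookkeeping: checking that the identifications above are natural in the two variables, compatible with horizontal composition and with the adjunction-like transfer through $\cBH\simeq\cAHp$, and that modifications on both sides match under these identifications. Once these diagrams are verified, the combination of essential surjectivity on objects and local equivalence on morphism categories yields the desired biequivalence.
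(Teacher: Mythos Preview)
Your approach is essentially the paper's: transport along the graded biequivalence $\cBH\simeq\cAHp$ of Proposition~\ref{proposition:bicomodules}, use Lemma~\ref{lemma:bijection2-1} to force graded semisimplicity, and realize every graded simple transitive $\mathbf{M}$ as $\mathbf{N}^{\prime}$ for some $\mathbf{N}\in\cAH\text{-}\mathrm{stmod}$, with the $\prime$-construction supplying the equivalence on morphism categories. The paper packages the essential surjectivity step through the internal cohom (writing $\mathbf{N}\simeq\mathbf{inj}_{\ccAH}([X,X])$ and lifting to $\cBH$), whereas you use the skew-category decomposition directly; these are equivalent formulations.

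One point needs more care. You write ``pick a set of representatives of the $\mathbb{Z}$-orbits \ldots\ and let $\mathcal{N}$ be the full subcategory they span. Then the $\cAH\langle 0\rangle$-action equips $\mathcal{N}$ with the structure of a simple transitive $\cAH$-$2$-representation.'' For an \emph{arbitrary} choice of orbit representatives there is no reason $\mathcal{N}$ is closed under the $\cAH$-action: applying $\mathrm{A}_{w}$ to a representative could produce summands that are non-trivial shifts of other representatives. The paper avoids this by instead fixing a single indecomposable $X$ and letting $\mathbf{N}$ be the transitive $\cAH$-subrepresentation it generates; this is $\cAH$-closed by construction and finitary since $\cAH$ is fusion. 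One then checks that the shifts $\mathbf{N}\langle t\rangle$ are pairwise disjoint (freeness of the $\mathbb{Z}$-action on isomorphism classes forces this, as a non-trivial overlap would make the finite set of simples in $\mathbf{N}$ periodic under some shift) and exhaust $\mathbf{M}^{(0)}(\varnothing)$ (by $\cBH$-transitivity). With that adjustment your argument goes through and matches the paper's.
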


\begin{proof}
Given a simple transitive $2$-representation $\mathbf{N}$ of $\cAH$, we can choose a simple object $\mathrm{X}\in\mathbf{N}(\varnothing)$ to obtain
$\mathbf{N}\simeq\mathbf{inj}_{\ccA_{\mathcal{H}}}\big([\mathrm{X},\mathrm{X}]\big)$.
By Proposition \ref{proposition:bicomodules} and \cite[Theorem 9]{MMMT} and
\cite[Corollary 12]{MMMZ},
$\mathbf{M}:=\mathbf{inj}_{\ccBH}\big([\mathrm{X},\mathrm{X}]\big)\simeq\mathbf{N}^{\prime}$
is a graded simple transitive $2$-representation of $\cBH$. (To simplify
notation, we use the identification $[\mathrm{X},\mathrm{X}]=\Theta\big([\mathrm{X},
\mathrm{X}]\big)$.) This provides an assignment
$\cAH\text{-}\mathrm{stmod}^{\prime}\to\cBH\text{-}\mathrm{gstmod}$ on objects, and we claim that it is essentially surjective.

Indeed, let $\mathbf{M}$ be a graded simple transitive $2$-representation of $\cBH$. As
before, we consider the $\cBHo$ $2$-representation $\mathbf{M}^{(0)}$, which again does
not contain any proper ideals, see Subsection \ref{s:grreps}, and restrict it to $\cAH$.
Letting $X$ be a non-zero indecomposable object in $\mathbf{M}^{(0)}(\varnothing)$, we
let $\mathbf{N}$ denote the transitive $2$-subrepresentation of $\cAH$ generated by $X$.
By virtue of $\cAH$ being pivotal
fusion, $\mathbf{N}$ is simple transitive and, moreover, the subcategory
$\mathbf{N}(\varnothing)$ of $\mathbf{M}^{(0)}(\varnothing)$ is also semisimple by Lemma \ref{lemma:bijection2-1}.

Also by Lemma \ref{lemma:bijection2-1}, we have
\begin{gather}\label{eq:bijection2-1}
\mathbf{M}^{(0)}\simeq\bigoplus_{t\in\mathbb{Z}}\mathbf{N}\langle t\rangle
\end{gather}
as $2$-representations of $\cBHo$.
By \cite[Theorem 4.7]{MMMT} and \cite[Corollary 12]{MMMZ} (which in the
context of finitary semisimple categories are
the dual versions of \cite[Corollary 7.10.5 and Exercise 7.10.6]{EGNO})
and local semisimplicity of $\cAH$, we have
\begin{gather}\label{eq:bijection2-2}
\mathbf{N}\simeq\mathbf{inj}_{\ccAH}\big([\mathrm{X},\mathrm{X}]\big),
\end{gather}
where $[\mathrm{X},\mathrm{X}]\in\cAH$ is the cosimple coalgebra given by the
internal cohom construction, see Subsection \ref{s2.23}. Proposition \ref{proposition:bicomodules}
implies that it is also cosimple as a coalgebra in
$\cBHo$ and from \eqref{eq:bijection2-1} and \eqref{eq:bijection2-2} it follows
that
\begin{gather*}
\mathbf{M}^{(0)}\simeq\bigoplus_{t\in\mathbb{Z}}\mathbf{inj}_{\ccAH}\big([\mathrm{X},\mathrm{X}]\big)\langle t\rangle\simeq\mathbf{inj}_{\ccBHo}([\mathrm{X},\mathrm{X}])
\end{gather*}
as $2$-representations of $\cBHo$.
Moreover, $\cBH\simeq(\cBHo)^{\prime}$ and, by semisimplicity, we have
\begin{gather*}
\mathbf{M}\simeq\mathbf{N}^{\prime}\simeq\mathbf{inj}_{\ccBH}\big([\mathrm{X},\mathrm{X}]\big)
\end{gather*}
as graded finitary $2$-representations, and our assignment is essentially surjective

We further claim that any morphism $\Phi$ of $2$-representations of $\cAH$
between $\mathbf{N}_{1}$ and $\mathbf{N}_{2}$ extends in a unique way
to a morphism of graded $2$-representations of $\cBH$ between
$\mathbf{M}_{1}=\mathbf{N}_{1}^{\prime}$ and $\mathbf{M}_{2}=\mathbf{N}_{2}^{\prime}$.
To see this, note that any object of $\mathbf{M}_{1}(\varnothing)^{(0)}$
is a direct sum of objects of the form $X^{\oplus\mathsf{v}^{t}}$ for some
$X\in\mathbf{N}_{1}(\varnothing)$. Since $X^{\oplus\mathsf{v}^{t}}=\mathbf{M}_{1}(\mathbbm{1}^{\oplus\mathsf{v}^{t}}_{\varnothing})\,X$,
we obtain that
\begin{gather*}
\Phi(X^{\oplus\mathsf{v}^{t}})=
\Phi\big(\mathbf{M}_{1}(\mathbbm{1}^{\oplus\mathsf{v}^{t}}_\varnothing)X \big)\cong\mathbf{M}_{2}(\mathbbm{1}^{\oplus\mathsf{v}^{t}}_{\varnothing})\Phi(X)=\Phi(X)^{\oplus\mathsf{v}^{t}}\;\text{in}\;\mathbf{M}_{2}(\varnothing)^{(0)}.
\end{gather*}
Similarly, all components of a morphism $f$ in $\mathbf{M}_{1}(\varnothing)$ are scalar multiples of $\mathrm{id}_{X}^{\oplus\mathsf{v}^{t}}$
for indecomposable $X$, again by semisimplicity. Since
$\mathrm{id}_{X}^{\oplus\mathsf{v}^{t}}=\mathbf{M}_{1}(\mathrm{id}_{\mathbbm{1}_\varnothing}^{\oplus\mathsf{v}^{t}})_{X}$, we obtain
\begin{gather*}
\Phi(\mathrm{id}_{X}^{\oplus\mathsf{v}^{t}})
=\Phi\big(\mathbf{M}_{1}(\mathrm{id}_{\mathbbm{1}_\varnothing}^{\oplus\mathsf{v}^{t}})_{X}\big)
=\mathbf{M}_{2}(\mathrm{id}_{\mathbbm{1}_\varnothing}^{\oplus\mathsf{v}^{t}})_{\Phi(X)}
=\mathbf{M}_{2}(\mathrm{id}_{\mathbbm{1}_\varnothing})_{\Phi(X)}^{\oplus\mathsf{v}^{t}}
=\mathrm{id}_{\Phi(X)}^{\oplus\mathsf{v}^{t}},
\end{gather*}
which completes the proof of our claim. Recall that $1$-morphisms in
$\cAH\text{-}\mathrm{stmod}^{\prime}$ are of the form $(\Phi,s)$, where $\Phi$ is a morphism of $2$-representations of $\cAH$
between $\mathbf{N}_{1}$ and $\mathbf{N}_{2}$ and $s\in\mathbb{Z}$.
Extending $\Phi$ as above, we send $(\Phi,s)$ to $\Phi\langle s\rangle$.
Modifications extend similarly, and we obtain a graded pseudofunctor
$\cAH\text{-}\mathrm{stmod}^{\prime}\to\cBH\text{-}\mathrm{gstmod}$.

To check that the above graded functor
\begin{gather*}
\mathrm{Hom}_{\ccAH}(\mathbf{N}_{1},\mathbf{N}_{2})^{\prime}\to\mathrm{Hom}_{\ccBH}(\mathbf{M}_{1},\mathbf{M}_{2})
\end{gather*}
is an equivalence, let $\Phi\colon\mathbf{M}_{1}\to\mathbf{M}_{2}$ be a morphism
of graded $2$-representations for $\cBH$. Suppose we have defined $\mathbf{N}_{1}\simeq\mathbf{inj}_{\ccAH}\big([\mathrm{X}_{1},\mathrm{X}_{1}]\big),\mathbf{N}_{2}\simeq\mathbf{inj}_{\ccAH}\big([\mathrm{X}_{2},\mathrm{X}_{2}]\big)$
via a choice of objects $X_i\in\mathbf{M}_{i}(\varnothing)$. If $\Phi(X_{1})\in\mathbf{N}_{2}(\varnothing)\langle t\rangle$,
it is easy to see that $\Phi\langle -t\rangle$ restricts to a morphism of $2$-representations $\Psi\colon\mathbf{N}_{1}\to\mathbf{N}_{2}$.
Modifications restrict naturally. It is immediate that these assignments provide a quasi-inverse to the functor $\mathrm{Hom}_{\ccAH}(\mathbf{N}_{1},\mathbf{N}_{2})^{\prime}\to\mathrm{Hom}_{\ccBH}(\mathbf{M}_{1},\mathbf{M}_{2})$.
\end{proof}


\subsection{The left biequivalence in Theorem \ref{theorem:main}}\label{subsection:a-theorem}


A functor between two finitary categories is called \emph{injective} if its
extension to the injective abelianizations of those categories is an injective object in the
category of left exact functors between the said abelianizations.
Further, a morphism of finitary  $2$-representations is injective if its underlying functors are
injective functors. For a finitary $2$-category
$\cC$ and two finitary $2$-representations $\mathbf{M}$, $\mathbf{N}$ of $\cC$, we denote
by $\mathrm{Hom}_{\ccC}^{\mathrm{inj}}(\mathbf{M},\mathbf{N})$ the category of injective
morphisms of $2$-representations. Moreover, considering injective endomorphisms of $\mathbf{M}$, we obtain the $2$-semicategory $\cEnd^{\mathrm{inj}}_{\ccC}(\mathbf{M})$.
We further denote by $\mathrm{End}^{\mathrm{inj},r}_{\ccC}(\mathbf{M})$
the additive closure of the injective endomorphisms and the identity morphism
of $\mathbf{M}$. This category also defines the $2$-category $\cEnd^{\mathrm{inj},r}_{\ccC}(\mathbf{M})$
whose single object is $\mathbf{M}$.

For future use, we record the following lemma.

\begin{lemma}\label{injadj}
Let $\mathrm{F}\colon\mathcal{B}\to\mathcal{C}$ be an injective functor and assume that both $ \mathcal{B}$ and $\mathcal{C}$ are finitary
Frobenius categories. Then $\mathrm{F}$ is isomorphic to tensoring with a projective bimodule and the collection of such injective functor is closed under adjunctions.
\end{lemma}

\begin{proof}
Assume without loss of generality that $\underline{\mathcal{B}}\simeq\mathsf{B}\text{-}\mathrm{mod}$ and  $\underline{\mathcal{C}}\simeq\mathsf{C}\text{-}\mathrm{mod}$ for self-injective algebras $\mathsf{B}$ and $\mathsf{C}$, respectively.
Any left exact functor from $\mathsf{B}\text{-}\mathrm{mod}$ to  $\mathsf{C}\text{-}\mathrm{mod}$ is isomorphic to $\mathrm{Hom}_{\mathsf{B}} (X,{}_{-})$ for some $\mathsf{B}$-$\mathsf{C}$-bimodules $X$. The morphism space between such functors $\mathrm{Hom}_{\mathsf{B}}(X^{\prime},{}_{-})$ and $\mathrm{Hom}_{\mathsf{B}}(X,{}_{-})$ is given by $\mathrm{Hom}_{\mathsf{B}\text{-}\mathsf{C}}(X,X^{\prime})$, thus $\mathrm{Hom}_{\mathsf{B}}(X,{}_{-})$ is injective if and only if $X$ is a projective bimodule. In this case, using
$\mathrm{Hom}_{\mathsf{B}}(\mathsf{B}\otimes_{\Bbbk}\mathsf{C},{}_{-})\cong \mathsf{C}^{\star}\otimes_{\Bbbk}\mathsf{B}\otimes_{\mathsf{B}}{}_{-},$
together with additivity of $\mathrm{Hom}$ and self-injectivity of $\mathsf{C}$, we see that injective functors correspond to tensoring with projective bimodules, and that these are, moreover, closed under taking adjoints.
\end{proof}

Before we can prove the left biequivalence in Theorem \ref{theorem:main}, we need to recall the
appropriate application of the double centralizer theorem \cite[Theorem 5.2]{MMMTZ2} to the
case of the cell $2$-representation $\mathbf{C}_\mathcal{H}$ of $\cSH$.
The $2$-actions of $\cSH$ and
$\cEnd_{\ccSH}(\CH)$ on $\CH$ commute by definition, this implies that there is the canonical
(graded) $2$-functor
\begin{gather*}
\mathrm{can}\colon\cSH\to\cEnd^{\mathrm{ex}}_{\ccEnd_{\cccSH}(\CH)}(\CH).
\end{gather*}

The following theorem is a special, but graded, case of \cite[Theorem 5.2]{MMMTZ2}.

\begin{theorem}\label{thm:double-centralizer}
The canonical $2$-functor $\mathrm{can}$ is fully faithful on $2$-morphisms and essentially surjective on
$1$-morphisms when restricted to
$\mathrm{add}(\mathcal{H})$ and corestricted to
$\cEnd^{\mathrm{inj}}_{\ccEnd_{\cccSH}(\CH)}(\CH)$.
\end{theorem}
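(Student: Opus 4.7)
The plan is to deduce this theorem from the general double centralizer theorem \cite[Theorem 5.2]{MMMTZ2} applied to the fiat $2$-category $\cSH$ and its cell $2$-representation $\mathbf{C}_{\mathcal{H}}$, and then to upgrade the resulting statement to the graded setting using the structural biequivalences already established. To verify the hypotheses of the general theorem, I would observe that $\cSH$ is fiat and $\mathcal{H}$-simple, that $\mathbf{C}_{\mathcal{H}}\simeq\mathbf{inj}_{\underline{\ccSH}}(\Cd)$ is simple transitive with apex equal to the unique maximal two-sided cell of $\cSH$ (Proposition \ref{prop711}), that $\Cd$ is a cosimple and separable coalgebra in $\underline{\cSH}$ (Example \ref{ex:Cd-alg-coalg} and Proposition \ref{proposition:dot diagram}), and that the endomorphism $2$-category $\cEnd_{\ccSH}(\mathbf{C}_{\mathcal{H}})\simeq\cAHpop$ is locally graded semisimple (Corollary \ref{cor:ssendo}). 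These are precisely the ingredients on which the double centralizer statement in \cite{MMMTZ2} relies.

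To promote the statement to the graded setting, I would use that all identifications invoked are compatible with the grading: both Corollary \ref{cor:ssendo} and Proposition \ref{proposition:bicomodules} are graded biequivalences, the subcategory $\mathrm{add}(\mathcal{H})\subset\cSH$ is stable under the translation $\langle t\rangle$, and $2$-morphism spaces in $\cSH$ and in the nested endomorphism $2$-category decompose into homogeneous components compatibly with $\mathrm{can}$. Hence fully faithfulness on $2$-morphisms reduces to the ungraded statement applied degree by degree, while essential surjectivity on $1$-morphisms only needs to be checked up to an overall grading shift, which is absorbed by the translation action on $\mathrm{add}(\mathcal{H})$.

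The main obstacle I expect is controlling the essential surjectivity onto $\cEnd^{\mathrm{inj}}_{\ccEnd_{\cccSH}(\CH)}(\mathbf{C}_{\mathcal{H}})$ using only $1$-morphisms in $\mathrm{add}(\mathcal{H})$, rather than in all of $\cSH$. My strategy is to translate via $\mathbf{C}_{\mathcal{H}}\simeq\mathbf{inj}_{\underline{\ccSH}}(\Cd)$: an injective endomorphism of $\mathbf{C}_{\mathcal{H}}$ that commutes with the right $\cEnd_{\ccSH}(\mathbf{C}_{\mathcal{H}})$-action corresponds, under the bicomodule dictionary, to horizontal composition with a biinjective $\Cd$-$\Cd$-bicomodule $\mathrm{Y}$ in $\underline{\mathrm{add}(\mathcal{H})}$. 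By Proposition \ref{proposition:bicomodules} every such $\mathrm{Y}$ already lies in $\cBH$, and by Lemma \ref{lemma:unique-bicomodule} its bicomodule structure is uniquely determined by its underlying $1$-morphism. Hence $\mathrm{Y}$ arises as $\Theta(\mathrm{A})$ for some $\mathrm{A}\in\cAH$, and its underlying $1$-morphism automatically belongs to $\mathrm{add}(\mathcal{H})$, which together with the graded decomposition above closes the argument.
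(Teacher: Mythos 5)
Your proposal follows essentially the same route as the paper: Theorem \ref{thm:double-centralizer} is not proved from scratch there either, but is simply recorded as a special, graded instance of the double centralizer theorem \cite[Theorem 5.2]{MMMTZ2}, the graded upgrade being unproblematic because all structure morphisms involved are homogeneous (cf. the remark in Subsection \ref{subsection:ungraded} that the proof does not use the grading in an essential way). Two caveats on your added detail. First, several of the ``hypotheses'' you verify, namely separability of $\Cd$ (Proposition \ref{proposition:dot diagram}) and local graded semisimplicity of $\cEnd_{\ccSH}(\CH)$ (Corollary \ref{cor:ssendo}), are not inputs to \cite[Theorem 5.2]{MMMTZ2}; in the paper they enter later, in the proof of Proposition \ref{prop:main-theorem-prop2}, alongside the double centralizer theorem rather than inside it. Second, your closing argument for essential surjectivity is circular if read as an independent proof: the claim that an injective endomorphism of $\CH$ commuting with the $\cEnd_{\ccSH}(\CH)$-action ``corresponds, under the bicomodule dictionary, to horizontal composition with a biinjective $\Cd$-$\Cd$-bicomodule in $\underline{\mathrm{add}(\mathcal{H})}$'' is exactly the double centralizer property you are trying to establish, and it is what \cite[Theorem 5.2]{MMMTZ2} supplies (its statement already incorporates the restriction to the apex cell and the corestriction to injective morphisms); so that paragraph is redundant rather than a needed supplement, while Proposition \ref{proposition:bicomodules} and Lemma \ref{lemma:unique-bicomodule} are only required afterwards, when the theorem is combined with $\mathcal{H}$-reduction in Proposition \ref{prop:main-theorem-prop2}.
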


We are now ready to prove the existence of the left graded biequivalence in Theorem \ref{theorem:main}.

\begin{proposition}\label{prop:main-theorem-prop2}
There is a biequivalence of graded $2$-categories
\begin{gather*}
\cSH\text{-}\mathrm{gstmod}_{\mathcal{H}}\simeq\cBH\text{-}\mathrm{gstmod}.
\end{gather*}
\end{proposition}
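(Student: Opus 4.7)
The plan is to combine the double centralizer theorem (Theorem \ref{thm:double-centralizer}) with the graded biequivalence $\cEnd_{\ccSH}(\CH)\simeq\cBHop$ from Corollary \ref{cor:End-Bicomod}. The main intermediate step is to establish a biequivalence of graded $2$-categories
\[
\cSH\text{-}\mathrm{gstmod}_{\mathcal{H}}\simeq\big(\cEnd_{\ccSH}(\CH)\big)^{\mathrm{op}}\text{-}\mathrm{gstmod}.
\]
The proposition then follows by composing this biequivalence with Corollary \ref{cor:End-Bicomod}.

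To construct this intermediate biequivalence, I would exploit the coalgebra description of (graded) simple transitive $2$-representations recalled in Subsection \ref{s2.23}. Every $\mathbf{M}\in\cSH\text{-}\mathrm{gstmod}_{\mathcal{H}}$ is equivalent to $\mathbf{inj}_{\underline{\ccSH}}(\mathrm{C})$ for some graded cosimple coalgebra $\mathrm{C}$ in $\underline{\cSH}$. Since $\cSH$ is $\mathcal{H}$-simple and $\mathbf{M}$ has apex $\mathcal{H}$, the coalgebra $\mathrm{C}$ may be chosen in $\mathrm{add}(\mathcal{H})\subset\cSH$. The essential surjectivity on $1$-morphisms and the fully faithful on $2$-morphisms property of the canonical $2$-functor in Theorem \ref{thm:double-centralizer} then transport $\mathrm{C}$ to a graded cosimple coalgebra in $\cEnd_{\ccSH}(\CH)$, and \cite[Corollary 12]{MMMZ} turns this into a graded simple transitive $2$-representation of $\cEnd_{\ccSH}(\CH)^{\mathrm{op}}$. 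On objects, the forward functor can equivalently be written $\mathbf{M}\mapsto\mathrm{Hom}_{\ccSH}(\CH,\mathbf{M})$, with $\cEnd_{\ccSH}(\CH)^{\mathrm{op}}$ acting by precomposition. Running the same argument backwards (using essential surjectivity on $1$-morphisms to pull back a graded cosimple coalgebra from $\cEnd_{\ccSH}(\CH)$ to $\mathrm{add}(\mathcal{H})$) yields a quasi-inverse. Proposition \ref{prop:exactness} is used throughout, as it guarantees that $1$-morphisms in $\cSH\text{-}\mathrm{gstmod}_{\mathcal{H}}$ are exact and hence correspond to (co)comodule morphisms at the coalgebra level.

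The main obstacle is not the existence of the assignments on objects, but verifying that they induce equivalences on the full morphism categories of the two $2$-categories of graded simple transitive $2$-representations, and are compatible with horizontal composition and the gradings. The decisive inputs are the full strength of Theorem \ref{thm:double-centralizer}, which transports $1$- and $2$-morphisms coherently between $\mathrm{add}(\mathcal{H})\subset\cSH$ and the endomorphism picture; Proposition \ref{prop:exactness}, which ensures exactness and hence the coalgebra correspondence at the morphism level; and the graded content of Corollary \ref{cor:End-Bicomod}, which aligns the gradings when we pass from $\cEnd_{\ccSH}(\CH)^{\mathrm{op}}$ to $\cBH$. Once these compatibilities are verified, the proposition follows immediately.
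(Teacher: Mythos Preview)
Your overall plan—reduce to a biequivalence $\cSH\text{-}\mathrm{gstmod}_{\mathcal{H}}\simeq\big(\cEnd_{\ccSH}(\CH)\big)^{\mathrm{op}}\text{-}\mathrm{gstmod}$ and then invoke Corollary~\ref{cor:End-Bicomod}—is exactly the strategy announced in the introduction, and it is what the paper ultimately achieves. The gap is in how you propose to carry this out.

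You say that Theorem~\ref{thm:double-centralizer} lets you ``transport $\mathrm{C}$ to a graded cosimple coalgebra in $\cEnd_{\ccSH}(\CH)$''. But read the theorem again: the canonical $2$-functor is
\[
\mathrm{can}\colon\cSH\to\cEnd^{\mathrm{ex}}_{\ccEnd_{\cccSH}(\CH)}(\CH),
\]
and it identifies $\mathrm{add}(\mathcal{H})$ with the \emph{double} centralizer $\cEnd^{\mathrm{inj}}_{\ccEnd_{\cccSH}(\CH)}(\CH)$, not with the single centralizer $\cEnd_{\ccSH}(\CH)$. So your coalgebra lands in the wrong place, and the same issue prevents your proposed quasi-inverse from being well-defined. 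The alternative description $\mathbf{M}\mapsto\mathrm{Hom}_{\ccSH}(\CH,\mathbf{M})$ does produce something on which $\cEnd_{\ccSH}(\CH)^{\mathrm{op}}$ acts, but nothing you have written explains why it is simple transitive, why the assignment is essentially surjective, or why it is an equivalence on morphism categories; the double centralizer theorem by itself does not supply these facts.

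The paper closes this gap by a genuinely different mechanism. It constructs a two-object graded fiat $2$-category $\hat{\cSH}$ (a Morita context), with objects $\mathtt{c}$ and $\mathtt{b}$ identified with the underlying categories of $\CH$ and the principal birepresentation of $\cBH$, and morphism categories given by suitable hom-categories over $\cEnd_{\ccSH}(\CH)$. Theorem~\ref{thm:double-centralizer} is used, correctly, to identify $\hat{\cSH}(\mathtt{c},\mathtt{c})$ with $\cSH$, and local graded semisimplicity identifies $\hat{\cSH}(\mathtt{b},\mathtt{b})$ with $\cBH$. The crucial point is that $\hat{\cSH}$ has a single non-trivial two-sided cell containing \emph{two} diagonal $\mathcal{H}$-cells, one in each endomorphism category. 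Strong $\mathcal{H}$-reduction (Theorem~\ref{theorem:strongH}) applied to $\hat{\cSH}$ then gives
\[
\cSH\text{-}\mathrm{gstmod}_{\mathcal{H}}\simeq\hat{\cSH}\text{-}\mathrm{gstmod}_{\mathcal{J}}\simeq\cBH\text{-}\mathrm{gstmod},
\]
which is the desired biequivalence. The Morita-context construction is precisely what bridges the single and double centralizers; without it, your direct transport argument does not go through.
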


\begin{proof}
Consider the following graded fiat $2$-category $\hat{\cSH}$ with two objects, denoted
$\mathtt{c}$ and $\mathtt{b}$. We identify the object $\mathtt{c}$ with
the graded category $\mathcal{C}:=\mathbf{C}_{\mathcal{H}}(\varnothing)$, i.e.
the underlying graded category of the cell $2$-representation of $\cSH$ with
apex $\mathcal{H}$, and the object $\mathtt{b}$ with the graded category
$\mathcal{B}:=\cBH(\varnothing,\varnothing)$, i.e. the graded category underlying
the principal birepresentation of $\cBH$.
Recall that, due to the graded equivalence $\cEnd_{\ccSH}(\CH)\simeq\cBHop$ in Corollary \ref{cor:End-Bicomod}, the $2$-category $\cEnd_{\ccSH}(\CH)$ acts, by graded endofunctors and homogeneous degree-zero natural transformations, on both
$\mathcal{C}$ and $\mathcal{B}$.

The graded morphism categories of $\hat{\cSH}$ are defined as
\begin{align*}
\hat{\cSH}(\mathtt{b},\mathtt{b})
&:=\mathrm{Hom}_{\ccEnd_{\cccSH}(\CH)}(\mathcal{B},\mathcal{B}),
\\
\hat{\cSH}(\mathtt{b},\mathtt{c})
&:=\mathrm{Hom}^{\mathrm{inj}}_{\ccEnd_{\cccSH}(\CH)}(\mathcal{B},\mathcal{C}),
\\
\hat{\cSH}(\mathtt{c},\mathtt{b})
&:=\mathrm{Hom}^{\mathrm{inj}}_{\ccEnd_{\cccSH}(\CH)}(\mathcal{C},\mathcal{B}),
\\
\hat{\cSH}(\mathtt{c},\mathtt{c})
&:=\mathrm{Hom}^{\mathrm{inj},r}_{\ccEnd_{\cccSH}(\CH)}(\mathcal{C},\mathcal{C}).
\end{align*}
Note that
\begin{gather*}
\mathrm{Hom}_{\ccEnd_{\cccSH}(\CH)}(\mathcal{B},\mathcal{B})=  \mathrm{Hom}^{\mathrm{inj}}_{\ccEnd_{\cccSH}(\CH)}(\mathcal{B},\mathcal{B})
=
\mathrm{Hom}^{\mathrm{inj},r}_{\ccEnd_{\cccSH}(\CH)}(\mathcal{B},\mathcal{B})
\end{gather*}
since $\mathcal{B}$ is graded semisimple by Proposition \ref{proposition:bicomodules}.
The graded $2$-categorical structure on
$\hat{\cSH}$ is given by composition in the same way as in e.g.
\cite[Subsection 2.3]{MM3}, and the fiat structure is defined by taking adjoints, which
preserves injective functors, since the underlying categories are Frobenius (see Lemma \ref{injadj}).

Next, note that the regular birepresentation of $\cBH$ defines a natural graded embedding
\begin{gather}\label{eq:grademb}
\mathbf{B}_{\mathcal{H}}\colon \cBH\to\hat{\cSH}(\mathtt{b},\mathtt{b}),
\end{gather}
as it clearly commutes with the right $2$-action of $\cBH$ on $\mathcal{B}$. Locally
graded semisimplicity of $\cBH$ implies that this graded embedding is a graded biequivalence.

By Theorem \ref{thm:double-centralizer}, the natural graded embedding of $\cSH$
into $\hat{\cSH}(\mathtt{c},\mathtt{c})$ is fully faithful on $2$-morphisms
and essentially surjective on $1$-morphisms when restricted to $\mathrm{add}(\mathcal{H})$
and corestricted to $\mathrm{Hom}^{\mathrm{inj}}_{\ccEnd_{\cccSH}(\CH)}(\mathcal{C},\mathcal{C})$.
In particular, the endomorphism $2$-category of the object $\mathtt{c}$ has two $\mathcal{H}$-cells:
that containing the identity and that consisting of indecomposable objects in
$\mathrm{Hom}^{\mathrm{inj}}_{\ccEnd_{\cccSH}(\CH)}(\mathcal{C},\mathcal{C})$,
which we can identify with $\mathcal{H}$.
Theorem \ref{thm:double-centralizer} then implies that there is a biequivalence of graded
$2$-categories
\begin{gather*}
\cSH\text{-}\mathrm{gstmod}_{\mathcal{H}}\simeq\hat{\cSH}(\mathtt{c},\mathtt{c})\text{-}
\mathrm{gstmod}_{\mathcal{H}}.
\end{gather*}

The statement now follows by
strong $\mathcal{H}$-reduction, see Theorem \ref{theorem:strongH}. To see this, note that
$\hat{\cSH}$ has only one non-trivial two-sided cell (provided $\mathcal{H}$
is not trivial, of course), which we denote by
$\mathcal{J}$ and is formed by the isomorphism classes of all indecomposable $1$-morphisms that are not
isomorphic to $\mathbbm{1}_{\mathtt{c}}$. This two-sided cell contains two diagonal $\mathcal{H}$-cells, the one
formed by the isomorphism classes of the indecomposable $1$-morphisms in
$\hat{\cSH}(\mathtt{b},\mathtt{b})$ and the other formed by the isomorphism classes of the indecomposable $1$-morphisms
in $\mathrm{Hom}^{\mathrm{inj}}_{\ccEnd_{\cccSH}(\CH)}(\mathcal{C},\mathcal{C})$, which can both be identified with
$\mathcal{H}$ due to Proposition \ref{proposition:bicomodules} and Theorem \ref{thm:double-centralizer}.
By the above and Theorem \ref{theorem:strongH}, there are biequivalences of graded $2$-categories
\begin{gather*}
\cSH\text{-}\mathrm{gstmod}_{\mathcal{H}}
\simeq
\hat{\cSH}\text{-}\mathrm{gstmod}_{\mathcal{J}}
\simeq
\cBH\text{-}\mathrm{gstmod}_{\mathcal{H}}=\cBH\text{-}\mathrm{gstmod},
\end{gather*}
where the middle biequivalence is induced by $\mathbf{B}_{\mathcal{H}}$
in \eqref{eq:grademb} and the last equality uses that $\cBH$ only has one $\mathcal{H}$-cell, which follows
from Remark \ref{rem:fusioncell} and Proposition \ref{proposition:bicomodules}.
This completes the proof.
\end{proof}

\begin{remark}
The construction of the $2$-category $\hat{\cSH}$ in the proof of Proposition
\ref{prop:main-theorem-prop2} is in the spirit of what is known as a \emph{Morita context}
in the tensor category literature, see \cite[Remark 3.18 and Section 4]{Mu}.
\end{remark}


\subsection{Local graded semisimplicity}


In this subsection we generalize Theorem \ref{theorem:local-ss}.

\begin{theorem}\label{theorem:main-local-ss}
The $2$-category $\cS\text{-}\mathrm{gstmod}_{\mathcal{J}}$ is locally graded semisimple.
\end{theorem}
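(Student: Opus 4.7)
The plan is to chain the biequivalences established in the excerpt to transfer local graded semisimplicity from the fusion side to the Soergel side. The starting point is the graded version of strong $\mathcal{H}$-reduction (Theorem \ref{theorem:strongH}), which yields a biequivalence of graded $2$-categories
\begin{gather*}
\cS\text{-}\mathrm{gstmod}_{\mathcal{J}}\simeq\cSH\text{-}\mathrm{gstmod}_{\mathcal{H}}
\end{gather*}
for any choice of diagonal $\mathcal{H}$-cell $\mathcal{H}\subset\mathcal{J}$. Then the main result, Theorem \ref{theorem:main}, gives a further biequivalence with $\cAH\text{-}\mathrm{stmod}^{\prime}$. So it suffices to prove that $\cAH\text{-}\mathrm{stmod}^{\prime}$ is locally graded semisimple.

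First I would deal with $\cAH\text{-}\mathrm{stmod}$ (without the $\prime$). By Proposition \ref{prop:fusion}, $\cAH$ is pivotal fusion, hence every simple transitive $2$-representation of $\cAH$ is semisimple (this was already used and recorded in the proof of Lemma \ref{lemma:bijection2-1}, via the fact that $\mathrm{A}_d$ acts projectively on any such $2$-representation). Any morphism of $2$-representations between semisimple $2$-representations is determined on each simple summand, so the morphism categories $\mathrm{Hom}_{\ccA_{\mathcal{H}}}(\mathbf{N}_{1},\mathbf{N}_{2})$ are semisimple (they are, up to equivalence, module categories over finite direct products of matrix algebras given by the endomorphism rings of the simples). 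Consequently $\cAH\text{-}\mathrm{stmod}$ is locally semisimple.

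Next, I would unwind the $\prime$-construction to check it preserves local graded semisimplicity. By definition, the morphism category from $(\mathbf{N}_{1},s)$ to $(\mathbf{N}_{2},t)$ in $\cAH\text{-}\mathrm{stmod}^{\prime}$ is $\mathrm{Hom}_{\ccA_{\mathcal{H}}}(\mathbf{N}_{1},\mathbf{N}_{2})^{\oplus\mathsf{v}^{s-t}}$, i.e. the ungraded semisimple category $\mathrm{Hom}_{\ccA_{\mathcal{H}}}(\mathbf{N}_{1},\mathbf{N}_{2})$ concentrated in a single degree. Its degree-zero part is $\mathrm{Hom}_{\ccA_{\mathcal{H}}}(\mathbf{N}_{1},\mathbf{N}_{2})$ if $s=t$ and zero otherwise, which is semisimple in either case. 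Hence every morphism category in $\cAH\text{-}\mathrm{stmod}^{\prime}$ is graded semisimple, and the biequivalences above transport this property to $\cS\text{-}\mathrm{gstmod}_{\mathcal{J}}$.

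There is no real obstacle here; the only point requiring a little care is the bookkeeping of the $\prime$-construction (cf.\ Remark \ref{rem:subtle}), namely that graded semisimplicity of $\mathcal{C}^{\prime}$ for a semisimple ungraded $\mathcal{C}$ amounts to the statement that $(\mathcal{C}^{\prime})^{(0)}$ is a direct sum, indexed by $\mathbb{Z}$, of copies of $\mathcal{C}$, as already noted in Lemma \ref{lem:skewcat}. Since the biequivalence of graded $2$-categories at each stage is degree-preserving on $2$-morphisms, graded semisimplicity of morphism categories is preserved, and the proof is complete.
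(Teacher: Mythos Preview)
Your overall strategy matches the paper's exactly: reduce via strong $\mathcal{H}$-reduction and Theorem~\ref{theorem:main} to $\cAH\text{-}\mathrm{stmod}^{\prime}$, then show the latter is locally graded semisimple. Your handling of the $\prime$-construction is essentially fine, though note that by the paper's convention $\cC^{\,\prime}$ for a $2$-category $\cC$ keeps the \emph{same} objects; it is the morphism categories that get primed. So objects of $\cAH\text{-}\mathrm{stmod}^{\prime}$ are $2$-representations $\mathbf{N}$, not pairs $(\mathbf{N},s)$, and what you must check is that each $\mathrm{Hom}_{\ccA_{\mathcal{H}}}(\mathbf{N}_1,\mathbf{N}_2)^{\prime}$ is graded semisimple. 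This follows from Lemma~\ref{lem:skewcat} exactly as you indicate, so this slip is harmless.

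The real gap is in your argument that $\cAH\text{-}\mathrm{stmod}$ is locally semisimple. You assert that morphisms of $2$-representations are ``determined on each simple summand'' and that the morphism categories are ``module categories over finite direct products of matrix algebras given by the endomorphism rings of the simples''. But a morphism $\Phi$ of $2$-representations carries additional coherence data, namely the isomorphisms $\Phi(\mathrm{F}\,X)\cong\mathrm{F}\,\Phi(X)$, and modifications are only those natural transformations compatible with this data. Consequently $\mathrm{Hom}_{\ccA_{\mathcal{H}}}(\mathbf{N}_1,\mathbf{N}_2)$ is neither a full subcategory of the (semisimple) category of additive functors, nor in any evident way a module category over the endomorphism algebras of the simples; semisimplicity of the ambient functor category does not descend automatically. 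This step genuinely needs the structure theory of module categories over fusion categories. The paper handles it by identifying simple transitive $2$-representations of $\cAH$ with indecomposable exact module categories and invoking \cite[Theorems~2.15 and~2.16]{ENO}, which show that functor categories between such module categories over a fusion category are semisimple (ultimately because every algebra object in a fusion category over $\mathbb{C}$ is separable). You should cite that result rather than the informal argument you gave.
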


\begin{proof}
By Theorem \ref{theorem:strongH} and Theorem \ref{theorem:main},
it suffices to prove that $\cAH\text{-}\mathrm{stmod}$ is locally semisimple.
Noting that $\cAH$ is pivotal fusion and that, therefore, its simple transitive $2$-representations
coincide with its so-called \emph{indecomposable exact module categories}, this is
a consequence of \cite[Theorems 2.15 and 2.16]{ENO}.
\end{proof}


\subsection{The ungraded case}\label{subsection:ungraded}


In this section, let $\cS=\cS(W,S)$ be the ungraded category of Soergel
bimodules for a finite Coxeter system $(W,S)$. This category is defined just as its graded counterpart, but
ignores the grading. The indecomposables in $\cS$
are still indexed by $W$ and will be denoted by
$\mathrm{C}_{x}$ as well. This follows from the fact that for any object $X\in\cS$, the
endomorphism ring $\mathrm{End}_{\ccS}(\mathrm{X})$ is local in the graded setting if and only
if it is local in the ungraded setting. The character isomorphism in the
Soergel--Elias--Williamson categorification theorem then has a $\mathbb{Z}$-linear counterpart
in the ungraded setting which sends $[\mathrm{C}_{x}]\in[\cS]_{\oplus}$ to
$c_{x}\in\mathbb{Z}[W]$, where $c_{x}$ is the
image of the Kazhdan--Lusztig basis element under the $\mathbb{Z}$-linear map
$\mathsf{H}(W)\to\mathbb{Z}[W]$ which sends the standard basis element
$T_{w}$ to $w$, for $w\in W$, and $\mathsf{v}$ to $1$.

Recall that $\cSH\text{-}\mathrm{stmod}_{\mathcal{H}}$ denotes the $2$-category
of all simple transitive $2$-representa\-tions of $\cSH$ with apex $\mathcal{H}$
without any grading assumptions. Any simple transitive graded $2$-representation of the
graded $\cSH$ with apex $\mathcal{H}$ remains simple transitive in the ungraded setting,
but there is no a priori reason why the forgetful $2$-functor
\begin{gather*}
F\colon\cSH\text{-}\mathrm{gstmod}_{\mathcal{H}}\to
\cSH\text{-}\mathrm{stmod}_{\mathcal{H}}
\end{gather*}
should be essentially surjective. In principle, there could be ungradable
simple transitive $2$-representations of $\cSH$ with apex $\mathcal{H}$.
This was actually an open question until now,
e.g. for finite dihedral Coxeter type it was unknown if the bicolored ADE classification of the
graded simple transitive $2$-representations of
$\cS$ with subregular apex in \cite{KMMZ}
and \cite{MT} remained valid in the ungraded case. We can now answer that
question affirmatively, because $F$ is indeed a biequivalence for any finite Coxeter type.

This follows from the fact that in the ungraded case the double centralizer property
from Theorem \ref{thm:double-centralizer} still holds (note that \cite[Theorem 5.2]{MMMTZ2}
was formulated and proved in the ungraded setting)
and that there is a biequivalence
\begin{gather}\label{eq:ungraded1}
\cEnd_{\ccSH}(\CH)\simeq\cAHop.
\end{gather}
The proof of Theorem \ref{thm:double-centralizer} does not use the grading in an
essential way, so that theorem remains true in the ungraded setup. The arguments which
prove the existence of the biequivalence in \eqref{eq:ungraded1} in the ungraded case, are
exactly the same as the ones which prove Proposition \ref{proposition:bicomodules}.
The only difference is that there is no translation in the ungraded setting,
so the trivial $\mathbb{Z}$-cover of
$\cAH$ ``collapses'' to $\cAH$ itself.
Just as in the proof of Theorem \ref{theorem:main}, the double centralizer theorem and
the biequivalence in \eqref{eq:ungraded1} imply that there is a biequivalence
\begin{gather}\label{eq:ungradedmain}
\cSH\text{-}\mathrm{stmod}_{\mathcal{H}}\simeq\cAH\text{-}\mathrm{stmod},
\end{gather}
by Theorem \ref{theorem:strongH} (which also holds both in the graded and the ungraded
setting). This shows that the forgetful $2$-functor $F$ is indeed a biequivalence.

As in Corollary \ref{cor:gradedfiniteness}, the biequivalence in \eqref{eq:ungradedmain} implies
that, for any finite Coxeter type, there are finitely many equivalence classes
of simple transitive $2$-representations of $\cS$.


\section{Classification results}\label{appendix}


\subsection*{The asymptotic bicategory and its $2$-representations}\label{appendix.1}


Recall that, by Proposition \ref{prop:fusion}, $\cAH$ is a pivotal fusion bicategory
and thus, $\mathcal{H}$
is its only cell and all of its simple transitive $2$-representations are semisimple.
Further, the notion of a simple transitive $2$-representation agrees with the notion
of an indecomposable exact module category, which is often used as the terminology in \cite{BFO}, \cite{EGNO}, \cite{KO},
\cite{Os4}, \cite{Os2}, \cite{Os3} below.

Up to a handful of exceptions,
the asymptotic bicategory $\cAH$ comes in three flavors and for all of them
a classification of simple transitive $2$-representations is known, as we will summarize now (giving more details below).
Recall that $\cR\mathrm{ep}(G)=\cR\mathrm{ep}(G,\mathbb{C})$ is the pivotal fusion $2$-category of finite dimensional $G$-modules.
Let $\cV\mathrm{ect}(G)=\cV\mathrm{ect}_{\mathbb{C}}(G)$ denote the pivotal fusion $2$-category of finite dimensional $G$-graded $\mathbb{C}$-vector spaces
($\cV\mathrm{ect}=\cV\mathrm{ect}(1)$ are plain finite dimensional vector spaces), and
$\cS\cO(3)_{k}$ the pivotal fusion $2$-category of complex,
finite dimensional representations
of quantum $\mathfrak{so}_{3}$ semisimplified at level $k$ and without
twists, see e.g. \cite[Examples 2.3.4 and 8.18.5]{EGNO}.

\begin{enumerate}[label=(\alph*)]

\item \textit{Finite Weyl type (excluding $G_{2}$): generic case.}
Up to three exceptions in types $E_{7}$ and $E_{8}$, see $($b$)$,
for each two-sided cell $\mathcal{J}$ there exists a diagonal $\mathcal{H}$-cell
$\mathcal{H}$ such that
$\cAH\simeq\cV\mathrm{ect}\big((\mathbb{Z}/2\mathbb{Z})^{k}\big)$ for some $k\in\mathbb{N}$,
or $\cAH\simeq\cR\mathrm{ep}(G)$ for $G$ being $S_{3}$, $S_{4}$ or $S_{5}$.
This follows from (the arguments in the proof of) \cite[Theorem 4]{BFO}.

For all of these, the classification of the associated simple transitive $2$-representa\-tion works as follows.
Let $G$ be a finite group and let $\Omega(G)$ denote the set of subgroups of $G$ up to conjugacy,
$K$ a choice of representative of $[K]\in\Omega(G)$, and $H^{2}(K,\mathbb{C}^{\times})$ the second
group cohomology of $K$ with values in $\mathbb{C}^{\times}=\mathbb{C}\setminus\{0\}$, whose
non-trivial generators are called \textit{Schur multipliers}.
By e.g. \cite[Example 7.4.10 and Corollary 7.12.20]{EGNO}, we have
\begin{gather*}
\scalebox{0.95}{$
\left\{
\begin{gathered}
\text{equivalence classes of simple transitive}
\\
\text{$2$-representations of $\cV\mathrm{ect}(G)$ or $\cR\mathrm{ep}(G)$}
\end{gathered}
\right\}
\stackrel{1{:}1}{\longleftrightarrow}
\left\{
\begin{gathered}
\big([K],\varpi\big)\mid[K]\in\Omega(G),
\\
\varpi\in H^2(K,\mathbb{C}^{\times})
\end{gathered}
\right\}$}
.
\end{gather*}
The simple transitive $2$-representations of
$\cV\mathrm{ect}(G)$ have rank $\#G/\#K$ and the ones for $\cR\mathrm{ep}(G)$ are
the $\varpi$-twisted representation categories $\cR\mathrm{ep}^{\varpi}(K)$ (in particular,
their rank is equal to the rank of the character ring of $K$ for trivial $\varpi$).
Hence, we need to analyze the simple transitive $2$-representations of $\cV\mathrm{ect}(G)$ or
$\cR\mathrm{ep}(G)$, which are given
by (conjugacy classes of) subgroups of $K\subset G$, their number $\#$,
and the Schur multipliers in $H^2(K,\mathbb{C}^{\times})$ of these subgroups. We additionally
list their rank $\mathrm{rk}$. Listing the data that we need is easy
(calculating the subgroups and their numbers for $(\mathbb{Z}/2\mathbb{Z})^{k}$ is a pleasant
exercise, while the
Schur multipliers of these subgroups were already determined by Schur, see e.g. \cite[Theorem 4]{Ber}
for a more modern reference; the data for the other three cases, $S_{3}$, $S_{4}$ and $S_{5}$,
can be calculated by computer). In the row $\mathrm{rk}$ in \eqref{tables}, two entries
correspond to two different simple transitive 2-representations, one for each
Schur multiplier with the one for the trivial Schur multiplier listed first.
\begin{gather}\label{tables}
\begin{gathered}
\,
\xy
(0,0)*{
\renewcommand*{\arraystretch}{1.5}
\begin{tabular}{C||C}
K & (\mathbb{Z}/2\mathbb{Z})^l
\\
\hline
\text{\#} & \binom{k}{l}
\\
\hline
H^2 & (\mathbb{Z}/2\mathbb{Z})^{l(l-1)/2}
\\
\hline
\mathrm{rk} & k/l
\end{tabular}};
(0,17)*{\scalebox{0.75}{\fcolorbox{black}{gray!10}{$\cV\mathrm{ect}\big((\mathbb{Z}/2\mathbb{Z})^{k}\big)$}}};
(0,15)*{\phantom{.}};
(0,-15)*{\phantom{.}};
\endxy
\;,\qquad\quad
\xy
(0,0)*{
\renewcommand*{\arraystretch}{1.5}
\begin{tabular}{C||C|C|C|C}
K & \phantom{.}1\phantom{.} & \mathbb{Z}/2\mathbb{Z} & \mathbb{Z}/3\mathbb{Z} & S_{3}
\\
\hline
\text{\#} & 1 & 1 & 1 & 1
\\
\hline
H^2 & 1 & 1 & 1 & 1
\\
\hline
\mathrm{rk}  & 1 & 2 & 3 & 3
\end{tabular}};
(0,17)*{\text{\tiny\fcolorbox{black}{gray!10}{$\ccR\mathrm{ep}(S_{3})$}}};
(0,15)*{\phantom{.}};
(0,-15)*{\phantom{.}};
\endxy
,
\\
\xy
(0,0)*{
\renewcommand*{\arraystretch}{1.5}
\begin{tabular}{C||C|C|C|C|C|C|C|C|C}
K & \phantom{.}1\phantom{.} & \mathbb{Z}/2\mathbb{Z} & \mathbb{Z}/3\mathbb{Z} & \mathbb{Z}/4\mathbb{Z}
& (\mathbb{Z}/2\mathbb{Z})^2 & S_{3} & D_{4} & A_{4} & S_{4}
\\
\hline
\text{\#} & 1 & 2 & 1 & 1 & 2 & 1 & 1 & 1 & 1
\\
\hline
H^2 & 1 & 1 & 1 & 1 & \mathbb{Z}/2\mathbb{Z} & 1 & \mathbb{Z}/2\mathbb{Z} & \mathbb{Z}/2\mathbb{Z} & \mathbb{Z}/2\mathbb{Z}
\\
\hline
\mathrm{rk}  & 1 & 2 & 3 & 4 & 4,1 & 3 & 5,2 & 4,3 & 5,3
\end{tabular}};
(0,17)*{\text{\tiny\fcolorbox{black}{gray!10}{$\ccR\mathrm{ep}(S_{4})$}}};
(0,15)*{\phantom{.}};
(0,-15)*{\phantom{.}};
\endxy
,
\\
\scalebox{0.55}{$\xy
(0,0)*{
\renewcommand*{\arraystretch}{1.5}
\begin{tabular}{C||C|C|C|C|C|C|C|C|C|C|C|C|C|C|C|C}
K & \phantom{.}1\phantom{.} & \mathbb{Z}/2\mathbb{Z} & \mathbb{Z}/3\mathbb{Z} & \mathbb{Z}/4\mathbb{Z}
& (\mathbb{Z}/2\mathbb{Z})^2 & \mathbb{Z}/5\mathbb{Z} & S_{3} & \mathbb{Z}/6\mathbb{Z} & D_{4} & D_{5}
& A_{4} & D_{6} & GA(1,5) & S_{4} & A_{5} & S_{5}
\\
\hline
\text{\#} & 1 & 2 & 1 & 1 & 2 & 1 & 2 & 1 & 1 & 1 & 1 & 1 & 1 & 1 & 1 & 1
\\
\hline
H^2 & 1 & 1 & 1 & 1 & \mathbb{Z}/2\mathbb{Z} & 1 & 1 & 1 & \mathbb{Z}/2\mathbb{Z} & \mathbb{Z}/2\mathbb{Z} & \mathbb{Z}/2\mathbb{Z}
& \mathbb{Z}/2\mathbb{Z} & 1 & \mathbb{Z}/2\mathbb{Z} & \mathbb{Z}/2\mathbb{Z} & \mathbb{Z}/2\mathbb{Z}
\\
\hline
\mathrm{rk}  & 1 & 2 & 3 & 4 & 4,1 & 5 & 3 & 6 & 5,2 & 4,2 & 4,3
& 6,3 & 5 & 5,3 & 5,4 & 7,5
\end{tabular}};
(0,17)*{\text{\tiny\fcolorbox{black}{gray!10}{$\ccR\mathrm{ep}(S_{5})$}}};
(0,15)*{\phantom{.}};
(0,-15)*{\phantom{.}};
\endxy$}
.
\end{gathered}
\end{gather}
(Here $GA(1,5)$ is the general affine group of rank one over $\mathbb{F}_{5}$.)

\item \textit{Finite Weyl type: exceptional case.} Type $E_{7}$ contains one
and type $E_{8}$ two so-called \emph{exceptional cells}. For these, by \cite[Theorem 1.1]{Os3}, we have
$\cAH\simeq\cV\mathrm{ect}^{\varsigma}(\mathbb{Z}/2\mathbb{Z})$,
having its $2$-structure twisted by the non-trivial element
$\varsigma$ in the third group cohomology group
$H^3(\mathbb{Z}/2\mathbb{Z},\mathbb{C}^{\times})\cong\mathbb{Z}/2\mathbb{Z}$. In this case $\cAH$ has only one associated simple
transitive $2$-representation, which is the regular $2$-representation of rank $2$, see e.g. \cite[Theorem 3.1]{Os4}.

\item \textit{Finite dihedral type (including $G_{2}$).} We have $\cAH\simeq\cV\mathrm{ect}$
for the cells containing the identity element or the longest element, or
$\cAH\simeq\cS\cO(3)_{k}$ for the middle cell, by
\cite[Theorem 2.15]{El1}. By e.g. \cite[Theorem 6.1]{KO} and \cite[Theorem 6.1]{Os2}, we have
\begin{gather}\label{dihedral}\hspace*{0.5cm}
\Bigg\{
\begin{gathered}
\text{equivalence classes of simple transitive}
\\
\text{$2$-representations of $\cS\cO(3)_{k}$}
\end{gathered}
\Bigg\}
\stackrel{1{:}1}{\longleftrightarrow}
\Bigg\{
\begin{gathered}
\text{bicolored ADE diagrams}
\\
\text{with Coxeter number $k+2$}
\end{gathered}
\Bigg\}
.
\end{gather}
The corresponding simple transitive $2$-representations have rank equal
to the number of vertices of the associated ADE diagram.

\item \textit{Types $H_{3}$ and $H_{4}$.} We do not know what $\cAH$ is in general.
For details see below.

\end{enumerate}


\subsection*{What the main theorem covers}\label{appendix.last}


Recall that any (graded) simple transitive $2$-repre\-sentation of $\cS$ has an apex $\mathcal{J}$ in $W$.
Then, Theorem \ref{theorem:main} together with Theorem \ref{theorem:strongH}, implies that
\begin{gather*}
\cS\text{-}\mathrm{gstmod}_{\mathcal{J}}\simeq
\cAH\text{-}\mathrm{stmod}^{\prime}
\end{gather*}
holds for any diagonal $\mathcal{H}$-cell $\mathcal{H}\subset\mathcal{J}$. Thus, we get
\begin{gather*}
\left\{
\begin{gathered}
\text{equivalence classes of (graded)}
\\
\text{simple transitive $2$-representations}
\\
\text{of $\cS$ with apex $\mathcal{J}$}
\end{gathered}
\right\}
\stackrel{1{:}1}{\longleftrightarrow}
\left\{
\begin{gathered}
\text{equivalence classes of}
\\
\text{simple transitive $2$-representations}
\\
\text{of $\cAH$ (including grading shifts)}
\end{gathered}
\right\}
.
\end{gather*}
(Note that Corollary \ref{cor5.42} also gives us the rank of
the simple transitive $2$-representa\-tions of $\cSH$
associated to the ones from $\cAH$. However, the corresponding
simple transitive $2$-representations
for $\cS$ will have greater rank in general.)
Thus, the above shows that
only certain cells in Coxeter types $H_{3}$ and $H_{4}$ -- most prominently,
the cell \eqref{eq:h4} in type $H_{4}$ given below -- would remain open
with respect to a complete classification of (graded) simple transitive $2$-representations of $\cS$.
For all other cases, Theorem \ref{theorem:main} gives a complete classification and parametrization of the (graded) simple
transitive $2$-representations of $\cS$, as we will summarize now.


\subsection*{What we cover}\label{appendix.0}


Let us give some details of what is covered by the results in this paper.
We say that a
type is \emph{``done''} if we can identify $\cAH$
and parameterize its simple transitive $2$-representations for at least one diagonal
$\mathcal{H}$-cell $\mathcal{H}\subset\mathcal{J}$.

For this purpose, we use what we call a \emph{full cell matrix}:
\begin{gather}\label{eq:cellmatrix}
\renewcommand{\arraystretch}{1.25}
\scalebox{0.75}{$\begin{tabular}{|C|C|C|C|C|}
\hline
\cellcolor{blue!25} 4_{5,5} & 1_{5,5} & 1_{5,20} & 2_{5,25} & 2_{5,25}
\\
\hline
1_{5,5} & \cellcolor{blue!25} 4_{5,5} & 1_{5,20} & 2_{5,25} &  2_{5,25}
\\
\hline
1_{20,5} & 1_{20,5} & \cellcolor{blue!25} 4_{20,20} & 2_{20,25} & 2_{20,25}
\\
\hline
2_{25,5} & 2_{25,5} & 2_{25,20} & \cellcolor{blue!25} 4_{25,25} & 1_{25,25}
\\
\hline
2_{25,5} & 2_{25,5} & 2_{25,20} & 1_{25,25} & \cellcolor{blue!25} 4_{25,25}
\\
\hline
\end{tabular}$}.
\end{gather}
Here we indicate the number of elements in
left or right cells, where e.g. $2_{20,25}$ is to be understood as
a $20$-by-$25$ matrix containing only the entry $2$ (thus, having $1000$ elements),
i.e. it is $2Id_{20,25}$, where $Id_{20,25}$ is the $20$-by-$25$ identity matrix.
The shaded boxes are (matrices of) diagonal $\mathcal{H}$-cells.

The full cell matrices are block matrices, but we also view them as matrices
containing only the scalars $n$ in $nId_{20,25}$ and call them \emph{cell matrices}.
The difference between the two is that
the cell matrix actually encodes equivalence classes of cell $2$-representation:
each column of the full cell matrix corresponds to a (left) cell $2$-representation,
while the columns of the cell matrix itself correspond to (left) cell $2$-representation up to equivalence.

Here are a few more examples, which have appeared under various names in the
literature; the first and second cases are $k=0$ respectively $k=1$ below.
\begin{gather}\label{eq:matrix-ab}
\begin{aligned}
\text{strongly regular:}\quad&
\scalebox{0.75}{$\fcolorbox{black}{blue!25}{$1_{a,a}$}$},
&\cAH&\simeq\cV\mathrm{ect},
\\[2pt]
\hline
\\[-13pt]
\text{nice:}\quad&
\renewcommand{\arraystretch}{1.25}
\scalebox{0.75}{$\begin{tabular}{|C|C|}
\hline
\cellcolor{blue!25} 2_{b,b} & 1_{c,b}
\\
\hline
1_{b,c} & \cellcolor{blue!25} 2_{c,c}
\\
\hline
\end{tabular}$},
&\cAH&\simeq\cV\mathrm{ect}(\mathbb{Z}/2\mathbb{Z}),
\\[2pt]
\hline
\\[-13pt]
\text{exceptional:}\quad&
\scalebox{0.75}{$\fcolorbox{black}{blue!25}{$2_{d,d}$}$},
&\cAH&\simeq\cV\mathrm{ect}^{\varsigma}(\mathbb{Z}/2\mathbb{Z}),
\\[2pt]
\hline
\\[-13pt]
\text{dihedral:}\quad&
\begin{aligned}
&\renewcommand{\arraystretch}{1.25}
\scalebox{0.75}{$\begin{tabular}{|C|C|}
\hline
\cellcolor{blue!25} \tfrac{m{-}1}{2} & \tfrac{m{-}1}{2}
\\
\hline
\tfrac{m{-}1}{2} & \cellcolor{blue!25} \tfrac{m{-}1}{2}
\\
\hline
\end{tabular}$},
&m\text{ odd},
\\[0pt]
&\renewcommand{\arraystretch}{1.25}
\scalebox{0.75}{$\begin{tabular}{|C|C|}
\hline
\cellcolor{blue!25} \tfrac{m}{2} & \tfrac{m{-}2}{2}
\\
\hline
\tfrac{m{-}2}{2} & \cellcolor{blue!25} \tfrac{m}{2}
\\
\hline
\end{tabular}$},
&m\text{ even},
\end{aligned}
&\cAH&\simeq\cS\cO(3)_{m-2},
\end{aligned}
\end{gather}
where $a^2$, $2(b^2+c^2+bc)$, $2d^2$ or $2(m-1)$, respectively, is the size of the cell in question.
(Knowing the size of the cells, one can recover $a,b,c,d$ since there is always a unique solution
in positive integers.)
Note for example that in the nice case there are only two columns in the
cell matrix, thus, only two cell $2$-representations up to equivalence.
However, there are $b+c$ cell $2$-representations as encoded in the columns of the full cell matrix.

\subsubsection*{Type $A_n$}\label{appendix.2}

This type is \textbf{done} for all $n$, since all
cells are strongly regular. In this case there is one equivalence class of simple transitive $2$-representation per
two-sided cell, all of which are cell $2$-representations.

\subsubsection*{Type $B_n$}\label{appendix.3}

This type is \textbf{done} for all $n$:

\begin{enumerate}[label=(\roman*)]

\item For all $\mathcal{H}$, we have $G=(\mathbb{Z}/2\mathbb{Z})^k$ for
some $k\in\mathbb{N}$ with $k(k+1)\leq n$. The classification in this case is given by \eqref{tables}.

\item The diagonal of the cell matrix is $2^k$, all other entries are $2^{l}$ for $l<k$.

\item Type $B_{5}$ is the first case not covered by previous classification results.

\item $B_{6}$ is the smallest example (i.e. the example of smallest rank)
in classical type where we have a non-cell simple transitive
$2$-representation; see below.

\end{enumerate}

Type $B_{5}$ is:
\begin{gather*}
\renewcommand{\arraystretch}{1.25}
\scalebox{0.75}{$\begin{tabular}{|C||C|C|C|C|C|C|C|C|C|C|C|C|C|C|C|C|}
\hline
\text{cell} & 0 & 1 & 2 & 3 & 4 & 5 & 6 & 7 & 7' & 6'
& 5' & 4' & 3' & 2' & 1' & 0' \\
\hline
\hline
\text{size} & 1 & 42 & 150 & 100 & 225 & 152 & 600 & 650 & 650 & 600 & 152 & 225 & 100 & 150 & 42 & 1 \\
\hline
\hline
\mathbf{a} & 0 & 1 & 2 & 3 & 3 & 4 & 4 & 5 & 6 & 7 & 9 & 10 & 10 & 11 & 16 & 25 \\
\hline
\hline
\cAH & 0 & 1 & 1 & 0 & 0 & 1 & 1
& 1 & 1 & 1 & 1 & 0 & 0
& 1 & 1 & 0 \\
\hline
\end{tabular}$}
\end{gather*}
Here and throughout: from left to right, we have listed the numbered cells,
paired $\mathcal{J}\leftrightsquigarrow\mathcal{J}^{\prime}=\mathcal{J}w_{0}$ (with $0$ being the minimal containing $1$
and $0'$ the maximal cell containing $w_{0}$). From top to bottom, we have listed their sizes, the $\mathbf{a}$-values, and $\cAH$, where
the number $k$ means that $\cAH\simeq\cV\mathrm{ect}(G)$ for $G=(\mathbb{Z}/2\mathbb{Z})^{k}$.

Type $B_{6}$ is:
\begin{gather*}
\renewcommand{\arraystretch}{1.25}
\scalebox{0.475}{$\begin{tabular}{|C||C|C|C|C|C|C|C|C|C|C|C|C|C|C|C|C|C|C|C|C|C|C|C|C|C|C|C|}
\hline
\text{cell} & 0 & 1 & 2 & 3 & 4 & 5 & 6 & 7
& 8 & 9 & 10 & 11 & 12{=}12' & 13{=}13'
& 11' & 10' & 9' & 8' & 7' & 6'
& 5' & 4' & 3' & 2' & 1' & 0' \\
\hline
\hline
\text{size} & 1 & 62 & 342 & 576 & 650 & 3150 & 350 & 1600 & 2432 & 3402
& 900 & 2025 & 14500 & 600 & 2025 & 900 & 3402 & 2432 & 1600 & 350
& 3150 & 650 & 576 & 342 & 62 & 1 \\
\hline
\hline
\mathbf{a} & 0 & 1 & 2 & 3 & 3 & 4 & 4 & 5 & 5 & 6
& 6 & 6 & 7 & 9 & 10 & 10 & 10 & 11 & 11 & 16
& 12 & 15 & 17 & 18 & 25 & 36 \\
\hline
\hline
\cAH & 0 & 1 & 1 & 0 & 1
& 1 & 1 & 0 & 1 & 1 & 0
& 0 & 2 & 1 & 0 & 0 & 1 & 1 & 0 & 1
& 1 & 1 & 0 & 1 & 1 & 0
\\
\hline
\end{tabular}$}
\end{gather*}

The cell $12$ is displayed in \eqref{eq:cellmatrix}. In this case, we have
$G=(\mathbb{Z}/2\mathbb{Z})^2$, which has the (non-conjugate) subgroups $1,K_{1},K_{2},K_{3},G$.
The subgroups $1$ and $K_{1}\cong K_{2}\cong K_{3}\cong\mathbb{Z}/2\mathbb{Z}$ all have trivial
second group cohomology, but
$H^2(G,\mathbb{C}^{\times})\cong\mathbb{Z}/2\mathbb{Z}$. Thus, we have six
equivalence classes of (graded) simple transitive $2$-representations of ranks $1$, $1$,
$2$, $2$, $2$ and $4$, respectively, see \eqref{tables}.
It follows from Theorem \ref{theorem:main}
that this case gives a non-cell simple transitive $2$-representation. The same happens repeatedly for higher ranks.

\subsubsection*{Type $D_n$}\label{appendix.4}

This type is \textbf{done} for all $n$:

\begin{enumerate}[label=(\roman*)]

\item[$($i$)${,}$($ii$)$] As for type $B_n$, but with $(k+1)^2\leq n$.

\item[$($iii$)$] Type $D_{4}$ is the first case not covered by previous results.

\item[$($iv$)$] In type $D_9$ the first non-cell simple transitive $2$-representations appear.

\end{enumerate}

\subsubsection*{Type $I_{2}(m)$}\label{appendix.5}

This type is \textbf{done} for all $m>2$:

\begin{enumerate}[label=(\roman*)]

\item Every $\mathcal{J}$ contains a $w_{0}^{\mathtt{I}}$; there are only three two-sided cells.

\item The bottom and top cell are strongly regular.

\item The middle cell has a dihedral cell matrix, see \eqref{eq:matrix-ab}, and the classification is given in \eqref{dihedral}.
This is the smallest example with
non-cell simple transitive $2$-representations, starting from type $I_{2}(6)=G_{2}$.

\end{enumerate}

\subsubsection*{Type $E_{6}$}\label{appendix.9}

This type is \textbf{done}:
\begin{gather*}
\renewcommand{\arraystretch}{1.25}
\scalebox{0.55}{$\begin{tabular}{|C||C|C|C|C|C|C|C|C|C|C|C|C|C|C|C|C|C|}
\hline
\text{cell} & 0 & 1 & 2 & 3 & 4 & 5 & 6 & 7
& 8{=}8' & 7' & 6' & 5' & 4' & 3'
& 2' & 1' & 0' \\
\hline
\hline
\text{size} & 1 & 36 & 400 & 1350 & 4096 & 3600 & 6561 & 576 & 18600 & 576 & 6561 & 3600 & 4096 & 1350 & 400 & 36 & 1 \\
\hline
\hline
\mathbf{a} & 0 & 1 & 2 & 3 & 4 & 5 & 6 & 6
& 7 & 12 & 10 & 11 & 13 & 15
& 20 & 25 & 36 \\
\hline
\hline
\cAH & 0 & 0 & 0 & 1 & 0
& 0 & 0 & 0 & \eqref{eq:e6appendix} & 0 & 0
& 0 & 0 & 1 & 0 & 0 & 0 \\
\hline
\end{tabular}$}
.
\end{gather*}
Here we write $k$ for $\cAH\simeq\cV\mathrm{ect}(G)$ with $G=(\mathbb{Z}/2\mathbb{Z})^{k}$.
The corresponding cells are strongly regular or nice, except
\begin{gather}\label{eq:e6appendix}
\renewcommand{\arraystretch}{1.25}
\scalebox{0.75}{$\begin{tabular}{|C|C|C|}
\hline
\cellcolor{blue!25} 3_{10,10} & 2_{50,10} & 1_{20,10}
\\
\hline
2_{10,50} & \cellcolor{blue!25} 3_{50,50} & 3_{20,50}
\\
\hline
1_{10,20} & 3_{50,20} & \cellcolor{blue!25} 6_{20,20}
\\
\hline
\end{tabular}$}
,\quad
\fcolorbox{black}{blue!25}{$3_{10,10}$}\colon
\cAH\simeq\cR\mathrm{ep}(S_{3}).
\end{gather}
Again, we get non-cell simple transitive $2$-representations, see \eqref{tables}.

\subsubsection*{Type $E_{7}$}\label{appendix.10}

This type is \textbf{done}, and it is quite similar to type $E_{6}$:
\begin{gather*}
\renewcommand{\arraystretch}{1.25}
\scalebox{0.55}{$\begin{tabular}{|C||C|C|C|C|C|C|C|C|C|C|C|C|C|C|C|C|C|C|}
\hline
\text{cell} & 0 & 1 & 2 & 3 & 4 & 5 & 6 & 7
& 8 & 9 & 10 & 11 & 12 & 13
& 14 & 15 & 16 & 17 \\
\hline
\hline
\text{size} & 1 & 49 & 729 & 4802 & 441 & 25650 & 35721 & 44100 & 11025 & 28224 & 35721 & 262150 & 246402 & 142884 & 44100 & 296352 & 11025 & 524288 \\
\hline
\hline
\mathbf{a} & 0 & 1 & 2 & 3 & 3 & 4 & 5 & 6 & 6 & 6 & 7 & 7 & 8 & 9 & 10 & 10 & 12 & 11 \\
\hline
\hline
\cAH & 0 & 0 & 0 & 1 & 0
& 1 & 0 & 0 & 0 & 0 & 0
& \eqref{eq:e6appendix} & 1 & 0 & 0 & 1 & 0 & \varsigma \\
\hline
\end{tabular}$}
\end{gather*}
\begin{gather*}
\scalebox{0.55}{$\begin{tabular}{|C||C|C|C|C|C|C|C|C|C|C|C|C|C|C|C|C|C|}
\hline
\text{cell} & 16' & 15' & 14' & 13' & 12' & 11' & 10'
& 9' & 8' & 7' & 6' & 5' & 4'
& 3' & 2' & 1' & 0' \\
\hline
\hline
\text{size} & 11025 & 296352 & 44100 & 142884 & 246402 & 262150 & 35721 & 28224 & 11025 & 44100 & 35721 & 25650 & 441 & 4802 & 729 & 49 & 1\\
\hline
\hline
\mathbf{a} & 15 & 13 & 13 & 14 & 15 & 16 & 20 & 21 & 21 & 21 & 22 & 25 & 36 & 30 & 37 & 46 & 63\\
\hline
\hline
\cAH & 0 & 1 & 0 & 0 & 1
& \eqref{eq:e6appendix} & 0 & 0 & 0 & 0 & 0
& 1 & 0 & 1 & 0 & 0 & 0 \\
\hline
\end{tabular}$}
.
\end{gather*}
Cell $17$ is exceptional with
$\cAH\simeq\cV\mathrm{ect}^{\varsigma}(\mathbb{Z}/2\mathbb{Z})$, see \cite[Theorem 1.1]{Os3}.
The cells $11$ and $11'$ are as in \eqref{eq:e6appendix} (with diagonals $3_{70,70}$, $3_{210,210}$
and $6_{35,35}$), giving
non-cell simple transitive $2$-representations. All
other cells are strongly regular or nice.

\subsubsection*{Type $E_{8}$}\label{appendix.11}

This type is \textbf{done}, and it is similar to type $E_{7}$:
\begin{gather*}
\renewcommand{\arraystretch}{1.25}
\scalebox{0.35}{$\begin{tabular}{|C||C|C|C|C|C|C|C|C|C|C|C|C|C|C|C|C|C|C|C|C|C|C|C|}
\hline
\text{cell} & 0 & 1 & 2 & 3 & 4 & 5 & 6 & 7
& 8 & 9 & 10 & 11 & 12 & 13
& 14 & 15 & 16 & 17 & 18 & 19 & 20 & 21 & 22{=}22' \\
\hline
\hline
\text{size} & 1 & 64 & 1225 & 20384 & 72200 & 313600 & 321489 & 740000 & 4986240 & 5696250
& 10497600 & 7768224 & 7683200 & 33554432 & 275625 & 29635200
& 12740000 & 20575296 & 8037225 & 36905625 & 17640000 & 47360000 & 4410000 \\
\hline
\hline
\mathbf{a} & 0 & 1 & 2 & 3 & 4 & 5 & 6 & 6 & 7 & 8
& 9 & 10 & 10 & 11 & 12 & 12
& 13 & 13 & 14 & 14 & 15 & 15 & 20 \\
\hline
\hline
\cAH & 0 & 0 & 0 & 1 & 1
& 0 & 0 & 1 & \eqref{eq:e6appendix} & \eqref{eq:e6appendix} & 0
& 1 & 1 & \varsigma & 0 & 1 & 1 & 0
& 0 & 0 & 0 & 1 & 0 \\
\hline
\end{tabular}$}
\end{gather*}
\begin{gather*}
\renewcommand{\arraystretch}{1.25}
\scalebox{0.35}{$\begin{tabular}{|C||C|C|C|C|C|C|C|C|C|C|C|C|C|C|C|C|C|C|C|C|C|C|C|}
\hline
\text{cell} & 23{=}23' & 21' & 20' & 19' & 18' & 17' & 16' & 15' & 14'
& 13' & 12' & 11' & 10' & 9' & 8'
& 7' & 6' & 5' & 4' & 3' & 2' & 1' & 0' \\
\hline
\hline
\text{size} & 202671840 & 47360000 & 17640000 & 36905625 & 8037225 & 20575296 & 12740000
& 29635200 & 275625 & 33554432 & 7683200 & 7768224 & 10497600 & 5696250 & 4986240 & 740000 & 321489 & 313600
& 72200 & 20384 & 1225 & 64 & 1 \\
\hline
\hline
\mathbf{a} & 16 & 21 & 21 & 22 & 22 & 23 & 25
& 24 & 36 & 26 & 28 & 30 & 31 & 32 & 37 & 42 & 46 & 47
& 52 & 63 & 74 & 91 & 120 \\
\hline
\hline
\cAH & \eqref{eq:biggestcell} & 1 & 0 & 0 & 0
& 0 & 1 & 1 & 0 & \varsigma & 1
& 1 & 0 & \eqref{eq:e6appendix} & \eqref{eq:e6appendix} & 1 & 0 & 0
& 1 & 1 & 0 & 0 & 0 \\
\hline
\end{tabular}$}
,
\end{gather*}
As before, the majority of cells are strongly regular or nice.
We also have exceptional cells with
$\cAH\simeq\cV\mathrm{ect}^{\varsigma}(\mathbb{Z}/2\mathbb{Z})$, see \cite[Theorem 1.1]{Os3},
and also cells as in \eqref{eq:e6appendix} (with diagonals $3_{448,448}$, $3_{896,896}$
and $6_{56,56}$, or $3_{175,175}$, $3_{875,875}$
and $6_{350,350}$), giving
non-cell, simple transitive $2$-representations. There is one remaining cell
with $\cAH\simeq\cR\mathrm{ep}(S_{5})$ for appropriate $\mathcal{H}$,
giving again non-cell simple transitive $2$-representations. Its cell matrix is
\begin{gather}\label{eq:biggestcell}
\begin{gathered}
\scalebox{0.8}{$\begin{tabular}{|C|C|C|C|C|C|C|}
\hline
\cellcolor{blue!25}\mathbf{7}_{420,420} & \mathbf{5}_{756,420} & \mathbf{6}_{1596,420} &
\mathbf{5}_{168,420} & \mathbf{3}_{378,420} & \mathbf{4}_{1092,420} & \mathbf{2}_{70,420}
\\
\hline
\mathbf{5}_{420,756} & \cellcolor{blue!25}\mathbf{8}_{756,756} & \mathbf{7}_{1596,756} &
\mathbf{7}_{168,756} & \mathbf{8}_{378,756} & \mathbf{8}_{1092,756} & \mathbf{7}_{70,756}
\\
\hline
\mathbf{6}_{420,1596} & \mathbf{7}_{756,1596} & \cellcolor{blue!25}\mathbf{12}_{1596,1596} &
\mathbf{8}_{168,1596} & \mathbf{9}_{378,1596} & \mathbf{13}_{1092,1596} & \mathbf{11}_{70,1596}
\\
\hline
\mathbf{5}_{420,168} & \mathbf{7}_{756,168} & \mathbf{8}_{1596,168} &
\cellcolor{blue!25}\mathbf{12}_{168,168} & \mathbf{7}_{378,168} & \mathbf{12}_{1092,168} & \mathbf{12}_{70,168}
\\
\hline
\mathbf{3}_{420,378} & \mathbf{8}_{756,378} & \mathbf{9}_{1596,378} &
\mathbf{7}_{168,378} & \cellcolor{blue!25}\mathbf{15}_{378,378} & \mathbf{14}_{1092,378} & \mathbf{19}_{70,378}
\\
\hline
\mathbf{4}_{420,1092} & \mathbf{8}_{756,1092} & \mathbf{13}_{1596,1092} &
\mathbf{12}_{168,1092} & \mathbf{14}_{378,1092} & \cellcolor{blue!25}\mathbf{21}_{1092,1092} & \mathbf{24}_{70,1092}
\\
\hline
\mathbf{2}_{420,70} & \mathbf{7}_{756,70} & \mathbf{11}_{1596,70} &
\mathbf{12}_{168,70} & \mathbf{19}_{378,70} & \mathbf{24}_{1092,70} & \cellcolor{blue!25}\mathbf{39}_{70,70}
\\
\hline
\end{tabular}$}
,
\\
\fcolorbox{black}{blue!25}{$7_{420,420}$}\colon
\cAH\cong\cR\mathrm{ep}(S_{5}).
\end{gathered}
\end{gather}
The simple transitive $2$-representations of $\cR\mathrm{ep}(S_{5})$ can be obtained from \eqref{tables}.

\subsubsection*{Type $F_{4}$}\label{appendix.7}

This type is \textbf{done}:
\begin{gather*}
\renewcommand{\arraystretch}{1.25}
\scalebox{0.75}{$\begin{tabular}{|C||C|C|C|C|C|C|C|C|C|C|C|}
\hline
\text{cell} & 0 & 1 & 2 & 3 & 4 & 5{=}5' & 4' & 3' & 2' & 1' & 0' \\
\hline
\hline
\text{size} & 1 & 24 & 81 & 64 & 64 & 684 & 64 & 64 & 81 & 24 & 1 \\
\hline
\hline
\mathbf{a} & 0 & 1 & 2 & 3 & 3 & 4 & 9 & 9 & 10 & 13 & 24 \\
\hline
\hline
\cAH & 0 & 1 & 0 & 0 & 0 & \eqref{eq:f4} & 0
& 0 & 0 & 1 & 0 \\
\hline
\end{tabular}$}
\end{gather*}
where we write $k$
for $\cAH\simeq\cV\mathrm{ect}(G)$ with $G=(\mathbb{Z}/2\mathbb{Z})^{k}$, as before,
and the cell matrices are strongly regular or nice, see
\eqref{eq:matrix-ab}.
In the remaining case we have (for appropriate $\mathcal{H}$):
\begin{gather}\label{eq:f4}
\renewcommand{\arraystretch}{1.25}
\scalebox{0.75}{$\begin{tabular}{|C|C|C|C|C|}
\hline
\cellcolor{blue!25} 5_{3,3} & 3_{3,3} & 4_{3,4} & 5_{3,1} & 2_{3,1}\\
\hline
3_{3,3} & \cellcolor{blue!25} 5_{3,3} & 4_{3,4} & 2_{3,1} & 5_{3,1}\\
\hline
4_{4,3} & 4_{4,3} & \cellcolor{blue!25} 9_{4,4} & 6_{4,1} & 6_{4,1}\\
\hline
5_{1,3} & 2_{1,3} & 6_{1,4} & \cellcolor{blue!25} 9_{1,1} & 3_{1,1}\\
\hline
2_{1,3} & 5_{1,3} & 6_{1,4} & 3_{1,1} & \cellcolor{blue!25} 9_{1,1}\\
\hline
\end{tabular}$}
,\quad
\fcolorbox{black}{blue!25}{$5_{3,3}$}\colon
\cAH\simeq\cR\mathrm{ep}(S_{4}).
\end{gather}
For the list of equivalence classes of simple transitive
$2$-representations of $\cR\mathrm{ep}(S_{4})$, see \eqref{tables}.
This is the second smallest example in Weyl type with a
non-cell simple transitive $2$-representation.

\subsubsection*{Type $H_{3}$}\label{appendix.6}

This type needs \textbf{more} work:
\begin{gather*}
\renewcommand{\arraystretch}{1.25}
\scalebox{0.75}{$\begin{tabular}{|C||C|C|C|C|C|C|C|}
\hline
\text{cell} & 0 & 1 & 2 & 3{=}3' & 2' & 1' & 0' \\
\hline
\hline
\text{size} & 1 & 18 & 25 & 32 & 25 & 18 & 1 \\
\hline
\hline
\mathbf{a} & 0 & 1 & 2 & 3 & 5 & 6 & 15 \\
\hline
\hline
\cAH & (a) & (b) & (a) & \cellcolor{red!25}(c) & (a) & \cellcolor{red!25}(b) & (a) \\
\hline
\end{tabular}$}
\end{gather*}

\begin{enumerate}[label=(\alph*)]	

\item These cases are strongly regular two-sided cells.

\item In these cases, the cell is $2_{3,3}$, and the Grothendieck rings of $\cAH$
and $\cS\cO(3)_{3}$ coincide.

\item Cell $3$ is $2_{4,4}$, and the Grothendieck rings of $\cAH$
and $\cV\mathrm{ect}(\mathbb{Z}/2\mathbb{Z})$ coincide.
\end{enumerate}

By \cite[Subsection 2.5]{Os1}, the only two possibilities for (b) are $\cAH\simeq\cS\cO(3)_{3}$ or
$\cAH\simeq\mathcal{M}(2,5)$ (in the notation of Ostrik).
Similarly, by \cite[Subsection 2.4]{Os1}, the only two possibilities for (c) are
$\cAH\simeq\cV\mathrm{ect}(\mathbb{Z}/2\mathbb{Z})$ or
$\cAH\simeq\cV\mathrm{ect}^{\varsigma}(\mathbb{Z}/2\mathbb{Z})$.
However, only in the case of the cell $1$ do we know which option it is, namely $\cAH\simeq\cS\cO(3)_{3}$,
since this case is covered by \cite[Theorem 28]{KMMZ}. In this case the classification is as in \eqref{dihedral}.

\subsubsection*{Type $H_{4}$}\label{appendix.8}

This type needs \textbf{much more} work:
\begin{gather*}
\renewcommand{\arraystretch}{1.25}
\scalebox{0.75}{$\begin{tabular}{|C||C|C|C|C|C|C|C|C|C|C|C|C|C|}
\hline
\text{cell} & 0 & 1 & 2 & 3 & 4 & 5 & 6{=}6' & 5' & 4' & 3' & 2' & 1' & 0' \\
\hline
\hline
\text{size} & 1 & 32 & 162 & 512 & 625 & 1296 & 9144 & 1296 & 625 & 512 & 162 & 32 & 1 \\
\hline
\hline
\mathbf{a} & 0 & 1 & 2 & 3 & 4 & 5 & 6 & 15 & 16 & 18 & 22 & 31 & 60 \\
\hline
\hline
\cAH & (a) & (b) & \cellcolor{red!25}(b) & \cellcolor{red!25}(c) & (a) & (a) & \cellcolor{red!25}\eqref{eq:h4}
& (a) & (a) & \cellcolor{red!25}(c) & \cellcolor{red!25}(b) & \cellcolor{red!25}(b) & (a) \\
\hline
\end{tabular}$}
\end{gather*}
(a),(b),(c) are similar to (a),(b),(c) in type $H_{3}$, and the same remark as in (d) holds.
In the remaining case we have:
\begin{gather}\label{eq:h4}
\renewcommand{\arraystretch}{1.25}
\scalebox{0.75}{$\begin{tabular}{|C|C|C|}
\hline
\cellcolor{blue!25} 14_{8,8} & 13_{10,8} & 14_{6,8} \\
\hline
13_{8,10} & \cellcolor{blue!25} 18_{10,10} & 18_{6,10} \\
\hline
14_{8,6} & 18_{10,6} & \cellcolor{blue!25} 24_{6,6} \\
\hline
\end{tabular}$}
.
\end{gather}
We were not able to find $\cAH$ in the literature.
In fact, for none of the diagonal $\mathcal{H}$-cells do we know what $\cAH$ is;
we only know the multiplication tables of their Grothendieck rings with respect to the
asymptotic Kazhdan--Lusztig bases $\{a_{w}\mid w\in\mathcal{H}\}$, see also \cite{Al}.
For example, if $\mathcal{H}$ is in the
$14$-by-$14$ block, then the Grothendieck ring of $\cAH$ is not
commutative, $\cAH$ has Perron--Frobenius
dimension $120(9+4\sqrt{5})$ and a simple generating $1$-morphism of Perron--Frobenius
dimension $1+\sqrt{5}$ and fusion graph
\begin{gather*}
\begin{tikzpicture}[anchorbase,scale=.8]
\draw[black] (0,1) to (.95,.31) to (.59,-.81) to (-.59,-.81) to (-.95,.31) to (0,1);
\draw[black] (0,1) to (.36,2.12) to (1.54,2.12) to (1.9,1) to (.95,.31);
\draw[black] (-.95,.31) to (-1.95,.31) to (-2.95,.31) to (-3.95,.31) node[above]{$*$};
\draw[black] (1.9,1) to (2.9,1) to (3.9,1) to (4.9,1);
\draw[black] (-.95,.31) to[out=135,in=180] (-.95,.91) to[out=0,in=45] (-.95,.31);
\draw[black] (-1.95,.31) to[out=135,in=180] (-1.95,.91) to[out=0,in=45] (-1.95,.31);
\draw[black] (.95,.31) to[out=135,in=180] (.95,.91) to[out=0,in=45] (.95,.31);
\draw[black] (0,1) to[out=225,in=180] (0,.4) to[out=0,in=315] (0,1);
\draw[black] (1.9,1) to[out=225,in=180] (1.9,.4) to[out=0,in=315] (1.9,1);
\draw[black] (2.9,1) to[out=225,in=180] (2.9,.4) to[out=0,in=315] (2.9,1);
\node at (0,1) {$\bullet$};
\node at (.95,.31) {$\bullet$};
\node at (-.95,.31) {$\bullet$};
\node at (.59,-.81) {$\bullet$};
\node at (-.59,-.81) {$\bullet$};
\node at (1.9,1) {$\bullet$};
\node at (1.54,2.12) {$\bullet$};
\node at (.36,2.12) {$\bullet$};
\node at (-1.95,.31) {$\bullet$};
\node at (-2.95,.31) {$\bullet$};
\node at (-3.95,.31) {$\bullet$};
\node at (2.9,1) {$\bullet$};
\node at (3.9,1) {$\bullet$};
\node at (4.9,1) {$\bullet$};
\end{tikzpicture}.
\end{gather*}

\begin{remark}
Note that the two-sided cell $6$ in type $H_{4}$, whose structure is detailed in \eqref{eq:h4},
contains a diagonal $\mathcal{H}$-cell
for which the fusion algebra $\mathsf{A}_{\mathcal{H}}$ is non-commutative. This implies, of course,
that the pivotal fusion category $\cAH$, whose precise structure
is unknown, is not braided. For completeness, we list all non-commutative $\mathsf{A}_{\mathcal{H}}$ in all finite Coxeter types below:
\begin{enumerate}[\textbullet]

\item In types $E_{6}$, the $\mathcal{H}$ of order 6 in the two-sided cell $8$.

\item In type $E_{7}$, the $\mathcal{H}$ of order 6 in the two-sided cells $11$ and $11^{\prime}$.

\item In type $E_{8}$,  the $\mathcal{H}$ of order 6 in the two-sided cells
$8,8^{\prime},9,9^{\prime}$, and the $\mathcal{H}$ of order $>8$
in the two-sided cell $23$.

\item In type $F_{4}$, the $\mathcal{H}$ of order 9 in the two-sided cell $5$.

\item In type $H_{4}$, all $\mathcal{H}$ in the two-sided cell $6$.

\end{enumerate}
\end{remark}


\vspace{2mm}

M.M.: Center for Mathematical Analysis, Geometry, and Dynamical Systems, Departamento de Matem{\'a}tica,
Instituto Superior T{\'e}cnico, 1049-001 Lisboa, PORTUGAL \& Departamento de Matem{\'a}tica, FCT,
Universidade do Algarve, Campus de Gambelas, 8005-139 Faro, PORTUGAL
\newline email: {\tt mmackaay\symbol{64}ualg.pt}

Vo.Ma.: Department of Mathematics, Uppsala University, Box. 480,
SE-75106, Uppsala, SWEDEN
\newline email: {\tt mazor\symbol{64}math.uu.se}

Va.Mi.: School of Mathematics, University of East Anglia,
Norwich NR4 7TJ, UK
\newline email: {\tt v.miemietz\symbol{64}uea.ac.uk}

D.T.: The University of Sydney,
School of Mathematics and Statistics F07,
Office Carslaw 827,
NSW 2006, AUSTRALIA, www.dtubbenhauer.com
\newline email: {\tt daniel.tubbenhauer\symbol{64}sydney.edu.au}

X.Z.: Beijing Advanced Innovation Center for Imaging Theory and Technology, Academy for Multidisciplinary Studies, Capital Normal University, Beijing 100048, CHINA
\newline email: {\tt xiaoting.zhang09\symbol{64}hotmail.com}

\end{document}